\newcounter{algorithmicH}
\let\oldalgorithmic\algorithmic
\renewcommand{\algorithmic}{%
  \stepcounter{algorithmicH}
  \oldalgorithmic}
\newtheorem{thm}{Theorem}[section]
\newtheorem{defn}{Definition}
\newtheorem{rmk}{Remark}
\newtheorem{lem}[thm]{Lemma}
\newtheorem{cor}[thm]{Corollary}
\newtheorem{assump}{Assumption}
\def\A{\hbox{$\mathcal A$}}
\def\B{\hbox{$\mathcal B$}}
\def\C{\hbox{$\mathcal C$}}
\DeclareMathOperator*{\argmin}{arg\,min}
\DeclareMathOperator{\EX}{\mathbb{E}}
\newcommand{\RR}{{\mathbb{R}}}
\newcommand{\EE}{{\mathbb{E}}}
\newcommand{\cN}{{\mathcal{N}}}
\newcommand{\vzero}{{\mathbf{0}}}
\numberwithin{equation}{section}
\numberwithin{thm}{section}
\numberwithin{lem}{section}
\numberwithin{cor}{section}
\numberwithin{prop}{section}
\numberwithin{rmk}{section}
\numberwithin{assump}{section}
\newcommand{\expect}{\EE\expectarg}
\DeclarePairedDelimiterX{\expectarg}[1]{[}{]}{%
	\ifnum\currentgrouptype=16 \else\begingroup\fi
	\activatebar#1
	\ifnum\currentgrouptype=16 \else\endgroup\fi
}
\newcommand{\innermid}{\nonscript\;\delimsize\vert\nonscript\;}
\newcommand{\activatebar}{%
	\begingroup\lccode`\~=`\|
	\lowercase{\endgroup\let~}\innermid 
	\mathcode`|=\string"8000
}
\begin{document}

\title{Katyusha Acceleration for Convex Finite-Sum Compositional Optimization 
}

\titlerunning{Katyusha Acceleration in Compositional Optimization}        

\author{Yibo Xu \and Yangyang Xu 
}


\institute{Yibo Xu \at Department of Mathematical Sciences, Rensselaer Polytechnic Institute, Troy, NY\\
              \email{xuy24@rpi.edu}             \\
Yangyang Xu \at Department of Mathematical Sciences, Rensselaer Polytechnic Institute, Troy, NY\\
\email{xuy21@rpi.edu}           
}

\date{\today}

\maketitle

\begin{abstract}
Structured optimization problems arise in many applications. To efficiently solve these problems, it is important to leverage the structure information in the algorithmic design. This paper focuses on convex problems with a finite-sum compositional structure. Finite-sum problems appear as the sample average approximation of a stochastic optimization problem and also arise in machine learning with a huge amount of training data. One popularly used numerical approach for finite-sum problems is the stochastic gradient method (SGM). However, the additional compositional structure prohibits easy access to unbiased stochastic approximation of the gradient, so directly applying the SGM to a finite-sum compositional optimization problem (COP) is often inefficient. 

We design new algorithms for solving strongly-convex and also convex two-level finite-sum COPs. Our design incorporates the Katyusha acceleration technique and adopts the mini-batch sampling from both outer-level and inner-level finite-sum. We first analyze the algorithm for strongly-convex finite-sum COPs. Similar to a few existing works, we obtain linear convergence rate in terms of the expected objective error, and from the convergence rate result, we then establish complexity results of the algorithm to produce an $\varepsilon$-solution. Our complexity results have the same dependence on the number of component functions as existing works. However, due to the use of Katyusha acceleration, our results have better dependence on the condition number $\kappa$ and improve to $\kappa^{2.5}$ from the {best-known} $\kappa^3$. Finally, we analyze the algorithm for convex finite-sum COPs, which uses as a subroutine the algorithm for strongly-convex finite-sum COPs. Again, we obtain better complexity results than existing works in terms of the dependence on $\varepsilon$, improving to $\varepsilon^{-2.5}$ from the best-known $\varepsilon^{-3}$.

\keywords{Finite-sum composition \and Katyusha \and variance reduction \and stochastic approximation}

\vspace{0.3cm}

\subclass{90C06 \and 90C15 \and 90C25 \and 62L20 \and 65C60 \and 65Y20}
\end{abstract}

\section{Introduction}
\label{intro}

Utilizing structure information of a problem is crucial for designing efficient algorithms, especially when the problem involves a high-dimensional variable and/or a huge amount of data. For example, recent works (e.g., \cite{johnson2013accelerating, xiao2014proximal}) have shown that on solving a finite-sum problem, the variance-reduced stochastic gradient method, which utilizes the finite-sum structure information, can significantly outperform a deterministic gradient method and a non-variance-reduced stochastic gradient method.
 
In this paper, we focus on the finite-sum compositional optimization problem (COP):
	\begin{equation}\label{genP}
\min_{x\in \mathbb{R}^{N_2}}  H(x)
  \equiv \frac{1}{n_1}\sum_{i=1}^{n_1}F_{i}\left(\frac{1}{n_2}\sum_{j=1}^{n_2} G_{j}(x)\right)+h(x),
\end{equation}
where $F_{i}\colon \mathbb{R}^{N_1} \rightarrow \mathbb{R}$ 
is a differentiable function for each $i=1,\ldots,n_1$, $G_{j}\colon \mathbb{R}^{N_2}\rightarrow \mathbb{R}^{N_1}$ is a differentiable map for each $j=1,\ldots,n_2$, and $h$ is a simple (but possibly non-differentiable) function. For ease of description, we let $F\colon \mathbb{R}^{N_1} \rightarrow \mathbb{R}$ and $G\colon \mathbb{R}^{N_2}\rightarrow \mathbb{R}^{N_1}$ respectively denote the average of $\{F_{i}\}$ and $\{G_{j}\}$, i.e.,
$$F = \frac{1}{n_1}\sum_{i=1}^{n_1}F_{i},\quad G = \frac{1}{n_2}\sum_{j=1}^{n_2} G_{j}.$$
Also, we let $f\colon \mathbb{R}^{N_2}\rightarrow \mathbb{R}$ be the composition of $F$ with $G$, namely,
\begin{equation}\label{eq:def-f}
f = F\circ G.
\end{equation}  
The problem \eqref{genP} can be viewed as a sample average approximation (SAA) of a two-level stochastic COP, for which \cite{wang2017stochastic} propose and analyze a class of stochastic compositional gradient methods.  Very recently,  
\cite{yang2019multilevel,zhang2019multi} extend the results to a multiple-level stochastic COP.  
Although it is possible to extend our method and analysis to a multiple-level finite-sum COP, we will focus on the two-level case because of the applications that we are interested in.   
Our main goal is to design a gradient-based (also called \emph{first-order}) algorithm for \eqref{genP} and to analyze its complexity to produce a stochastic $\varepsilon$-solution $\bar x$, i.e., $\EE\big[H(\bar x)-H(x^{\ast})\big]\leq \varepsilon$, where $x^{\ast}$ is a minimizer of $H$. The complexity is measured by the number of gradient evaluations of each $F_i$ and Jacobian matrix evaluations of each $G_j$. The method in \cite{wang2017stochastic} can certainly be applied to \eqref{genP}. However, due to the utilization of the finite-sum structure, our method can have significantly better complexity result.  
 	
Below, we give two examples that motivate us to consider problems in the form of \eqref{genP}.

\vspace{5pt}

\noindent \textbf{Example I: Risk-Averse Learning.}  
Given a set of data $\{(\boldsymbol{a}_i,b_i)\}_{i=1}^n$ sampled from a certain distribution, the (sample) mean-variance minimization \cite{MR3235228,MR1094565} can be formulated as
	\begin{eqnarray}
	&&\displaystyle\min_{\boldsymbol{x}\in X}\,\,\frac{1}{n}\sum_{i=1}^{n}\ell(\boldsymbol{x};\boldsymbol{a}_i,b_i)+\frac{\lambda}{n} \sum_{i=1}^{n}\left[\ell(\boldsymbol{x};\boldsymbol{a}_i,b_i)-\frac{1}{n}\sum_{j=1}^{n}\ell(\boldsymbol{x};\boldsymbol{a}_j,b_j)\right]^2, \label{meanv01}
		\end{eqnarray}
		where $\ell$ is the loss function, $X$ is a closed convex set in $\RR^N$, and $\lambda>0$ is to balance the trade-off between mean and variance. 
 As shown in \cite{lian2017finite}, define $G_j:\RR^N\to\RR^{N+1}$  and $F_i:\RR^{N+1}\to\RR$ by
\[G_j(\boldsymbol{x})= \big[\boldsymbol{x} ; \ell(\boldsymbol{x};\boldsymbol{a}_j,b_j)\big] , \quad  F_i(\boldsymbol{z},y)= \lambda(\ell(\boldsymbol{z};\boldsymbol{a}_i,b_i)-y)^2+\ell(\boldsymbol{z};\boldsymbol{a}_i,b_i),\]
for $j=1,\ldots, n$ and $i=1,\ldots, n$, and let $h$ be the indicator function on $X$. Then  \eqref{meanv01} can be rewritten into the form of \eqref{genP} with $n_1=n_2=n$.

As an alternative, by expanding the square term, we write \eqref{meanv01} equivalently into 
\begin{equation}
\min_{\boldsymbol{x}\in X}\,\displaystyle\frac{1}{n}\sum_{i=1}^{n}\ell(\boldsymbol{x};\boldsymbol{a}_i,b_i)+\frac{\lambda}{n} \sum_{i=1}^{n}\left(\ell(\boldsymbol{x};\boldsymbol{a}_i,b_i)\right)^2  - \lambda\left[\frac{1}{n}\sum_{j=1}^{n}\ell(\boldsymbol{x};\boldsymbol{a}_j,b_j)\right]^2.\label{meanv02}
\end{equation}
Define  $G_j:\RR^N\to\RR^{N+1}$  and $F_i:\RR^{N+1}\to\RR$ by
\[G_j(\boldsymbol{x})= \big[\boldsymbol{x} ; \ell(\boldsymbol{x};\boldsymbol{a}_j,b_j)\big] , \quad F_i(\boldsymbol{z},y)= \lambda\ell^2(\boldsymbol{z};\boldsymbol{a}_i,b_i)+\ell(\boldsymbol{z};\boldsymbol{a}_i,b_i)-\lambda y^2,\] 
for $j=1,\ldots, n$ and $i=1,\ldots, n$, and also let $h$ be the indicator function on $X$. Then we can rewrite \eqref{meanv02} into the form of \eqref{genP} with $n_1=n_2=n$.

The case of \eqref{meanv02} where $\ell$ is a linear function in $\boldsymbol{x}$ is certainly a convex COP, sometimes even strongly convex, and it has been extensively tested computationally; see \cite{chen2020momentum,lin2018improved} for example about experiments on such a convex COP, and \cite{lian2017finite,huo2018accelerated,yu2017fast,zhang2019composite} about a strongly convex COP.

\vspace{5pt}

\noindent \textbf{Example II: finite-sum constrained problems via augmented Lagrangian.}	
	Consider a problem with a finite-sum objective and also finite-sum constraints: 
\begin{equation}\label{eq:finite-sum-cp}
 \min_{x\in X}  ~\frac{1}{m}\sum_{i=1}^{m}f_i(x), 
~~\text{s.t.} ~~\frac{1}{n}\sum_{j=1}^{n}g_{j k}(x)\leq 0, \, k=1,\ldots, K,
\end{equation}
where $X$ is a closed convex set in $\RR^N$. 
The Neyman-Pearson classification problem as in \cite{rigollet2011neyman} can be formulated as \eqref{eq:finite-sum-cp} with $K=1$, and the fairness-constrained classification problem as in \cite{zafar2015fairness} can be written in the form of \eqref{eq:finite-sum-cp} with $K=2$. 
The augmented Lagrangian method (ALM) is one popular and effective way for solving functional constrained problems. It has been shown by \cite{xu2019iter-ialm} that applying an optimal first-order method (FOM) within the ALM framework can yield an overall (near) optimal FOM for nonlinear functional constrained problems. By the classic AL function, the primal subproblem (that is the most expensive step in the ALM) takes the form: 
\begin{equation}\label{eq:alm-subprob}
\min_{x\in X}  ~\frac{1}{m}\sum_{i=1}^{m}f_i(x) + \sum_{k=1}^K \psi_\beta\left(\frac{1}{n}\sum_{j=1}^{n}g_{j k}(x),\, z_k\right),
\end{equation}
where $\beta>0$ is the augmented penalty parameter, and 
$$\psi_\beta(u,v):=\frac{1}{2\beta}\left[\max\{\beta u,\, -v\} \right]^2+\frac{v}{\beta} \max\{\beta u,\, -v\}.$$
Given $\beta>0$ and $z\in\RR^K$, define $G_j:\RR^N\to\RR^{N+K}$  and $F_i:\RR^{N+K}\to\RR$ by
$$G_j(x) = \big[x; g_{j1}(x); \ldots;g_{jK}(x)\big], \quad F_i(x, y) = f_i(x)+\sum_{k=1}^K \psi_\beta(y_k, z_k)$$
for $j=1,\ldots, n$ and $i=1,\ldots, m$, and also let $h$ be the indicator function on $X$. Then we can write \eqref{eq:alm-subprob} into the form of \eqref{genP} with $n_1=m$ and $n_2=n$.

\subsection{Related Work}
In this subsection, we review existing works on solving problems in the form of \eqref{genP} or its special cases.

\vspace{5pt}

\noindent \textbf{Approaches for convex finite-sum problems.}
~When each $G_j$ is the identity map, \eqref{genP} reduces to the traditional finite-sum problem
\begin{equation}\label{eq:finite-sum-prob}
\min_{x\in\RR^N} f(x)+h(x)\equiv\frac{1}{n}\sum_{i=1}^{n} F_i(x)+h(x).
\end{equation}
For solving \eqref{eq:finite-sum-prob}, one can apply the proximal gradient method (PG) or its accelerated version APG \cite{nesterov2013gradient}. Each iteration of PG or APG computes the {gradient of $f$ }at a point and performs a proximal mapping of $h$, and thus their per-iteration complexity is $n$, in terms of the number of gradient evaluation of component functions $\{F_i\}$. If each $F_i$ has an $L$-Lipschitz continuous gradient, and $f$ is $\mu$-strongly convex, then PG and APG respectively need $O\left(\kappa\log\frac{1}{\varepsilon}\right)$ and $O\left(\sqrt{\kappa}\log\frac{1}{\varepsilon}\right)$ iterations to produce an $\varepsilon$-solution, where $\kappa=\frac{L}{\mu}$ denotes the condition number. Hence, their total complexities are respectively $O\left(n \kappa\log\frac{1}{\varepsilon}\right)$ and $O\left(n\sqrt{\kappa}\log\frac{1}{\varepsilon}\right)$, which are high as $n$ is large. The stochastic gradient descent (SGD) can be used for the big-$n$ case of \eqref{eq:finite-sum-prob}. At each update, it only needs to evaluate the gradient of one or a few randomly sampled functions and can produce a stochastic $\varepsilon$-solution with a complexity of $O\left(\frac{G\kappa}{\mu\varepsilon}\right)$. Here, $G$ is a bound for the second moment of sample gradients. The complexity of SGD could be lower than those of PG and APG if $n$ is large and $\varepsilon$ is not too tiny. While PG and APG treat \eqref{eq:finite-sum-prob} as a regular deterministic problem, the SGD simply takes it as a stochastic program. None of them utilize the finite-sum structure. It turns out that a better complexity of  $O\left((n+\kappa)\log \frac{1}{\varepsilon}\right)$ can be obtained by utilizing the special structure, through a random sampling together with a variance reduction (VR) technique (e.g., \cite{schmidt2017minimizing, johnson2013accelerating, xiao2014proximal, defazio2014saga}). Furthermore, the Katyusha acceleration by \cite{allen2017katyusha} incorporates the VR and the linear coupling technique of \cite{allen2014linear} that is disassembled from Nesterov's acceleration. The Katyusha accelerated method achieves a complexity of $O\left((n+\sqrt{n \kappa})\log \frac{1}{\varepsilon}\right)$, which is lower than $O\left((n+\kappa)\log \frac{1}{\varepsilon}\right)$ if $n\ll \kappa$. Similar results have also been shown in  \cite{defazio2016simple, shang2018asvrg, lin2015universal}. They match with the lower complexity bound given in \cite{woodworth2016tight} and thus are optimal for solving problems in the form of \eqref{eq:finite-sum-prob}. 
{It is worth mentioning that  
for PG and APG, the condition number $\kappa=\frac{L_f}{\mu}$ where $L_f$ is the smoothness constant of $f$, while for Katyusha,  
$\kappa=\frac{\sum_{i=1}^{n}L_i}{n\mu}$ where $L_i$ is the smoothness constant of $F_i$ for each $i$. The latter $\kappa$ is always no smaller than the former one, but they can be the same in the extreme case. 
}

\vspace{5pt}

\noindent\textbf{Approaches for finite-sum COPs.} ~Several methods have been designed specifically for solving problems with finite-sum composition structure. However, their complexity results are generally worse than those obtained for solving problems in the form of \eqref{eq:finite-sum-prob}. For example, to produce a stochastic $\varepsilon$-solution, the methods in \cite{lian2017finite,huo2018accelerated} both use the VR technique and bear a complexity of $O\left((n_1+n_2+\kappa^3)\log\frac{1}{\varepsilon}\right)$ if the objective in \eqref{genP} is strongly convex and has condition number $\kappa$.  
This result is significantly worse than the complexity of $O\left((n+\kappa)\log \frac{1}{\varepsilon}\right)$ mentioned previously for solving \eqref{eq:finite-sum-prob}, in terms of the dependence on the condition number $\kappa$. The worse result is caused by the additional composition structure, which prohibits easy access to unbiased stochastic estimation of $\nabla f$. To see this, note that
	\begin{equation*}
 \nabla f(x) = [\nabla G(x) ]^\top  \nabla F(G(x))  
 =\displaystyle\left[\frac{1}{n_2}\sum_{j=1}^{n_2} \nabla G_{j}(x) \right]^\top  \nabla F\left(\frac{1}{n_2}\sum_{j=1}^{n_2} G_{j}(x)\right).
	\end{equation*}	
Hence, if we can unbiasedly estimate the Jacobian matrix $\nabla G(x)$ and the gradient $\nabla F(G(x))$ independently, then an unbiased estimation of $\nabla f(x)$ can be obtained. 
However, $\nabla F(\EX [\xi])= \EX[\nabla F(\xi)]$ does not generally hold for a random vector $\xi$, and thus though we can easily have an unbiased stochastic approximation of $\nabla G(x)$ and $G(x)$ by randomly sampling from $\{G_j\}$, this way does not guarantee an unbiased estimation of $\nabla F(G(x))$, let alone $\nabla f(x)$. 
Complexity results have been established in the literature for problem \eqref{genP} under different scenarios. For example, \cite{lian2017finite,huo2018accelerated} studied the scenario where $f$ is smooth and strongly convex. Both of them  
inherit the algorithmic design from \cite{johnson2013accelerating} 
and have achieved linear convergence. 
Besides the strongly convex scenario, other cases of \eqref{genP} have also been studied. 
The case with a smooth and convex $f$ was treated, for example, in \cite{huo2018accelerated,lin2018improved}. 
The case with a smooth but non-convex $f$ is studied, for example, in \cite{wang2017accelerating,liu2017variance,chen2020momentum,ghadimi2020single}. 
{  The case with a non-convex $f$ that has a convex but nonsmooth $F$ is studied, for example, in \cite{tran2020stochastic,zhang2020stochastic}. 
}
The work of \cite{liu2017variance} employs the variance reduction technique while sampling the inner map $G$ and its Jacobian and the outer map $F$, for which various mini-batch sizes can be taken.  
Instead of a finite-sum problem, \cite{wang2017accelerating,ghadimi2020single} study a stochastic composition problem.  
Assuming that the sampling at the inner layer is unbaised and has a bounded variance, \cite{wang2017accelerating} provide sublinear guarantees 
for strongly convex, convex, and non-convex cases.  
The work of \cite{yu2017fast} deals with a finite-sum composition problem with an additional linear constraint. It integrates variance reduction with the alternating direction method of multipliers.  
More recently, \cite{zhang2019composite} propose a SAGA-style variance-reduced algorithm that handles non-convex COPs and optimally strongly convex COPs.
For comparison, we list complexity results of state-of-the-art methods for the strongly-convex and convex cases in Table~\ref{table:1}.

\begin{table}[h]
\caption{A comparison of complexity results amongst several state-of-the-art algorithms for solving problems in the form of \eqref{genP} to produce a stochastic $\varepsilon$-solution; see Definition~\ref{def:eps-sol}. ``$h\neq 0$'' is to reflect whether the algorithm handles a proximal term. ``conv. outer sum.'' stands for convex outer summand, meaning whether the convexity of each $F_i \circ G$ is assumed. In that column, ``both'' indicates that the analysis is done with the assumption and also done without the assumption. In the fourth column, we use $\kappa$ for the condition number. Although the compared papers have different definitions of $\kappa$, they all have $\mu$ as the denominator in the fractions.  {C-SAGA}  of \cite{zhang2019composite} only assumes an optimally strong convexity of $f$.} 

\label{table:1}
\begin{center}
\scalebox{0.85}{
\begin{tabular}{|c|c|c|c|c|}
\hline
 Method                 & $h\neq 0$ & conv. outer sum. & $f$ strongly convex & $f$ convex \\ \hline
AGD \cite{nesterov1983method,nesterov2013introductory}              & yes       & both                 &   $(n_1+n_2)\sqrt{\kappa}\log\frac{1}{\varepsilon}$                  &     $(n_1+n_2)  \frac{1}{\sqrt{\varepsilon}} $     \\
C-SVRG-2 \cite{lian2017finite}     & no        & yes                  &      $(n_1+n_2+\kappa^3)\log\frac{1}{\varepsilon}$        &    --     \\
VRSC-PG \cite{huo2018accelerated}     & yes       & yes                  &       $(n_1+n_2+\kappa^3)\log\frac{1}{\varepsilon}$          &     $n_1+n_2+  \frac{(n_1+n_2)^{\frac{2}{3}}}{\varepsilon^2} $    \\
SCVRG  \cite{lin2018improved} & yes       & no                   &         --         &    $(n_1+n_2) \log\frac{1}{\varepsilon} + \frac{1}{\varepsilon^{3}}$     \\
{C-SAGA} \cite{zhang2019composite}     & yes        & no                  &      $(n_1+n_2+\kappa (n_1+n_2)^{\frac{2}{3}})\log\frac{1}{\varepsilon}$        &    --     \\
This paper & yes       & both                 &       $(n_1+n_2+\kappa^{2.5})\log\frac{1}{\varepsilon}$            &    $(n_1+n_2) \log\frac{1}{\varepsilon} + \frac{1}{\varepsilon^{2.5}}$ \\[0.1cm]\hline       
\end{tabular}
}
\end{center}
\end{table}

\subsection{Our Contributions}

The contributions of this paper are mainly on designing new algorithms for solving convex and strongly-convex finite-sum compositional optimization problems and establishing complexity results that appear the best so far. They are summarized as follows.
	\begin{itemize}
	
	\item First, we propose a new algorithm for solving strongly convex compositional optimization. Our design incorporates Katyusha acceleration by \cite{allen2017katyusha} together with a mini-batch sampling technique   which is also adopted by \cite{lian2017finite,huo2018accelerated} and \cite{lin2018improved}.  
	
	\item Second, we conduct the complexity analysis of the new algorithm for solving strongly convex problems in two 
	 different scenarios. We start from the scenario where the outer finite-sum in \eqref{genP} has a relatively small number $n_1$ but the inner finite-sum takes a big number $n_2$. Then we analyze the scenario where both $n_1$ and $n_2$ are big.  
	 For both scenarios, our complexity results are roughly in the order of $(n_1+n_2+\kappa^{2.5})\log\frac{1}\varepsilon $ to produce a stochastic $\varepsilon$-solution.  
	Our complexity results are better than 
	\cite{lian2017finite,huo2018accelerated}
	by an order of $\sqrt \kappa$. This is due to the incorporation of the Katyusha acceleration in our algorithmic design. 
Furthermore, our complexity is lower than the complexity of Nesterov's accelerated method when $n_1+n_2>\kappa^{2}$, and lower than that of the method in \cite{zhang2019composite} when $n_1+n_2>\kappa^{9/4}$. {Yet, it is unknown if this complexity can be improved, as the lower complexity bound for \eqref{genP} is still an open question. }

	\item Thirdly, we propose a new algorithm for solving convex compositional optimization, by applying the optimal black-box reduction technique of \cite{allen2016optimal} and our proposed strongly-convex problem solver as a subroutine. For the two scenarios mentioned above, our complexity results are roughly in the order of $(n_1+n_2)\log\frac{1}{\varepsilon} + \frac{1}{\varepsilon^{2.5}}$. Compared to existing results, ours are better by an order of $\frac{1}{\sqrt\varepsilon}$.  
	\end{itemize}

\subsection{Notation and organization}

Throughout the paper, we use $\|\cdot\|$ to denote the Euclidean norm of a vector and also the spectral norm of a matrix. For any real number $a$, we use $\lceil a \rceil$ for the least integer that is lower bounded by $a,$ $\lfloor a \rfloor$ for the greatest integer that is upper bounded by $a,$ and for any positive integer $n$, we use $[n]$ for the set $\{1,\ldots, n\}$. For a differentiable scalar function $f$, $\nabla f$ denotes its gradient, and for a differentiable vector function $G$, $\nabla G$ denotes its Jacobian matrix.  
$\EE$ is used for the full expectation, and a subscript will be added for conditional expectation.
We use the big-$O$, big-$\Omega$, and big-$\Theta$ notation with the standard meanings to compare two numbers that both can go to infinity. Specifically, $a = O(b)$ means that there exists a uniform constant $C>0$ such that $a\le C\cdot b$, $a = \Omega(b)$ means that there exists a uniform constant $c>0$ such that $a\ge c\cdot b$, and $a=\Theta(b)$ means that $a = O(b)$ and $a = \Omega(b)$ both hold. 

\begin{defn}[stochastic $\varepsilon$-solution]\label{def:eps-sol} ~Given $\varepsilon>0$, a random vector $\bar x$ is called a stochastic $\varepsilon$-solution of \eqref{genP} if $\EE\big[H(\bar x)-H(x^{\ast})\big]\leq \varepsilon$, where $x^{\ast}$ is a minimizer of $H$.
\end{defn}

\begin{defn}[$L$-smoothness]
~A differentiable scalar (resp. vector) function $\phi$ on a set $X$ is called $L$-smooth with $L\ge 0$ if its gradient (resp. Jacobian matrix) $\nabla \phi$ is $L$-Lipschitz continuous, namely,  
\[\|\nabla \phi(x)- \nabla \phi(x^{\prime})\|\leq L\|x-x^{\prime}\|,\,\forall x,\,x^{\prime}\in X.\]
\end{defn}

\begin{defn}[bounded gradient]
~A differentiable scalar (resp. vector) function $\phi$ on a set $X$ has a $b$-bounded gradient (resp. Jacobian matrix) $\nabla f,$ if 
\[\|\nabla f(x) \|\leq b,\,\forall x\in X.\]
\end{defn}

\begin{defn}[$\mu$-strong convexity]
~A function $\phi$ on a convex set $X$ is called $\mu$-strongly convex for some $\mu>0,$ if
\[\phi(x^{\prime})\geq \phi(x) + \langle \tilde\nabla \phi(x), x^{\prime}-x \rangle+ \frac{\mu}{2}\|x^{\prime}-x\|^2,\,\forall x,\,x^{\prime}\in X,\]
where $\tilde\nabla \phi(x)$ stands for a subgradient of $\phi$ at $x.$
\end{defn}

The rest of the paper is organized as follows. In Section~\ref{sec:algorithm}, we give the technical assumptions and also the algorithm for the strongly convex case of \eqref{genP}. A few lemmas are established in Section~\ref{sec:pre}. In Section~\ref{sec:sock}, we analyze the algorithm for the case of relatively small $n_1$ and big $n_2$, and in Section~\ref{sec:gock}, we conduct the analysis for big $n_1$ and big $n_2$. Strong convexity is assumed in Sections~\ref{sec:sock} and \ref{sec:gock}, and in Section~\ref{sec:nonsc}, we propose an algorithm for the convex case of \eqref{genP} and give the complexity results. Section~\ref{sec:conclusion} concludes the paper.

\section{Our algorithm, main complexity result, and a proof-sketch} \label{sec:algorithm} 
The following three assumptions are made throughout the analysis for strongly convex cases of \eqref{genP}.
\begin{assump}\label{singleone}
 ~The function $f$ given in \eqref{eq:def-f} is convex, and the function 
  $h$ in \eqref{genP} is $\mu$-strongly convex with $\mu>0$.
\end{assump}

\begin{assump}\label{gentwo}
~For every $i\in [n_1]$, $F_i$ is $L_F$-smooth and has a $B_F$-bounded gradient, and for every $j\in [n_2]$, $G_j$ is $L_G$-smooth and has a $B_G$-bounded Jacobian matrix.
\end{assump}
By this assumption, $f$ must be smooth, and also $F$ is $L_F$-smooth and has a $B_F$-bounded gradient. Note that we do not assume the smoothness of $h$, but the proximal mapping of $h$ needs to be easy to implement our algorithm.

\begin{assump}\label{genthree}
~For every $i\in [n_1] $ and every $ j\in [n_2]$, it holds 
$$\left\| \left[\nabla G_j(x)\right]^\top  \nabla F_i(G(x)) -\left[\nabla G_j(y)\right]^\top  \nabla F_i(G(y)) \right\|\leq L\|x-y\|,\,  \forall \,x,\,y\in \mathbb{R}^{N_2}.$$
\end{assump}
This assumption is a conventional one made in the literature. It guarantees the $L$-smoothness of $f_i:= F_i\circ G$ for each $i\in [n_1]$ by the following arguments: 
\begin{align}\label{eq:smooth-fi}
\|\nabla f_i(x)-\nabla f_i(y)\|=&\left\|[\nabla G(x)]^\top \nabla F_i(G(x))-[\nabla G(y)]^\top \nabla F_i(G(y))\right\| \cr
=&\left\|\frac{1}{n_2}\sum_{j=1}^{n_2}\Big([\nabla G_j(x)]^\top \nabla F_i(G(x))-[\nabla G_j(y)]^\top \nabla F_i(G(y))\Big)\right\| \cr
\leq& \frac{1}{n_2}\sum_{j=1}^{n_2}\left\|[\nabla G_j(x)]^\top \nabla F_i(G(x))-[\nabla G_j(y)]^\top \nabla F_i(G(y))\right\|
\leq L \|x-y\|,
\end{align}
and it implies the $L$-smoothness of $f$ as well by noting:
\[
\|\nabla f(x)-\nabla f(y)\| 
= \left\|\frac{1}{n_1}\sum_{i=1}^{n_1}\big(\nabla f_i(x)-\nabla f_i(y)\big)\right\| \\
\leq \frac{1}{n_1}\sum_{i=1}^{n_1}\|\nabla f_i(x)-\nabla f_i(y)\|
\leq L \|x-y\|.
\]
Notice that some model is provided with $f$ being both smooth and $\mu$-strongly convex while $h$ is only convex. For this case, one can let $f\leftarrow f-\frac{\mu}{2}\|\cdot\|^2$ and $h\leftarrow h+\frac{\mu}{2}\|\cdot\|^2$ to fit our assumptions. We assume $L \ge \mu.$

Assumption~\ref{gentwo} is also made in \cite{lian2017finite,huo2018accelerated,lin2018improved,zhang2019composite}. \cite{lian2017finite,huo2018accelerated} made Assumption~\ref{genthree} to obtain \eqref{eq:smooth-fi},  while \cite{lin2018improved,zhang2019composite} gave a formula of $L$ about $L_F,\,B_F,\,L_G,\,B_G;$ we choose not to use that formula because it provides an upper bound on $L$ and a smaller $L$ can exist, depending on applications. 

Under the above assumptions, we design the SoCK 
 method to solve Problem~\eqref{genP}, and the pseudocode is given in Algorithm~\ref{alg:gock}.  The design incorporates the linear coupling technique of \cite{allen2014linear}, which dissembles Nesterov's acceleration into three parts and is also seen in \cite{allen2017katyusha}:
\begin{itemize}
		\item the linear coupling step: an $x$-trajectory that takes a convex combination of $y$, $z$-trajectories and the current snapshot $\widetilde{x}$; {when there is no $\widetilde{x}$, this step can be seen as the first weighted average update in the (three point) accelerated (proximal) gradient descent (APGD) of \cite{lan2020first}; and the weight on $\widetilde{x}$ ensures
		 that $x$-trajectory ``is not too far away from $\widetilde{x}$ so the gradient estimator remains `accurate enough','' as discussed in  \cite{allen2017katyusha}; see \eqref{SV1} or \eqref{SV3} for a relation between the bias and the $(x,\widetilde{x})$ pair;}
		\item the mirror descent step: a $z$-trajectory that performs traditional mirror descent steps on its own query points, the step size is larger and does not guarantee descent at each step; {this step can be seen as the proximal update in APGD of \cite{lan2020first}};
		\item the gradient descent step: a $y$-trajectory that walks a traditional gradient descent step from the current $x$-query point, with a step size $\Theta(\frac{1}{L})$; {this step can be seen as a close replicate of the last weighted average update in APGD of \cite{lan2020first}. 
		}
	\end{itemize}
Algorithm~\ref{alg:gock} also incorporates the variance reduction technique that is adopted by the state-of-the-art algorithms, e.g. \cite{lian2017finite,huo2018accelerated,lin2018improved}. By the linear coupling technique with carefully selected momentum weights ($\tau_1$ and $\tau_2$ here), 
one can achieve the optimal deterministic rates for convex objectives as in \cite{allen2014linear} and the optimal stochastic rate for a finite-sum objective as in \cite{allen2017katyusha}. For a finite-sum problem, if we view SVRG as a special SGD that has a constant step size and achieves linear convergence, then Katyusha in \cite{allen2017katyusha} essentially achieves a Nesterov's acceleration upon the gradient sampling of SVRG.

\begin{algorithm}[H]                      
\caption{ \textbf{S}trongly C\textbf{o}nvex \textbf{C}ompositional \textbf{K}atyusha (SoCK)}         
\label{alg:gock}   
{\small                        
\begin{algorithmic} 
\State \textbf{Input:} $x_0\in \mathbb{R}^{N_2},$ 
$S,$ $m\leq \frac{L}{2\mu},$ $\theta>1$, inner mini-batch sizes $A$ and $B$, and {outer mini-batch size $C$};          
		\For{$s=0$ to $S-1$}
		\State Compute $G(\widetilde{x}^s),$ $\nabla G(\widetilde{x}^s)$ and $\nabla f(\widetilde{x}^s)\leftarrow \left[\nabla G(\widetilde{x}^s)\right]^\top  \nabla F(G(\widetilde{x}^s));$ \algorithmiccomment{take a snapshot}
		\For{$j=0$ to $m-1$}
		\State $k\leftarrow s m+j;$
		\State $x_{k+1}\leftarrow\tau_1 z_k+ \tau_2 \widetilde{x}^s +(1-\tau_1-\tau_2)y_k;$ \algorithmiccomment{linear coupling step}
		\State Sample $\A_k$ and $\B_k$ uniformly at random from $[n_2]$ with replacement such that $|\A_k|=A$ and $|\B_k|=B$;
		\State Let $\widehat{G}_{k}\leftarrow\frac{1}{A}\sum_{j_k \in \mathcal{A}_k}\left(G_{j_{k}}(x_{k+1})-G_{j_{k}}(\widetilde{x}^s) \right)+ G(\widetilde{x}^s);$
		\State Let $\nabla \widehat{G}_{k}\leftarrow\frac{1}{B}\sum_{j_{k} \in \mathcal{B}_k}\left(\nabla G_{j_{k}}(x_{k+1})-\nabla G_{j_{k}}(\widetilde{x}^s) \right)+ \nabla G(\widetilde{x}^s);$
		\State \textbf{Option I:} let 
		 $\widetilde{\nabla}_{k+1}\leftarrow \left[\nabla \widehat{G}_{k}\right]^\top  \nabla F(\widehat{G}_{k});$ \algorithmiccomment{batch step for outer function}
		\State \textbf{Option II:} 
		  Sample $\C_k$ uniformly at random from $[n_1]$ with $|\C_k|=C$ and let 
		$$\textstyle\widetilde{\nabla}_{k+1}\leftarrow \frac{1}{C}\sum_{i\in\C_k}\left(\left[\nabla \widehat{G}_{k}\right]^\top  \nabla F_{i}(\widehat{G}_{k})-\left[\nabla G(\widetilde{x}^s)\right]^\top  \nabla F_{i}(G(\widetilde{x}^s))\right) +\nabla f(\widetilde{x}^s)$$
		
		\State  Let $z_{k+1}\leftarrow\argmin_{z}\, \langle \widetilde{\nabla}_{k+1},z\rangle +\frac{1}{2\alpha}\|z-z_{k}\|^2+h(z);$ \algorithmiccomment{mirror descent step}
		\State  Let $y_{k+1}\leftarrow\argmin_{y}\, \langle \widetilde{\nabla}_{k+1},y\rangle +\frac{3L}{2}\|y-x_{k+1}\|^2+h(y);$ \algorithmiccomment{gradient descent step}
		\EndFor
		\State   $\widetilde{x}^{s+1}\leftarrow(\sum_{j=0}^{m-1} \theta^{j})^{-1} \cdot \sum_{j=0}^{m-1} \theta^{j}y_{s m+j+1};$ \algorithmiccomment{update to the snapshot point}
		\EndFor
		\State \Return $\widetilde{x}^S$.
\end{algorithmic}
}
\end{algorithm}

The choices of $\tau_1,$  $\tau_2$ and $\alpha$ will be specified later in the technical sections. Algorithm~\ref{alg:gock} takes a snapshot $\widetilde{x}$ every $m$ iterations.  
The update to the snapshot query point is an artifact of the analysis, for the sake of telescoping the progress among all snapshot points. The two different settings of the final output are also artifacts from our analysis. Notice that by counting the number of component function/gradient/Jacobian evaluations, the overall complexity of the algorithm is $S\big(n_1+2n_2+m(2A+2B+n_1)\big)$ if \textbf{Option I} is taken and $S\big(n_1+2n_2+2m(A+B+C)\big)$ if \textbf{Option II} is taken. Hence, we will take \textbf{Option I} if $n_1$ is in the same magnitude of $A+B$ and  \textbf{Option II} only if $n_1\gg A+B$. 
Corresponding to the two options, we will conduct the analysis separately in Section~\ref{sec:sock} and Section~\ref{sec:gock}. 

The randomness of the algorithm comes from the uniform samples $\A_k,\,\B_k$ and $\C_k$, for all $0\leq k\leq Sm-1.$ In our analysis, we use $\EX_{k-1}$ for the conditional expectation with the history until the $k$-th iteration is fixed. More precisely, in Section~\ref{sec:sock} with \textbf{Option I} taken, $\EX_{k-1}[\,\cdot\,]=\EX\left[\,\cdot\,|\,\{\A_i,\,\B_i\}_{i=0}^{k-1}\right]$, and in Section~\ref{sec:gock} with \textbf{Option II} taken, $\EX_{k-1}[\,\cdot\,]=\EX\left[\,\cdot\,|\,\{\A_i,\,\B_i,\,\C_i\}_{i=0}^{k-1}\right]$. 
For ease of notation, we will use the following shorthands in our analysis
\begin{equation}\label{def-Dk-Dtilde}
D_{k}\equiv \EE\big[ H(y_k)-H(x^{\ast}) \big],\ \widetilde{D}^s\equiv \EE\big[ H(\widetilde{x}^s)-H(x^{\ast}) \big],
\end{equation}
and for readers' convenience, we list some important parameters with their meanings in Table~\ref{table:2}.

\begin{table}[h]
\caption{ List of parameters and notations}
\label{table:2}
\scalebox{0.85}{
\begin{tabular}{ll}
\hline
$L$ & smoothness parameter for problem \eqref{genP}; see Assumption~\ref{genthree} \\
$\mu$ & strong-convexity parameter of $h$ in \eqref{genP} \\
$A$ & number of samples with replacement used for the output estimation of the inner map $G$ \\
$B$ & number of samples with replacement used for the Jacobian estimation of the inner map $G$ \\
$C$ & number of samples with replacement used for the gradient estimation of the outer function $F$ \\
$S$ & number of outer loops \\
$m$ & number of inner loops \\
$\widehat{G}_{k}$ & the output estimation of the inner map $G$ at iteration $k,$ implimented variance reduction and mini-batch \\
$\nabla \widehat{G}_{k}$ & the Jacobian estimation of the inner map $G$ at iteration $k,$ implimented variance reduction and mini-batch \\
$\widetilde{\nabla}_{k+1}$ & the gradient estimation of the composition $f$ at iteration $k,$ implimented variance reduction and mini-batch\\
$x^*$ & the optimal solution of \eqref{genP} \\[0.1cm]\hline
\end{tabular}
}
\end{table}

{An informal statement of our main complexity result is as follows.   
\begin{thm}[informal version] Given $\varepsilon>0$, under Assumptions~\ref{singleone}--\ref{genthree}, Algorithm~\ref{alg:gock} with appropriate parameter setting can produce $\widetilde{x}^S$ as a stochastic $\varepsilon$-solution within $T$ {  component function/gradient/Jacobian evaluations}, where  
	\begin{equation*}
	T = 
		\begin{cases}
			O\left( \left(n_1+n_2+{\textstyle\sqrt{\frac{L}{\mu}}}\Big({\textstyle\frac{\max\{B_G^4 L_F^2,\,B_F^2 L_G^2\}}{\mu^2}+n_1}\Big)\right) \log\frac{H(x_0)-H(x^{\ast})}{\varepsilon}\right),  &\textup{for \textbf{Option I},}\\[0.1cm]
			O\left( \Big(n_1 + n_2+\sqrt{\frac{L}{\mu}}\frac{\max\{B_G^4 L_F^2,\,B_F^2 L_G^2,\,L^2\}}{\mu^2}\Big) \log\frac{H(x_0)-H(x^{\ast})}{\varepsilon}\right),  &\textup{for \textbf{Option II}.}
		\end{cases}
	\end{equation*}
\end{thm}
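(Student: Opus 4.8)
The plan is to carry over the three-point Katyusha analysis of \cite{allen2017katyusha} — a mirror-descent step, a gradient-descent step, and the linear coupling of \cite{allen2014linear} — with the one genuinely new ingredient being a careful treatment of the \emph{biased} compositional gradient estimator $\widetilde{\nabla}_{k+1}$. \textbf{Step 1 (bias and variance of the estimator).} First I would quantify how far $\widetilde{\nabla}_{k+1}$ is from $\nabla f(x_{k+1})=[\nabla G(x_{k+1})]^{\top}\nabla F(G(x_{k+1}))$. Since $\mathcal{A}_k$ and $\mathcal{B}_k$ are drawn independently, $\widehat{G}_k$ and $\nabla\widehat{G}_k$ are conditionally unbiased for $G(x_{k+1})$ and $\nabla G(x_{k+1})$, with conditional second moments of order $(B_G^2/A)\|x_{k+1}-\widetilde{x}^s\|^2$ and $(L_G^2/B)\|x_{k+1}-\widetilde{x}^s\|^2$, by the $B_G$-bounded Jacobian and $L_G$-smoothness of each $G_j$ (Assumption~\ref{gentwo}). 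Combining these with the $L_F$-smoothness and $B_F$-bounded gradient of $F$, and using $\EX_{k-1}[\widetilde{\nabla}_{k+1}]=[\nabla G(x_{k+1})]^{\top}\EX_{k-1}[\nabla F(\widehat{G}_k)]$ for Option~I, I would obtain a bias bound $\|\EX_{k-1}[\widetilde{\nabla}_{k+1}]-\nabla f(x_{k+1})\|\le c_1\|x_{k+1}-\widetilde{x}^s\|$ with $c_1=O(B_G^2L_F/\sqrt{A})$ and a conditional second-moment bound $\EX_{k-1}\|\widetilde{\nabla}_{k+1}-\nabla f(x_{k+1})\|^2\le c_2\|x_{k+1}-\widetilde{x}^s\|^2$ with $c_2=O(B_G^4L_F^2/A+B_F^2L_G^2/B)$; for Option~II, the extra sampling of $\mathcal{C}_k$ adds $O(L^2/C)$ to $c_2$ via Assumption~\ref{genthree}.

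\textbf{Step 2 (one-iteration progress).} Next I would derive a per-iteration inequality. The mirror-descent step and the $\mu$-strong convexity of $h$ bound $\langle\widetilde{\nabla}_{k+1},z_k-u\rangle+h(z_{k+1})-h(u)$, for every $u$, by $\tfrac{1}{2\alpha}\|z_k-u\|^2-\big(\tfrac{1}{2\alpha}+\tfrac{\mu}{2}\big)\|z_{k+1}-u\|^2+\tfrac{\alpha}{2}\|\widetilde{\nabla}_{k+1}\|^2$; the $\tfrac{3L}{2}$-regularized gradient-descent step yields a ``sufficient decrease'' controlling $H(y_{k+1})$ by the linearization of $f$ at $x_{k+1}$ plus $h$; and the coupling identity $x_{k+1}=\tau_1 z_k+\tau_2\widetilde{x}^s+(1-\tau_1-\tau_2)y_k$ converts $z$-progress into objective progress. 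Inserting convexity of $f$, the descent lemma $f(y_{k+1})\le f(x_{k+1})+\langle\nabla f(x_{k+1}),y_{k+1}-x_{k+1}\rangle+\tfrac{L}{2}\|y_{k+1}-x_{k+1}\|^2$, and Step~1 to replace $\nabla f(x_{k+1})$ by $\widetilde{\nabla}_{k+1}$ — with the bias inner products handled by Young's inequality and the noise by its second moment — leaves error terms of order $(c_1^2/\mu+c_2/L)\|x_{k+1}-\widetilde{x}^s\|^2$, which via the coupling identity and $\mu$-strong convexity of $H$ I would further bound by small multiples of $\EE\|z_k-x^{\ast}\|^2$, $D_k$, and $\widetilde{D}^s$. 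Requiring these errors to be absorbed into the right-hand side is exactly what forces the lower bounds on $A,B$ (and $C$), and it leaves a recursion of the schematic form
\[
\tfrac{1}{\tau_1}D_{k+1}+\big(\tfrac{1}{2\alpha}+\tfrac{\mu}{2}\big)\EE\|z_{k+1}-x^{\ast}\|^2\le\tfrac{1-\tau_1-\tau_2}{\tau_1}D_k+\tfrac{\tau_2}{\tau_1}\widetilde{D}^s+\tfrac{1}{2\alpha}\EE\|z_k-x^{\ast}\|^2 .
\]

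\textbf{Step 3 (telescoping and counting).} With $\theta=1+\alpha\mu$, multiplying the $k$-th inequality by $\theta^{k-sm}$ and summing over one epoch makes the $z$-terms telescope, while the $\theta$-weighted definition of $\widetilde{x}^{s+1}$ together with convexity of $H$ collapses the $D_{k+1}$ terms onto $\widetilde{D}^{s+1}$. Choosing $\tau_1=\Theta(\min\{\sqrt{m\mu/L},1\})$, a suitable constant $\tau_2$, and $\alpha=\Theta(1/(\tau_1 L))$ in the Katyusha manner then gives an epoch-wise linear recursion $\Phi^{s+1}\le\rho\,\Phi^s$ with $\rho<1$ for a suitable potential $\Phi^s$ combining $\widetilde{D}^s$ and $\EE\|z_{sm}-x^{\ast}\|^2$; iterating over $s$ and using $\|x_0-x^{\ast}\|^2\le\tfrac{2}{\mu}(H(x_0)-H(x^{\ast}))$ shows that $S=\Theta\big((1+\sqrt{L/(\mu m)})\log\tfrac{H(x_0)-H(x^{\ast})}{\varepsilon}\big)$ outer loops make $\widetilde{D}^S\le\varepsilon$. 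Finally I would pick $m$ (as large as $m\le L/(2\mu)$ and the bias control permit) and $A,B,C$ at the thresholds from Step~2 so as to balance the per-epoch evaluation cost — $n_1+2n_2+m(2A+2B+n_1)$ for Option~I and $n_1+2n_2+2m(A+B+C)$ for Option~II — against $S$, with $A,B$ proportional to $\max\{B_G^4L_F^2,B_F^2L_G^2\}$ divided by a power of $\mu$ and, for Option~II, $C$ additionally involving $L^2$; multiplying the per-epoch cost by $S$ and simplifying yields the stated $T$. The convex case (Section~\ref{sec:nonsc}) then follows by running this strongly-convex solver inside the black-box reduction of \cite{allen2016optimal}.

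\textbf{Main obstacle.} I expect Step~2 to be the crux. Unlike the exact-finite-sum Katyusha of \cite{allen2017katyusha}, the estimator is biased, so $\langle\widetilde{\nabla}_{k+1}-\nabla f(x_{k+1}),z_k-x^{\ast}\rangle$ and $\langle\widetilde{\nabla}_{k+1}-\nabla f(x_{k+1}),y_{k+1}-x_{k+1}\rangle$ do not vanish in expectation; moreover, since $f$ is only convex, not strongly convex (Assumption~\ref{singleone}), the resulting $\|x_{k+1}-\widetilde{x}^s\|^2$ error cannot be charged against a Bregman divergence of $f$ at the snapshot point as in SVRG-type arguments, and must instead be squeezed — through the coupling identity and a telescoping over the epoch — into the strong-convexity slack $\tfrac{\mu}{2}\|z_{k+1}-x^{\ast}\|^2$, the gradient-descent quadratic, and the $\tau_2$-weighted snapshot gap $\widetilde{D}^s$. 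This tight accounting is precisely what pins down the admissible mini-batch sizes $A$, $B$, $C$ and epoch length $m$, hence the exponents in $T$; the $\sqrt{L/\mu}$ gain over the $O((n_1+n_2+\kappa^3)\log\frac{1}{\varepsilon})$ of \cite{lian2017finite,huo2018accelerated} is then a direct consequence of the Katyusha momentum weights $\tau_1,\tau_2$.
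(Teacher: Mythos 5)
Your Steps~1 and 2 are essentially the paper's route: the mean-square bound $\EX_{k-1}\|\widetilde{\nabla}_{k+1}-\nabla f(x_{k+1})\|^2=O\big((B_G^4L_F^2/A+B_F^2L_G^2/B(+L^2/C))\|x_{k+1}-\widetilde{x}^s\|^2\big)$ is exactly Lemmas~\ref{lem:summandbound}, \ref{lem:samplevariance} and \ref{lem:samplevariance3}, and your per-iteration inequality is the content of Lemmas~\ref{lem:GD}, \ref{lem:MD}, \ref{lem:cp1}, \ref{lem:split} and \ref{lem:cp-step2} (the paper simply Young-bounds the whole deviation $\langle\widetilde{\nabla}_{k+1}-\nabla f(x_{k+1}),z_k-x^{\ast}\rangle$ with $\beta=6/\mu$ rather than splitting bias from zero-mean noise; your finer split is valid and if anything slightly tighter, but it does not change which batch sizes are forced, since the bias part alone already pins $A=\Omega(B_G^4L_F^2/\mu^2)$).

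The genuine gap is in Step~3, i.e.\ precisely the step that produces the exponent $2.5$. You propose the ``Katyusha manner'' $\tau_1=\Theta(\sqrt{m\mu/L})$, which yields per-epoch contraction $\exp(-\Theta(\sqrt{m\mu/L}))$ and hence $S=\Theta\big(\sqrt{L/(\mu m)}\log\frac1\varepsilon\big)$, and then you want to take $m$ ``as large as $m\le L/(2\mu)$ permits'' or to balance. None of these recovers the stated $T$: since the bias absorption forces the per-inner-iteration cost $W=\Theta(\max\{B_G^4L_F^2,B_F^2L_G^2\}/\mu^2+n_1)$ (resp.\ with $L^2$ for Option~II), the total is $S\cdot\big(n_1+n_2+mW\big)$; with $m=\Theta(L/\mu)$ this is the old $O\big((n_1+n_2+\kappa W)\log\frac1\varepsilon\big)$ (the $\kappa^3$-type bound of C-SVRG-2/VRSC-PG), with $m=\Theta(\sqrt{\kappa})$ it is $O\big(\kappa^{1/4}(n_1+n_2)+\kappa^{3/4}W\big)\log\frac1\varepsilon$, and optimizing $m$ gives $O\big(n_1+n_2+\sqrt{\kappa(n_1+n_2)W}\big)\log\frac1\varepsilon$, which exceeds the claimed $O\big(n_1+n_2+\sqrt{\kappa}\,W\big)\log\frac1\varepsilon$ whenever $W\ll n_1+n_2\ll\kappa W$. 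The point you are missing is that here, unlike in Katyusha where $m=n$ and each inner step costs $O(1)$, each inner step costs $\Theta(\kappa^2)$ oracles, so one must make the epoch as short as possible while keeping a \emph{constant} contraction per epoch. That requires the deliberately weak momentum $\tau_1=\tau_2=\Theta(1/m)$ (not $\Theta(\sqrt{m\mu/L})$), $\alpha=\Theta(m/L)$, $\theta=1+\Theta(1/m)$, together with $m=\Theta(\sqrt{L/\mu})$: then $m\alpha\mu=\Theta(1)$, $\theta^m$ and the $\widetilde{D}$-ratio are constants ($\ge 13/12$ in the paper, cf.\ Lemma~\ref{usualrate} and Theorems~\ref{thm:sock}, \ref{thm:gocknos}), so $S=O\big(\log\frac{H(x_0)-H(x^{\ast})}{\varepsilon}\big)$ \emph{independently of} $\kappa$, and $S\cdot\big(n_1+2n_2+m(2A+2B+n_1)\big)$, resp.\ $S\cdot\big(n_1+2n_2+2m(A+B+C)\big)$, with $A,B(,C)=\Theta(\max\{B_G^4L_F^2,B_F^2L_G^2(,L^2)\}/\mu^2)$, gives exactly the stated $T$. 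As written, your parameter plan would prove a weaker theorem, not this one.
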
 

\vspace{0.2cm}

\noindent \textbf{A proof sketch:} before giving complete analysis of Algorithm~\ref{alg:gock}, we sketch a few important steps below. The analysis framework is similar to that of Katyusha \cite{allen2017katyusha}. One critical component in the convergence analysis of Algorithm~\ref{alg:gock} as well as the main difference from that of Katyusha lies in the biased gradient estimation $\widetilde{\nabla}_{k+1}$ at a query point $x_{k+1}$ given a snapshot point $\widetilde{x}^s$, and also in how the bias term $\EE\|\widetilde{\nabla}_{k+1}-\nabla f(x_{k+1})\|^2$ is bounded and dispensed within the analysis.

	\emph{First}, for every inner iteration (i.e., for each $k$), we will bound the progress of the gradient descent and mirror descent steps by using the results from \cite{allen2017katyusha};
	\emph{Second}, we will bound the bias term $\EE\|\widetilde{\nabla}_{k+1}-\nabla f(x_{k+1})\|^2$. Completely different from the bounding technique in \cite{allen2017katyusha}, we use the technique in \cite{lian2017finite,huo2018accelerated} and \cite{lin2018improved} for a COP and obtain a bound in the following form:
	\begin{equation}\label{eq:sock-grad-ess}
	  \EE\|\widetilde{\nabla}_{k+1}-\nabla f(x_{k+1})\|^2=O\left(\|x_{k+1}-\widetilde{x}^s\|^2\cdot\sum\frac{1}{\text{mini-batch size}}\right);
	\end{equation}
\emph{Thirdly},	we will utilize the linear coupling step to combine the progress within one inner iteration and generalize the key one-inner-iteration result in \cite{allen2017katyusha}, i.e., to obtain \eqref{eq:ineq-for-all-case} through Lemmas~\ref{lem:cp1} and \ref{lem:cp-step2};
\emph{Fourth},	we plug the bound in \eqref{eq:sock-grad-ess} about the bias term into our generalized one-inner-iteration result \eqref{eq:ineq-for-all-case}  
and obtain our key inequality on the progress of an entire inner iteration of Algorithm~\ref{alg:gock}, i.e., \eqref{eq:cp2}.  
\emph{Fifth},	we telescope \eqref{eq:cp2} and obtain the progress within an entire inner loop, i.e.,  \eqref{eq:key} for every outer iteration $s$. Then we carefully choose parameters to manage the deviation from Katyusha acceleration and obtain linear convergence in terms of the outer iteration number. \emph{Finally}, total computational complexity results are obtained from the linear convergence and the choices of mini-batch sizes. 
} 

\section{Preparatory lemmas}\label{sec:pre}
In this section, we establish a few lemmas about the proposed SoCK 
 method in Algorithm~\ref{alg:gock}. These results hold if either \textbf{Option I} or \textbf{Option II} is taken, and thus they can be used to show our main convergence rate results in Section~\ref{sec:sock} and Section~\ref{sec:gock}. 

The first lemma is about the progress that the algorithm makes after obtaining $y_{k+1}$. Its proof follows that of \cite[Lemma 2.3]{allen2017katyusha}.
\begin{lem}
\label{lem:GD}
~If \[y_{k+1}=\argmin_{y}\, \langle \widetilde{\nabla}_{k+1},y-x_{k+1}\rangle +\frac{3L}{2}\|y-x_{k+1}\|^2+h(y)-h(x_{k+1}),\] and 
\[\textup{Prog}(x_{k+1})\equiv-\min_{y}\left\{ \langle \widetilde{\nabla}_{k+1},y-x_{k+1}\rangle +\frac{3L}{2}\|y-x_{k+1}\|^2+h(y)-h(x_{k+1})\right\}\geq 0,\] we have
\[ H(x_{k+1})- H(y_{k+1}) \geq \textup{Prog}(x_{k+1})-\frac{1}{4L} \|\widetilde{\nabla}_{k+1}-\nabla f(x_{k+1})\|^2 .\]
\end{lem}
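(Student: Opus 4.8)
The statement is the standard "gradient-descent progress" lemma from Katyusha, adapted to the case of a biased gradient estimator. The plan is to follow the proof of \cite[Lemma 2.3]{allen2017katyusha} essentially verbatim, with $\nabla f(x_{k+1})$ playing the role that the true gradient plays there and $\widetilde{\nabla}_{k+1}$ playing the role of the estimator, and then absorbing the discrepancy $\widetilde{\nabla}_{k+1}-\nabla f(x_{k+1})$ via Young's inequality. Concretely, I would first record the $L$-smoothness of $f$ (which follows from Assumption~\ref{genthree} via \eqref{eq:smooth-fi}), so that
\[
f(y_{k+1}) \le f(x_{k+1}) + \langle \nabla f(x_{k+1}), y_{k+1}-x_{k+1}\rangle + \frac{L}{2}\|y_{k+1}-x_{k+1}\|^2.
\]
Since $H = f + h$, adding $h(y_{k+1})$ to both sides and subtracting $H(x_{k+1}) = f(x_{k+1})+h(x_{k+1})$ gives
\[
H(y_{k+1}) - H(x_{k+1}) \le \langle \nabla f(x_{k+1}), y_{k+1}-x_{k+1}\rangle + \frac{L}{2}\|y_{k+1}-x_{k+1}\|^2 + h(y_{k+1}) - h(x_{k+1}).
\]

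Next I would rewrite the right-hand side in terms of $\widetilde{\nabla}_{k+1}$ instead of $\nabla f(x_{k+1})$: write $\langle \nabla f(x_{k+1}), y_{k+1}-x_{k+1}\rangle = \langle \widetilde{\nabla}_{k+1}, y_{k+1}-x_{k+1}\rangle + \langle \nabla f(x_{k+1})-\widetilde{\nabla}_{k+1}, y_{k+1}-x_{k+1}\rangle$. The first of these two pieces, together with $\frac{3L}{2}\|y_{k+1}-x_{k+1}\|^2 + h(y_{k+1})-h(x_{k+1})$, is exactly $-\textup{Prog}(x_{k+1})$ by the definition of $y_{k+1}$ as the minimizer; so after regrouping I would get
\[
H(y_{k+1}) - H(x_{k+1}) \le -\textup{Prog}(x_{k+1}) - L\|y_{k+1}-x_{k+1}\|^2 + \langle \nabla f(x_{k+1})-\widetilde{\nabla}_{k+1}, y_{k+1}-x_{k+1}\rangle,
\]
where I have kept the leftover $-L\|y_{k+1}-x_{k+1}\|^2 = \frac{L}{2}\|y_{k+1}-x_{k+1}\|^2 - \frac{3L}{2}\|y_{k+1}-x_{k+1}\|^2$. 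Finally, apply Young's inequality $\langle a, b\rangle \le L\|b\|^2 + \frac{1}{4L}\|a\|^2$ with $a = \nabla f(x_{k+1})-\widetilde{\nabla}_{k+1}$ and $b = y_{k+1}-x_{k+1}$; the $L\|y_{k+1}-x_{k+1}\|^2$ term cancels the leftover quadratic, leaving
\[
H(y_{k+1}) - H(x_{k+1}) \le -\textup{Prog}(x_{k+1}) + \frac{1}{4L}\|\widetilde{\nabla}_{k+1}-\nabla f(x_{k+1})\|^2,
\]
which is the claimed inequality after rearranging.

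There is no real obstacle here; the only point requiring a little care is bookkeeping of the coefficient $3L/2$ in the definition of $y_{k+1}$ versus the $L/2$ coming from smoothness, making sure the surplus $L\|y_{k+1}-x_{k+1}\|^2$ is exactly what Young's inequality needs to kill the cross term at the constant $\frac1{4L}$. The nonnegativity hypothesis $\textup{Prog}(x_{k+1})\ge 0$ is not used in the chain of inequalities above (it holds automatically by taking $y=x_{k+1}$ in the min, since the objective there is $0$); it is stated for consistency with how the quantity is used downstream. I would also note in passing that the argument does not use any convexity of $f$ — only $L$-smoothness — so the lemma is exactly the compositional analogue of \cite[Lemma 2.3]{allen2017katyusha} with the variance/bias term $\|\widetilde{\nabla}_{k+1}-\nabla f(x_{k+1})\|^2$ appearing in place of the SVRG variance term.
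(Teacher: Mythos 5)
Your proof is correct and is essentially the same argument as the paper's: the paper also starts from the definition of $\textup{Prog}(x_{k+1})$, swaps $\widetilde{\nabla}_{k+1}$ for $\nabla f(x_{k+1})$, uses $L$-smoothness of $f$, and kills the cross term with Young's inequality at the weight $\langle a,b\rangle\le \frac{1}{4L}\|a\|^2+L\|b\|^2$, exactly exploiting the surplus from $\frac{3L}{2}$ versus $\frac{L}{2}$. The only difference is presentational (you work from the smoothness inequality toward $\textup{Prog}$, the paper works from $\textup{Prog}$ toward the function values), so no further comment is needed.
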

\begin{proof} 
We have
\begin{align*}
\textup{Prog}(x_{k+1})=&-\left(\langle \widetilde{\nabla}_{k+1},y_{k+1}-x_{k+1}\rangle +\frac{3L}{2}\|y_{k+1}-x_{k+1}\|^2+h(y_{k+1})-h(x_{k+1}) \right) \\
=& -\left(\langle  \nabla f(x_{k+1}),y_{k+1}-x_{k+1}\rangle +\frac{L}{2}\|y_{k+1}-x_{k+1}\|^2+h(y_{k+1})-h(x_{k+1}) \right) \\
&\phantom{ } +\left( \langle \nabla f(x_{k+1})-\widetilde{\nabla}_{k+1},y_{k+1}-x_{k+1}\rangle -L\|y_{k+1}-x_{k+1}\|^2\right) \\
\leq& -\left( f(y_{k+1})-f(x_{k+1}) +h(y_{k+1})-h(x_{k+1})\right) + \frac{1}{4L}\|\nabla f(x_{k+1})-\widetilde{\nabla}_{k+1}\|^2.
\end{align*}
The last inequality above uses the $L$ smoothness of $f,$ as well as Young's inequality $\langle a,b\rangle\leq \frac{1}{2}\|a\|^2+\frac{1}{2}\|b\|^2.$ 
\end{proof}

The bound on the variance of the biased sample gradient $\widetilde{\nabla}_{k+1}$ is critical for the convergence result. The next lemma will be used to derive such a bound on the variance for the algorithm with either \textbf{Option I} or \textbf{Option II}.

\begin{lem} \label{lem:summandbound}
~Let $\widehat{G}_{k}$ and $\nabla \widehat{G}_{k}$ be those in Algorithm~\ref{alg:gock}, and let $g(\cdot)$ be any function on $\mathbb{R}^{N_1}$ that is $l$-smooth and has $b$-bounded gradient, then
\[ \EX_{k-1}\left[ \left\|\left[\nabla \widehat{G}_{k}\right]^\top  \nabla g(\widehat{G}_{k}) - \left[\nabla G(x_{k+1})\right]^\top  \nabla g(G(x_{k+1}))\right\|^2 \right]\leq \left(\frac{2 B_G^4 l^2}{A} + \frac{2 b^2 L_G^2}{B}\right)\|\widetilde{x}^s-x_{k+1}\|^2.\]
\end{lem}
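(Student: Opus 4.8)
\textbf{Proof proposal for Lemma~\ref{lem:summandbound}.}
The plan is to decompose the error into two parts: one coming from using the variance-reduced estimator $\nabla\widehat G_k$ of the Jacobian in place of the true Jacobian $\nabla G(x_{k+1})$, and one coming from evaluating $\nabla g$ at the variance-reduced estimator $\widehat G_k$ of the inner value in place of $G(x_{k+1})$. Concretely, I would write
\[
\left[\nabla\widehat G_k\right]^\top\nabla g(\widehat G_k)-\left[\nabla G(x_{k+1})\right]^\top\nabla g(G(x_{k+1}))
= \underbrace{\left[\nabla\widehat G_k\right]^\top\big(\nabla g(\widehat G_k)-\nabla g(G(x_{k+1}))\big)}_{\text{(I)}}
+\underbrace{\big(\nabla\widehat G_k-\nabla G(x_{k+1})\big)^\top\nabla g(G(x_{k+1}))}_{\text{(II)}},
\]
apply $\|a+b\|^2\le 2\|a\|^2+2\|b\|^2$, and take conditional expectation, bounding (I) and (II) separately.

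For term (I), I would first use the $l$-smoothness of $g$ to get $\|\nabla g(\widehat G_k)-\nabla g(G(x_{k+1}))\|\le l\|\widehat G_k-G(x_{k+1})\|$. Since $\nabla\widehat G_k$ is a convex combination (average over $\mathcal B_k$) of matrices $\nabla G_{j_k}(\cdot)$, and each $G_j$ has $B_G$-bounded Jacobian by Assumption~\ref{gentwo}, the spectral norm $\|\nabla\widehat G_k\|\le B_G$ deterministically. Thus $\|\text{(I)}\|^2\le B_G^2 l^2\|\widehat G_k-G(x_{k+1})\|^2$, and I need $\EX_{k-1}\|\widehat G_k-G(x_{k+1})\|^2$. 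The estimator $\widehat G_k-G(x_{k+1})=\frac1A\sum_{j_k\in\mathcal A_k}\big(G_{j_k}(x_{k+1})-G_{j_k}(\widetilde x^s)\big)-\big(G(x_{k+1})-G(\widetilde x^s)\big)$ is an average of $A$ i.i.d.\ (sampled with replacement) zero-mean terms, so its second moment is at most $\frac1A$ times the second moment of a single term, which in turn is at most $\EX_{j}\|G_j(x_{k+1})-G_j(\widetilde x^s)\|^2\le B_G^2\|x_{k+1}-\widetilde x^s\|^2$ using the $B_G$-bounded Jacobian again (each $G_j$ is $B_G$-Lipschitz). This gives $\EX_{k-1}\|\text{(I)}\|^2\le \frac{B_G^4 l^2}{A}\|\widetilde x^s-x_{k+1}\|^2$.

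For term (II), I would use $\|\nabla g(G(x_{k+1}))\|\le b$ (the $b$-bounded gradient hypothesis on $g$), so $\|\text{(II)}\|^2\le b^2\|\nabla\widehat G_k-\nabla G(x_{k+1})\|^2$ — here I should be a little careful that $\|M^\top v\|\le\|M\|\|v\|$ so this is valid with $\|\cdot\|$ the spectral/Euclidean norm. Then $\nabla\widehat G_k-\nabla G(x_{k+1})=\frac1B\sum_{j_k\in\mathcal B_k}\big(\nabla G_{j_k}(x_{k+1})-\nabla G_{j_k}(\widetilde x^s)\big)-\big(\nabla G(x_{k+1})-\nabla G(\widetilde x^s)\big)$ is again an average of $B$ i.i.d.\ zero-mean terms, so its expected squared norm is at most $\frac1B\EX_j\|\nabla G_j(x_{k+1})-\nabla G_j(\widetilde x^s)\|^2\le\frac{L_G^2}{B}\|x_{k+1}-\widetilde x^s\|^2$ by the $L_G$-smoothness of each $G_j$. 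This yields $\EX_{k-1}\|\text{(II)}\|^2\le\frac{b^2 L_G^2}{B}\|\widetilde x^s-x_{k+1}\|^2$. Combining via the factor-2 split gives exactly the claimed bound $\big(\frac{2B_G^4 l^2}{A}+\frac{2b^2 L_G^2}{B}\big)\|\widetilde x^s-x_{k+1}\|^2$.

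The routine but slightly delicate points — rather than a genuine obstacle — are: (a) justifying the "second moment of an average of i.i.d.\ centered variables is $\le\frac1{(\text{batch size})}\times$ second moment of one term" step, which relies on sampling with replacement so the summands are genuinely independent (the cross terms vanish in expectation); (b) the variance-style bound $\EX\|Y-\EX Y\|^2\le\EX\|Y\|^2$ used to drop the mean; and (c) keeping track that all the "bounded gradient/Jacobian'' constants are used in their Lipschitz form (bounded Jacobian $\Rightarrow$ Lipschitz map) exactly where needed. None of these require new ideas beyond what is already set up in Assumptions~\ref{gentwo}–\ref{genthree}, so I expect the proof to be short; the main thing to get right is the clean two-term decomposition above so that each of $A$ and $B$ controls the intended source of error.
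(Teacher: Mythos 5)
There is a genuine gap in your bound on term (I), and it comes from your choice of decomposition. You assert that $\nabla\widehat G_k$ ``is a convex combination (average over $\mathcal B_k$) of matrices $\nabla G_{j_k}(\cdot)$'' and hence $\|\nabla\widehat G_k\|\le B_G$ deterministically. That is false: the estimator is the variance-reduced quantity
\[
\nabla\widehat G_k=\frac1B\sum_{j_k\in\mathcal B_k}\bigl(\nabla G_{j_k}(x_{k+1})-\nabla G_{j_k}(\widetilde x^s)\bigr)+\nabla G(\widetilde x^s),
\]
which contains the correction term $\nabla G(\widetilde x^s)-\frac1B\sum_{j_k}\nabla G_{j_k}(\widetilde x^s)$ and is not an average of Jacobians. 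The best deterministic bounds available are $\|\nabla\widehat G_k\|\le 3B_G$ (triangle inequality) or $\|\nabla\widehat G_k\|\le B_G+L_G\|x_{k+1}-\widetilde x^s\|$ (smoothness). With the first you only get a constant $\tfrac{18B_G^4 l^2}{A}$ instead of the claimed $\tfrac{2B_G^4 l^2}{A}$, so the lemma as stated does not follow; with the second you pick up a spurious $\|x_{k+1}-\widetilde x^s\|^4$ term that does not fit the claimed form at all. So the proposal, as written, does not establish the stated inequality.

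The paper avoids this entirely by pivoting the decomposition around the cross term $[\nabla G(x_{k+1})]^\top\nabla g(\widehat G_k)$ rather than your $[\nabla\widehat G_k]^\top\nabla g(G(x_{k+1}))$: it splits into $\bigl([\nabla\widehat G_k]-[\nabla G(x_{k+1})]\bigr)^\top\nabla g(\widehat G_k)$ and $[\nabla G(x_{k+1})]^\top\bigl(\nabla g(\widehat G_k)-\nabla g(G(x_{k+1}))\bigr)$. In the first piece the vector factor is $\nabla g(\widehat G_k)$, which is bounded by $b$ at \emph{any} argument by hypothesis on $g$; in the second piece the matrix factor is the deterministic true Jacobian $\nabla G(x_{k+1})$, whose norm is at most $B_G$ since it is a genuine average of the $\nabla G_j(x_{k+1})$. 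After that, your two moment bounds, $\EX_{k-1}\|\widehat G_k-G(x_{k+1})\|^2\le\frac{B_G^2}{A}\|x_{k+1}-\widetilde x^s\|^2$ and $\EX_{k-1}\|\nabla\widehat G_k-\nabla G(x_{k+1})\|^2\le\frac{L_G^2}{B}\|x_{k+1}-\widetilde x^s\|^2$, are exactly what the paper uses (independence from sampling with replacement, variance bounded by second moment, Lipschitzness from the bounded Jacobian), and combining with the factor-$2$ split gives the stated constants. So the remainder of your argument is sound; the fix is simply to swap which factor is held fixed in the two-term decomposition.
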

\begin{proof} 
First, we observe that 
\begin{align}\label{eq:lem-summandbound-ineq1}
&~  \left\|\big[\nabla \widehat{G}_{k}\big]^\top  \nabla g(\widehat{G}_{k}) - \big[\nabla G(x_{k+1})\big]^\top  \nabla g(G(x_{k+1}))\right\|^2   \cr
\stackrel{\text{\ding{172}}}{\leq}& ~2  \left\|\big[\nabla \widehat{G}_{k}\big]^\top  \nabla g(\widehat{G}_{k}) - \big[\nabla G(x_{k+1})\big]^\top  \nabla g(\widehat{G}_{k})\right\|^2   
+2  \left\|\big[\nabla G(x_{k+1})\big]^\top  \nabla g(\widehat{G}_{k}) - \big[\nabla G(x_{k+1})\big]^\top  \nabla g(G(x_{k+1}))\right\|^2   \cr
\stackrel{\text{\ding{173}}}{\leq}&~ 2b^2  \left\| \nabla \widehat{G}_{k}  -  \nabla G(x_{k+1})  \right\|^2   
+ 2B_G^2  \left\| \nabla g(\widehat{G}_{k}) - \nabla g(G(x_{k+1}))\right\|^2  \cr
\stackrel{\text{\ding{174}}}{\leq}& ~2b^2  \left\| \nabla \widehat{G}_{k}  -  \nabla G(x_{k+1})  \right\|^2  
+ 2B_G^2 l^2  \left\|  \widehat{G}_{k}  -  G(x_{k+1}) \right\|^2  .
\end{align}
Here, \ding{172} uses the Young's inequality, \ding{173} follows from the boundedness of $\nabla g$ and $\nabla G$, and \ding{174} is from the  $l$-smoothness of $g$.

Notice that $\widehat{G}_{k}$ and $\nabla \widehat{G}_{k}$ are respectively unbiased estimators of $G(x_{k+1})$ and $ \nabla G(x_{k+1})$. 
Hence,
\begin{align}\label{eq:lem-summandbound-ineq2}
&\EX_{k-1}\left[ \left\|  \widehat{G}_{k}  -  G(x_{k+1}) \right\|^2\right]= \EX_{k-1}\left[ \Big\| \frac{1}{A}\sum_{j_k \in \mathcal{A}_k}\big(G_{j_{k}}(x_{k+1})-G_{j_{k}}(\widetilde{x}^s) \big)- \big(G(x_{k+1})  - G(\widetilde{x}^s) \big) \Big\|^2 \right]\cr
\stackrel{\text{\ding{172}}}{=}& \frac{1}{A^2}\sum_{j_k \in \mathcal{A}_k}\EX_{k-1}\left[ \Big\|  \big(G_{j_{k}}(x_{k+1})-G_{j_{k}}(\widetilde{x}^s) \big)- \big(G(x_{k+1})  - G(\widetilde{x}^s) \big) \Big\|^2 \right]   \cr
\stackrel{\text{\ding{173}}}{\leq}& \frac{1}{A^2}\sum_{j_k \in \mathcal{A}_k}\EX_{k-1}\left[ \left\|  G_{j_{k}}(x_{k+1})-G_{j_{k}}(\widetilde{x}^s)   \right\|^2 \right] 
\stackrel{\text{\ding{174}}}{\leq}\frac{B_G^2}{A^2}\sum_{j_k \in \mathcal{A}_k}  \left\|  x_{k+1}-\widetilde{x}^s   \right\|^2 = \frac{B_G^2}{A }  \left\|  x_{k+1}-\widetilde{x}^s   \right\|^2.
\end{align}
Here, \ding{172} comes from the fact that $\big\{\left(G_{j_{k}}(x_{k+1})-G_{j_{k}}(\widetilde{x}^s) \right)-(G(x_{k+1})  - G(\widetilde{x}^s) )\big\}$ are conditionally independent with each other, and their expectations all equal 0, \ding{173} holds because the variance is bounded by the second moment, and \ding{174} follows from the intermediate value theorem and the boundedness  of the Jacobian of each $G_{j_k}$. 

A completely parallel argument gives $\EX_{k-1}\left[ \big\| \nabla \widehat{G}_{k}  -  \nabla G(x_{k+1})  \big\|^2 \right]\leq \frac{L_G^2}{B }  \left\|  x_{k+1}-\widetilde{x}^s   \right\|^2$. Plugging this inequality and that in \eqref{eq:lem-summandbound-ineq2} into \eqref{eq:lem-summandbound-ineq1} leads to the desired result. 
\end{proof}

The next lemma is from \cite[Lemma 2.5]{allen2017katyusha}.
\begin{lem}\label{lem:MD}
~Suppose $h(\cdot)$ is $\mu$-strongly convex. Given $\widetilde{\nabla}_{k+1}$, if
\[z_{k+1}=\argmin_{z}\, \alpha\langle \widetilde{\nabla}_{k+1},z-z_k\rangle +\frac{1}{2}\|z-z_{k}\|^2+\alpha h(z)-\alpha h(z_k),\]
then it holds for any $u\in \mathbb{R}^{N_2}$ that
\begin{equation}\label{eq:prog-z} 
\alpha\langle \widetilde{\nabla}_{k+1},z_{k+1}-u\rangle +\alpha h(z_{k+1})-\alpha h(u) \leq -\frac{1}{2}\|z_{k}-z_{k+1}\|^2+\frac{1}{2}\|z_{k}-u\|^2-\frac{1+\alpha\mu}{2}\|z_{k+1}-u\|^2.  
\end{equation}
\end{lem}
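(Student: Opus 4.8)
The plan is to recognize \eqref{eq:prog-z} as the standard ``prox lemma'' for the mirror-descent (proximal) step, proved exactly as in \cite[Lemma 2.5]{allen2017katyusha}. The starting point is that $z_{k+1}$ is the (unique) minimizer of
\[
\phi(z):=\alpha\langle \widetilde{\nabla}_{k+1},z-z_k\rangle+\tfrac12\|z-z_k\|^2+\alpha h(z)-\alpha h(z_k),
\]
which is $(1+\alpha\mu)$-strongly convex because $h$ is $\mu$-strongly convex. By the first-order optimality condition for this subproblem, there exists a subgradient $\tilde\nabla h(z_{k+1})\in\partial h(z_{k+1})$ with $\alpha\widetilde{\nabla}_{k+1}+(z_{k+1}-z_k)+\alpha\tilde\nabla h(z_{k+1})=0$, equivalently
\[
\big\langle \alpha\widetilde{\nabla}_{k+1}+\alpha\tilde\nabla h(z_{k+1}),\, z_{k+1}-u\big\rangle=\langle z_k-z_{k+1},\, z_{k+1}-u\rangle \quad\text{for every }u\in\mathbb{R}^{N_2}.
\]

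Next I would apply the three-point (``law of cosines'') identity $\langle a-b,\,b-c\rangle=\tfrac12\|a-c\|^2-\tfrac12\|a-b\|^2-\tfrac12\|b-c\|^2$ with $a=z_k$, $b=z_{k+1}$, $c=u$ to rewrite the right-hand side above as $\tfrac12\|z_k-u\|^2-\tfrac12\|z_k-z_{k+1}\|^2-\tfrac12\|z_{k+1}-u\|^2$. Then I would invoke $\mu$-strong convexity of $h$ evaluated at $z_{k+1}$ against $u$, namely $\alpha h(u)\ge \alpha h(z_{k+1})+\alpha\langle\tilde\nabla h(z_{k+1}),\,u-z_{k+1}\rangle+\tfrac{\alpha\mu}{2}\|u-z_{k+1}\|^2$, which rearranges to $\alpha\langle\tilde\nabla h(z_{k+1}),\,z_{k+1}-u\rangle\ge \alpha h(z_{k+1})-\alpha h(u)+\tfrac{\alpha\mu}{2}\|z_{k+1}-u\|^2$. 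Substituting this lower bound into the optimality identity (the $\tilde\nabla h$ term appears there with a plus sign, so moving it across turns the lower bound into an upper bound on $\alpha\langle\widetilde{\nabla}_{k+1},z_{k+1}-u\rangle$) and collecting the $\|z_{k+1}-u\|^2$ terms into $-\tfrac{1+\alpha\mu}{2}\|z_{k+1}-u\|^2$ yields exactly \eqref{eq:prog-z}.

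There is no genuine obstacle here; the only care needed is in the sign bookkeeping and in using one common subgradient $\tilde\nabla h(z_{k+1})$ both in the optimality condition and in the strong-convexity inequality for $h$, which is legitimate precisely because $h$ may be nonsmooth. The conclusion holds for arbitrary $u\in\mathbb{R}^{N_2}$ without any further assumption, as stated.
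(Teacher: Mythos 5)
Your proof is correct: the first-order optimality condition of the prox subproblem with a common subgradient of $h$ at $z_{k+1}$, the three-point identity, and $\mu$-strong convexity of $h$ combine exactly as you say to give \eqref{eq:prog-z}. The paper does not prove this lemma itself but simply cites \cite[Lemma 2.5]{allen2017katyusha}, and your argument is precisely the standard proof of that cited result, so there is nothing to add.
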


The following lemma serves as a critical step in combining the progress of an entire iteration, enabled by the linear coupling update. 
\begin{lem}
\label{lem:cp1}
~Let $x_{k+1}, y_{k+1}$ and $z_{k+1}$ be those given in Algorithm~\ref{alg:gock}.
If $\tau_1\in (0, \frac{1}{3\alpha L}]$ and $\tau_2\in[0,1-\tau_1] $ in the linear coupling step,  then for any $u\in \mathbb{R}^{N_2}$ and any positive $\beta$, it holds
\begin{align}\label{eq:lem-cp1-ineq}
&\alpha \big\langle  \nabla f(x_{k+1}),z_{k}-u \big\rangle -\alpha h(u)\cr
\leq&  \frac{\alpha}{\tau_1}\big( f(x_{k+1})- H(y_{k+1}) \big) +\alpha\Big(\frac{1}{4\tau_1L}+\frac{\beta}{2}\Big) \|\widetilde{\nabla}_{k+1}-\nabla f(x_{k+1})\|^2  
+\frac{1+\alpha/\beta}{2}\|z_{k}-u\|^2  \\
&  -\frac{1+\alpha\mu}{2} \|z_{k+1}-u\|^2 
+ \frac{\alpha\tau_2}{\tau_1}  h(\widetilde{x}^s)+ \frac{\alpha(1-\tau_1-\tau_2)}{\tau_1} h(y_k).\nonumber
\end{align}
\end{lem}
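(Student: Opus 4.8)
The plan is to adapt the linear-coupling argument of \cite{allen2017katyusha} (their Lemma~2.7) to the present setting, carefully tracking the proximal term $h$ and the extra coupling weight $\tau_2$ on the snapshot $\widetilde{x}^s$. The skeleton has three pieces: a mirror-descent estimate, a comparison between the coupling step and the gradient-descent progress, and the passage from the biased gradient $\widetilde{\nabla}_{k+1}$ back to $\nabla f(x_{k+1})$.

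\emph{Mirror descent.} First I would write $\alpha\langle\nabla f(x_{k+1}),z_k-u\rangle=\alpha\langle\widetilde{\nabla}_{k+1},z_k-u\rangle+\alpha\langle\nabla f(x_{k+1})-\widetilde{\nabla}_{k+1},z_k-u\rangle$ and set the second (bias) term aside for the final step. Splitting $\alpha\langle\widetilde{\nabla}_{k+1},z_k-u\rangle=\alpha\langle\widetilde{\nabla}_{k+1},z_k-z_{k+1}\rangle+\alpha\langle\widetilde{\nabla}_{k+1},z_{k+1}-u\rangle$ and applying Lemma~\ref{lem:MD} to the last summand gives $\alpha\langle\widetilde{\nabla}_{k+1},z_{k+1}-u\rangle\le-\alpha h(z_{k+1})+\alpha h(u)-\frac12\|z_k-z_{k+1}\|^2+\frac12\|z_k-u\|^2-\frac{1+\alpha\mu}{2}\|z_{k+1}-u\|^2$.

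\emph{Coupling step versus gradient descent.} The crux is to bound $\langle\widetilde{\nabla}_{k+1},z_k-z_{k+1}\rangle$ by the gradient-descent progress $\textup{Prog}(x_{k+1})$. Since $y_{k+1}$ attains the minimum defining $\textup{Prog}(x_{k+1})$, evaluating that objective at the test point $w:=x_{k+1}+\tau_1(z_{k+1}-z_k)$ yields $\textup{Prog}(x_{k+1})\ge-\langle\widetilde{\nabla}_{k+1},w-x_{k+1}\rangle-\frac{3L}{2}\|w-x_{k+1}\|^2-h(w)+h(x_{k+1})$, i.e.\ $\tau_1\langle\widetilde{\nabla}_{k+1},z_k-z_{k+1}\rangle\le\textup{Prog}(x_{k+1})+\frac{3L\tau_1^2}{2}\|z_k-z_{k+1}\|^2+h(w)-h(x_{k+1})$. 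The key observation is that substituting the linear-coupling identity $x_{k+1}=\tau_1 z_k+\tau_2\widetilde{x}^s+(1-\tau_1-\tau_2)y_k$ gives $w=\tau_1 z_{k+1}+\tau_2\widetilde{x}^s+(1-\tau_1-\tau_2)y_k$, which is a genuine convex combination since $\tau_1>0$, $\tau_2\ge0$ and $1-\tau_1-\tau_2\ge0$; hence convexity of $h$ gives $h(w)\le\tau_1 h(z_{k+1})+\tau_2 h(\widetilde{x}^s)+(1-\tau_1-\tau_2)h(y_k)$. Dividing by $\tau_1$, multiplying by $\alpha$, and using $\tau_1\le\frac{1}{3\alpha L}$ so that $\frac{3\alpha L\tau_1}{2}\|z_k-z_{k+1}\|^2\le\frac12\|z_k-z_{k+1}\|^2$, the $\|z_k-z_{k+1}\|^2$ terms from this step and from the mirror-descent estimate cancel, and so do the $\alpha h(z_{k+1})$ terms.

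\emph{Replacing Prog and restoring $\nabla f$.} Next I would apply Lemma~\ref{lem:GD}, $\textup{Prog}(x_{k+1})\le H(x_{k+1})-H(y_{k+1})+\frac{1}{4L}\|\widetilde{\nabla}_{k+1}-\nabla f(x_{k+1})\|^2$, and use $H(x_{k+1})-h(x_{k+1})=f(x_{k+1})$ to turn $\frac{\alpha}{\tau_1}\big(\textup{Prog}(x_{k+1})-h(x_{k+1})\big)$ into $\frac{\alpha}{\tau_1}\big(f(x_{k+1})-H(y_{k+1})\big)+\frac{\alpha}{4\tau_1 L}\|\widetilde{\nabla}_{k+1}-\nabla f(x_{k+1})\|^2$. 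Finally, I would bound the bias term set aside earlier by Young's inequality, $\alpha\langle\nabla f(x_{k+1})-\widetilde{\nabla}_{k+1},z_k-u\rangle\le\frac{\alpha\beta}{2}\|\nabla f(x_{k+1})-\widetilde{\nabla}_{k+1}\|^2+\frac{\alpha}{2\beta}\|z_k-u\|^2$; adding the $\frac12\|z_k-u\|^2$ from mirror descent produces the coefficient $\frac{1+\alpha/\beta}{2}$, and merging the two variance terms gives $\alpha\big(\frac{1}{4\tau_1 L}+\frac{\beta}{2}\big)$. Collecting everything and moving $\alpha h(u)$ to the left-hand side yields the claimed inequality. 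I expect the only genuinely nontrivial point to be the choice of the test point $w=x_{k+1}+\tau_1(z_{k+1}-z_k)$ and the recognition that it is again a convex combination of $z_{k+1},\widetilde{x}^s,y_k$, so convexity of $h$ controls $h(w)$; the remainder is careful bookkeeping of the $h$-terms and of the step-size restriction $\tau_1\le\frac1{3\alpha L}$.
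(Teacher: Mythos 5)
Your proposal is correct and is essentially the paper's own argument: your test point $w=x_{k+1}+\tau_1(z_{k+1}-z_k)$ is exactly the point $v=\tau_1 z_{k+1}+\tau_2\widetilde{x}^s+(1-\tau_1-\tau_2)y_k$ used in the paper, and both proofs combine the same ingredients (Lemma~\ref{lem:GD}, Lemma~\ref{lem:MD}, convexity of $h$ at this convex combination, the step-size condition $\tau_1\le\frac{1}{3\alpha L}$ to absorb the $\|z_k-z_{k+1}\|^2$ term, and Young's inequality for the bias) in the same way. The only difference is cosmetic bookkeeping in the order of the cancellations.
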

\begin{proof} 
Let $v=\tau_1 z_{k+1}+\tau_2 \widetilde{x}^s+(1-\tau_1-\tau_2)y_k$. We have $x_{k+1}-v=\tau_1(z_k-z_{k+1})$, and therefore
\begin{align}
&  \alpha\langle \widetilde{\nabla}_{k+1},z_{k}-z_{k+1}\rangle -\frac{1}{2}\|z_k-z_{k+1}\|^2  
=  \frac{\alpha}{\tau_1}\langle \widetilde{\nabla}_{k+1},x_{k+1}-v\rangle -\frac{1}{2\tau_1^2}\|x_{k+1}-v\|^2  \nonumber\\
=& \frac{\alpha}{\tau_1}\left(\langle \widetilde{\nabla}_{k+1},x_{k+1}-v\rangle -\frac{1}{2\alpha\tau_1}\|x_{k+1}-v\|^2 -h(v)+h(x_{k+1}) \right) + \frac{\alpha}{\tau_1}\big( h(v)-h(x_{k+1})\big)  \nonumber\\
\stackrel{\text{\ding{172}}}{\leq}& \frac{\alpha}{\tau_1}\left(\langle \widetilde{\nabla}_{k+1},x_{k+1}-v\rangle -\frac{3L}{2}\|x_{k+1}-v\|^2 -h(v)+h(x_{k+1}) \right) + \frac{\alpha}{\tau_1}\big( h(v)-h(x_{k+1})\big)  \nonumber\\
\stackrel{\text{\ding{173}}}{\leq}& \frac{\alpha}{\tau_1}\left( H(x_{k+1})-H(y_{k+1})+\frac{1}{4L}\|\widetilde{\nabla}_{k+1}-\nabla f(x_{k+1})\|^2 \right) + \frac{\alpha}{\tau_1}\big( h(v)-h(x_{k+1})\big)  \nonumber\\
\stackrel{\text{\ding{174}}}{\leq}& \frac{\alpha}{\tau_1}\left( H(x_{k+1})-H(y_{k+1})+\frac{1}{4L}\|\widetilde{\nabla}_{k+1}-\nabla f(x_{k+1})\|^2 \right)\nonumber\\
& \phantom{\EX\Big[\frac{\alpha}{\tau_1}\left( H()+ \|\widetilde{\nabla}_{k+1}-\nabla f(x_{k+1})\|^2 \right)} + \frac{\alpha}{\tau_1}\big( \tau_1 h(z_{k+1})+\tau_2 h(\widetilde{x}^s)+(1-\tau_1-\tau_2)h(y_k)-h(x_{k+1})\big)  . \label{cp1:one}
\end{align}
Here \ding{172} holds because $\tau_1\leq \frac{1}{3\alpha L}$, \ding{173} uses Lemma~\ref{lem:GD}, and 
\ding{174} follows from the convexity of $h(\cdot)$ and the definition of $v$.

Meanwhile, for any vector $u$ and any positive $\beta$,
\begin{align}
&~\alpha \langle  \nabla f(x_{k+1}),z_{k}-u\rangle \cr
= &~  \alpha \langle  \widetilde{\nabla}_{k+1},z_{k}-u\rangle +  \alpha \langle  \nabla f(x_{k+1})-\widetilde{\nabla}_{k+1},z_{k}-u\rangle \nonumber\\
\leq &~  \alpha \langle  \widetilde{\nabla}_{k+1},z_{k}-z_{k+1}\rangle + \alpha \langle  \widetilde{\nabla}_{k+1},z_{k+1}-u\rangle   
+ \frac{\alpha}{2}\left[ \beta\|\widetilde{\nabla}_{k+1}-\nabla f(x_{k+1})\|^2+\frac{1}{\beta}\|z_{k}-u\|^2\right],\label{cp1:two}
\end{align}
where the inequality follows from the Young's inequality.

Over \eqref{cp1:two}, we substitute in \eqref{cp1:one} and Lemma~\ref{lem:MD} and combine alike terms to get: 
\begin{align*}
&\alpha \langle  \nabla f(x_{k+1}),z_{k}-u\rangle -\alpha h(u)\\
\leq&  \frac{\alpha}{\tau_1}\big( H(x_{k+1})- H(y_{k+1}) \big) +\alpha\Big(\frac{1}{4\tau_1L}+\frac{\beta}{2}\Big) \|\widetilde{\nabla}_{k+1}-\nabla f(x_{k+1})\|^2 
+\frac{1+\alpha/\beta}{2}\|z_{k}-u\|^2  \\
& \phantom{ H(x_{k+1})-\EX[H(y_{k+1}) } -\frac{1+\alpha\mu}{2} \|z_{k+1}-u\|^2 
+ \frac{\alpha\tau_2}{\tau_1}  h(\widetilde{x}^s)+ \frac{\alpha(1-\tau_1-\tau_2)}{\tau_1} h(y_k)-\frac{\alpha}{\tau_1}h(x_{k+1}) 
\end{align*}
which finishes the proof upon simplification. 
\end{proof}

The previous lemma generalizes the analyses of Katyusha algorithms in \cite{allen2017katyusha} to take the bias of $\widetilde{\nabla}_{k+1}$ into consideration. We follow a process similar to the analyses in \cite{allen2017katyusha}, and derive the following lemmas to manage this bias within an entire iteration.

\begin{lem}\label{lem:split}
~Let $x_{k+1}$ be given in the linear coupling step and $x^*$ be the solution of \eqref{genP}. Then
\begin{equation}\label{eq:gaps}
\|x_{k+1}-\widetilde{x}^s\|^2 \le 3\left(\tau_1^2\|z_k-x^{\ast}\|^2+\frac{2(1-\tau_2)^2}{\mu}\big(H(\widetilde{x}^s) -H(x^{\ast})\big)+\frac{2(1-\tau_1-\tau_2)^2}{\mu}\big(H( y_{k})-H(x^{\ast})\big)\right).
\end{equation}
\end{lem}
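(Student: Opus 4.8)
The plan is to unwind the linear coupling step, rewrite $x_{k+1}-\widetilde{x}^s$ as an affine combination of $z_k-x^{\ast}$, $\widetilde{x}^s-x^{\ast}$ and $y_k-x^{\ast}$ whose coefficients sum to zero, apply the elementary bound $\|a+b+c\|^2\le 3(\|a\|^2+\|b\|^2+\|c\|^2)$, and finally convert the two distance-to-optimum terms into objective gaps via the $\mu$-strong convexity of $H$.

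First I would start from the linear coupling step $x_{k+1}=\tau_1 z_k+\tau_2\widetilde{x}^s+(1-\tau_1-\tau_2)y_k$ and subtract $\widetilde{x}^s$, obtaining $x_{k+1}-\widetilde{x}^s=\tau_1 z_k+(\tau_2-1)\widetilde{x}^s+(1-\tau_1-\tau_2)y_k$. Since the coefficients $\tau_1$, $\tau_2-1$, $1-\tau_1-\tau_2$ add up to $0$, I may freely insert $-x^{\ast}$ inside each term to write $x_{k+1}-\widetilde{x}^s=\tau_1(z_k-x^{\ast})-(1-\tau_2)(\widetilde{x}^s-x^{\ast})+(1-\tau_1-\tau_2)(y_k-x^{\ast})$. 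Taking squared norms and using $\|a+b+c\|^2\le 3(\|a\|^2+\|b\|^2+\|c\|^2)$ (a consequence of the convexity of $\|\cdot\|^2$, or of Cauchy--Schwarz) then yields $\|x_{k+1}-\widetilde{x}^s\|^2\le 3\tau_1^2\|z_k-x^{\ast}\|^2+3(1-\tau_2)^2\|\widetilde{x}^s-x^{\ast}\|^2+3(1-\tau_1-\tau_2)^2\|y_k-x^{\ast}\|^2$.

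Next I would bound the last two terms. Under Assumption~\ref{singleone}, $f$ is convex and $h$ is $\mu$-strongly convex, so $H=f+h$ is $\mu$-strongly convex; since $x^{\ast}$ minimizes $H$, the vector $0$ is a subgradient of $H$ at $x^{\ast}$, and the strong-convexity inequality applied with that subgradient gives $H(w)-H(x^{\ast})\ge\frac{\mu}{2}\|w-x^{\ast}\|^2$ for every $w$, i.e. $\|w-x^{\ast}\|^2\le\frac{2}{\mu}\big(H(w)-H(x^{\ast})\big)$. Using this with $w=\widetilde{x}^s$ and $w=y_k$ and substituting into the displayed bound produces exactly \eqref{eq:gaps}.

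There is essentially no serious obstacle here: the statement is a direct computation. The only two points requiring a little care are the bookkeeping that makes the coefficients in the decomposition of $x_{k+1}-\widetilde{x}^s$ sum to zero (which is what allows inserting $x^{\ast}$ for free), and invoking strong convexity of the full objective $H$ rather than of $h$ alone — which is legitimate precisely because $f$ is assumed convex.
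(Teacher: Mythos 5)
Your proposal is correct and follows essentially the same route as the paper: rewrite $x_{k+1}-\widetilde{x}^s=\tau_1(z_k-x^{\ast})+(1-\tau_2)(x^{\ast}-\widetilde{x}^s)+(1-\tau_1-\tau_2)(y_k-x^{\ast})$, apply the three-term Young/Cauchy--Schwarz bound, and convert the two squared distances to objective gaps via the $\mu$-strong convexity of $H$ at its minimizer. The only addition is your explicit justification that $H=f+h$ inherits strong convexity from $h$, which the paper leaves implicit.
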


\begin{proof} 
By the update formula of $x_{k+1}$ and also the Young's inequality, we have
\begin{align*}
 \|x_{k+1}-\widetilde{x}^s\|^2=&~\|\tau_1(z_k-x^{\ast})+(1-\tau_2)(x^{\ast}-\widetilde{x}^s)+(1-\tau_1-\tau_2)( y_{k}-x^{\ast})\|^2 \nonumber \\ 
{\leq} &~ 3\left(\tau_1^2\|z_k-x^{\ast}\|^2+(1-\tau_2)^2\|x^{\ast}-\widetilde{x}^s\|^2+(1-\tau_1-\tau_2)^2\| y_{k}-x^{\ast}\|^2\right)
\end{align*}
Since $H$ is $\mu$-strongly convex, it holds that $H(u)-H(x^{\ast})\geq\frac{\mu}{2}\|u-x^{\ast}\|^2$ for any $u$. Hence, we obtain \eqref{eq:gaps} by bounding $\|x^{\ast}-\widetilde{x}^s\|^2$ and $\| y_{k}-x^{\ast}\|^2$ by the function values of $H$. 
\end{proof}

\begin{lem}\label{lem:cp-step2}
~Suppose $\tau_1\in (0, \frac{1}{3\alpha L}]$ and $\tau_2\in[0,1-\tau_1] $ in the linear coupling step of Algorithm \ref{alg:gock}. Let $x^*$ be the solution of \eqref{genP}. Then for any positive number $\beta$, it holds
\begin{align}\label{eq:ineq-for-all-case}
0 \leq&~ \frac{\tau_2}{\tau_1}\big( H(\widetilde{x}^s)-H(x^{\ast})\big) +\frac{(1-\tau_1-\tau_2)}{\tau_1}\big(H(y_k)-H(x^{\ast})\big)- \frac{1}{\tau_1}\big(H(y_{k+1}) -H(x^{\ast})  \big)  \\
&~  + \left(\frac{1}{4\tau_1L}+\frac{\beta}{2}\right)\|\widetilde{\nabla}_{k+1}-\nabla f(x_{k+1})\|^2 
+\frac{1+\alpha/\beta}{2\alpha}\|z_{k}-x^{\ast}\|^2 -\frac{1+\alpha\mu}{2\alpha}\|z_{k+1}-x^{\ast}\|^2.\nonumber
\end{align}
\end{lem}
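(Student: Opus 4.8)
\textbf{Proof proposal for Lemma~\ref{lem:cp-step2}.}

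The plan is to specialize Lemma~\ref{lem:cp1} to the choice $u = x^{\ast}$ and then exploit the convexity and strong convexity structure of $H = f + h$ to convert the inner-product term $\alpha\langle\nabla f(x_{k+1}), z_k - x^{\ast}\rangle$ on the left side into a genuine function-value gap. First I would take $u = x^{\ast}$ in \eqref{eq:lem-cp1-ineq}, divide everything by $\alpha$, and rearrange so that the left-hand side becomes
$\langle\nabla f(x_{k+1}), z_k - x^{\ast}\rangle - h(x^{\ast})$.
Then the task reduces to producing a lower bound on this quantity of the form $\frac{1}{\tau_1}\big(H(y_{k+1}) - H(x^{\ast})\big) - \frac{\tau_2}{\tau_1}h(\widetilde{x}^s) - \frac{1-\tau_1-\tau_2}{\tau_1}h(y_k) + (\text{the }f\text{-progress term already on the RHS of Lemma~\ref{lem:cp1}})$, so that after adding it back the displayed inequality \eqref{eq:ineq-for-all-case} falls out.

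The key step is the standard ``three-point'' manipulation used in linear coupling / Katyusha analyses: write $z_k - x^{\ast} = (z_k - x_{k+1}) + (x_{k+1} - x^{\ast})$. For the second piece, convexity of $f$ gives $\langle\nabla f(x_{k+1}), x_{k+1} - x^{\ast}\rangle \ge f(x_{k+1}) - f(x^{\ast})$. For the first piece, $z_k - x_{k+1}$ is collinear with the convex-combination directions — using $x_{k+1} = \tau_1 z_k + \tau_2\widetilde{x}^s + (1-\tau_1-\tau_2)y_k$ one has $z_k - x_{k+1} = \frac{1}{\tau_1}\big(\tau_2(z_k - \widetilde{x}^s) + (1-\tau_1-\tau_2)(z_k - y_k)\big)$ up to the same kind of rewriting that appears in the proof of Lemma~\ref{lem:cp1} — so that $\langle\nabla f(x_{k+1}), z_k - x_{k+1}\rangle$ can again be split and bounded below by linearizations of $f$ at $x_{k+1}$ evaluated at $\widetilde{x}^s$ and $y_k$, i.e. by $\frac{\tau_2}{\tau_1}\big(f(\widetilde{x}^s) - f(x_{k+1})\big) + \frac{1-\tau_1-\tau_2}{\tau_1}\big(f(y_k) - f(x_{k+1})\big)$, using convexity of $f$. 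Combining these two pieces, adding $-h(x^{\ast})$ and recombining the $h$-terms already present in Lemma~\ref{lem:cp1} (the $\frac{\tau_2}{\tau_1}h(\widetilde{x}^s)$ and $\frac{1-\tau_1-\tau_2}{\tau_1}h(y_k)$ survive and pair with the $f$-values to form $H(\widetilde{x}^s)$ and $H(y_k)$), the left side of Lemma~\ref{lem:cp1} becomes exactly the telescoping combination $\frac{1}{\tau_1}H(y_{k+1}) - \frac{\tau_2}{\tau_1}H(\widetilde{x}^s) - \frac{1-\tau_1-\tau_2}{\tau_1}H(y_k)$ minus the optimal value with the right coefficients, and moving everything to one side gives \eqref{eq:ineq-for-all-case}. (The use of $u = x^{\ast}$ rather than a generic $u$, together with strong convexity of $h$ already baked into the $-\frac{1+\alpha\mu}{2}\|z_{k+1}-x^{\ast}\|^2$ term of Lemma~\ref{lem:cp1}, is what lets the $\mu$ appear; no extra use of strong convexity of $H$ is needed here — Lemma~\ref{lem:split} is where that will be used later.)

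The main obstacle I anticipate is purely bookkeeping rather than conceptual: making sure the coefficients $\frac{\alpha}{\tau_1}$, $\frac{\alpha\tau_2}{\tau_1}$, $\frac{\alpha(1-\tau_1-\tau_2)}{\tau_1}$ inherited from Lemma~\ref{lem:cp1} line up exactly with the linearization coefficients produced by the $z_k - x_{k+1}$ split, so that all the bare $f(x_{k+1})$ terms cancel and one is left with clean $H(\cdot)-H(x^{\ast})$ gaps with coefficients summing appropriately (note $\frac{\tau_2}{\tau_1} + \frac{1-\tau_1-\tau_2}{\tau_1} - \frac{1}{\tau_1} = -1$, which is the sign structure that eventually yields contraction). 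I would also need to check that Lemma~\ref{lem:cp1} already outputs $H(x_{k+1})$ on its right-hand side (it does, via Lemma~\ref{lem:GD}) rather than $f(x_{k+1})$, so that the $h(x_{k+1})$ terms are consistently accounted for. Once the algebra is arranged, dividing through by $\alpha$ and transposing gives the stated inequality with no residual terms.
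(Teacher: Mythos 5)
Your overall route is the same as the paper's: the paper proves \eqref{eq:ineq-for-all-case} by writing $\alpha\big(f(x_{k+1})-f(u)\big)\le \alpha\langle\nabla f(x_{k+1}),x_{k+1}-z_k\rangle+\alpha\langle\nabla f(x_{k+1}),z_k-u\rangle$, expanding $x_{k+1}-z_k$ through the linear-coupling identity, applying convexity of $f$ at $\widetilde{x}^s$ and $y_k$, and then adding \eqref{eq:lem-cp1-ineq} with $u=x^{\ast}$; your plan of lower-bounding $\langle\nabla f(x_{k+1}),z_k-x^{\ast}\rangle$ via the split $z_k-x^{\ast}=(z_k-x_{k+1})+(x_{k+1}-x^{\ast})$ and three applications of convexity is the mirror-image organization of exactly that argument, and your remark that no strong convexity beyond what Lemma~\ref{lem:cp1} already carries is needed here is accurate.

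However, two of your displayed formulas are wrong as written, and one of them is a step that would fail. First, the decomposition: the correct identities are $z_k-x_{k+1}=\tau_2(z_k-\widetilde{x}^s)+(1-\tau_1-\tau_2)(z_k-y_k)$ (no $1/\tau_1$ factor), or, in the form you actually need so that linearization at $x_{k+1}$ applies, $z_k-x_{k+1}=\frac{\tau_2}{\tau_1}(x_{k+1}-\widetilde{x}^s)+\frac{1-\tau_1-\tau_2}{\tau_1}(x_{k+1}-y_k)$; the version in terms of $z_k-\widetilde{x}^s$ and $z_k-y_k$ cannot be bounded by convexity at $x_{k+1}$ directly, since $z_k$ is not one of the points where $f$ is evaluated. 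Second, and more seriously, your claimed lower bound has the wrong sign: convexity gives $\langle\nabla f(x_{k+1}),x_{k+1}-\widetilde{x}^s\rangle\ge f(x_{k+1})-f(\widetilde{x}^s)$, hence $\langle\nabla f(x_{k+1}),z_k-x_{k+1}\rangle\ge \frac{\tau_2}{\tau_1}\big(f(x_{k+1})-f(\widetilde{x}^s)\big)+\frac{1-\tau_1-\tau_2}{\tau_1}\big(f(x_{k+1})-f(y_k)\big)$, which is the negative of the bound you state (your expression is precisely the paper's \emph{upper} bound on $\langle\nabla f(x_{k+1}),x_{k+1}-z_k\rangle$). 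The sign matters for the bookkeeping you describe: with the correct bound the $f(x_{k+1})$ coefficients sum to $1+\frac{\tau_2}{\tau_1}+\frac{1-\tau_1-\tau_2}{\tau_1}=\frac{1}{\tau_1}$ and cancel against the $\frac{1}{\tau_1}f(x_{k+1})$ coming from \eqref{eq:lem-cp1-ineq}, while $f(\widetilde{x}^s)$ and $f(y_k)$ cross to the other side with weights $\frac{\tau_2}{\tau_1}$ and $\frac{1-\tau_1-\tau_2}{\tau_1}$ and pair with the surviving $h(\widetilde{x}^s)$ and $h(y_k)$ to form $H(\widetilde{x}^s)$ and $H(y_k)$; with your signs neither cancellation occurs and \eqref{eq:ineq-for-all-case} does not follow. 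Since your own description of the final recombination presupposes the correct signs, this is a fixable slip rather than a missing idea; once corrected, your proof coincides with the paper's.
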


\begin{proof} 
For any $u$, we have
\begin{align}
&\alpha \big(f(x_{k+1}) -f(u) \big)
\stackrel{\text{\ding{172}}}{\leq} \alpha \big\langle  \nabla f(x_{k+1}),x_{k+1}-u\big\rangle =\alpha \langle  \nabla f(x_{k+1}),x_{k+1}-z_{k}\rangle+\alpha\langle  \nabla f(x_{k+1}),z_{k}-u\rangle \nonumber\\
\stackrel{\text{\ding{173}}}{=}& \frac{\alpha\tau_2}{\tau_1}\langle  \nabla f(x_{k+1}),\widetilde{x}^s-x_{k+1}\rangle +\frac{\alpha(1-\tau_1-\tau_2)}{\tau_1}\langle  \nabla f(x_{k+1}),y_k-x_{k+1}\rangle  +\alpha\langle  \nabla f(x_{k+1}),z_{k}-u\rangle \label{cp2step1-2}\\
\stackrel{\text{\ding{174}}}{\leq}& \frac{\alpha\tau_2}{\tau_1}\big( f(\widetilde{x}^s)-f(x_{k+1})\big) +\frac{\alpha(1-\tau_1-\tau_2)}{\tau_1}\big(f(y_k)-f(x_{k+1})\big) +\alpha\langle  \nabla f(x_{k+1}),z_{k}-u\rangle .\label{cp2step1}
\end{align}
Here, \ding{172} uses the convexity of $f(\cdot),$ \ding{173} is by the definition of $x_{k+1},$ \ding{174} uses the convexity of $f(\cdot)$ twice. Adding \eqref{eq:lem-cp1-ineq} and \eqref{cp2step1}, we have
\begin{align*}
&~\alpha \big(f(x_{k+1}) -H(u) \big)\\
\leq&~ \frac{\alpha\tau_2}{\tau_1}\big( H(\widetilde{x}^s)-f(x_{k+1})\big) +\frac{\alpha(1-\tau_1-\tau_2)}{\tau_1}\big(H(y_k)-f(x_{k+1})\big)+ \frac{\alpha}{\tau_1}\big( f(x_{k+1})-H(y_{k+1})\big)  \\
&  +\alpha\left(\frac{1}{4\tau_1L}+\frac{\beta}{2}\right)\|\widetilde{\nabla}_{k+1}-\nabla f(x_{k+1})\|^2 
+\frac{1+\alpha/\beta}{2}\|z_{k}-u\|^2 -\frac{1+\alpha\mu}{2}\|z_{k+1}-u\|^2.
\end{align*}
Setting $u=x^{\ast}$ in the above inequality and dividing both sides by $\alpha$, we obtain the desired result by rearranging terms.
\end{proof}
{By bounding $\|\widetilde{\nabla}_{k+1}-\nabla f(x_{k+1})\|^2$ in the form of \eqref{eq:sock-grad-ess}, and applying Lemma~\ref{lem:split} to dispense the bias deviation in \eqref{eq:ineq-for-all-case} with $\beta=\frac{6}{\mu}$, we will eventually obtain the following inequality}
	\begin{align}\label{eq:cp2}
0 \leq&~ \left(\frac{(1-\tau_1-\tau_2)}{\tau_1}+\frac{M}{\mu}2(1-\tau_1-\tau_2)^2\right)(H(y_k)-H(x^{\ast}))- \frac{1}{\tau_1}\big(\EX_{k-1}[H(y_{k+1})] -H(x^{\ast})  \big)  \\
 &~ \hspace{-0.6cm}+ \! \left(\frac{\tau_2}{\tau_1} \! + \! \frac{M}{\mu}2(1 -\tau_2)^2\right)\!( H(\widetilde{x}^s)-H(x^{\ast}))
 \! + \! \left(\frac{1}{2\alpha} \! + \! \frac{\mu}{12} \! + \! M\tau_1^2\right)\!\|z_{k}- x^{\ast}\|^2  \! - \! \frac{1+\alpha\mu}{2\alpha}\EX_{k-1}[\|z_{k+1}- x^{\ast}\|^2],\nonumber
\end{align}
where $M$ is a positive number to be defined. The proof of \eqref{eq:cp2} differs slightly for \textbf{Option I} and \textbf{Option II} of Algorithm \ref{alg:gock} by choosing appropriate $M$ for different scenarios. 
We state \eqref{eq:cp2} here as the generalization of the key step of Katyusha in \cite{allen2017katyusha} for an entire iteration of Algorithm~\ref{alg:gock}, which takes the bias of $\widetilde{\nabla}_{k+1}$ into consideration. 
The next lemma builds upon \eqref{eq:cp2} with an appropriate choice of $M$ and provides the progress of an entire inner loop, which would consequently split into distinct cases of linear convergence as in \cite{allen2017katyusha}.

\begin{lem}\label{telescoping}
~Suppose that \eqref{eq:cp2} holds for each $k$ and $s$ and that $2 M\tau_1^2 < \frac{5\mu}{6}$. Let $\overline{A}, \overline{B}$ and $\overline{C}$ be numbers that satisfy
$$\overline{A}\geq \frac{2M}{\mu}(1-\tau_1-\tau_2)^2,\ \overline{B}\geq \frac{2M}{\mu}(1 -\tau_2)^2,\ \overline{C} = \frac{1}{2\alpha}+\frac{\mu}{12}+M\tau_1^2.$$ 
If $\theta\in (1,\frac{1+\alpha\mu}{1+\alpha(\mu/6+2 M\tau_1^2)}]$, then 
\begin{multline}\label{eq:key}
 \left(\frac{(\tau_1+\tau_2-1+1/\theta)}{\tau_1}-\overline{A}\right)\theta\widetilde{D}^{s+1} \sum_{j=0}^{m-1}\theta^j  \leq  \left(\frac{(1-\tau_1-\tau_2)}{\tau_1}+\overline{A}\right)\left( D_{s m}-  \theta^{m} D_{(s+1) m } \right)\\
  + \left(\frac{\tau_2}{\tau_1}+\overline{B}\right) \widetilde{D}^s\sum_{j=0}^{m-1}\theta^j 
+\overline{C} \EE\big[\|z_{s m}-x^{\ast}\|^2\big] -\overline{C}\theta^m\EX\big[\|z_{(s+1)m}-x^{\ast}\|^2\big],
\end{multline}
where $\widetilde D^s$ and $D_{sm}$ are defined in \eqref{def-Dk-Dtilde}. 
\end{lem}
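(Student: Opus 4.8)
The plan is to take full expectation in \eqref{eq:cp2}, rewrite it with the shorthands $D_k,\widetilde D^s$ from \eqref{def-Dk-Dtilde} and the abbreviation $Z_k:=\EE[\|z_k-x^{\ast}\|^2]$, and then telescope over one inner loop $k=sm,\dots,sm+m-1$ with the geometric weights $\theta^j$, mimicking the aggregation step of \cite{allen2017katyusha}. After taking expectation, the coefficient of $H(y_k)-H(x^{\ast})$ is $\tfrac{1-\tau_1-\tau_2}{\tau_1}+\tfrac{2M}{\mu}(1-\tau_1-\tau_2)^2$; since $D_k\ge 0$, $\widetilde D^s\ge 0$ and both $\tfrac{1-\tau_1-\tau_2}{\tau_1},\tfrac{\tau_2}{\tau_1}\ge 0$, I may enlarge the $D_k$- and $\widetilde D^s$-coefficients up to $\overline A$ and $\overline B$, while the $z$-coefficient is exactly $\overline C$ by definition. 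This produces, for each $k$ in the loop,
\[
\tfrac{1}{\tau_1}D_{k+1}+\tfrac{1+\alpha\mu}{2\alpha}Z_{k+1}\;\le\; a\,D_k+c\,\widetilde D^s+\overline C\,Z_k,\qquad a:=\tfrac{1-\tau_1-\tau_2}{\tau_1}+\overline A,\ \ c:=\tfrac{\tau_2}{\tau_1}+\overline B.
\]

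Next I multiply the inequality for $k=sm+j$ by $\theta^j$ and sum over $j=0,\dots,m-1$. Re-indexing the shifted sums via $\sum_{j=0}^{m-1}\theta^j X_{sm+j+1}=\tfrac1\theta\sum_{l=1}^{m}\theta^l X_{sm+l}$ and peeling off the endpoint $j=0$ on the right and $l=m$ on the left, the interior $z$-terms ($l=1,\dots,m-1$) collect the coefficient $\tfrac{1+\alpha\mu}{2\alpha\theta}-\overline C$, which is $\ge 0$ exactly because $\theta\le\tfrac{1+\alpha\mu}{1+\alpha(\mu/6+2M\tau_1^2)}=\tfrac{1+\alpha\mu}{2\alpha\overline C}$ (and the hypothesis $2M\tau_1^2<\tfrac{5\mu}{6}$ is precisely what makes this range of $\theta$ nonempty), so these interior $z$-terms can be discarded from the left side, and the $z$-endpoint can be weakened via $\tfrac{1+\alpha\mu}{2\alpha\theta}\theta^m Z_{(s+1)m}\ge\overline C\,\theta^m Z_{(s+1)m}$. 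The interior $D$-terms come with coefficient $\tfrac{1}{\tau_1\theta}-a$; writing the $D$-endpoint as $\tfrac{1}{\tau_1\theta}\theta^m D_{(s+1)m}=(\tfrac{1}{\tau_1\theta}-a)\theta^m D_{(s+1)m}+a\,\theta^m D_{(s+1)m}$ lets me merge all left-side $D$-terms into $(\tfrac{1}{\tau_1\theta}-a)\sum_{l=1}^{m}\theta^l D_{sm+l}+a\,\theta^m D_{(s+1)m}$, and I note $\tfrac{1}{\tau_1\theta}-a=\tfrac{\tau_1+\tau_2-1+1/\theta}{\tau_1}-\overline A$, which is the very coefficient appearing on the left of \eqref{eq:key}.

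It remains to bring in the snapshot update. Jensen's inequality applied to the convex $H$ at $\widetilde x^{s+1}=(\sum_{j=0}^{m-1}\theta^j)^{-1}\sum_{j=0}^{m-1}\theta^j y_{sm+j+1}$, followed by taking expectation, yields $\widetilde D^{s+1}\sum_{j=0}^{m-1}\theta^j\le\sum_{j=0}^{m-1}\theta^j D_{sm+j+1}=\tfrac1\theta\sum_{l=1}^{m}\theta^l D_{sm+l}$; multiplying by $\tfrac{1}{\tau_1\theta}-a$ (which is nonnegative under the parameter choices made later, and equals the left coefficient of \eqref{eq:key}) then lower-bounds $(\tfrac{1}{\tau_1\theta}-a)\sum_{l=1}^{m}\theta^l D_{sm+l}$ by $(\tfrac{1}{\tau_1\theta}-a)\theta\,\widetilde D^{s+1}\sum_{j=0}^{m-1}\theta^j$. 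Substituting this lower bound and the $z$-endpoint weakening into the telescoped inequality, and finally moving $a\,\theta^m D_{(s+1)m}$ and $\overline C\,\theta^m Z_{(s+1)m}$ to the right so that they pair up into $a(D_{sm}-\theta^m D_{(s+1)m})$ and $\overline C(\EE\|z_{sm}-x^{\ast}\|^2-\theta^m\EE\|z_{(s+1)m}-x^{\ast}\|^2)$, produces exactly \eqref{eq:key}.

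I expect the only real difficulty to be bookkeeping: keeping the geometric weights consistent through the re-indexing so that the interior sums cancel and the surviving coefficients carry the correct signs, and in particular checking the two sign conditions $\tfrac{1+\alpha\mu}{2\alpha\theta}-\overline C\ge 0$ (from the stated $\theta$-bound) and $\tfrac{1}{\tau_1\theta}-a\ge 0$ (so that Jensen's inequality is used in the right direction and the telescoping ``pays'' for the snapshot averaging). There is no new conceptual ingredient beyond the biased-gradient handling already distilled into \eqref{eq:cp2}; this lemma is just the loop-level aggregation, parallel to the corresponding step in \cite{allen2017katyusha}.
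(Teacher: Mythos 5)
Your proposal is correct and follows essentially the same route as the paper: take full expectation of \eqref{eq:cp2}, enlarge the $D_k$- and $\widetilde D^s$-coefficients to $\overline A$ and $\overline B$, weight by $\theta^j$ and sum over the inner loop, use $\theta\le\frac{1+\alpha\mu}{2\alpha\overline C}$ to control the $z$-terms, and finish with convexity of $H$ at the snapshot average. The only (immaterial) difference is that the paper applies the bound $\overline C\theta\le\frac{1+\alpha\mu}{2\alpha}$ already at the per-iteration level so the $z$-terms telescope exactly, whereas you keep $\frac{1+\alpha\mu}{2\alpha}$ through the summation and discard/weaken interior and endpoint $z$-terms afterwards; you also make explicit the sign condition on $\frac{\tau_1+\tau_2-1+1/\theta}{\tau_1}-\overline A$ that the paper leaves implicit.
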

\begin{proof} 
Taking full expectation over both sides of \eqref{eq:cp2} and using the definition of $\widetilde D^s$ and $D_{sm}$ in \eqref{def-Dk-Dtilde}, we have from the conditions on $\overline{A}, \overline{B}$ and $\overline{C}$ that 
\[ 0 \leq  \left(\frac{(1-\tau_1-\tau_2)}{\tau_1}+\overline{A}\right)D_{k}- \frac{1}{\tau_1}  D_{k+1} 
  + \left(\frac{\tau_2}{\tau_1}+\overline{B}\right) \widetilde{D}^s
+\overline{C} \EE\big[\|z_{k}-x^{\ast}\|^2 \big] -\overline{C}\theta\EX\big[\|z_{k+1}-x^{\ast}\|^2\big].
 \]
Multiplying the above inequality by $\theta^j$ for each $k=s m +j$ and summing up the resulting inequalities for all $j=0,\ldots,m-1$, we obtain
\begin{multline*}
 0 \leq  \left(\frac{(1-\tau_1-\tau_2)}{\tau_1}+\overline{A}\right)\sum_{j=0}^{m-1}\theta^j D_{s m+j}- \frac{1}{\tau_1} \sum_{j=0}^{m-1} \theta^{j} D_{s m+j+1} 
  + \left(\frac{\tau_2}{\tau_1}+\overline{B}\right) \widetilde{D}^s\sum_{j=0}^{m-1}\theta^j \\
+\overline{C} \EE\big[\|z_{s m}-x^{\ast}\|^2\big] -\overline{C}\theta^m\EX\big[\|z_{(s+1)m}-x^{\ast}\|^2\big].
\end{multline*}
which can be rewritten as
\begin{multline*}
 \left(\frac{(\tau_1+\tau_2-1+1/\theta)}{\tau_1}-\overline{A}\right)\sum_{j=1}^{m}\theta^j D_{s m+j}   \leq  \left(\frac{(1-\tau_1-\tau_2)}{\tau_1}+\overline{A}\right)\left( D_{s m}-  \theta^{m} D_{(s+1) m } \right)\\
  + \left(\frac{\tau_2}{\tau_1}+\overline{B}\right) \widetilde{D}^s\sum_{j=0}^{m-1}\theta^j 
+\overline{C} \EE\big[\|z_{s m}-x^{\ast}\|^2\big] -\overline{C}\theta^m\EX\big[\|z_{(s+1)m}-x^{\ast}\|^2\big].
\end{multline*}
By the convexity of $H(\cdot)$ and the choice of $\widetilde{x}^{s+1}$, 
we have $\widetilde{D}^{s+1}\leq(\sum_{j=0}^{m-1} \theta^{j})^{-1} \cdot \sum_{j=0}^{m-1} \theta^{j}D_{s m+j+1}$. Therefore, the above inequality implies \eqref{eq:key}, and we complete the proof. 
\end{proof}

In Sections~\ref{sec:sock} and \ref{sec:gock} we will consider different scenarios of a strongly convex $H$, bound $\|\widetilde{\nabla}_{k+1}-\nabla f(x_{k+1})\|^2$ {in the form of \eqref{eq:sock-grad-ess}} respectively, and utilize the key convergence steps \eqref{eq:ineq-for-all-case}, \eqref{eq:cp2} and \eqref{eq:key}, to show our final theoretical guarantees with carefully chosen parameters.

\section{Convergence results of the SoCK 
 method with outer batch step}\label{sec:sock}
In this section, we analyze the convergence rate of Algorithm~\ref{alg:gock} that takes \textbf{Option I} and estimate its complexity to produce a stochastic $\varepsilon$-solution of \eqref{genP}. More precisely, we assume $n_1$ is not too big, so we view the outer finite-sum as a single function $F$. 

First, we show that \eqref{eq:cp2} holds with an appropriate choice of $M$ and thus \eqref{eq:key} follows. Then we establish the convergence rate by using \eqref{eq:key}. The next lemma bounds the variance of $\widetilde{\nabla}_{k+1}$, and it follows from Lemma~\ref{lem:summandbound} with $g=F$.
\begin{lem} \label{lem:samplevariance}
~Let $f$ be that in \eqref{eq:def-f}, and let $\widetilde{x}^s$ and $x_{k+1}$ be those given in Algorithm \ref{alg:gock} with $\widetilde{\nabla}_{k+1}$ computed by \textbf{Option I}. Then
\begin{equation}\label{SV1}
\EX_{k-1}\big[\|\widetilde{\nabla}_{k+1}-\nabla f(x_{k+1})\|^2\big]\leq \left(\frac{2 B_G^4 L_F^2}{A} + \frac{2 B_F^2 L_G^2}{B}\right)\|\widetilde{x}^s-x_{k+1}\|^2.
\end{equation}
\end{lem}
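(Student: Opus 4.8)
The plan is to reduce Lemma~\ref{lem:samplevariance} directly to the already-proven Lemma~\ref{lem:summandbound}. Observe that when \textbf{Option I} is taken, the gradient estimator is $\widetilde{\nabla}_{k+1}=[\nabla\widehat{G}_k]^\top\nabla F(\widehat{G}_k)$, while $\nabla f(x_{k+1})=[\nabla G(x_{k+1})]^\top\nabla F(G(x_{k+1}))$ by \eqref{eq:def-f} and the chain rule. Hence $\widetilde{\nabla}_{k+1}-\nabla f(x_{k+1})$ has exactly the form appearing on the left-hand side of Lemma~\ref{lem:summandbound} with the role of the generic function $g$ played by $F$.

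First I would verify that $F$ satisfies the hypotheses of Lemma~\ref{lem:summandbound}: by Assumption~\ref{gentwo}, each $F_i$ is $L_F$-smooth and has a $B_F$-bounded gradient, and since $F=\frac1{n_1}\sum_{i=1}^{n_1}F_i$ is their average, $F$ is $L_F$-smooth and has a $B_F$-bounded gradient as well (this is noted in the paragraph after Assumption~\ref{gentwo}). So Lemma~\ref{lem:summandbound} applies with $g=F$, $l=L_F$, and $b=B_F$.

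Second, I would simply instantiate the conclusion of Lemma~\ref{lem:summandbound} with these choices, obtaining
\[
\EX_{k-1}\!\left[\left\|[\nabla\widehat{G}_k]^\top\nabla F(\widehat{G}_k)-[\nabla G(x_{k+1})]^\top\nabla F(G(x_{k+1}))\right\|^2\right]\le\left(\frac{2B_G^4 L_F^2}{A}+\frac{2B_F^2 L_G^2}{B}\right)\|\widetilde{x}^s-x_{k+1}\|^2,
\]
and then identify the left-hand side with $\EX_{k-1}[\|\widetilde{\nabla}_{k+1}-\nabla f(x_{k+1})\|^2]$ via the definition of $\widetilde{\nabla}_{k+1}$ in \textbf{Option I} and the formula $\nabla f = [\nabla G]^\top\nabla F(G)$. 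This yields exactly \eqref{SV1}.

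This lemma is essentially a one-line corollary, so I do not anticipate a real obstacle; the only thing to be careful about is making the substitution $g=F$ explicit and confirming that averaging preserves both the smoothness constant and the gradient bound, rather than degrading them. There is no expectation over the outer index here (unlike \textbf{Option II}), since \textbf{Option I} uses the full batch $F$, so the conditional expectation $\EX_{k-1}$ is taken only over $\mathcal{A}_k,\mathcal{B}_k$, matching precisely the setting of Lemma~\ref{lem:summandbound}.
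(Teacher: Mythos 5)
Your proposal is correct and matches the paper's own argument: the paper proves \eqref{SV1} precisely by invoking Lemma~\ref{lem:summandbound} with $g=F$ (so $l=L_F$, $b=B_F$), using that the average $F$ inherits $L_F$-smoothness and the $B_F$-bound from the $F_i$'s, and identifying $\widetilde{\nabla}_{k+1}=[\nabla\widehat{G}_k]^\top\nabla F(\widehat{G}_k)$ and $\nabla f(x_{k+1})=[\nabla G(x_{k+1})]^\top\nabla F(G(x_{k+1}))$. No gaps.
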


Plugging \eqref{SV1} into \eqref{eq:ineq-for-all-case}, we are able to show \eqref{eq:cp2} and thus \eqref{eq:key} with an appropriate $M$. 
\begin{lem}
\label{lem:cp2}
~Suppose $\tau_1\in (0, \frac{1}{3\alpha L}]$ and $\tau_2\in[0,1-\tau_1] $ in the linear coupling step of Algorithm \ref{alg:gock}. Let $x^*$ be the solution of \eqref{genP}. If $\widetilde{\nabla}_{k+1}$ is computed by \textbf{Option I}, then \eqref{eq:cp2} holds with  
\begin{equation}\label{eq:M-set1}
M = 3(\frac{1}{4\tau_1L}+\frac{3}{\mu})\left(\frac{2 B_G^4 L_F^2}{A} + \frac{2 B_F^2 L_G^2}{B}\right).
\end{equation} 
\end{lem}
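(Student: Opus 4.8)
The plan is to specialize inequality \eqref{eq:ineq-for-all-case} of Lemma~\ref{lem:cp-step2} to the choice $\beta=\tfrac{6}{\mu}$, and then absorb the gradient-bias term $\|\widetilde{\nabla}_{k+1}-\nabla f(x_{k+1})\|^2$ by combining the variance estimate \eqref{SV1} of Lemma~\ref{lem:samplevariance} with the decomposition of $\|x_{k+1}-\widetilde{x}^s\|^2$ furnished by Lemma~\ref{lem:split}. Since $x_{k+1}=\tau_1 z_k+\tau_2\widetilde{x}^s+(1-\tau_1-\tau_2)y_k$ is determined by the history up to iteration $k$ (as are $z_k$, $y_k$, and $\widetilde{x}^s$), the first step is to take the conditional expectation $\EX_{k-1}[\,\cdot\,]$ of both sides of \eqref{eq:ineq-for-all-case}; on the resulting right-hand side the only genuinely random quantities are $H(y_{k+1})$, $\|z_{k+1}-x^{\ast}\|^2$, and $\|\widetilde{\nabla}_{k+1}-\nabla f(x_{k+1})\|^2$. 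With $\beta=\tfrac{6}{\mu}$ the coefficient $\tfrac{1}{4\tau_1 L}+\tfrac{\beta}{2}$ of the bias term becomes $\tfrac{1}{4\tau_1 L}+\tfrac{3}{\mu}$, and the coefficient $\tfrac{1+\alpha/\beta}{2\alpha}$ of $\|z_k-x^{\ast}\|^2$ becomes $\tfrac{1}{2\alpha}+\tfrac{\mu}{12}$, which already matches the shape of \eqref{eq:cp2}.

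Next I would bound the bias term. Writing $M_0:=\bigl(\tfrac{1}{4\tau_1 L}+\tfrac{3}{\mu}\bigr)\bigl(\tfrac{2B_G^4 L_F^2}{A}+\tfrac{2B_F^2 L_G^2}{B}\bigr)$, inequality \eqref{SV1} gives that the contribution $\bigl(\tfrac{1}{4\tau_1L}+\tfrac{3}{\mu}\bigr)\EX_{k-1}\|\widetilde{\nabla}_{k+1}-\nabla f(x_{k+1})\|^2$ is at most $M_0\|\widetilde{x}^s-x_{k+1}\|^2$. Substituting the bound of Lemma~\ref{lem:split}, this is at most
\[
M_0\cdot 3\!\left(\tau_1^2\|z_k-x^{\ast}\|^2+\tfrac{2(1-\tau_2)^2}{\mu}\bigl(H(\widetilde{x}^s)-H(x^{\ast})\bigr)+\tfrac{2(1-\tau_1-\tau_2)^2}{\mu}\bigl(H(y_k)-H(x^{\ast})\bigr)\right),
\]
and since $M=3M_0$ by \eqref{eq:M-set1}, this equals $M\tau_1^2\|z_k-x^{\ast}\|^2+\tfrac{2M(1-\tau_2)^2}{\mu}\bigl(H(\widetilde{x}^s)-H(x^{\ast})\bigr)+\tfrac{2M(1-\tau_1-\tau_2)^2}{\mu}\bigl(H(y_k)-H(x^{\ast})\bigr)$.

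Finally I would substitute this bound back into the conditional-expectation form of \eqref{eq:ineq-for-all-case} and collect alike terms: the coefficient of $H(y_k)-H(x^{\ast})$ becomes $\tfrac{1-\tau_1-\tau_2}{\tau_1}+\tfrac{2M}{\mu}(1-\tau_1-\tau_2)^2$, the coefficient of $H(\widetilde{x}^s)-H(x^{\ast})$ becomes $\tfrac{\tau_2}{\tau_1}+\tfrac{2M}{\mu}(1-\tau_2)^2$, and the coefficient of $\|z_k-x^{\ast}\|^2$ becomes $\tfrac{1}{2\alpha}+\tfrac{\mu}{12}+M\tau_1^2$, while the terms $-\tfrac{1}{\tau_1}\bigl(\EX_{k-1}[H(y_{k+1})]-H(x^{\ast})\bigr)$ and $-\tfrac{1+\alpha\mu}{2\alpha}\EX_{k-1}\|z_{k+1}-x^{\ast}\|^2$ pass through unchanged. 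This is exactly \eqref{eq:cp2}. No substantial obstacle arises beyond careful coefficient bookkeeping; the one point deserving attention is that the factor $3$ appearing in Lemma~\ref{lem:split} is precisely what the definition \eqref{eq:M-set1} of $M$ is designed to absorb, and that the hypotheses $\tau_1\in(0,\tfrac{1}{3\alpha L}]$ and $\tau_2\in[0,1-\tau_1]$ assumed here are exactly those required to invoke Lemmas~\ref{lem:cp-step2} and \ref{lem:split}.
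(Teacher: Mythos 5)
Your proposal is correct and follows essentially the same route as the paper: take conditional expectation of \eqref{eq:ineq-for-all-case} with $\beta=\frac{6}{\mu}$, bound the bias term via \eqref{SV1}, and use Lemma~\ref{lem:split} to distribute $\|x_{k+1}-\widetilde{x}^s\|^2$ over the three terms, the factor $3$ being absorbed into the definition \eqref{eq:M-set1} of $M$. The coefficient bookkeeping you carry out matches \eqref{eq:cp2} exactly, so there is nothing to add.
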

\begin{proof} 
Taking conditional expectation $\EE_{k-1}$ on both sides of \eqref{eq:ineq-for-all-case} with $\beta=\frac{6}{\mu}$ and plugging \eqref{SV1}, we immediately have \eqref{eq:cp2} by using the choice of $M$ in \eqref{eq:M-set1}.
\end{proof}

The next result is easy to show. Its proof is given in the appendix.

\begin{lem}\label{usualrate}
~Let $\overline{A},\overline{B}\in(0,\frac{1}{8}],$ $\tau_1\in[\frac{1}{2m},1),$ $\tau_2\leq\tau_1.$ If $\theta= 1+\frac{1}{12m},$ then $\frac{(\tau_1+\tau_2-1+1/\theta-\overline{A}\tau_1)\theta}{\tau_2+\overline{B}\tau_1} > \frac{13}{12}.$
\end{lem}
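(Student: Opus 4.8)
\textbf{Proof proposal for Lemma~\ref{usualrate}.}
The plan is to treat the claimed inequality $\frac{(\tau_1+\tau_2-1+1/\theta-\overline{A}\tau_1)\theta}{\tau_2+\overline{B}\tau_1} > \frac{13}{12}$ as a purely algebraic estimate in the four quantities $\tau_1,\tau_2,\overline A,\overline B$ under the stated constraints, and to reduce it to a single-variable bound in $m$. First I would substitute $\theta = 1+\frac{1}{12m}$ and observe that $1/\theta = \frac{12m}{12m+1} = 1 - \frac{1}{12m+1}$, so that $\tau_1+\tau_2-1+1/\theta = \tau_1+\tau_2-\frac{1}{12m+1}$. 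Since $\tau_1\ge \frac{1}{2m}$, I would compare $\frac{1}{2m}$ against $\frac{1}{12m+1}$: we have $\frac{1}{2m} - \frac{1}{12m+1} = \frac{12m+1-2m}{2m(12m+1)} = \frac{10m+1}{2m(12m+1)} > 0$, hence $\tau_1 - \frac{1}{12m+1} \ge \frac{10m+1}{2m(12m+1)} > 0$. This is the crux: the numerator term $\tau_1+\tau_2-1+1/\theta$ is strictly positive and bounded below by roughly $\tau_2 + \frac{5}{12m}$.

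Next I would bound the numerator from below and the denominator from above. For the numerator, using $\overline A \le \frac18$ gives $\tau_1+\tau_2-1+1/\theta - \overline A\tau_1 \ge \frac{7}{8}\tau_1 + \tau_2 - \frac{1}{12m+1} \ge \frac{7}{8}\cdot\frac{1}{2m} + \tau_2 - \frac{1}{12m+1}$; then multiplying by $\theta > 1$ only helps. For the denominator, using $\overline B\le\frac18$ and $\tau_2\le\tau_1$... wait, that last point needs care: $\overline B \tau_1$ could be as large as $\frac18\tau_1$, and $\tau_1$ is only bounded above by $1$. So instead I would keep the denominator as $\tau_2 + \overline B\tau_1 \le \tau_2 + \frac18\tau_1$ and aim to show $\frac{7}{8}\tau_1 + \tau_2 - \frac{1}{12m+1} \ge \frac{13}{12}(\tau_2 + \frac18\tau_1)$, i.e. (after clearing) $\frac{7}{8}\tau_1 - \frac{13}{96}\tau_1 \ge \frac{1}{12}\tau_2 + \frac{1}{12m+1}$, i.e. $\frac{71}{96}\tau_1 \ge \frac{1}{12}\tau_2 + \frac{1}{12m+1}$. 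Using $\tau_2\le\tau_1$ this follows from $\frac{71}{96}\tau_1 - \frac{1}{12}\tau_1 = \frac{63}{96}\tau_1 = \frac{21}{32}\tau_1 \ge \frac{21}{32}\cdot\frac{1}{2m} = \frac{21}{64m}$, and one checks $\frac{21}{64m} \ge \frac{1}{12m+1}$ since $21(12m+1) = 252m+21 > 64m$ for all $m\ge 1$. Finally, since we dropped the factor $\theta>1$ from the numerator, the displayed inequality is in fact strict, giving the desired $>\frac{13}{12}$ (with some slack).

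The main obstacle I anticipate is bookkeeping the direction of each inequality when combining the lower bound on the numerator with the upper bound on the denominator — in particular making sure the $\overline B\tau_1$ term in the denominator is controlled by the surplus in the $\tau_1$-coefficient of the numerator rather than by $\tau_2$, since $\tau_1$ has no small upper bound. The monotonicity $\tau_2\le\tau_1$ is exactly what closes this gap. A secondary subtlety is that the bound must hold uniformly in $m\ge 1$ (the smallest case $m=1$ is the tightest for the $\frac{1}{2m}$ lower bound but not for the $\frac{1}{12m+1}$ term), so I would double-check the final scalar inequality $\frac{21}{64m}\ge\frac{1}{12m+1}$ at $m=1$ explicitly: $\frac{21}{64}$ versus $\frac{1}{13}$, clearly fine. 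Everything else is routine arithmetic, and the constant $\frac{13}{12}$ is comfortably not tight, which suggests the estimates above leave enough room.
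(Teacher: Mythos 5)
Your proof is correct: the reduction to $\frac{71}{96}\tau_1 \ge \frac{1}{12}\tau_2 + \frac{1}{12m+1}$ and then, via $\tau_2\le\tau_1$ and $\tau_1\ge\frac{1}{2m}$, to the scalar inequality $\frac{21}{64m}\ge\frac{1}{12m+1}$ all checks out, and the strictness indeed comes from the dropped factor $\theta>1$ applied to a numerator you have verified to be positive. This is essentially the same elementary argument as the paper's, which only organizes the algebra differently (normalizing by $\tau_1$ and rewriting the ratio as $1+\frac{1-\overline{A}-\overline{B}-\frac{1}{6}}{\tau_2/\tau_1+\overline{B}}$ before invoking $\tau_2\le\tau_1$); both proofs use exactly the facts $\overline{A},\overline{B}\le\frac18$, $2m\tau_1\ge 1$, $\tau_2\le\tau_1$, $\theta>1$, and both implicitly rely on positivity of the denominator $\tau_2+\overline{B}\tau_1$ (i.e.\ $\tau_2\ge 0$), which holds in every application since $\tau_2=\frac{1}{2m}$.
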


Now we are ready to show our first main convergence rate result.

\begin{thm}[convergence rate with \textbf{Option I}]\label{thm:sock}

Under Assumptions~\ref{singleone}, \ref{gentwo} and \ref{genthree}, let $\{\widetilde x^s\}$ be generated from Algorithm~\ref{alg:gock} with $\widetilde\nabla_{k+1}$ computed by \textbf{Option I} and with parameters set as follows:
\begin{equation}\label{eq:para-1}
m\leftarrow\left\lceil \frac{1}{2}\sqrt{\frac{L}{\mu}}\right\rceil,\, \tau_1\leftarrow  \frac{1}{2m},\,\tau_2\leftarrow \frac{1}{2m},\, \theta\leftarrow 1+\frac{1}{12m},\, \alpha\leftarrow \frac{1}{3\tau_1 L},\, A\leftarrow\frac{720 B_G^4 L_F^2}{\mu^2},\, B\leftarrow\frac{720 B_F^2 L_G^2}{\mu^2}.
\end{equation}
Then 
\begin{equation*}
\EX\big[H(\widetilde{x}^{S})-H(x^{\ast}) \big] \leq 
11\cdot\left(  \frac{12}{13}\right)^{S}\big(H(x_0)-H(x^{\ast})\big). 
\end{equation*}
\end{thm}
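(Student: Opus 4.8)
The plan is to derive a per-outer-loop contraction from the master inequality \eqref{eq:key} of Lemma~\ref{telescoping}, and then iterate it $S$ times. First I would instantiate Lemma~\ref{lem:cp2} with the parameters in \eqref{eq:para-1}: with $A = 720 B_G^4 L_F^2/\mu^2$ and $B = 720 B_F^2 L_G^2/\mu^2$ we get $\frac{2B_G^4 L_F^2}{A} + \frac{2B_F^2L_G^2}{B} = \frac{\mu^2}{180}$, so $M = 3\left(\frac{1}{4\tau_1 L} + \frac{3}{\mu}\right)\cdot\frac{\mu^2}{180}$. Using $\alpha = \frac{1}{3\tau_1 L}$ and $\tau_1 = \frac{1}{2m}$ with $m = \lceil \frac12\sqrt{L/\mu}\rceil$, one checks $\frac{1}{4\tau_1 L} = \frac{m}{2L} \le \frac{1}{4}\sqrt{1/(L\mu)}\cdot(1+o(1))$, which is dominated by $\frac{3}{\mu}$ up to a constant since $L \ge \mu$; the upshot is a bound of the shape $M\tau_1^2 = O(\mu)$ and more precisely $M\tau_1^2 \le \frac{\mu}{8}$-type estimates, and likewise $\frac{2M}{\mu}(1-\tau_2)^2$ and $\frac{2M}{\mu}(1-\tau_1-\tau_2)^2$ are both $\le \frac18$, so the quantities $\overline A,\overline B \in (0,\frac18]$ required by Lemma~\ref{usualrate} can be taken as exactly those two bounds. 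I would carry out these constant bookkeeping computations carefully, because this is where the specific numerical constants ($720$, $13/12$, $11$) are pinned down.

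Next I would verify the hypotheses of Lemma~\ref{telescoping} and of Lemma~\ref{usualrate} under \eqref{eq:para-1}: namely $2M\tau_1^2 < \frac{5\mu}{6}$ (immediate from $M\tau_1^2 = O(\mu)$ with the explicit constant), $\theta = 1 + \frac{1}{12m} \in \left(1, \frac{1+\alpha\mu}{1+\alpha(\mu/6 + 2M\tau_1^2)}\right]$ (this needs $\alpha\mu \ge \alpha(\mu/6 + 2M\tau_1^2) + \frac{1}{12m}(1 + \alpha(\mu/6+2M\tau_1^2))$, which follows since $\alpha\mu = \frac{\mu}{3\tau_1 L} = \frac{2m\mu}{3L} \ge \frac{1}{3m}\cdot\frac{1}{(1/2\cdot\sqrt{L/\mu}+1)^2}\cdot$-type lower bound, and is comfortably larger than the $\frac{5\mu}{6}\alpha/\text{something}$ leftover — again an explicit-constant check), $\tau_1 = \frac{1}{2m} \in [\frac{1}{2m}, 1)$, and $\tau_2 = \tau_1 \le \tau_1$. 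Then Lemma~\ref{usualrate} gives $\frac{(\tau_1+\tau_2-1+1/\theta - \overline A\tau_1)\theta}{\tau_2 + \overline B \tau_1} > \frac{13}{12}$, i.e. the coefficient of $\theta\widetilde D^{s+1}\sum_{j=0}^{m-1}\theta^j$ on the left of \eqref{eq:key} times $\tau_1$ exceeds $\frac{13}{12}$ times the coefficient $\tau_2 + \overline B\tau_1$ of $\widetilde D^s\sum_{j=0}^{m-1}\theta^j$ on the right (after clearing $\tau_1$).

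Then I would set up a Lyapunov (potential) function combining $D_{sm}$ and $\|z_{sm}-x^\ast\|^2$ with the snapshot term, so that the mixed terms $D_{sm} - \theta^m D_{(s+1)m}$ and $\|z_{sm}-x^\ast\|^2 - \theta^m\|z_{(s+1)m}-x^\ast\|^2$ telescope across $s$. Concretely, define $\Phi^s := \left(\frac{1-\tau_1-\tau_2}{\tau_1} + \overline A\right)D_{sm} + \overline C\,\EE\|z_{sm}-x^\ast\|^2$ plus an appropriate multiple of $\widetilde D^s$; dividing \eqref{eq:key} by $\sum_{j=0}^{m-1}\theta^j$ and using the above ratio bound, one gets $\widetilde D^{s+1} \le \frac{12}{13}\widetilde D^s + (\text{telescoping remainder})/\sum_{j=0}^{m-1}\theta^j$, and absorbing the remainder into the potential yields $\Phi^{S} \le \left(\frac{12}{13}\right)^S \Phi^0$. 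Finally, to convert to the stated bound, note $\widetilde x^0 = x_0$ so $\widetilde D^0 = H(x_0) - H(x^\ast)$, while $y_0$ and $z_0$ would be initialized at $x_0$ as well (or $D_0 = \widetilde D^0$ and $\|z_0 - x^\ast\|^2 \le \frac{2}{\mu}\widetilde D^0$ by strong convexity), so $\Phi^0$ is bounded by a constant (the number $11$ arises here, being the sum of the coefficients $\frac{1-\tau_1-\tau_2}{\tau_1} + \overline A + \frac{\tau_2}{\tau_1} + \overline B + \frac{2\overline C}{\mu}$ evaluated to leading order, all but $\frac{1}{\tau_1}\cdot(1-\tau_1) \approx 2m$... so in fact the $11$ must come from a \emph{normalized} potential where each $D$ and $z$ term is first scaled) times $\big(H(x_0)-H(x^\ast)\big)$, and the $\big(\frac{12}{13}\big)^S$ factor carries through since $H(\widetilde x^S) - H(x^\ast) = \widetilde D^S \le \Phi^S$ up to the same constant.

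\textbf{Main obstacle.} The conceptually routine but error-prone heart of the argument is the explicit constant-chasing: showing that with $\beta = 6/\mu$ and the specific $m,\tau_1,\tau_2,\theta,\alpha,A,B$ in \eqref{eq:para-1} the chained inequalities collapse to the clean ratio $\frac{13}{12}$ and the clean prefactor $11$, in particular verifying $\theta \le \frac{1+\alpha\mu}{1+\alpha(\mu/6 + 2M\tau_1^2)}$ (which couples $m$, $\alpha\mu$, and $M\tau_1^2$ simultaneously) and that the telescoped potential's initial value is exactly bounded by $11\big(H(x_0)-H(x^\ast)\big)$ — the latter requires tracking how the $\|z\|^2$ term, bounded via $\mu$-strong convexity by $\frac{2}{\mu}\widetilde D^0$, interacts with $\overline C = \frac{1}{2\alpha} + \frac{\mu}{12} + M\tau_1^2$ and the $\frac{12}{13}$ geometric factor. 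I expect no structural difficulty beyond Lemma~\ref{telescoping}; the work is purely in choosing the Lyapunov combination so that all remainders telescope and the leading constants match.
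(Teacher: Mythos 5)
Your proposal follows essentially the same route as the paper's proof: instantiate Lemma~\ref{lem:cp2} with the parameters in \eqref{eq:para-1} to get $M=\frac{\mu^2}{60}\big(\frac{1}{4\tau_1 L}+\frac{3}{\mu}\big)\le\frac{\mu}{16}$, verify the hypotheses of Lemmas~\ref{telescoping} and \ref{usualrate} (including $\theta\le\frac{1+\alpha\mu}{1+\alpha(\mu/6+2M\tau_1^2)}$ via $\alpha\mu\ge\frac{1}{6m}$ and $\alpha\mu<1$), convert \eqref{eq:key} into a $\frac{12}{13}$-contraction of the potential combining $D_{sm}$, $\widetilde D^s\sum_j\theta^j$, and $\EE\|z_{sm}-x^\ast\|^2$, telescope over $s$, normalize by $\big(\frac{\tau_2}{\tau_1}+\overline B\big)\sum_j\theta^j$, and bound the initial $z_0$-term by strong convexity to obtain the constant $11$ (which in the paper arises as $3+8$). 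The constant bookkeeping you defer is exactly what the paper carries out, and your stated intermediate bounds ($\overline A,\overline B\le\frac18$, the normalized coefficients) are consistent with it.
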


\begin{proof} 
First, notice $m^2\geq\frac{L}{4\mu}.$ 
Hence, $\alpha=\frac{1}{3\tau_1L}=\frac{2m}{3L}\geq\frac{1}{6m\mu},$ and thus $ \alpha\mu\geq\frac{1}{6m}$. Secondly, $m<\frac{1}{2}\sqrt{\frac{L}{\mu}}+1\le\frac{3L}{2\mu},$ so $\alpha\mu<1$. Thirdly, by the choices of $A$ and $B$, the $M$ given in \eqref{eq:M-set1} satisfies 
\begin{equation}\label{eq:M-set1-ineq}
M=\frac{\mu^2}{60}\left(\frac{1}{4\tau_1 L}+\frac{3}{\mu}\right)\le \frac{\mu^2}{60}\left(\frac{3}{4\mu}+\frac{3}{\mu}\right) = \frac{\mu}{16},
\end{equation}
where the first inequality holds because $\frac{1}{\tau_1}= 2m<\frac{3L}{\mu}$. Therefore, $2M\tau_1^2 = \frac{M}{2m^2}\le \frac{M}{2} < \frac{\mu}{6}$, and thus
\begin{equation}\label{eq:bound-theta}
\frac{1+\alpha\mu}{1+\alpha(\mu/6+2 M\tau_1^2)} > \frac{1+\alpha\mu}{1+\alpha\mu/3} \ge 1+\frac{\alpha\mu}{2} \ge 1+\frac{1}{12m} =\theta,
\end{equation}
where we have used $\alpha\mu <1$ in the second inequality and $ \alpha\mu\geq\frac{1}{6m}$ in the third inequality.

Let $\overline{A} = \frac{2M}{\mu}(1-\tau_1-\tau_2)^2,$ $\overline{B} = \frac{2M}{\mu}(1 -\tau_2)^2$, and $\overline{C} = \frac{1}{2\alpha}+\frac{\mu}{12}+M\tau_1^2$. Then by Lemma~\ref{lem:cp2}, all conditions required in Lemma~\ref{telescoping} are satisfied. Hence, we have \eqref{eq:key}.

Since $\frac{2M}{\mu} \le \frac{1}{8}$ from \eqref{eq:M-set1-ineq}, we have $\overline{A} \le \frac{1}{8}$ and $\overline{B}\leq \frac{1}{8}$, and thus by Lemma~\ref{usualrate}, 
\begin{equation}\label{eq:temp-ineq1}
\frac{(\tau_1+\tau_2-1+1/\theta-\overline{A}\tau_1)\theta}{\tau_2+\overline{B}\tau_1} > \frac{13}{12}.
\end{equation}
In addition, since $\theta= 1+\frac{1}{12m},$ we have $\theta^{m} \geq\frac{13}{12}.$
Therefore, \eqref{eq:key} 
implies
\begin{multline}\label{expdecay}
    \left(\frac{(1-\tau_1-\tau_2)}{\tau_1}+\overline{A}\right) D_{(s+1) m} +\left(\frac{\tau_2}{\tau_1}+\overline{B}\right) \widetilde{D}^{s+1}\sum_{j=0}^{m-1}\theta^j +\overline{C} \EE\big[\|z_{(s+1) m}-x^{\ast}\|^2 \big]\\
\leq  \frac{12}{13} \left(\left(\frac{(1-\tau_1-\tau_2)}{\tau_1}+\overline{A}\right) D_{s m} +\left(\frac{\tau_2}{\tau_1}+\overline{B}\right) \widetilde{D}^s\sum_{j=0}^{m-1}\theta^j +\overline{C} \EE\big[\|z_{s m}-x^{\ast}\|^2\big]\right).
\end{multline}
Repeatedly using \eqref{expdecay} for $s=0$ through $s=S-1$ gives 
\[\left(\frac{\tau_2}{\tau_1}+\overline{B}\right)\widetilde{D}^S\sum_{j=0}^{m-1}\theta^j\le \Big(\frac{12}{13}\Big)^S\left(\left(\frac{(1-\tau_1-\tau_2)}{\tau_1}+\overline{A}\right) D_{0} +\left(\frac{\tau_2}{\tau_1}+\overline{B}\right) \widetilde{D}^0\sum_{j=0}^{m-1}\theta^j +\overline{C} \|z_{0}-x^{\ast}\|^2\right).\]
Dividing by $\left(\frac{\tau_2}{\tau_1}+\overline{B}\right)\sum_{j=0}^{m-1}\theta^j$ both sides of the above inequality, we have
\begin{equation}\label{expdecay-ineq2}
\widetilde{D}^S\le \Big(\frac{12}{13}\Big)^S\left(\frac{\frac{(1-\tau_1-\tau_2)}{\tau_1}+\overline{A}}{\big(\frac{\tau_2}{\tau_1}+\overline{B}\big)\sum_{j=0}^{m-1}\theta^j} D_{0} + \widetilde{D}^0 +\frac{\overline{C}}{\big(\frac{\tau_2}{\tau_1}+\overline{B}\big)\sum_{j=0}^{m-1}\theta^j} \|z_{0}-x^{\ast}\|^2\right).
\end{equation}
Notice $\sum_{j=0}^{m-1}\theta^j\geq m$, $\tau_1=\tau_2=\frac{1}{2m}$, $\overline A \le \frac{1}{8}$, and $\overline B > 0$, and thus we have 
\begin{equation}\label{expdecay-ineq3}
\frac{\frac{(1-\tau_1-\tau_2)}{\tau_1}+\overline{A}}{\big(\frac{\tau_2}{\tau_1}+\overline{B}\big)\sum_{j=0}^{m-1}\theta^j}\le 2.
\end{equation}
In addition, recall $M\le \frac{\mu}{16}$ from \eqref{eq:M-set1-ineq}. Hence, $\overline{C} = \frac{1}{2\alpha}+\frac{\mu}{12}+M\tau_1^2 =  \frac{1}{2\alpha}+\frac{\mu}{12} + \frac{\mu}{4m^2} \le \frac{1}{2\alpha}\left(1+\frac{\alpha\mu}{3}\right)$, and thus 
\begin{equation}\label{expdecay-ineq4}
\frac{\overline{C}}{\big(\frac{\tau_2}{\tau_1}+\overline{B}\big)\sum_{j=0}^{m-1}\theta^j} \le \frac{ 1+\alpha\mu/3}{2m\alpha}.
\end{equation}
Moreover, by the $\mu$-strong convexity of $H$, it holds $H(x_0) - H(x^*) \ge \frac{\mu}{2}\|x_0-x^{\ast}\|^2=\frac{\mu}{2}\|z_0-x^{\ast}\|^2$. Plugging this inequality and also \eqref{expdecay-ineq3} and \eqref{expdecay-ineq4} into \eqref{expdecay-ineq2}, we obtain by recalling the definition of $\widetilde D^s$ and $D_k$ in \eqref{def-Dk-Dtilde} that
$$\EX\big[H(\widetilde{x}^{S})-H(x^{\ast})\big] \le \Big( \frac{12}{13} \Big)^{S} \left(3+\frac{ 1+\alpha\mu/3}{m\alpha\mu}\right)\big(H(x_0)-H(x^{\ast})\big).$$
Since $\frac{1}{m\alpha\mu}\le 6$ and $\alpha\mu <1$ as we showed at the beginning of the proof, it holds $\frac{ 1+\alpha\mu/3}{m\alpha\mu} \le 8$, which together with the above inequality gives the desired result.
\end{proof}

By Theorem~\ref{thm:sock}, we can estimate the complexity of  Algorithm~\ref{alg:gock} in terms of the number of evaluations on $\nabla F_i$, $G_j$, $\nabla G_j$, and the proximal mapping of $h$.

\begin{cor}[complexity result with \textbf{Option I}]\label{cor:sock}
~Given $\varepsilon>0$, under the same assumptions as in Theorem~\ref{thm:sock}, the complexity of Algorithm~\ref{alg:gock} to obtain a stochastic $\varepsilon$-solution is 
\begin{equation}\label{eq:cor:sock}
O\left( \left(n_1+n_2+{\textstyle\sqrt{\frac{L}{\mu}}}\Big({\textstyle\frac{\max\{B_G^4 L_F^2,\,B_F^2 L_G^2\}}{\mu^2}+n_1}\Big)\right) \log\frac{H(x_0)-H(x^{\ast})}{\varepsilon}\right).
\end{equation} 
\end{cor}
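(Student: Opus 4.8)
\textbf{Proof proposal for Corollary~\ref{cor:sock}.} The plan is to turn the linear-rate bound of Theorem~\ref{thm:sock} into an iteration count for the outer loop, then multiply by the per-iteration cost already recorded after Algorithm~\ref{alg:gock} and substitute the parameter choices from \eqref{eq:para-1}. First, from Theorem~\ref{thm:sock} we have $\EX[H(\widetilde x^S)-H(x^\ast)]\le 11\,(12/13)^S\,(H(x_0)-H(x^\ast))$, so to guarantee a stochastic $\varepsilon$-solution it suffices to take
\[
S=\left\lceil \log_{13/12}\frac{11(H(x_0)-H(x^\ast))}{\varepsilon}\right\rceil
 = O\!\left(\log\frac{H(x_0)-H(x^\ast)}{\varepsilon}\right),
\]
since $\log(13/12)$ is a positive absolute constant and the extra constant factor $11$ is absorbed by the $O(\cdot)$.

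Second, I would recall that when \textbf{Option I} is used, one full outer iteration costs $n_1+2n_2+m(2A+2B+n_1)$ component-function/gradient/Jacobian evaluations: the snapshot computation contributes $n_1+2n_2$ (one pass over $\{G_j\}$, $\{\nabla G_j\}$, and $\{\nabla F_i\}$), and each of the $m$ inner iterations contributes $2A$ evaluations for $\widehat G_k$ (values of $G_{j_k}$ at $x_{k+1}$ and at $\widetilde x^s$), $2B$ for $\nabla\widehat G_k$, and $n_1$ for the batch step $[\nabla\widehat G_k]^\top\nabla F(\widehat G_k)$. Hence the total is $O\big(S(n_1+n_2+m(A+B+n_1))\big)$. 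Now substitute the parameter settings \eqref{eq:para-1}: $m=\lceil\frac12\sqrt{L/\mu}\,\rceil=\Theta(\sqrt{L/\mu})$, $A=\Theta(B_G^4L_F^2/\mu^2)$, and $B=\Theta(B_F^2L_G^2/\mu^2)$, so $A+B=\Theta(\max\{B_G^4L_F^2,B_F^2L_G^2\}/\mu^2)$ and
\[
m(A+B+n_1)=\Theta\!\left(\sqrt{\tfrac{L}{\mu}}\Big(\tfrac{\max\{B_G^4L_F^2,\,B_F^2L_G^2\}}{\mu^2}+n_1\Big)\right).
\]
Combining with the bound on $S$ yields exactly \eqref{eq:cor:sock}.

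This argument is essentially bookkeeping, so I do not expect a genuine obstacle; the one place needing a little care is making sure the per-iteration count is stated correctly (in particular, that the batch step really costs $n_1$ and not $2n_1$, and that the $2n_2$ from the snapshot is not double-counted against the inner $2A+2B$ terms), and that the ceiling in $m=\lceil\frac12\sqrt{L/\mu}\,\rceil$ only changes things by a constant because we have assumed $L\ge\mu$, so $\sqrt{L/\mu}\ge 1$ and $m=\Theta(\sqrt{L/\mu})$. A secondary subtlety worth a sentence is that $A$ and $B$ in \eqref{eq:para-1} are real numbers and should be read as $\lceil\cdot\rceil$ when used as sample sizes, which again only affects constants. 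With those remarks in place the $O(\cdot)$ estimate \eqref{eq:cor:sock} follows directly, and the proof is complete.
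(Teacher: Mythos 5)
Your proposal is correct and follows essentially the same route as the paper's own proof: bound the outer-loop count $S=O\big(\log\frac{H(x_0)-H(x^{\ast})}{\varepsilon}\big)$ from Theorem~\ref{thm:sock}, take the per-outer-iteration cost $O\big(n_1+n_2+m(A+B+n_1)\big)$ recorded after Algorithm~\ref{alg:gock} for \textbf{Option I}, and substitute the parameter choices in \eqref{eq:para-1}. The extra remarks on ceilings and on not double-counting evaluations only affect constants and do not change the argument.
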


\begin{proof} 
To produce one snapshot point $\widetilde x^s$, the complexity is $O\big(n_1+n_2+ m(A + B+n_1)\big)=O \Big(n_1+n_2+\sqrt{\frac{L}{\mu}}\big(\frac{\max\{B_G^4 L_F^2,\,B_F^2 L_G^2\}}{\mu^2}+n_1\big)\Big)$.
In addition, by Theorem~\ref{thm:sock}, to have $\EX\big[H(\widetilde{x}^{S})-H(x^{\ast}) \big] \le \varepsilon$, it suffices to have $S=O\left( \log\frac{H(x_0)-H(x^{\ast})}{\varepsilon}\right)$.   Hence, the total complexity is $O\big(S(n_1+n_2+ m(A + B+n_1))\big)$, which is the desired result in \eqref{eq:cor:sock}. 
\end{proof}

\begin{remark}
From the complexity result in \eqref{eq:cor:sock}, we can easily see that when $n_1=O\left(\frac{\max\{B_G^4 L_F^2,\,B_F^2 L_G^2\}}{\mu^2}\right)$, the result will not become better even if we replace $\nabla F(\widehat G_k)$ in \textbf{Option I} by its stochastic approximation with a mini-batch sampling. Hence, in this case of relatively small $n_1$, we should always take \textbf{Option I} in Algorithm~\ref{alg:gock}.
\end{remark}

\section{Convergence results of the SoCK 
 method with outer mini-batch step}\label{sec:gock}

In this section, we assume that both of $n_1$ and $n_2$ are very big, and we analyze the convergence rate of Algorithm~\ref{alg:gock} that takes \textbf{Option II} and estimate its complexity to produce a stochastic $\varepsilon$-solution of \eqref{genP}.  
Our analysis shares a similar flow as that in the previous section. We first bound $\|\widetilde{\nabla}_{k+1}-\nabla f(x_{k+1})\|^2$ and show that \eqref{eq:cp2} holds with an appropriate choice of $M$.
\begin{lem} \label{lem:samplevariance3}
~If Assumptions~
\ref{gentwo} and \ref{genthree} are satisfied, and $\widetilde{\nabla}_{k+1}$ is computed by \textbf{Option II} in Algorithm~\ref{alg:gock}, then
\begin{equation}\label{SV3}
\EX_{k-1}\big[\|\widetilde{\nabla}_{k+1}-\nabla f(x_{k+1})\|^2\big]\leq \left(\frac{4 B_G^4 L_F^2}{A} + \frac{4 B_F^2 L_G^2}{B} + \frac{2L^2}{C}\right)\|\widetilde{x}^s-x_{k+1}\|^2.
\end{equation}
\end{lem}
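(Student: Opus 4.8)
The plan is to bound $\EX_{k-1}\big[\|\widetilde\nabla_{k+1}-\nabla f(x_{k+1})\|^2\big]$ by splitting the gradient error into the error already incurred by the \textbf{Option~I} estimator built from the same $\widehat G_k,\nabla\widehat G_k$, plus the extra fluctuation due to sampling $\C_k$, and then reusing Lemma~\ref{lem:summandbound} for each piece. Since $\A_k,\B_k,\C_k$ are drawn independently, write $\EX_{k-1}=\EX_{\A_k,\B_k}\EX_{\C_k}$ and set $\widehat\nabla_{k+1}:=[\nabla\widehat G_k]^\top\nabla F(\widehat G_k)$, the \textbf{Option~I} estimate. The key algebraic observation is that, because $\nabla f(\widetilde x^s)=[\nabla G(\widetilde x^s)]^\top\nabla F(G(\widetilde x^s))$, the control variate in \textbf{Option~II} telescopes and
\[
\EX_{\C_k}\big[\widetilde\nabla_{k+1}\,\big|\,\A_k,\B_k\big]=\frac{1}{n_1}\sum_{i=1}^{n_1}\Big([\nabla\widehat G_k]^\top\nabla F_i(\widehat G_k)-[\nabla G(\widetilde x^s)]^\top\nabla F_i(G(\widetilde x^s))\Big)+\nabla f(\widetilde x^s)=\widehat\nabla_{k+1}.
\]
Thus $\widetilde\nabla_{k+1}-\nabla f(x_{k+1})=(\widetilde\nabla_{k+1}-\widehat\nabla_{k+1})+(\widehat\nabla_{k+1}-\nabla f(x_{k+1}))$, where the first bracket has zero $\EX_{\C_k}$-mean and the second is $\C_k$-measurable; a bias--variance decomposition (rather than a lossy Young split, which would double the $A,B$-dependent constants) then gives
\[
\EX_{k-1}\big[\|\widetilde\nabla_{k+1}-\nabla f(x_{k+1})\|^2\big]=\EX_{k-1}\big[\|\widetilde\nabla_{k+1}-\widehat\nabla_{k+1}\|^2\big]+\EX_{k-1}\big[\|\widehat\nabla_{k+1}-\nabla f(x_{k+1})\|^2\big].
\]

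The second term is precisely the \textbf{Option~I} quantity: applying Lemma~\ref{lem:summandbound} with $g=F$, which is $L_F$-smooth with a $B_F$-bounded gradient by Assumption~\ref{gentwo}, yields $\big(\tfrac{2B_G^4L_F^2}{A}+\tfrac{2B_F^2L_G^2}{B}\big)\|\widetilde x^s-x_{k+1}\|^2$, as in \eqref{SV1}. For the first term, note $\widetilde\nabla_{k+1}-\widehat\nabla_{k+1}=\frac1C\sum_{i\in\C_k}(v_i-\bar v)$ with $v_i:=[\nabla\widehat G_k]^\top\nabla F_i(\widehat G_k)-[\nabla G(\widetilde x^s)]^\top\nabla F_i(G(\widetilde x^s))$ and $\bar v:=\frac1{n_1}\sum_i v_i$; since $\C_k$ consists of $C$ i.i.d.\ uniform draws with replacement, its conditional variance is $\frac1C$ of the per-sample variance, so
\[
\EX_{\C_k}\big[\|\widetilde\nabla_{k+1}-\widehat\nabla_{k+1}\|^2\,\big|\,\A_k,\B_k\big]=\frac{1}{C}\cdot\frac{1}{n_1}\sum_{i=1}^{n_1}\|v_i-\bar v\|^2\le\frac{1}{C}\cdot\frac{1}{n_1}\sum_{i=1}^{n_1}\|v_i\|^2 .
\]
Now decompose each $v_i=\eta_i+\zeta_i$ with $\eta_i:=[\nabla\widehat G_k]^\top\nabla F_i(\widehat G_k)-[\nabla G(x_{k+1})]^\top\nabla F_i(G(x_{k+1}))$ and $\zeta_i:=\nabla f_i(x_{k+1})-\nabla f_i(\widetilde x^s)$: taking $\EX_{\A_k,\B_k}$ and invoking Lemma~\ref{lem:summandbound} with $g=F_i$ (so $l=L_F$, $b=B_F$) bounds $\EX_{\A_k,\B_k}\|\eta_i\|^2$ by $\big(\tfrac{2B_G^4L_F^2}{A}+\tfrac{2B_F^2L_G^2}{B}\big)\|\widetilde x^s-x_{k+1}\|^2$, while $\|\zeta_i\|^2\le L^2\|x_{k+1}-\widetilde x^s\|^2$ by \eqref{eq:smooth-fi}. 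Collecting these, the first term is $\le\frac1C\big(\tfrac{4B_G^4L_F^2}{A}+\tfrac{4B_F^2L_G^2}{B}+2L^2\big)\|\widetilde x^s-x_{k+1}\|^2$, and adding the two bounds (absorbing the $\frac1C$-weighted $A,B$-terms into the $A,B$-terms) produces \eqref{SV3}.

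The step I expect to need the most care is the conditional-expectation bookkeeping, not any individual inequality: one must track which of $\A_k,\B_k,\C_k$ each quantity depends on, in order to (i) legitimately identify $\widehat\nabla_{k+1}$ as $\EX_{\C_k}[\widetilde\nabla_{k+1}\mid\A_k,\B_k]$ and hence use the orthogonal decomposition instead of $\|a+b\|^2\le 2\|a\|^2+2\|b\|^2$, (ii) interchange $\EX_{\A_k,\B_k}$ and $\EX_{\C_k}$ when applying Lemma~\ref{lem:summandbound} to $\eta_i$, and (iii) keep the final coefficients exactly at $\tfrac{4B_G^4L_F^2}{A}$, $\tfrac{4B_F^2L_G^2}{B}$, $\tfrac{2L^2}{C}$. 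Everything else is the same variance-reduction-plus-mini-batch computation already carried out for \textbf{Option~I} in Lemma~\ref{lem:samplevariance}.
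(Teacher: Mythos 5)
Your proof is correct, but it follows a genuinely different route from the paper's. The paper introduces the intermediate quantity $U_{k+1}$ of \eqref{eq:def-Uk}, which keeps the same outer sample $\C_k$ but replaces $\widehat G_k,\nabla\widehat G_k$ by the exact $G(x_{k+1}),\nabla G(x_{k+1})$; it then applies the lossy split $\|a+b\|^2\le 2\|a\|^2+2\|b\|^2$ to $\widetilde{\nabla}_{k+1}-U_{k+1}$ and $U_{k+1}-\nabla f(x_{k+1})$, bounding the first piece via Cauchy--Schwarz over $\C_k$ (no $1/C$ gain) and Lemma~\ref{lem:summandbound} with $g=F_i$, and the second via the conditional zero-mean/independence argument plus \eqref{eq:smooth-fi}, which is where $L^2/C$ arises. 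You instead center at the \textbf{Option~I} estimator $\widehat{\nabla}_{k+1}=\EX_{\C_k}[\widetilde{\nabla}_{k+1}\mid \A_k,\B_k]$ (your control-variate telescoping identity is exactly right) and use an exact bias--variance decomposition, so the whole $\C_k$-fluctuation carries a $1/C$ factor while the $A,B$-error enters once at full weight through \eqref{SV1} and once more at weight $2/C$. This is conceptually appealing: it exhibits the \textbf{Option~II} error as the \textbf{Option~I} error plus an $O(1/C)$ outer-sampling variance, and for $C\ge 2$ your intermediate bound is at least as tight as the paper's. Two small caveats: (i) your final absorption step, $\tfrac{2}{A}\bigl(1+\tfrac{2}{C}\bigr)\le\tfrac{4}{A}$ and similarly for $B$, requires $C\ge 2$, so as written you recover the exact constants of \eqref{SV3} only for $C\ge 2$ (harmless for the theorem, where $C=1080L^2/\mu^2$, but the paper's Young-split proof covers $C=1$ too); (ii) in the bias--variance step the bracket $\widehat{\nabla}_{k+1}-\nabla f(x_{k+1})$ is measurable with respect to $\A_k,\B_k$ and the history, i.e., it does not depend on $\C_k$ --- not ``$\C_k$-measurable'' as you wrote --- and that independence is precisely what makes the cross term vanish.
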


\begin{proof} 
Define 
\begin{equation}\label{eq:def-Uk}
U_{k+1} = \frac{1}{C}\sum_{i\in\C_k}\left(\left[\nabla G(x_{k+1})\right]^\top  \nabla F_{i}( G(x_{k+1}))-\left[\nabla G(\widetilde{x}^s)\right]^\top  \nabla F_{i}(G(\widetilde{x}^s))\right) +\nabla f(\widetilde{x}^s).
\end{equation} 
By the Young's inequality, it holds that
\begin{equation}\label{chain1}
\EX_{k-1}\big[\|\widetilde{\nabla}_{k+1}-\nabla f(x_{k+1})\|^2\big]\leq 2\EX_{k-1}\big[\|\widetilde{\nabla}_{k+1}-U_{k+1}\|^2\big]+2\EX_{k-1}\big[\|U_{k+1}-\nabla f(x_{k+1})\|^2\big].
\end{equation}
By the definition of $\widetilde{\nabla}_{k+1}$ in \textbf{Option II} of Algorithm~\ref{alg:gock} and the definition of $U_{k+1}$ in \eqref{eq:def-Uk}, we have
\begin{align*}
\EX_{k-1}\big[\|\widetilde{\nabla}_{k+1}-U_{k+1}\|^2\big] =&~ \frac{1}{C^2}\EX_{k-1}\left[ \left\| \sum_{i\in\C_k} \left(\big[\nabla \widehat{G}_{k}\big]^\top  \nabla F_{i}(\widehat{G}_{k}) - \left[\nabla G(x_{k+1})\right]^\top  \nabla F_{i}\big(G(x_{k+1})\big)\right)  \right\|^2 \right]\cr
\le &~\frac{1}{C }\sum_{i\in\C_k}\EX_{k-1}\left[ \left\|  \big[\nabla \widehat{G}_{k}\big]^\top  \nabla F_{i}(\widehat{G}_{k}) - \left[\nabla G(x_{k+1})\right]^\top  \nabla F_{i}\big(G(x_{k+1})\big)  \right\|^2 \right].
\end{align*}
Applying Lemma~\ref{lem:summandbound} with $g= F_i$ for each $i\in \C_k$ to the right-hand-side (r.h.s.) of the above inequality gives
\begin{equation}\label{chain1-ineq1}
\EX_{k-1}\big[\|\widetilde{\nabla}_{k+1}-U_{k+1}\|^2\big] \le 2\left(\frac{ B_G^4 L_F^2}{A} + \frac{ B_F^2 L_G^2}{B} \right)\|\widetilde{x}^s-x_{k+1}\|^2.
\end{equation}
For the second term of the r.h.s. of \eqref{chain1}, we have
	\begin{align}\label{chain1-ineq2}
&~\EX_{k-1}\big[\|U_{k+1}-\nabla f(x_{k+1})\|^2\big]\cr
=  & \! \frac{1}{C^2} \! \EX_{k-1} \! \left[ \left\| \sum_{i\in\C_k} \! \left(\left[\nabla G( \! x_{k+1} \! )\right]^\top \! \nabla F_{i}( \! G( \! x_{k+1} \! ) \! ) \! - \! \left[\nabla G(\widetilde{x}^s)\right]^\top \! \nabla F_{i}( \! G(\widetilde{x}^s) \! ) \! - \! \nabla f( \! x_{k+1} \! )  \! +  \! \nabla f(\widetilde{x}^s)\right) \! \right\|^2 \right]\cr
=  & \! \frac{1}{C^2} \! \EX_{k-1} \! \left[\sum_{i\in\C_k} \!  \left\|  \left[\nabla G( \! x_{k+1} \! )\right]^\top \! \nabla F_{i}( \! G( \! x_{k+1} \! ) \! ) \! - \! \left[\nabla G(\widetilde{x}^s)\right]^\top \! \nabla F_{i}( \! G(\widetilde{x}^s) \! ) \! - \! \nabla f( \! x_{k+1} \! )  \! +  \! \nabla f(\widetilde{x}^s)   \right\|^2 \right] \! ,
\end{align}
where the second equality holds because the summands are conditionally independent and each has a zero mean. Since the variance of a random vector is bounded by its second moment, we have for each $i\in \C_k$ that
\begin{align}
&~\EX_{k-1}\left[ \left\|  \left[\nabla G(x_{k+1})\right]^\top  \nabla F_{i}( G(x_{k+1}))-\left[\nabla G(\widetilde{x}^s)\right]^\top  \nabla F_{i}(G(\widetilde{x}^s))-\nabla f(x_{k+1}) + \nabla f(\widetilde{x}^s)   \right\|^2 \right]\cr
\le &~\EX_{k-1}\left[ \left\|  \left[\nabla G(x_{k+1})\right]^\top  \nabla F_{i}( G(x_{k+1}))-\left[\nabla G(\widetilde{x}^s)\right]^\top  \nabla F_{i}(G(\widetilde{x}^s))  \right\|^2 \right] \label{chain1-ineq3-1}\\
\le &~ L^2\|\widetilde{x}^s-x_{k+1}\|^2, \label{chain1-ineq3}
\end{align}
where the second inequality follows from \eqref{eq:smooth-fi}. Substituting \eqref{chain1-ineq3} into \eqref{chain1-ineq2} yields
\begin{equation}\label{chain1-ineq4}
\EX_{k-1}\big[\|U_{k+1}-\nabla f(x_{k+1})\|^2\big] \le \frac{L^2}{C} \|\widetilde{x}^s-x_{k+1}\|^2.
\end{equation}
We obtain the desired result by plugging \eqref{chain1-ineq1} and \eqref{chain1-ineq4} into \eqref{chain1}. 
\end{proof}

Plugging \eqref{SV3} into \eqref{eq:ineq-for-all-case}, we are able to show \eqref{eq:cp2} and thus \eqref{eq:key} with an appropriate $M$. 
\begin{lem}
\label{lem:cp2nos}
~Suppose $\tau_1\in (0, \frac{1}{3\alpha L}]$ and $\tau_2\in[0,1-\tau_1] $ in the linear coupling step of Algorithm \ref{alg:gock}. Let $x^*$ be the solution of \eqref{genP}. If $\widetilde{\nabla}_{k+1}$ is computed by \textbf{Option II}, then \eqref{eq:cp2} holds with  
\begin{equation}\label{eq:M-set2}
M = 3\Big(\frac{1}{4\tau_1L}+\frac{3}{\mu}\Big)\left(\frac{4 B_G^4 L_F^2}{A} + \frac{4 B_F^2 L_G^2}{B}+ \frac{2 L^2}{C}\right).
\end{equation} 
\end{lem}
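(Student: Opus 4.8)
The plan is to run the exact same argument as in the proof of Lemma~\ref{lem:cp2}, the only change being the constant in the variance bound on $\widetilde{\nabla}_{k+1}$. First I would start from inequality \eqref{eq:ineq-for-all-case} of Lemma~\ref{lem:cp-step2}, which is valid under the stated hypotheses $\tau_1\in(0,\frac{1}{3\alpha L}]$, $\tau_2\in[0,1-\tau_1]$ and for any positive $\beta$, and makes no reference to how $\widetilde{\nabla}_{k+1}$ is computed. Taking the conditional expectation $\EE_{k-1}$ of both sides and choosing $\beta=\frac{6}{\mu}$, the coefficient in front of the bias term $\|\widetilde{\nabla}_{k+1}-\nabla f(x_{k+1})\|^2$ becomes $\frac{1}{4\tau_1 L}+\frac{3}{\mu}$, while the coefficient in front of $\|z_k-x^{\ast}\|^2$ becomes $\frac{1}{2\alpha}+\frac{\mu}{12}$, since $\frac{1+\alpha/\beta}{2\alpha}=\frac{1}{2\alpha}+\frac{1}{2\beta}$.

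Next I would invoke Lemma~\ref{lem:samplevariance3}, i.e.\ the bound \eqref{SV3}, to replace $\EE_{k-1}\big[\|\widetilde{\nabla}_{k+1}-\nabla f(x_{k+1})\|^2\big]$ by $\big(\frac{4B_G^4 L_F^2}{A}+\frac{4B_F^2 L_G^2}{B}+\frac{2L^2}{C}\big)\|\widetilde{x}^s-x_{k+1}\|^2$. This is the single place where the analysis departs from that of Lemma~\ref{lem:cp2}: the extra Young splitting into the $U_{k+1}$ surrogate and the additional mini-batch sampling of the outer sum $\{F_i\}$ are precisely what double the constants $B_G^4L_F^2/A$ and $B_F^2L_G^2/B$ and introduce the new term $2L^2/C$. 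Then I would apply Lemma~\ref{lem:split} (inequality \eqref{eq:gaps}) to bound $\|\widetilde{x}^s-x_{k+1}\|^2$ by $3\tau_1^2\|z_k-x^{\ast}\|^2+\frac{6(1-\tau_2)^2}{\mu}(H(\widetilde{x}^s)-H(x^{\ast}))+\frac{6(1-\tau_1-\tau_2)^2}{\mu}(H(y_k)-H(x^{\ast}))$. Defining $M$ by \eqref{eq:M-set2}, so that $M=3\big(\frac{1}{4\tau_1 L}+\frac{3}{\mu}\big)\big(\frac{4B_G^4 L_F^2}{A}+\frac{4B_F^2 L_G^2}{B}+\frac{2L^2}{C}\big)$, the product of the bias coefficient $\frac{1}{4\tau_1 L}+\frac{3}{\mu}$, the variance constant, and the factor $3$ from \eqref{eq:gaps} reproduces exactly the coefficients $M\tau_1^2$, $\frac{2M(1-\tau_2)^2}{\mu}$ and $\frac{2M(1-\tau_1-\tau_2)^2}{\mu}$ appearing in \eqref{eq:cp2}. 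Collecting the $H(\widetilde{x}^s)$, $H(y_k)$, $\|z_k-x^{\ast}\|^2$ contributions and keeping $-\frac{1}{\tau_1}\EE_{k-1}[H(y_{k+1})-H(x^{\ast})]$ and $-\frac{1+\alpha\mu}{2\alpha}\EE_{k-1}[\|z_{k+1}-x^{\ast}\|^2]$ on the right yields precisely \eqref{eq:cp2}.

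I do not anticipate a genuine obstacle, as every inequality used (Lemmas~\ref{lem:cp-step2}, \ref{lem:samplevariance3}, \ref{lem:split}) is already available and the argument is pure bookkeeping of constants. The only steps requiring a little care are confirming that $\beta=\frac{6}{\mu}$ makes the $\|z_k-x^{\ast}\|^2$ coefficient land on $\frac{1}{2\alpha}+\frac{\mu}{12}+M\tau_1^2$ as written in \eqref{eq:cp2}, and that the factor $3$ of Lemma~\ref{lem:split} is correctly absorbed into the definition of $M$ in \eqref{eq:M-set2}.
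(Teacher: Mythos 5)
Your proposal is correct and matches the paper's own argument: the paper likewise takes $\EE_{k-1}$ of \eqref{eq:ineq-for-all-case} with $\beta=\frac{6}{\mu}$, plugs in the bound \eqref{SV3}, and absorbs the factor $3$ from Lemma~\ref{lem:split} into the definition of $M$ in \eqref{eq:M-set2}, exactly as you describe. The only difference is that the paper's write-up is terser, leaving the bookkeeping you spell out implicit.
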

\begin{proof} 
Taking conditional expectation $\EE_{k-1}$ on both sides of \eqref{eq:ineq-for-all-case} with $\beta=\frac{6}{\mu}$ and plugging \eqref{SV3}, we immediately have \eqref{eq:cp2} by using the choice of $M$ in \eqref{eq:M-set2}. 
\end{proof} 

We are now to show our second main result.

\begin{thm}[convergence result for \textbf{Option II}
	]\label{thm:gocknos}
~Under Assumptions~\ref{singleone}, \ref{gentwo} and \ref{genthree}, let $\{\widetilde x^s\}$ be generated from Algorithm~\ref{alg:gock} with $\widetilde\nabla_{k+1}$ computed by \textbf{Option II} and with parameters set as follows:
\begin{subequations}\label{eq:para-2}
\begin{align}
m\leftarrow\left\lceil \frac{1}{2}\sqrt{\frac{L}{\mu}}\right\rceil,\, \tau_1\leftarrow  \frac{1}{2m},\, \tau_2\leftarrow \frac{1}{2m},\, \theta\leftarrow 1+\frac{1}{12m},\, \alpha\leftarrow \frac{1}{3\tau_1 L},\\
A\leftarrow\frac{2160 B_G^4 L_F^2}{\mu^2},\, B\leftarrow\frac{2160 B_F^2 L_G^2}{\mu^2},\, C\leftarrow\frac{1080  L^2}{\mu^2}.\label{eq:para-2-ABC}
\end{align}
\end{subequations}
Then
\begin{equation*}
\EX\big[H(\widetilde{x}^{S})-H(x^{\ast})\big] \leq 
11\cdot \left(  \frac{12}{13}\right)^{S}\big(H(x_0)-H(x^{\ast})\big). 
\end{equation*}
\end{thm}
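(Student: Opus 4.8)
The plan is to follow the proof of Theorem~\ref{thm:sock} almost verbatim, the one genuine change being that the gradient-variance bound now comes from Lemma~\ref{lem:samplevariance3} (inequality \eqref{SV3}) and hence the constant $M$ is the one from Lemma~\ref{lem:cp2nos}, i.e.\ \eqref{eq:M-set2}, rather than from Lemma~\ref{lem:cp2}. First I would record that, because $m$, $\tau_1$, $\tau_2$, $\theta$ and $\alpha$ are set exactly as in \eqref{eq:para-1}, the elementary facts $m^2\ge\frac{L}{4\mu}$, $\alpha\mu\ge\frac{1}{6m}$ and $\alpha\mu<1$ still hold, with the same one-line justifications (and $\frac{1}{m\alpha\mu}\le 6$).

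The crux is the observation that the inflated batch sizes in \eqref{eq:para-2-ABC} were chosen precisely so that the Option~II constant $M$ collapses to the \emph{same} value used in the Option~I analysis. Indeed, substituting $A=\frac{2160B_G^4L_F^2}{\mu^2}$, $B=\frac{2160B_F^2L_G^2}{\mu^2}$, $C=\frac{1080L^2}{\mu^2}$ into \eqref{eq:M-set2} gives $\frac{4B_G^4L_F^2}{A}=\frac{4B_F^2L_G^2}{B}=\frac{2L^2}{C}=\frac{\mu^2}{540}$, so that $M=3\big(\frac{1}{4\tau_1L}+\frac{3}{\mu}\big)\cdot\frac{\mu^2}{180}=\frac{\mu^2}{60}\big(\frac{1}{4\tau_1L}+\frac{3}{\mu}\big)$, which is exactly the quantity bounded in \eqref{eq:M-set1-ineq}. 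Hence the same chain yields $M\le\frac{\mu}{16}$ (using $\frac{1}{\tau_1}=2m<\frac{3L}{\mu}$), and therefore $2M\tau_1^2=\frac{M}{2m^2}\le\frac{M}{2}<\frac{\mu}{6}$ and the bound $\theta\le\frac{1+\alpha\mu}{1+\alpha(\mu/6+2M\tau_1^2)}$ holds exactly as in \eqref{eq:bound-theta}.

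From this point the argument is mechanical and identical to that of Theorem~\ref{thm:sock}: set $\overline{A}=\frac{2M}{\mu}(1-\tau_1-\tau_2)^2$, $\overline{B}=\frac{2M}{\mu}(1-\tau_2)^2$, $\overline{C}=\frac{1}{2\alpha}+\frac{\mu}{12}+M\tau_1^2$; by Lemma~\ref{lem:cp2nos} all hypotheses of Lemma~\ref{telescoping} are met, so \eqref{eq:key} holds. Since $\frac{2M}{\mu}\le\frac18$ we get $\overline{A},\overline{B}\le\frac18$, so Lemma~\ref{usualrate} gives the ratio bound exceeding $\frac{13}{12}$; together with $\theta^m\ge\frac{13}{12}$ this turns \eqref{eq:key} into the one-outer-loop contraction \eqref{expdecay}. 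Iterating \eqref{expdecay} for $s=0,\dots,S-1$, dividing through, and using $\sum_{j=0}^{m-1}\theta^j\ge m$, $\tau_1=\tau_2=\frac{1}{2m}$, $M\le\frac{\mu}{16}$, and the $\mu$-strong convexity bound $H(x_0)-H(x^*)\ge\frac{\mu}{2}\|z_0-x^*\|^2$ — precisely steps \eqref{expdecay-ineq2}--\eqref{expdecay-ineq4} — yields $\EE[H(\widetilde x^S)-H(x^*)]\le(\tfrac{12}{13})^S\big(3+\frac{1+\alpha\mu/3}{m\alpha\mu}\big)(H(x_0)-H(x^*))$, and $\frac{1+\alpha\mu/3}{m\alpha\mu}\le 8$ finishes the bound with the constant $11$.

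I do not anticipate a real obstacle: the entire purpose of the parameter choice \eqref{eq:para-2} is to make the Option~II analysis reduce to the Option~I analysis. The only point requiring care is the bookkeeping of the extra factor of $2$ produced by the Young split in \eqref{chain1} and the factor-of-$3$ inflation of $A,B$ together with the new term in $C$, i.e.\ verifying that \eqref{eq:M-set2} with \eqref{eq:para-2-ABC} indeed reproduces $\frac{\mu^2}{60}\big(\frac{1}{4\tau_1L}+\frac{3}{\mu}\big)$; once that is confirmed, every subsequent estimate is literally the one already used to prove Theorem~\ref{thm:sock}.
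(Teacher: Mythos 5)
Your proposal is correct and follows exactly the paper's own route: the paper's proof likewise observes that the parameters coincide with those of Theorem~\ref{thm:sock} except for $A,B,C$, which only enter through $M$, checks that \eqref{eq:M-set2} with \eqref{eq:para-2-ABC} again gives $M\le \frac{\mu}{16}$, and then invokes the argument of Theorem~\ref{thm:sock} verbatim. Your explicit computation $\frac{4B_G^4L_F^2}{A}=\frac{4B_F^2L_G^2}{B}=\frac{2L^2}{C}=\frac{\mu^2}{540}$, so $M=\frac{\mu^2}{60}\big(\frac{1}{4\tau_1 L}+\frac{3}{\mu}\big)$, is exactly the verification the paper leaves implicit.
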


\begin{proof} 
Notice that the parameters given in \eqref{eq:para-2} are the same as those in \eqref{eq:para-1} except for $A,B$ and $C$, and also notice that the choices of $A,B$ and $C$ only affect the value of $M$. Plugging into \eqref{eq:M-set2} the values of $A,B,C$ given in \eqref{eq:para-2-ABC}, we can easily verify that $M\le \frac{\mu}{16}$. Now following the same arguments in the proof of Theorem~\ref{thm:sock}, we obtain the desired result. 
\end{proof}

By Theorem~\ref{thm:gocknos}, we can estimate the complexity of  Algorithm~\ref{alg:gock} in terms of the number of evaluations on $\nabla F_i$, $G_j$, $\nabla G_j$, and the proximal mapping of $h$. Its proof follows that of Corollary~\ref{cor:sock}, and we omit it.

\begin{cor}\label{cor:gocknos}
~Given $\varepsilon>0$, under the same assumptions as in Theorem~\ref{thm:gocknos}, the complexity of Algorithm~\ref{alg:gock} to produce a stochastic $\varepsilon$-solution is \[O\left( \Big(n_1 + n_2+\sqrt{\frac{L}{\mu}}\frac{\max\{B_G^4 L_F^2,\,B_F^2 L_G^2,\,L^2\}}{\mu^2}\Big) \log\frac{H(x_0)-H(x^{\ast})}{\varepsilon}\right).\] 
\end{cor}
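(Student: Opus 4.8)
The plan is to follow essentially the same chain of reasoning used in the proof of Corollary~\ref{cor:sock}, adjusting only the per-snapshot cost to reflect the use of \textbf{Option II} in Algorithm~\ref{alg:gock}. First I would recall from the discussion after Algorithm~\ref{alg:gock} that, with \textbf{Option II}, the overall complexity counted in component function/gradient/Jacobian evaluations is $S\big(n_1+2n_2+2m(A+B+C)\big)$: each outer loop $s$ spends $O(n_1+n_2)$ evaluations to form the snapshot quantities $G(\widetilde x^s)$, $\nabla G(\widetilde x^s)$, and $\nabla f(\widetilde x^s)$, and each of the $m$ inner iterations spends $O(A+B+C)$ evaluations to build $\widehat G_k$, $\nabla\widehat G_k$, and $\widetilde\nabla_{k+1}$.

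Next I would substitute the parameter choices from \eqref{eq:para-2}. We have $m=\lceil\frac12\sqrt{L/\mu}\rceil=O(\sqrt{L/\mu})$, while $A=\Theta(B_G^4 L_F^2/\mu^2)$, $B=\Theta(B_F^2 L_G^2/\mu^2)$, and $C=\Theta(L^2/\mu^2)$, so $A+B+C=O\big(\max\{B_G^4 L_F^2,\,B_F^2 L_G^2,\,L^2\}/\mu^2\big)$. Therefore the cost to produce a single snapshot point $\widetilde x^{s+1}$ is
\[
O\!\left(n_1+n_2+\sqrt{\tfrac{L}{\mu}}\cdot\frac{\max\{B_G^4 L_F^2,\,B_F^2 L_G^2,\,L^2\}}{\mu^2}\right).
\]
Then I would invoke Theorem~\ref{thm:gocknos}, which gives the linear rate $\EE[H(\widetilde x^S)-H(x^\ast)]\le 11\,(12/13)^S\big(H(x_0)-H(x^\ast)\big)$; solving $11\,(12/13)^S\big(H(x_0)-H(x^\ast)\big)\le\varepsilon$ shows that $S=O\big(\log\frac{H(x_0)-H(x^\ast)}{\varepsilon}\big)$ outer iterations suffice. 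Multiplying the per-snapshot cost by this value of $S$ yields the claimed bound
\[
O\!\left(\Big(n_1+n_2+\sqrt{\tfrac{L}{\mu}}\,\frac{\max\{B_G^4 L_F^2,\,B_F^2 L_G^2,\,L^2\}}{\mu^2}\Big)\log\frac{H(x_0)-H(x^\ast)}{\varepsilon}\right).
\]

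There is essentially no obstacle here, since all the analytic work has been done upstream (Lemma~\ref{lem:samplevariance3}, Lemma~\ref{lem:cp2nos}, Theorem~\ref{thm:gocknos}); the only points requiring mild care are (i) bookkeeping the evaluation count correctly — in particular noting that \textbf{Option II} costs $O(A+B+C)$ rather than $O(A+B+n_1)$ per inner step, which is exactly why this regime is preferable when $n_1\gg A+B$ — and (ii) absorbing the constants $11$, $\log(12/13)^{-1}$, and the implied constants in $A,B,C$ into the big-$O$. For this reason the proof can simply mirror that of Corollary~\ref{cor:sock} verbatim with the substitution $A+B+n_1\leadsto A+B+C$, and may justifiably be stated as ``the proof follows that of Corollary~\ref{cor:sock}, and we omit it.''
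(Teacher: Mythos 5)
Your proposal is correct and matches the paper's intended argument: the paper itself omits the proof, stating it follows that of Corollary~\ref{cor:sock}, and your bookkeeping (per-snapshot cost $O(n_1+n_2+m(A+B+C))$ under \textbf{Option II}, the parameter values from \eqref{eq:para-2}, and $S=O(\log\frac{H(x_0)-H(x^{\ast})}{\varepsilon})$ from Theorem~\ref{thm:gocknos}) is exactly the substitution the authors have in mind.
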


\section{Treating non-strongly convex compositional optimization}\label{sec:nonsc}

In this section, we study convex (but may not be strongly-convex) finite-sum compositional optimization in the form of \eqref{genP}, namely, instead of Assumption~\ref{singleone}, we make the following assumption: 
\begin{assump}\label{singleone-cvx}
 ~The function $f$ given in \eqref{eq:def-f} is convex, and the function 
  $h$ in \eqref{genP} is convex. 
\end{assump}
The non-strongly convex case has been studied in \cite{lin2018improved}.  
The algorithmic design in \cite{lin2018improved} is built on the algorithms for strongly convex models. They modify earlier algorithms for strongly convex COPs in a way that their step size ranges in a fixed magnitude that is free from $\kappa,$ 
and their number of inner iterations exponentially increases 
as the outer loop proceeds. This way, the final outer loop dominates the total computational cost. We adopt a similar trick to find an approximate solution of a convex model by solving a sequence of slightly perturbed but strongly convex problems. More precisely, we follow \cite{allen2016optimal} and implement the black-box reductions; see Algorithm~\ref{alg:gock-cvx}, where $H_t^*$ denotes the optimal value of the problem in the $t$-th outer loop. 

\begin{algorithm}[H]                      
\caption{ \textbf{N}on-strongly C\textbf{o}nvex  \textbf{C}ompositional \textbf{K}atyusha (NoCK)}          
\label{alg:gock-cvx}                           
\begin{algorithmic} 
\State \textbf{Input:} $x_0\in \mathbb{R}^{N_2}$, initial strong-convexity constant $\mu_0$, and outer loop number $T$
\For{$t=0,1,\ldots,T-1$}
\State Apply Algorithm~\ref{alg:gock} with starting point $x_t$ to $\min_x \big\{H_t(x):=H(x)+\frac{\mu_t}{2}\|x-x_0\|^2\big\}$ and find $x_{t+1}$ such that
\begin{equation}\label{eq:rule-x-t}
\EE\big[H_t(x_{t+1})-H_t^*\,\big|\, x_t\big] \le \frac{H_t(x_t)-H_t^*}{4}.
\end{equation}
\State Let $\mu_{t+1}=\mu_t/2$.
\EndFor
\State \textbf{Return} $x_T$
\end{algorithmic}
\end{algorithm}

The following result is from \cite[Theorem 3.1]{allen2016optimal} and can be proved in the same way.
\begin{lem}\label{AdaptReg}
~Let $x^*$ be an optimal solution of \eqref{genP} and $H^*$ be the optimal objective value.
Suppose $H(x_0) - H^* \le D_H$ and $\|x_0-x^*\|^2\le D_x$. Set $\mu_0=D_H/D_x$ and $T=\log_2(D_H/\varepsilon)$. Then $\EE [H(x_T)-H^*] \le \varepsilon$ and the total complexity is $\sum_{t=0}^{T-1}\mathrm{Time}(t)$, where $\mathrm{Time}(t)$ denotes the complexity to produce $x_{t+1}$.
\end{lem}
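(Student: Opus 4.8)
The plan is to reproduce the AdaptReg argument of \cite{allen2016optimal}: track the potential $\Delta_t:=\EE[H_t(x_t)-H_t^{*}]$ along the outer loop of Algorithm~\ref{alg:gock-cvx}, show it contracts geometrically, and then translate a bound on $\Delta_T$ into the desired bound on $\EE[H(x_T)-H^{*}]$. The complexity statement is immediate once we check that each rule \eqref{eq:rule-x-t} is enforceable: writing $\tilde h_t(x):=h(x)+\tfrac{\mu_t}{2}\|x-x_0\|^2$, the objective $H_t=f+\tilde h_t$ satisfies Assumptions~\ref{singleone}--\ref{genthree} with $\mu\leftarrow\mu_t$ and the same $L$ (the strong convexity comes from the added quadratic, whose proximal map is trivially available), so Theorem~\ref{thm:sock} (or Theorem~\ref{thm:gocknos}) shows that running Algorithm~\ref{alg:gock} for a constant number of epochs $S$ — the least $S$ with $11(12/13)^S\le\tfrac14$ — produces $x_{t+1}$ obeying \eqref{eq:rule-x-t}; by construction the work for this is $\mathrm{Time}(t)$ and the total is $\sum_{t=0}^{T-1}\mathrm{Time}(t)$.

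For the convergence part, first I would collect elementary facts about the moving optimal values. From $H(x)\le H_t(x)\le H(x)+\tfrac{\mu_t}{2}\|x-x_0\|^2$, the choice $\mu_t=\mu_0/2^{t}=D_H/(2^{t}D_x)$, and $\|x_0-x^{*}\|^2\le D_x$, one gets $0\le H_t^{*}-H^{*}\le \tfrac{\mu_t}{2}D_x=D_H/2^{t+1}$; and since $H_{t+1}\le H_t$ pointwise, $0\le H_t^{*}-H_{t+1}^{*}\le H_t^{*}-H^{*}\le D_H/2^{t+1}$. Next I would split $H_{t+1}(x_{t+1})-H_{t+1}^{*}$ as $\big(H_{t+1}(x_{t+1})-H_t(x_{t+1})\big)+\big(H_t(x_{t+1})-H_t^{*}\big)+\big(H_t^{*}-H_{t+1}^{*}\big)$, note the first bracket equals $-\tfrac{\mu_t}{4}\|x_{t+1}-x_0\|^2\le 0$, bound the conditional expectation of the second by $\tfrac14(H_t(x_t)-H_t^{*})$ via \eqref{eq:rule-x-t}, and bound the third by $D_H/2^{t+1}$; taking full expectations yields the recursion $\Delta_{t+1}\le\tfrac14\Delta_t+D_H/2^{t+1}$, with $\Delta_0=H(x_0)-H_0^{*}\le H(x_0)-H^{*}\le D_H$. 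A one-line induction then gives $\Delta_t\le D_H/2^{t-1}$ for all $t\ge0$, and finally $\EE[H(x_T)-H^{*}]\le\EE[H_T(x_T)-H^{*}]=\Delta_T+(H_T^{*}-H^{*})\le D_H/2^{T-1}+D_H/2^{T+1}=O(D_H/2^{T})$; with $T=\log_2(D_H/\varepsilon)$ (adjusted by an additive absolute constant to swallow the $O(1)$ factor) this is $\le\varepsilon$.

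The bookkeeping is routine; I expect the only delicate points to be (i) handling the \emph{moving} minimizers $H_t^{*}$ — in particular, noticing that the drift $H_t^{*}-H_{t+1}^{*}$, though nonnegative, is controlled by the cheap $O(2^{-t})$ bound on $H_t^{*}-H^{*}$ rather than by something that could accumulate — and (ii) the rate comparison when solving $\Delta_{t+1}\le\tfrac14\Delta_t+O(2^{-t})$: the additive perturbation decays only at rate $\tfrac12$, which is slower than the $\tfrac14$ contraction, so it is the perturbation, not the contraction, that sets the final $O(2^{-t})$ rate of $\Delta_t$. Getting the constants to land exactly at ``$\le\varepsilon$'' requires the harmless additive adjustment of $T$ noted above.
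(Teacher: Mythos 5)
Your proposal is correct and is essentially the paper's own proof: the paper simply invokes \cite[Theorem 3.1]{allen2016optimal} (AdaptReg), and your potential $\Delta_t=\EE[H_t(x_t)-H_t^*]$ with the recursion $\Delta_{t+1}\le\tfrac14\Delta_t+D_H/2^{t+1}$, plus the observation $0\le H_t^*-H^*\le \tfrac{\mu_t}{2}D_x$, is exactly that argument (including the reduction of \eqref{eq:rule-x-t} to Theorems~\ref{thm:sock}/\ref{thm:gocknos} with a constant number of epochs $S$). The only discrepancy is the absolute constant—with $T=\log_2(D_H/\varepsilon)$ your chain yields $\EE[H(x_T)-H^*]\le\tfrac52\varepsilon$ rather than $\varepsilon$—which you acknowledge and which is immaterial to the complexity results, since it only shifts $T$ by an additive constant.
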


\begin{rmk}
~The setting of $T$  and $\mu_0$ requires an estimate of $D_H$ and  $D_H/D_x$, respectively. This can be done if the domain of $h$ is bounded and we know the bound. Otherwise, {since the algorithm can terminate at any moment,} one can simply tune $\mu_0$ {as a ratio of two unknowns}, as explained in \cite{allen2016optimal}.  Note that the theoretical guarantee of \cite{lin2018improved} requires $D_H+L\cdot D_x$ instead for its termination. 

{In practice, \eqref{eq:rule-x-t} is not checkable as it requires the optimal value $H^*_t$. Nevertheless,  
it would hold by Theorems~\ref{thm:sock} and \ref{thm:gocknos} if $S\ge\log 44/\log(12/11)$ is used in the subroutine SoCK. This fixed $S$ can be too big to yield good numerical performance.  
In Section~\ref{sec:comp-nock}, we notice that NoCK can still perform well even with $S=3$.}
\end{rmk}

Applying Corollaries~\ref{cor:sock}, and \ref{cor:gocknos} 
 together with Lemma~\ref{AdaptReg}, we can easily have the complexity result of Algorithm~\ref{alg:gock-cvx} to produce a stochastic $\varepsilon$-solution of \eqref{genP} when it is non-strongly convex. 

\begin{thm}[case of relatively small $n_1$]\label{non:sock}
Let $\varepsilon>0$. Suppose Algorithm~\ref{alg:gock} with \textbf{Option I} is applied as the subroutine in Algorithm~\ref{alg:gock-cvx}. Then under Assumptions~\ref{singleone-cvx}, \ref{gentwo} and \ref{genthree}, Algorithm~\ref{alg:gock-cvx} with $\mu_0$ and $T$ set to those in Lemma~\ref{AdaptReg} can produce a stochastic $\varepsilon$-solution of \eqref{genP} with overall complexity  
\begin{equation}\label{eq:non:sock}
\sum_{t=0}^{T-1}\mathrm{Time}(t) = O\left((n_1+n_2)\log\frac{D_H}{\varepsilon}  + \frac{D_x^{2.5}\sqrt{L}\cdot\max\{B_G^4 L_F^2,\,B_F^2 L_G^2 \}}{\varepsilon^{2.5}} + \frac{\sqrt{D_x L}n_1}{\sqrt\varepsilon}\right). 
\end{equation}
\end{thm}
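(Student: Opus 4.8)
The plan is to read the result as a straightforward bookkeeping consequence of three facts already in the paper: Lemma~\ref{AdaptReg}, which reduces everything to bounding $\mathrm{Time}(t)$, the per-run cost of Algorithm~\ref{alg:gock}; Theorem~\ref{thm:sock}, which turns "$\mathrm{Time}(t)$" into a single (geometric-type) expression in $\mu_t$; and Corollary~\ref{cor:sock}, which supplies the concrete count of component evaluations. So the whole argument is: (i) show each outer subproblem of Algorithm~\ref{alg:gock-cvx} is an admissible instance for Algorithm~\ref{alg:gock} with \textbf{Option I}; (ii) show a \emph{constant} number of SoCK outer loops realizes the progress rule \eqref{eq:rule-x-t}; (iii) plug $\mu_t=\mu_0/2^t$ into the SoCK cost and sum the geometric series over $t=0,\dots,T-1$; (iv) invoke Lemma~\ref{AdaptReg} to certify the $\varepsilon$-accuracy and the total-complexity formula.

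\emph{Step 1 (subproblems are admissible).} In the $t$-th loop we minimize $H_t(x)=H(x)+\tfrac{\mu_t}{2}\|x-x_0\|^2$. Absorbing the quadratic into $h$, the regularizer $h+\tfrac{\mu_t}{2}\|\cdot-x_0\|^2$ is $\mu_t$-strongly convex and still has an easy proximal map, while $f=F\circ G$ (hence $L,L_F,B_F,L_G,B_G$) is unchanged; so Assumptions~\ref{singleone}--\ref{genthree} hold for $H_t$ with strong-convexity parameter $\mu_t$, and Algorithm~\ref{alg:gock} with \textbf{Option I} and the parameter choice \eqref{eq:para-1} (reading $\mu\leftarrow\mu_t$) applies. \emph{Step 2 (constant inner effort).} By Theorem~\ref{thm:sock}, running SoCK on $H_t$ from $x_t$ for $S$ outer loops gives $\EE[H_t(\widetilde{x}^S)-H_t^*\mid x_t]\le 11(12/13)^S\bigl(H_t(x_t)-H_t^*\bigr)$; taking $S$ equal to a universal constant (it suffices that $11(12/13)^S\le\tfrac14$) makes $x_{t+1}:=\widetilde{x}^S$ satisfy \eqref{eq:rule-x-t}. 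Hence $\mathrm{Time}(t)=O(1)\cdot(\text{cost of one SoCK run})$, and by the count in the proof of Corollary~\ref{cor:sock} one such run costs $O\bigl(n_1+n_2+m(A+B+n_1)\bigr)=O\bigl(n_1+n_2+\sqrt{L/\mu_t}\,(\max\{B_G^4L_F^2,B_F^2L_G^2\}/\mu_t^2+n_1)\bigr)$ by the choices of $m,A,B$ in \eqref{eq:para-1}.

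\emph{Step 3 (substitute and sum).} With $\mu_0=D_H/D_x$ and $\mu_{t+1}=\mu_t/2$ we have $\mu_t=D_H/(D_x2^t)$, so $\sqrt{L/\mu_t}=\sqrt{LD_x2^t/D_H}$ and $1/\mu_t^2=D_x^24^t/D_H^2$, giving
\[
\mathrm{Time}(t)=O\!\left(n_1+n_2+\frac{D_x^{2.5}\sqrt{L}\,2^{2.5t}\max\{B_G^4L_F^2,\,B_F^2L_G^2\}}{D_H^{2.5}}+\sqrt{\frac{LD_x\,2^t}{D_H}}\;n_1\right).
\]
Summing over $t=0,\dots,T-1$ with $T=\log_2(D_H/\varepsilon)$: the first term contributes $T(n_1+n_2)=O((n_1+n_2)\log(D_H/\varepsilon))$; the second is geometric with ratio $2^{2.5}>1$, so it is dominated up to a constant by its last term, where $2^{2.5(T-1)}\le(2^T)^{2.5}=(D_H/\varepsilon)^{2.5}$, yielding $O(D_x^{2.5}\sqrt{L}\max\{B_G^4L_F^2,B_F^2L_G^2\}/\varepsilon^{2.5})$; the third is geometric with ratio $\sqrt2$, again dominated by its last term with $2^{(T-1)/2}\le(D_H/\varepsilon)^{1/2}$, yielding $O(\sqrt{D_xL}\,n_1/\sqrt\varepsilon)$. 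Adding the three bounds gives exactly \eqref{eq:non:sock}, and Lemma~\ref{AdaptReg} certifies $\EE[H(x_T)-H^*]\le\varepsilon$.

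\emph{Main obstacle.} There is no deep obstacle here — the analytic content is carried entirely by Theorem~\ref{thm:sock}, Corollary~\ref{cor:sock} and Lemma~\ref{AdaptReg}, and what remains is the geometric-series domination, which is routine. The two points that merit a line of care are: (a) the first several indices $t$ may have $\mu_t$ comparable to or exceeding $L$ (this happens when $\mu_0=D_H/D_x$ is large relative to $L$), so that $m=\lceil\tfrac12\sqrt{L/\mu_t}\rceil=1$ and the parameter constraints of Algorithm~\ref{alg:gock} must be checked (or SoCK replaced by a handful of plain proximal-gradient steps) in this well-conditioned regime; but there are at most $O(\log(D_H/(LD_x)))$ such loops, each costing $O(n_1+n_2)$, which is absorbed into the $(n_1+n_2)\log(D_H/\varepsilon)$ term, so the stated bound is unaffected and the geometric estimates of Step~3 are applied over the indices with $\mu_t\le L$; and (b) \eqref{eq:rule-x-t} is a conditional-expectation statement, so chaining Theorem~\ref{thm:sock} across the $T$ outer loops uses the tower property, exactly as in the proof of Lemma~\ref{AdaptReg}.
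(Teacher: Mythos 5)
Your proposal is correct and takes essentially the same route as the paper: the paper's proof likewise invokes Corollary~\ref{cor:sock} with $\mu_t=\mu_0/2^t$ to bound $\mathrm{Time}(t)$, sums the resulting geometric series, and substitutes $\mu_0=D_H/D_x$ and $T=\log_2(D_H/\varepsilon)$, with Lemma~\ref{AdaptReg} certifying the $\varepsilon$-accuracy. Your Steps 1--2 (admissibility of the perturbed subproblems and the constant number of SoCK outer loops needed to realize \eqref{eq:rule-x-t}) and your remark about the well-conditioned early loops simply make explicit what the paper leaves implicit, and do not change the argument.
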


\begin{proof} 
From Lemma~\ref{AdaptReg}, it follows that $x_T$ is a stochastic $\varepsilon$-solution of \eqref{genP}, and thus we only need to show the overall complexity $\sum_{t=0}^{T-1}\mathrm{Time}(t)$.
By Corollary~\ref{cor:sock} and $\mu_t = \frac{\mu_0}{2^t}$, the complexity to produce $x_{t+1}$ that satisfies \eqref{eq:rule-x-t} is
\begin{equation*}
\textup{Time}(t)= 
O\left(n_1+n_2+ 2^{2.5t}\sqrt{\frac{L}{\mu_0}}\frac{\max\{B_G^4 L_F^2,\,B_F^2 L_G^2 \}}{\mu_0^2} + 2^{0.5t}n_1\sqrt{\frac{L}{\mu_0}}\right).
\end{equation*}
Hence, the total complexity is
\begin{align*}
\sum_{t=0}^{T-1}\mathrm{Time}(t) = O\left((n_1+n_2)T+\frac{2^{2.5T}-1}{2^{2.5}-1}\sqrt{\frac{L}{\mu_0}}\frac{\max\{B_G^4 L_F^2,\,B_F^2 L_G^2 \}}{\mu_0^2}+\frac{2^{0.5T}-1}{2^{0.5}-1}n_1\sqrt{\frac{L}{\mu_0}}\right).
\end{align*}
Since $\mu_0=\frac{D_H}{D_x}$ and $T=\log_2\frac{D_H}{\varepsilon}$, the above equation implies the desired result. 
\end{proof}

\begin{remark}
From the result in \eqref{eq:non:sock}, we see that if $n_1=O\left(\frac{D_x^2\max\{B_G^4 L_F^2,\,B_F^2 L_G^2 \}}{\varepsilon^2}\right)$, we should always take \textbf{Option I} while applying Algorithm~\ref{alg:gock}, since even if we do mini-batch sampling, the complexity result will not become better.
\end{remark}

\begin{thm}[case of big $n_1$]
	\label{non:gocknos}
Let $\varepsilon>0$, and let Algorithm~\ref{alg:gock} with \textbf{Option II} be applied as the subroutine in Algorithm~\ref{alg:gock-cvx}. Then under Assumptions~\ref{singleone-cvx}, \ref{gentwo} and \ref{genthree}, Algorithm~\ref{alg:gock-cvx} with $\mu_0$ and $T$ set to those in Lemma~\ref{AdaptReg} can produce a stochastic $\varepsilon$-solution of \eqref{genP} with overall complexity 
\begin{equation*}
\sum_{t=0}^{T-1}\mathrm{Time}(t) =O\left((n_1+n_2)\log\frac{D_H}{\varepsilon}  + \frac{D_x^{2.5}\sqrt{L}\cdot\max\{B_G^4 L_F^2,\,B_F^2 L_G^2, L^2 \}}{\varepsilon^{2.5}}\right). 
\end{equation*}
\end{thm}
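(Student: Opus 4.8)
The plan is to mirror the proof of Theorem~\ref{non:sock}, simply substituting the per-outer-loop complexity of the strongly convex subroutine by the Option~II bound of Corollary~\ref{cor:gocknos}. First I would check that each subproblem fits the strongly convex template: for $H_t(x)=H(x)+\frac{\mu_t}{2}\|x-x_0\|^2$, Assumption~\ref{singleone-cvx} keeps $f$ convex, while $h+\frac{\mu_t}{2}\|\cdot-x_0\|^2$ is $\mu_t$-strongly convex, so Assumption~\ref{singleone} holds for $H_t$ with strong-convexity modulus $\mu_t$; Assumptions~\ref{gentwo} and~\ref{genthree} are untouched since they only concern $\{F_i\}$ and $\{G_j\}$, and the smoothness constant of the composition $f$ stays $L$ because the added quadratic is absorbed into the proximal term $h_t:=h+\frac{\mu_t}{2}\|\cdot-x_0\|^2$, whose proximal map is still easy. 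Hence Theorem~\ref{thm:gocknos} applies to $H_t$ with condition number $L/\mu_t$.

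Next, since the geometric rate $11\,(12/13)^S$ from Theorem~\ref{thm:gocknos} drops below $\tfrac14$ once $S$ exceeds an absolute constant (see the remark following Lemma~\ref{AdaptReg}), the inner call needs only $S=O(1)$ snapshots to meet the reduction rule~\eqref{eq:rule-x-t}. Reading off the per-snapshot cost from Corollary~\ref{cor:gocknos} and plugging in $\mu_t=\mu_0/2^t$ gives
\[
\mathrm{Time}(t)=O\!\left(n_1+n_2+2^{2.5t}\sqrt{\tfrac{L}{\mu_0}}\,\frac{\max\{B_G^4 L_F^2,\,B_F^2 L_G^2,\,L^2\}}{\mu_0^2}\right),
\]
where the factor $2^{2.5t}$ comes from $\sqrt{L/\mu_t}\cdot\mu_t^{-2}=2^{t/2}\cdot 2^{2t}\sqrt{L/\mu_0}\,\mu_0^{-2}$. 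Unlike the Option~I estimate used in Theorem~\ref{non:sock}, there is no extra $n_1$-type term here, because the Option~II complexity already subsumes $n_1$ in the leading $n_1+n_2$.

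Then I would sum over $t=0,\dots,T-1$ using $\sum_{t=0}^{T-1}2^{2.5t}=\frac{2^{2.5T}-1}{2^{2.5}-1}=O(2^{2.5T})$ to get
\[
\sum_{t=0}^{T-1}\mathrm{Time}(t)=O\!\left((n_1+n_2)T+2^{2.5T}\sqrt{\tfrac{L}{\mu_0}}\,\frac{\max\{B_G^4 L_F^2,\,B_F^2 L_G^2,\,L^2\}}{\mu_0^2}\right),
\]
and finally substitute $\mu_0=D_H/D_x$ and $T=\log_2(D_H/\varepsilon)$ from Lemma~\ref{AdaptReg}, which turns $2^{2.5T}$ into $(D_H/\varepsilon)^{2.5}$ and $\sqrt{L/\mu_0}\,\mu_0^{-2}$ into $\sqrt{L}\,(D_x/D_H)^{2.5}$; the second term collapses to $D_x^{2.5}\sqrt{L}\,\max\{B_G^4 L_F^2,\,B_F^2 L_G^2,\,L^2\}/\varepsilon^{2.5}$ and the first to $(n_1+n_2)\log(D_H/\varepsilon)$, while Lemma~\ref{AdaptReg} certifies that $x_T$ is a stochastic $\varepsilon$-solution, completing the proof. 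The only step needing a bit of care is the constant-$S$ claim—that the fixed geometric rate of Theorem~\ref{thm:gocknos} delivers the $\tfrac14$-contraction of~\eqref{eq:rule-x-t} with $S$ independent of all problem parameters—but this is immediate; otherwise the argument is routine geometric-series bookkeeping on top of the already-established strongly convex complexity, so I do not anticipate a genuine obstacle.
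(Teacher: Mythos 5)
Your proposal is correct and follows essentially the same route as the paper: invoke Corollary~\ref{cor:gocknos} for each perturbed strongly convex subproblem with $\mu_t=\mu_0/2^t$ (the constant-$S$ contraction to satisfy \eqref{eq:rule-x-t} is exactly what the remark after Lemma~\ref{AdaptReg} records), sum the geometric series, and substitute $\mu_0=D_H/D_x$, $T=\log_2(D_H/\varepsilon)$ as in the proof of Theorem~\ref{non:sock}. Your added verification that $H_t$ satisfies Assumption~\ref{singleone} with modulus $\mu_t$ while keeping the smoothness constant $L$ and an easy proximal map is a detail the paper leaves implicit, and your bookkeeping ($2^{2.5t}$ factor, absence of a separate $n_1\sqrt{L/\mu}$ term under Option II) matches the paper's.
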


\begin{proof} 
First, notice again that $x_T$ is a stochastic $\varepsilon$-solution of \eqref{genP}. By Corollary~\ref{cor:gocknos} and $\mu_t = \frac{\mu_0}{2^t}$, the complexity to produce $x_{t+1}$ that satisfies \eqref{eq:rule-x-t} is
\begin{equation*}
\textup{Time}(t)= 
O\left(n_1+n_2+ 2^{2.5t}\sqrt{\frac{L}{\mu_0}}\frac{ \max\{B_G^4 L_F^2,\,B_F^2 L_G^2,\,L^2\}}{\mu_0^2}\right) 
\end{equation*}
Now summing $\textup{Time}(t)$ over $t$ and following the proof of Theorem~\ref{non:sock}, we obtain the desired result. 
\end{proof}

{\section{Numerical Experiments}
In this section, we conduct some experiments to demonstrate the numerical performance of Algorithms~\ref{alg:gock} and \ref{alg:gock-cvx}.  The problem that we test is an $\ell_1$ regularized version of \eqref{meanv01} with linear loss function $\ell$, namely, we solve 
\begin{equation}\label{eq:exp1}
	\displaystyle\min_{\boldsymbol{x}\in \RR^N}\,\,\frac{1}{n}\sum_{i=1}^{n}\boldsymbol{a}_i^\top\boldsymbol{x} +\frac{\lambda_1}{n} \sum_{i=1}^{n}\left[\boldsymbol{a}_i^\top \boldsymbol{x} -\frac{1}{n}\sum_{j=1}^{n}\boldsymbol{a}_j^\top\boldsymbol{x} \right]^2+\lambda_2\|\boldsymbol{x}\|_1.
\end{equation}
This problem has also been tested in other papers such as \cite{lian2017finite,lin2018improved,huo2018accelerated,zhang2019composite}. 
In our tests, we generate the data matrix $\boldsymbol{A}=[\boldsymbol{a}_1,\ldots,\boldsymbol{a}_n]$ by drawing i.i.d. columns from a multivariate normal distribution $\cN(\vzero, \boldsymbol{\Sigma})$ with $\boldsymbol{\Sigma}=\boldsymbol{L}^\top \boldsymbol{L} +v\boldsymbol{I}_{N}$. Here, $\boldsymbol{L}\in \RR^{N\times N}$ is generated by drawing i.i.d. entries from a standard normal distribution, $\boldsymbol{I}_N$ is the identity matrix, and $v$ is a positive parameter to control the smoothness and strong-convexity parameters $L$ and $\mu$ of the smooth part in the objective of \eqref{eq:exp1}. Notice that \eqref{eq:exp1} can be rewritten as 
\begin{equation}\label{eq:exp2}
	\displaystyle\min_{\boldsymbol{x}\in \RR^N}\,\,\bar{\boldsymbol{a}}^\top\boldsymbol{x} +\lambda_1   \boldsymbol{x}^\top\left(\frac{1}{n}\boldsymbol{A} \boldsymbol{A}^\top -\bar{\boldsymbol{a}} \bar{\boldsymbol{a}}^\top \right)\boldsymbol{x}+\lambda_2\|\boldsymbol{x}\|_1,
\end{equation} 
where $\bar{\boldsymbol{a}}\equiv\frac{1}{n}\sum_{i=1}^{n}\boldsymbol{a}_i$. Hence, $L$, $\mu$, and $\kappa= L /\mu$ can be computed explicitly. For all the tests, we set the dimension to $N=500$.

\subsection{Strongly convex instances}
 In this subsection, we test Algorithm~\ref{alg:gock} on solving \eqref{eq:exp1} and compare to VRSC-PG in \cite{huo2018accelerated}, C-SAGA and the generalized C-SAGA (that we name as GC-SAGA) in \cite{zhang2019composite}. There are two options in Algorithm~\ref{alg:gock}. We name the method as SOCK if Option I is adopted and as GOCK if Option II is used. We tuned the parameters of these methods in order for them to have fast linear convergence, and their values are set as follows. For SOCK and GOCK, we set $m=\left\lceil \frac{1}{2}\sqrt{\kappa}\right\rceil$, $\theta=1+\frac{1}{4 m}$, $\tau_1=\tau_2=\frac{1}{2 m}$, $\alpha=\frac{2 m}{3 L}$, and $A=B=\left\lceil \kappa^2/256 \right\rceil$, and in addition, $C=\left\lceil \kappa^2/16 \right\rceil$ is set for GOCK. For VRSC-PG, we set the length of its inner loop to $m^\prime=\left\lceil \frac{1}{4}{\kappa}\right\rceil$, 
  its inner mini-batch size to $\left\lceil \kappa^2/256 \right\rceil$, and its outer mini-batch size to $\left\lceil \kappa^2/16 \right\rceil$. In addition, its step size is set to $\eta=1/(5 L)$. For both of C-SAGA and GC-SAGA, we set their mini-batch sizes to $s=\left\lceil n^{2/3} /2\right\rceil$ and step size to $\eta=1/(2 L)$. The length of their inner loop is set to $\tau=\left\lceil (\kappa+ n^{1/3} )/2\right\rceil$ and $\tau=\left\lceil (\kappa+ 2 n^{1/3} )/2\right\rceil$, respectively.

The results for one trial are shown in Figure~\ref{fig:sock_comp_t1}, where we generate $n=5000$ or $n=50,000$ data samples and vary the value of $v$ to change the condition number $\kappa$. The results for more trials are shown in Appendix~\ref{app:test}. From the figures, we see that all compared methods converge linearly, and the proposed SOCK and GOCK methods need fewer oracles to reach the same accuracy as compared to the other three methods. When the data samples are the same, all methods take more oracles for the larger $\kappa$ to reach the same accuracy.

\begin{figure}[h]
	\caption{Comparison of the proposed SOCK (i.e., Algorithm~\ref{alg:gock} with Option I) and GOCK (i.e., Algorithm~\ref{alg:gock} with Option II) to VRSC-PG in \cite{huo2018accelerated}, C-SAGA and GC-SAGA in \cite{zhang2019composite} on solving instances of \eqref{eq:exp1} with the number of samples $n=5000$ or $n=50,000$. All methods take the explicitly-computed strong convexity constant $\mu$. The value of $v$ is varied to change the condition number $\kappa$.}
	\label{fig:sock_comp_t1}
	\begin{center}
	\begin{tabular}{cccc}
{\small$(n,\kappa,v)=(5e3, 96, 30)$} & {\small$(n,\kappa,v)=(5e3, 52, 60)$} & {\small$(n,\kappa,v)=(5e4, 211, 10)$} & {\small$(n,\kappa,v)=(5e4, 93, 30)$}  \\	
		\includegraphics[width=.23\linewidth]{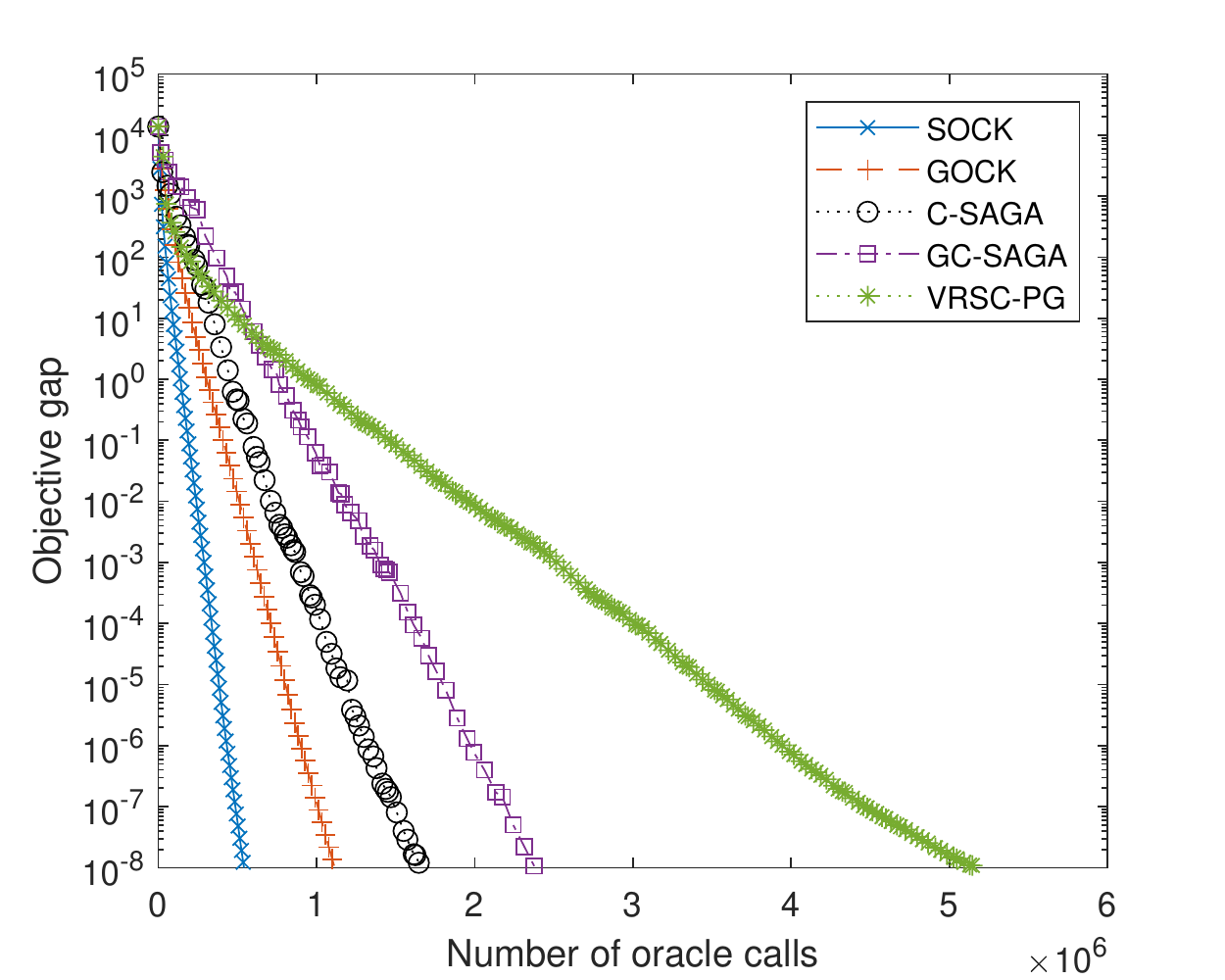} &
    	\includegraphics[width=.23\linewidth]{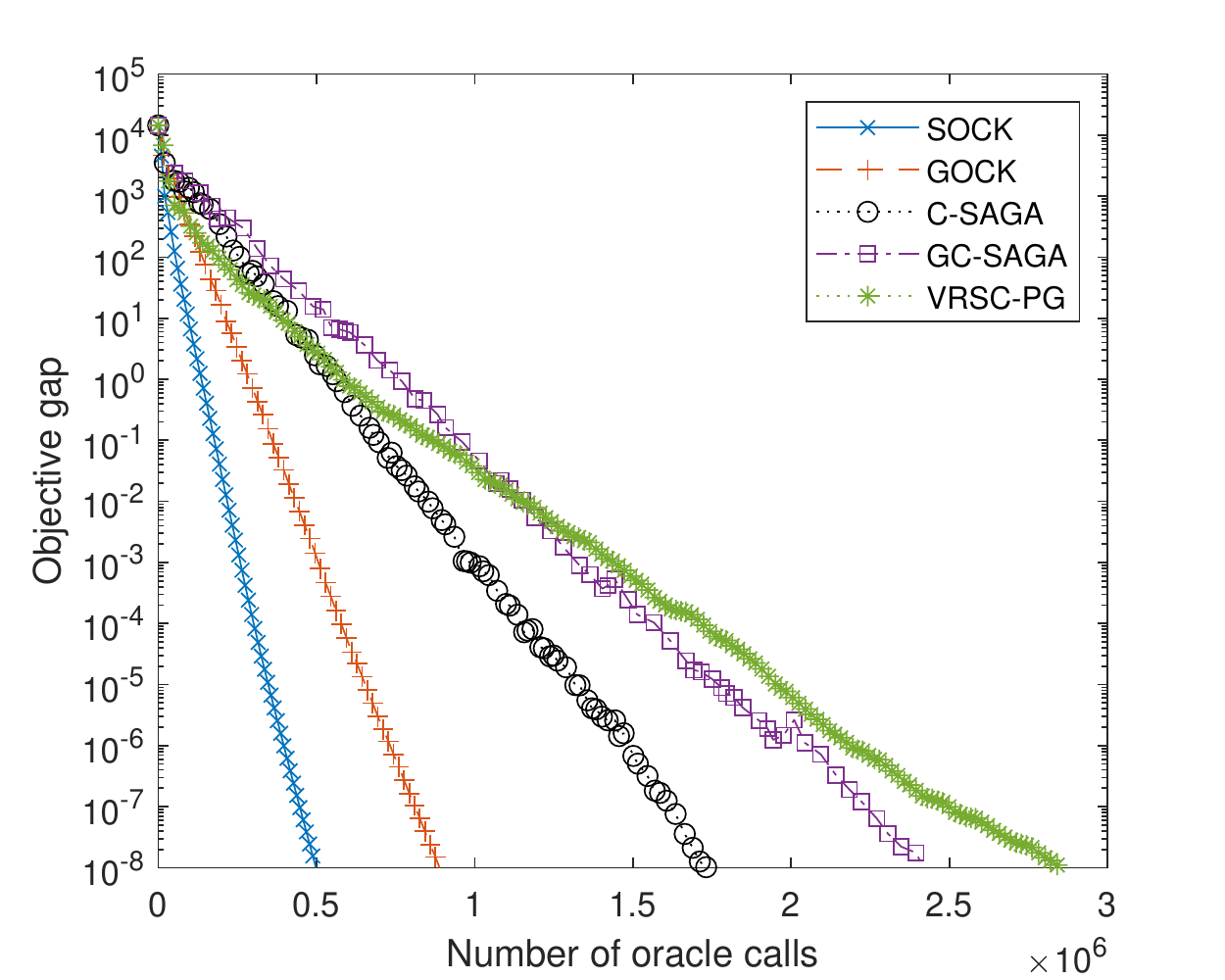}		&
    	\includegraphics[width=.23\linewidth]{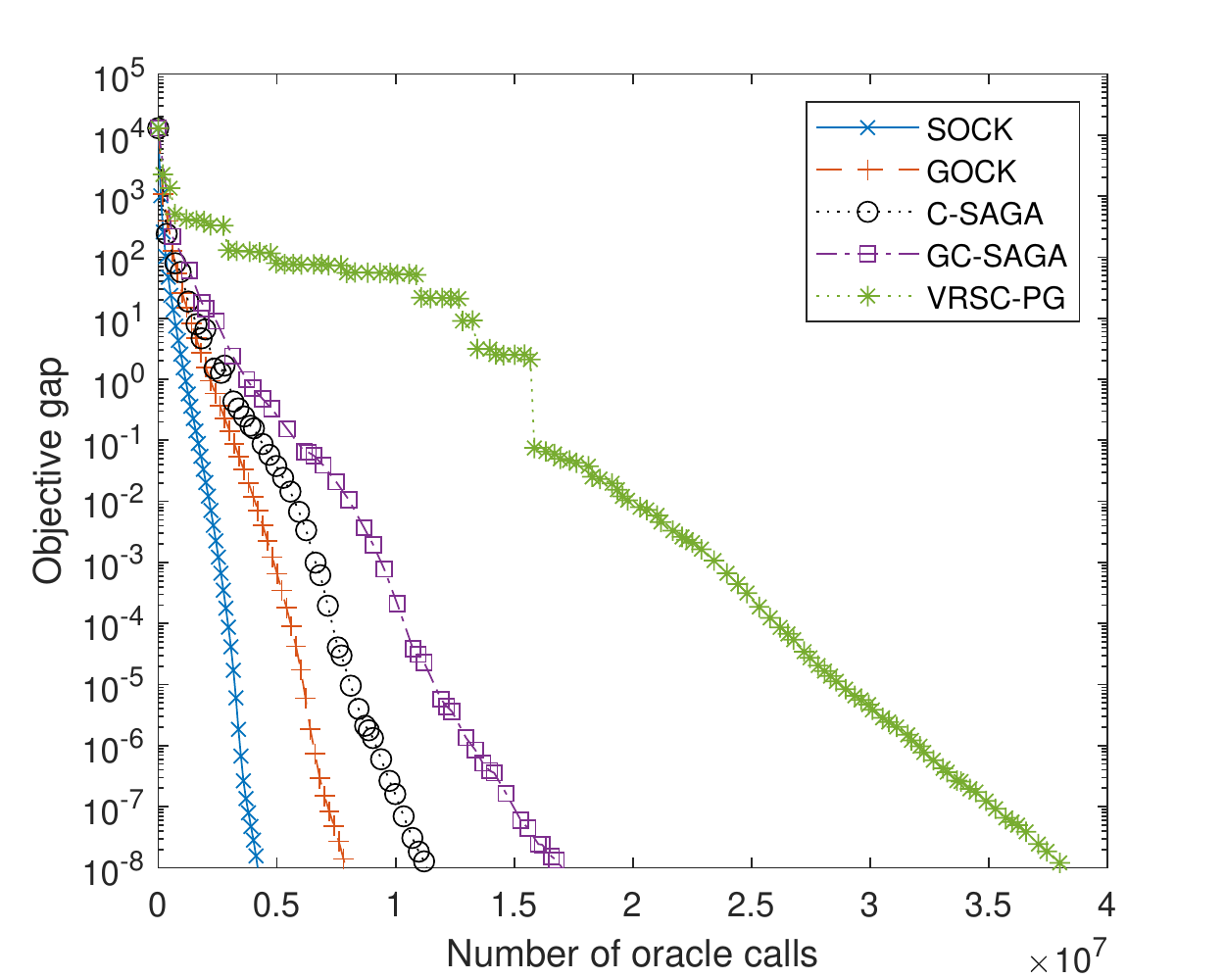}	&
	\includegraphics[width=.23\linewidth]{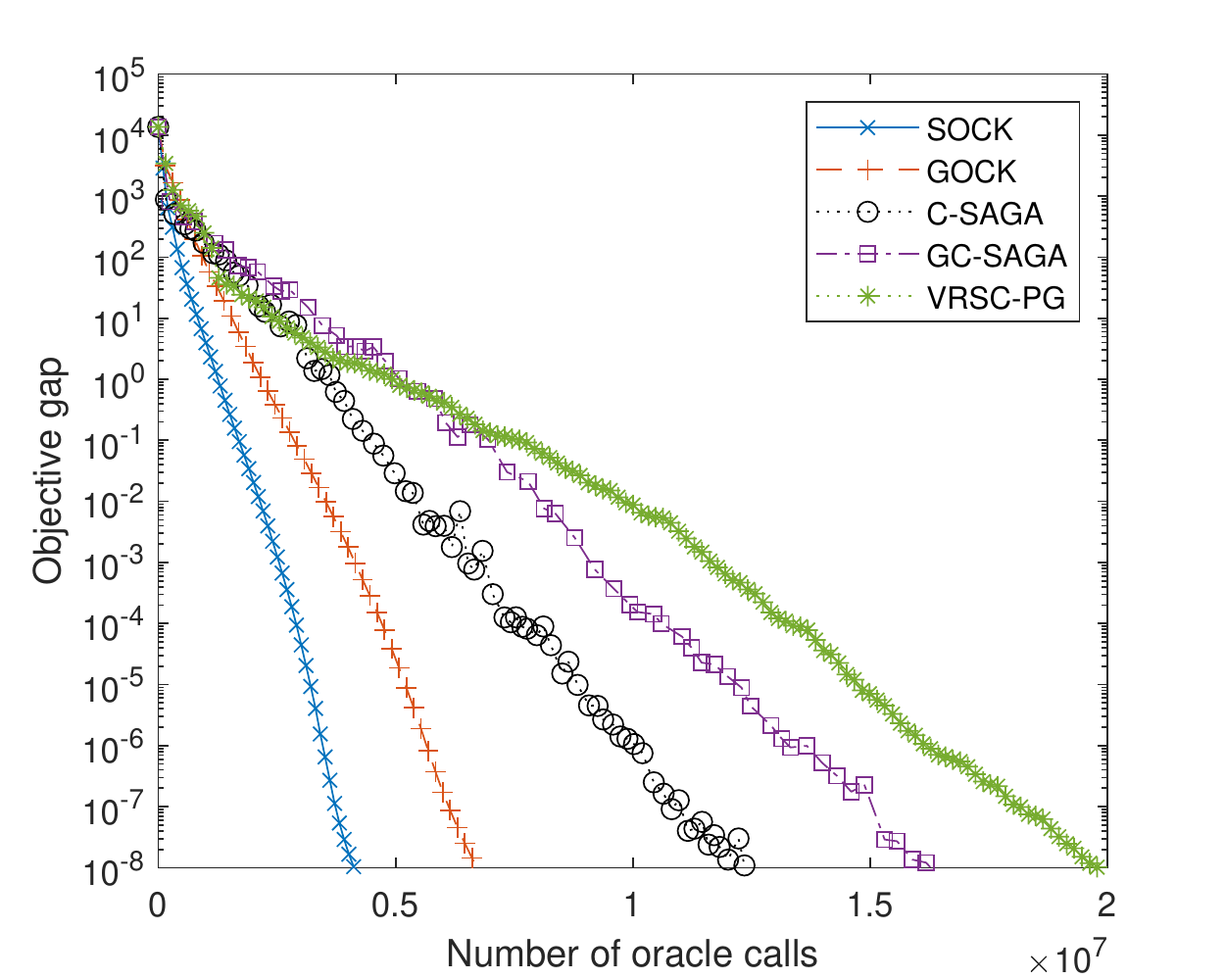}
	\end{tabular}
\end{center}
\end{figure}

\subsection{General convex instances}\label{sec:comp-nock}
In this subsection, we test Algorithm~\ref{alg:gock-cvx} on solving \eqref{eq:exp1} and compare it to SCVRG in \cite{lin2018improved}. Even though the problem instances can be strongly convex with the generated data, we treat it as general convex by simply using $\mu=0$. Since SCVRG in theory requires outer mini-batch sampling, we use Algorithm~\ref{alg:gock} with $S=3$ and Option-II as the subroutine in Algorithm~\ref{alg:gock-cvx} for a fair comparison. In the $t$-th call of Algorithm~\ref{alg:gock}, we set $m$ to $\left\lceil \frac{1}{2}\sqrt{\frac{L_t}{\mu_t}}\right\rceil$. We name our method as NoCK, and for simplicity, we set $\mu_0=1$.  

For both NoCK and SCVRG, we set the maximum number of total outer iterations to $T=\lceil \log_2 \frac{L}{\varepsilon} \rceil$, where $\varepsilon=10^{-4}$ and $L$ can be explicitly computed from the reformulation in \eqref{eq:exp2}; for SCVRG, we set its maximum number of total iterations to $\cal{T}=2 k_0 \left((\frac{3}{2})^T-1\right)$ as \cite{lin2018improved} suggests in its implementation, where $k_0=5$ is set to the default value of the initial inner loop length. 
The step size 
 of SCVRG is set to $\eta=\frac{1}{5L}$, the same as that we use for VRSC-PG. Theoretically, SCVRG requires the inner and outer mini-batches $A=B=C=\Theta(\frac{L^2}{\varepsilon^2})$. However, it can have much better numerical performance by using smaller mini-batches \footnote{In the linked code of \cite{lin2018improved}, { $A=B=5$ and $C=1$} is adopted.}. For this reason, we adopt two settings as follows.

1. \textbf{Theoretical parameter choice:}	
For NoCK, we set the inner mini-batch size to $A_t=B_t=\min\{(\frac{L+\mu_t}{ 400\mu_t })^2,\frac{n}{200},500\} $ and outer mini-batch to $C_t=\min\{(\frac{L+\mu_t}{ 20\mu_t })^2,\frac{n}{200},500\}$, where $t$ is the number of calling Algorithm~\ref{alg:gock}; for SCVRG, we set its inner mini-batch to  $A=B=\min\{(\frac{L}{ 4000\varepsilon })^2,\frac{n}{200},500\} $ and outer mini-batch to $C=\min\{(\frac{L}{ 200\varepsilon })^2,\frac{n}{200},500\}$. Notice that our setting somehow follows the theory by keeping the order $O(\kappa^2)$ or $O(\frac{L^2}{\varepsilon^2})$ but deviates from it by introducing a cap to the min-batch sizes to avoid the use of deterministic gradients.

2. \textbf{Heuristic parameter choice:} For NoCK, we set $A_t=B_t=C_t=500$ for all $t$, and for SCVRG, we set $A=B=C=50$. Notice that in this setting, the mini-batch sizes for NoCK are actually larger than those in the previous choice, while SCVRG uses smaller mini-batch sizes and achieves better performance. During our tuning, we find that SCVRG can diverge for the tested instances if the default setting {  $A=B=5$ and $C=1$} is adopted.

The results are shown in Figure~\ref{fig:nock_comp_t1} for the two parameter settings, where we generate $n=50,000$ or $n=250,000$ samples and vary $v\in \{0, 10\}$. For the former choice, our proposed method NoCK converges faster than SCVRG on all instances, while for the latter choice, SCVRG can be faster when $v=10$. Notice that the strong-convexity constant $\mu$ becomes bigger as $v$ increases. Although neither NoCK or SCVRG exploit the strong convexity, the results indicate that SCVRG can somehow benefit more from the strong convexity.

\begin{figure}[h]
	\caption{Comparison of the proposed NoCK method (i.e., Algorithm~\ref{alg:gock-cvx}) to SCVRG in \cite{lin2018improved} on solving instances of \eqref{eq:exp1} with the number of samples $n=50,000$ or $n=250,000$. The instances are treated as convex, even though they can be strongly convex. Parameters of the algorithms are set by the theoretical parameter choice in the top row and the heuristic choice in the bottom row.}
	\label{fig:nock_comp_t1}
	\begin{center}
	\begin{tabular}{cccc}
{\footnotesize $(n, L, v) = (5e4, 791, 0)$} & {\footnotesize $(n, L, v) = (5e4, 798, 10)$} & {\footnotesize $(n, L, v) = (2.5e5, 806, 0)$} & {\footnotesize $(n, L, v) = (2.5e5, 799, 10)$}\\	
\includegraphics[width=.23\linewidth]{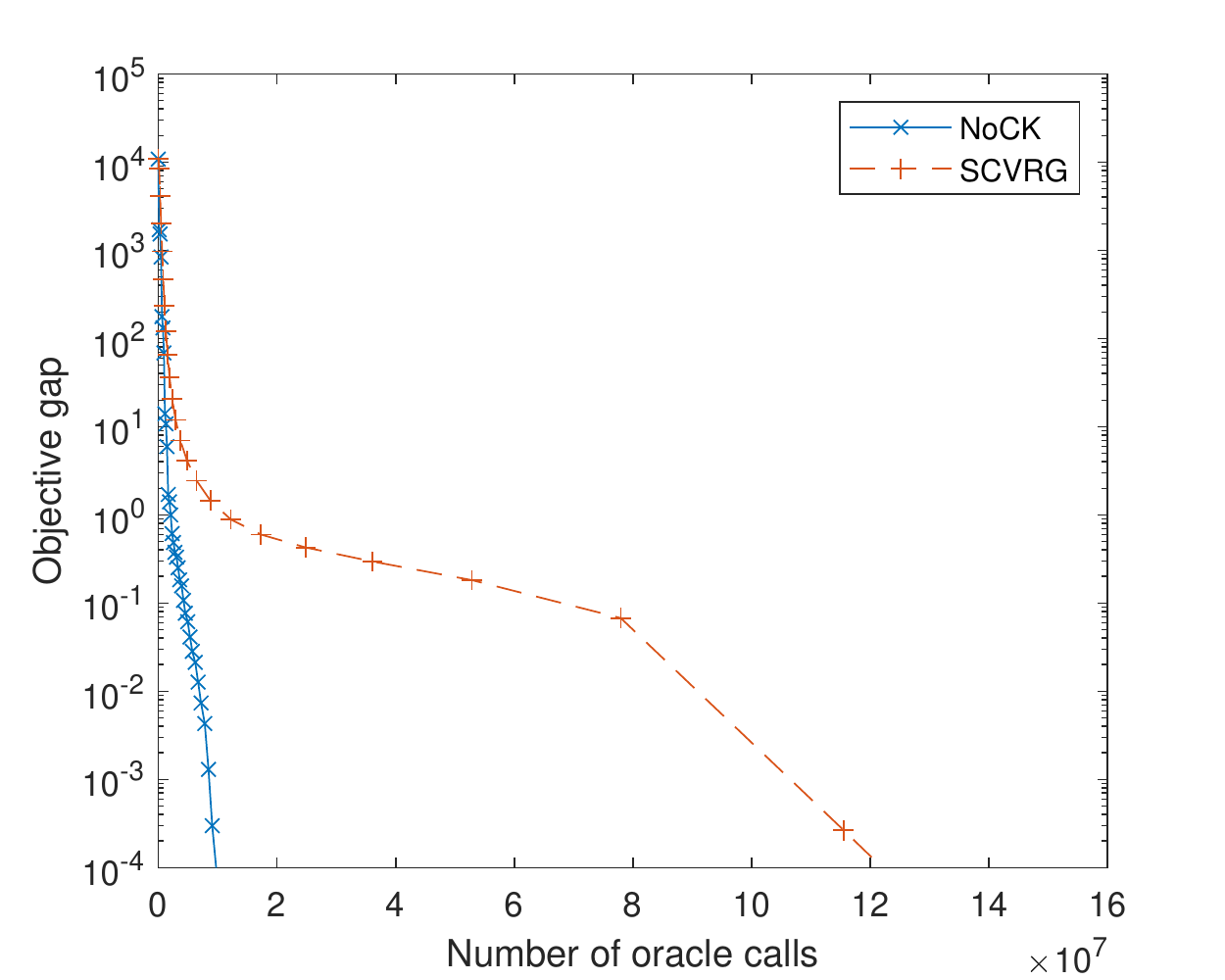}		&
		\includegraphics[width=.23\linewidth]{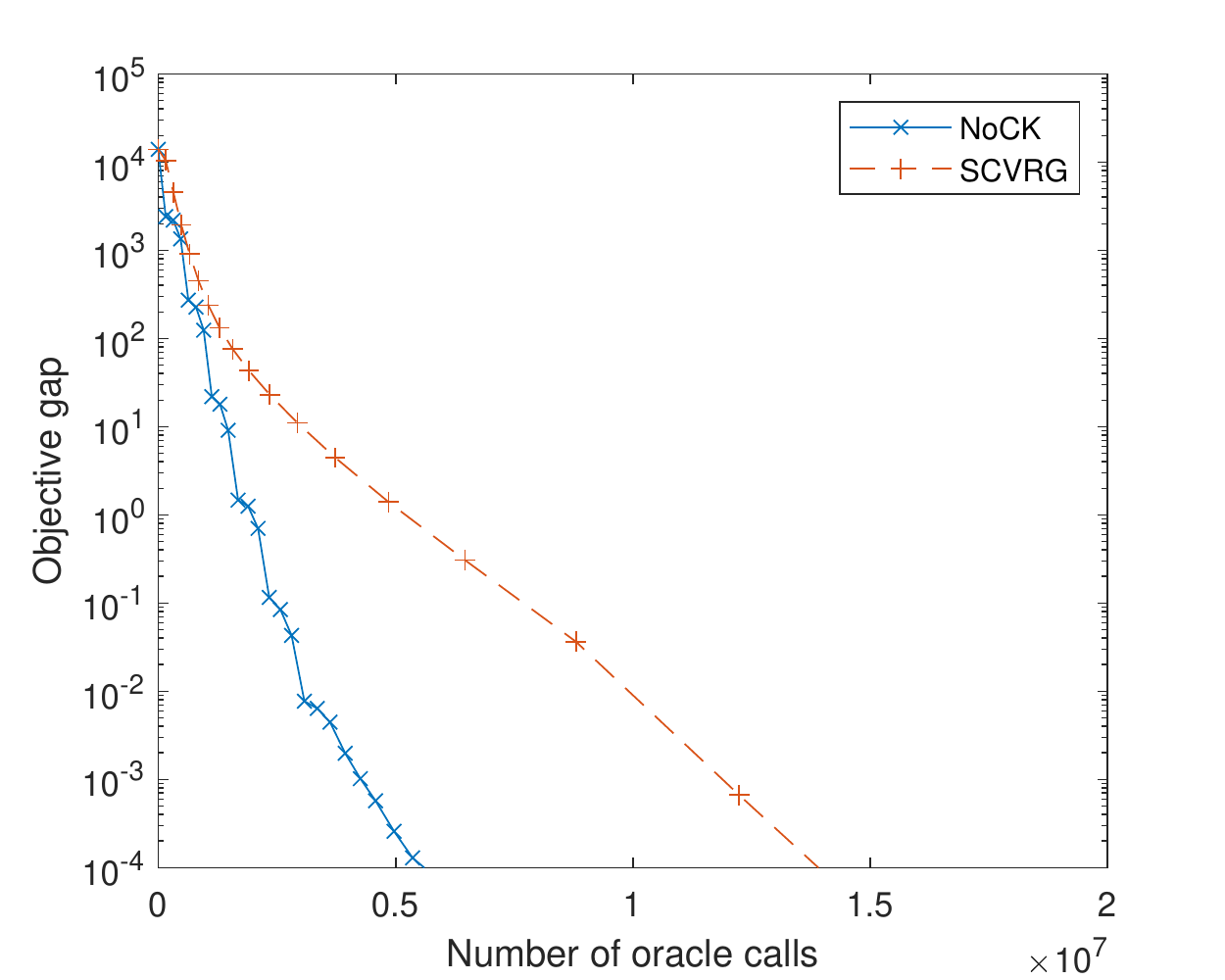}	&
		\includegraphics[width=.23\linewidth]{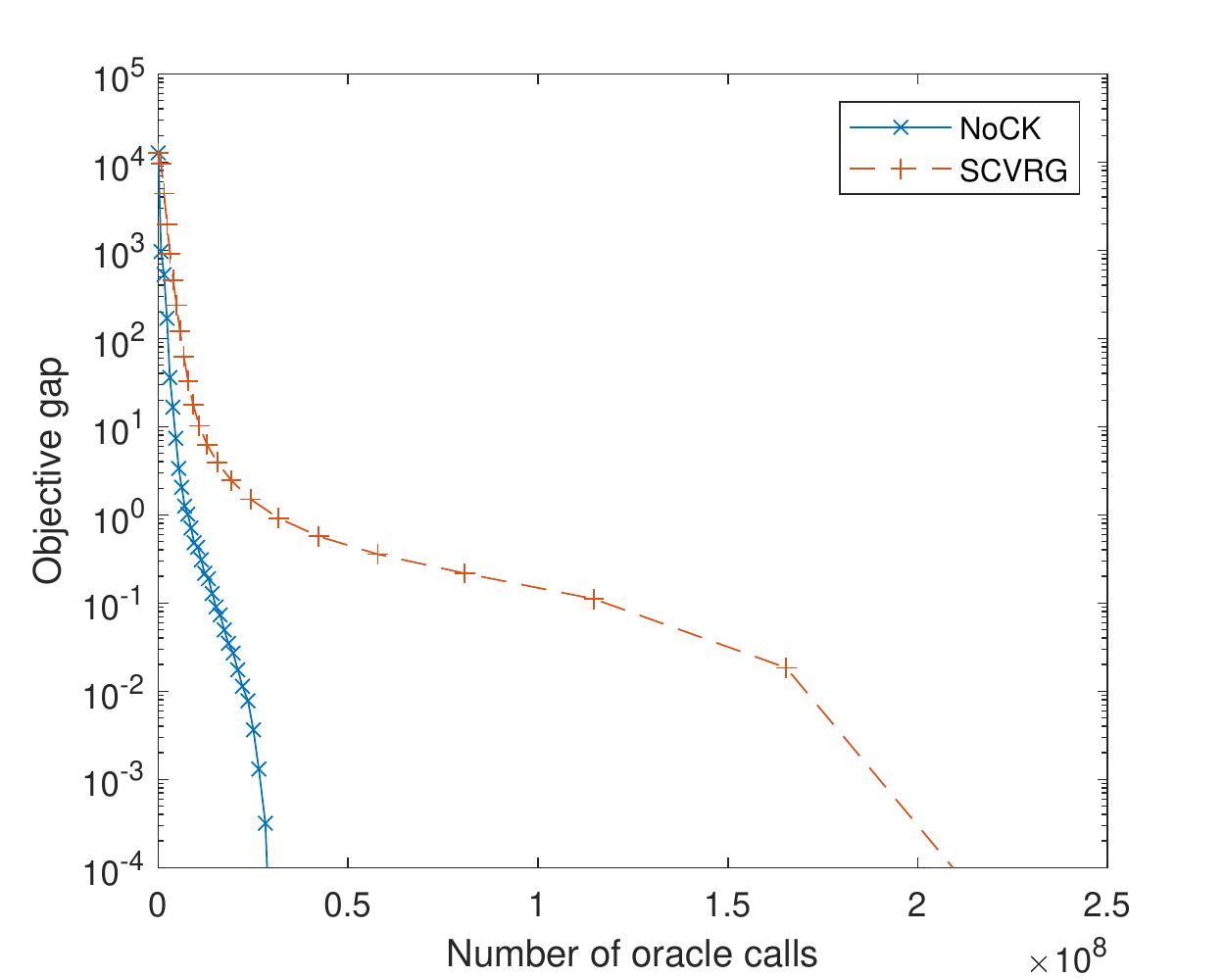}	&
		\includegraphics[width=.23\linewidth]{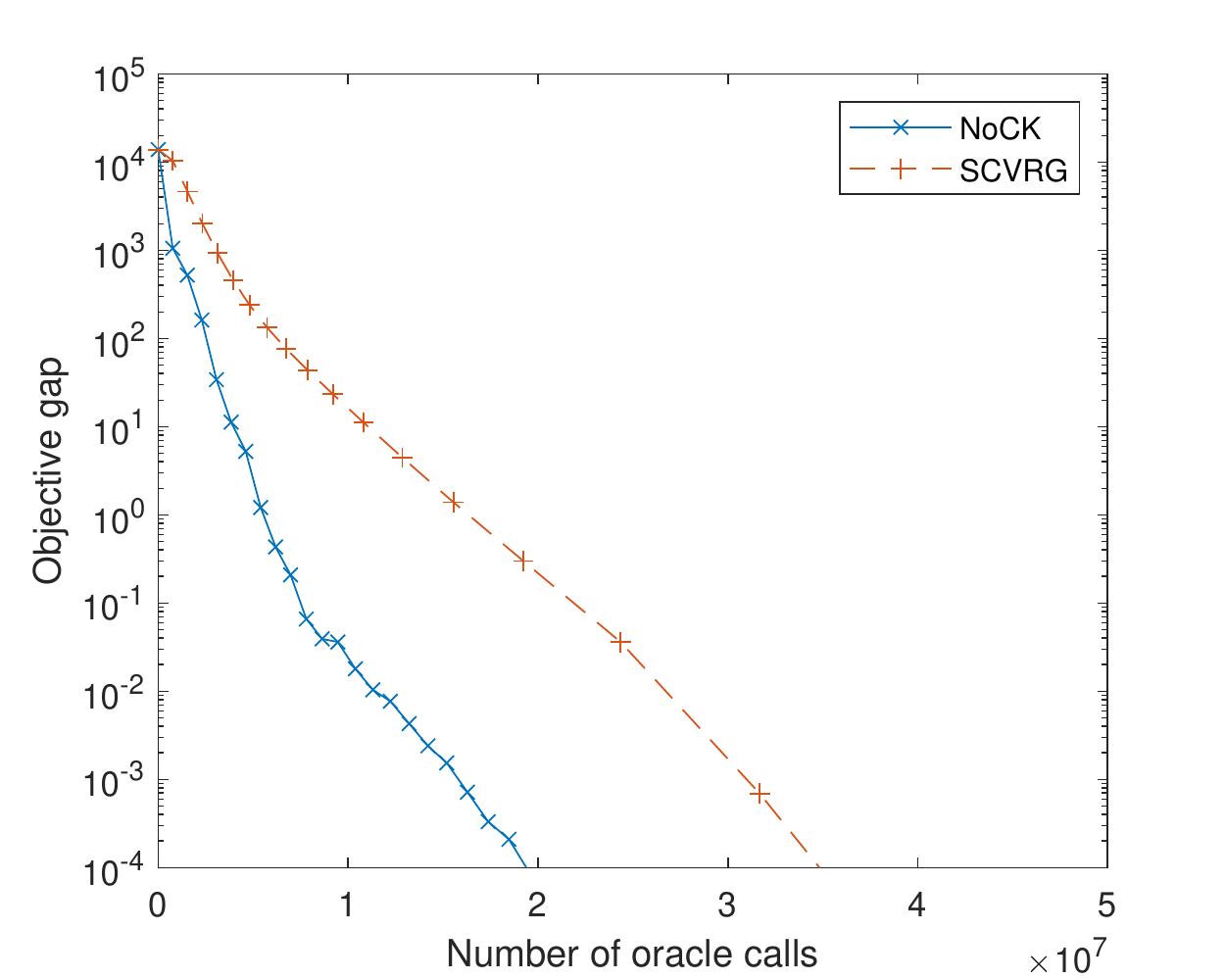}	\\
\includegraphics[width=.23\linewidth]{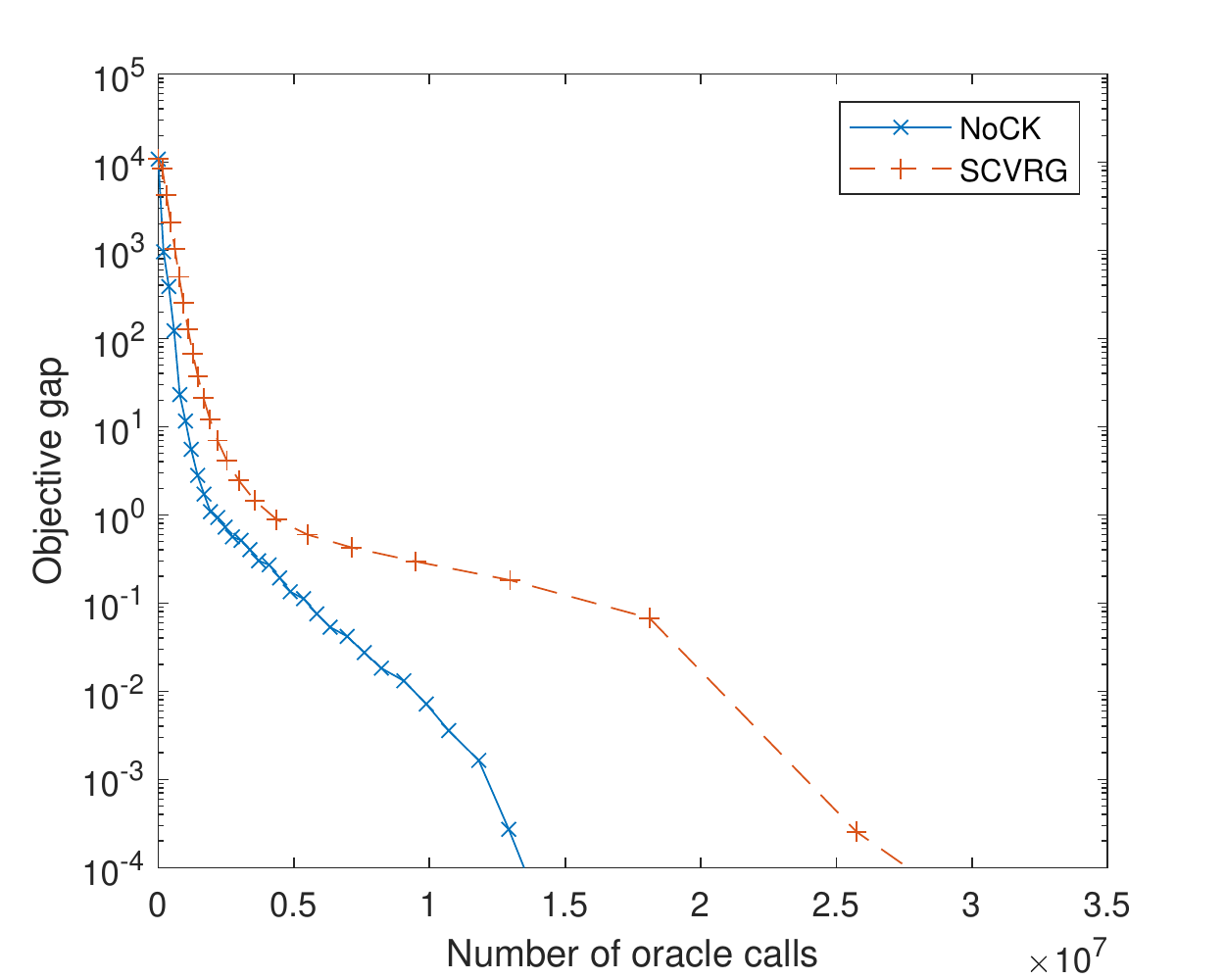} &
	\includegraphics[width=.23\linewidth]{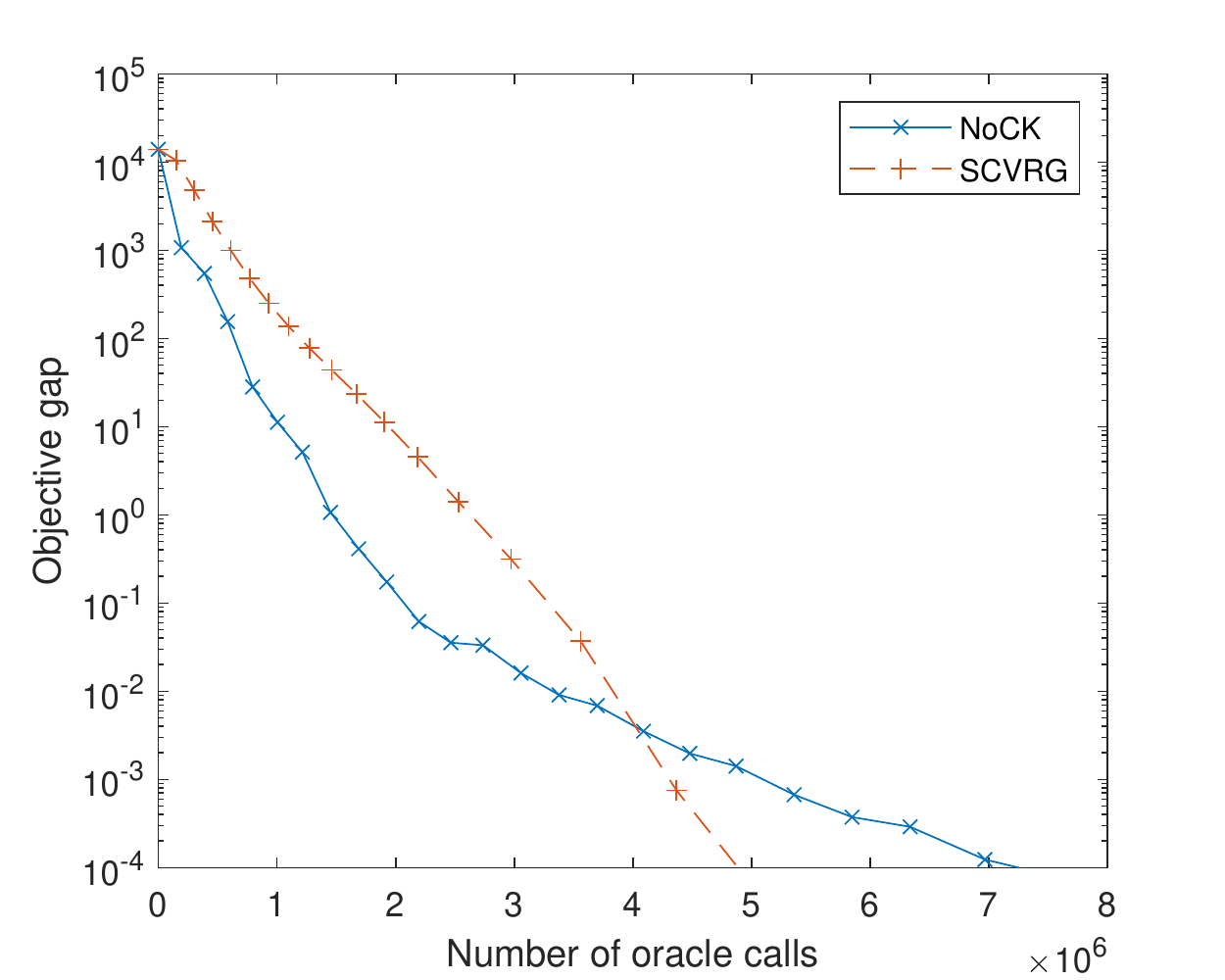}&
	\includegraphics[width=.23\linewidth]{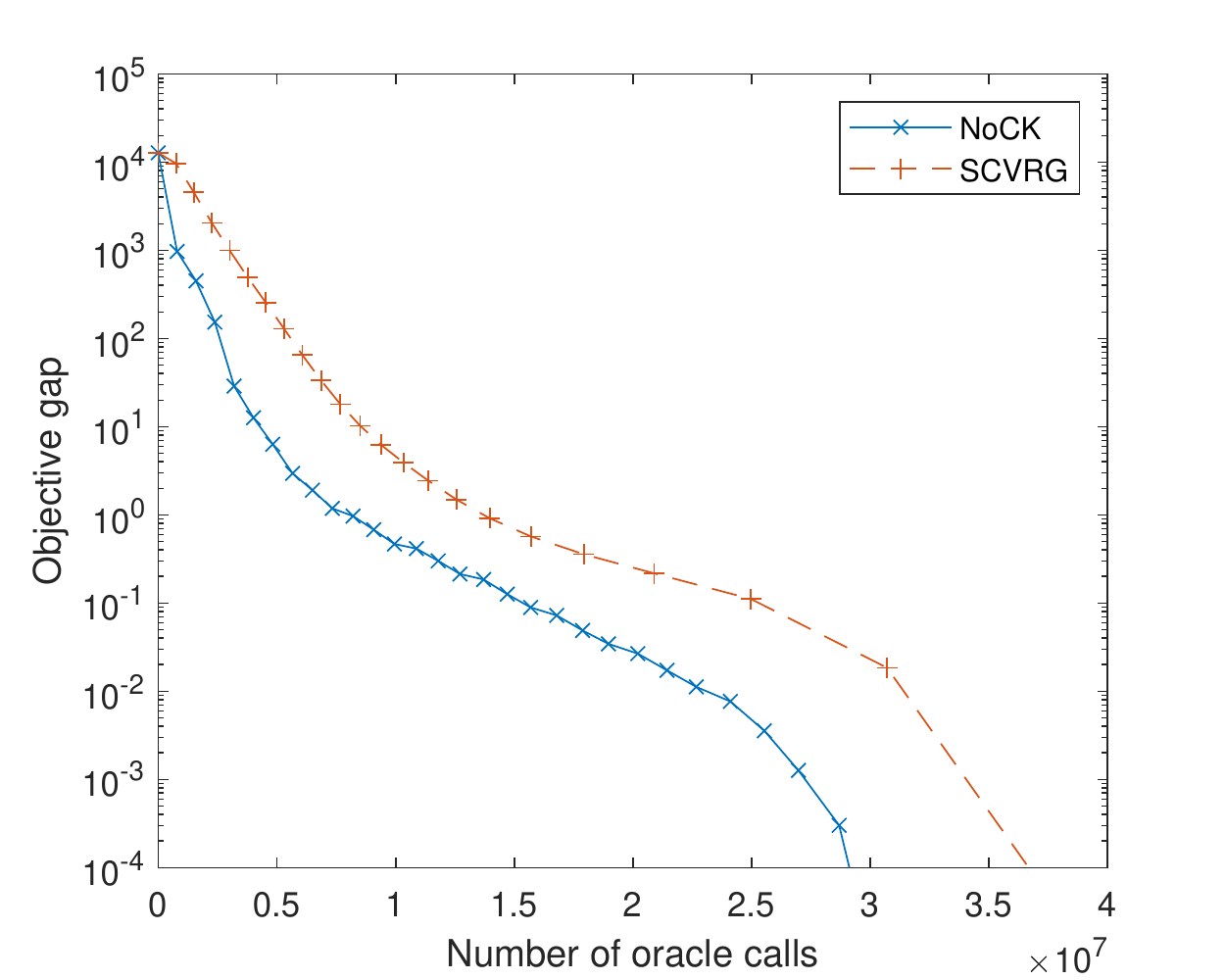} &
	\includegraphics[width=.23\linewidth]{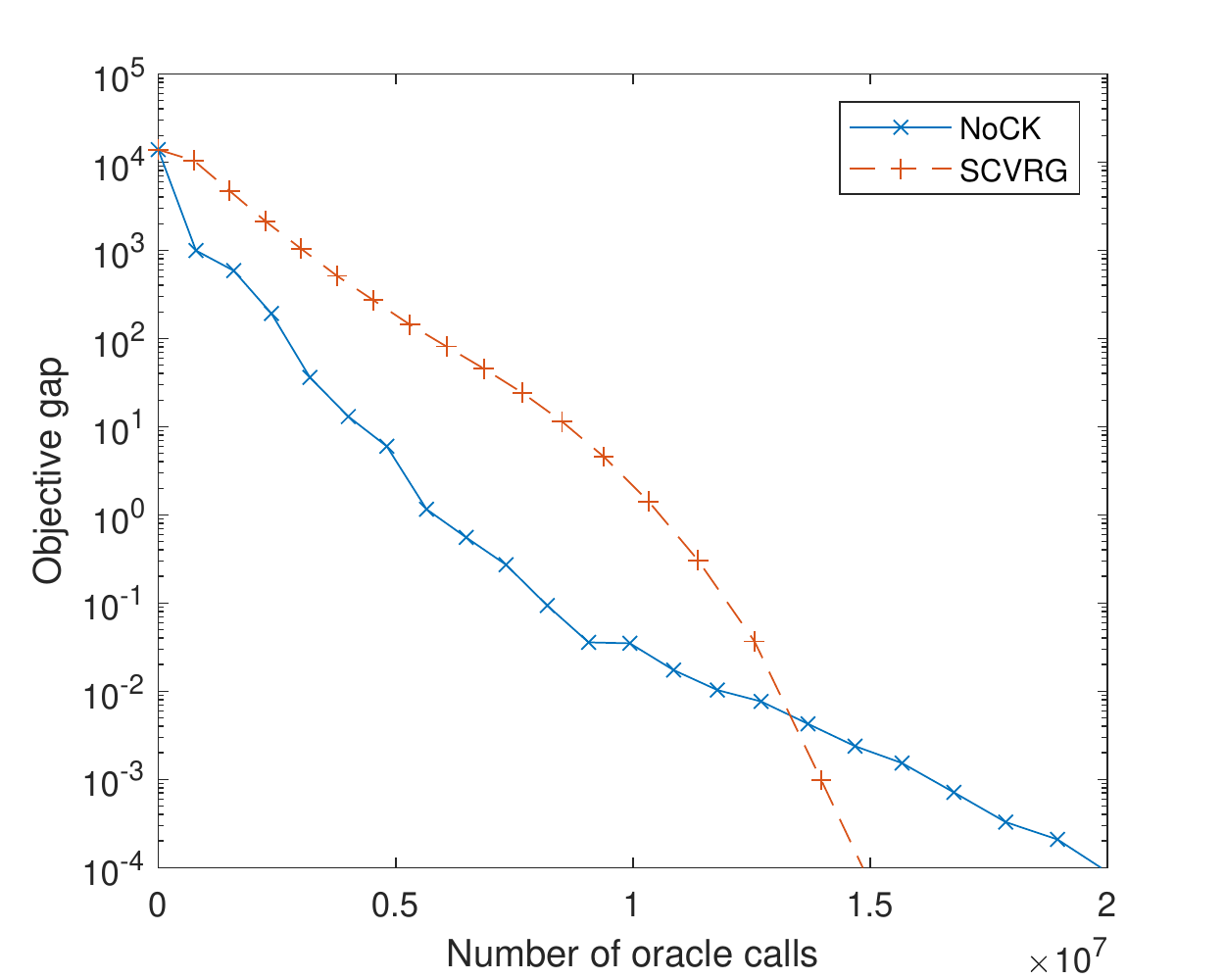}		
	\end{tabular}
\end{center}	
\end{figure}
} 

\section{Conclusions}\label{sec:conclusion}
We have proposed an algorithm for solving the strongly convex case of the finite-sum compositional problem \eqref{genP}. To produce a stochastic $\varepsilon$-solution, the proposed algorithm generally needs $O\left((n_1+n_2 + \kappa^{2.5})\log\frac{1}{\varepsilon}\right)$ evaluations of component function/gradient/Jacobian, where $\kappa$ denotes the condition number. For convex cases of \eqref{genP}, we proposed an algorithm that approximately solves a sequence of strongly convex perturbed problems.  The complexity result is generally $O\left((n_1+n_2)\log\frac{1}{\varepsilon} + \varepsilon^{-2.5}\right)$.  Our complexity results are better than the existing best ones for general convex cases, and for strongly convex cases when $n_1+n_2>\kappa^{9/4}$ .

\appendix

\section{Proof of Lemma~\ref{usualrate}}
The following chain of inequalities holds:
\begin{align*}
&\frac{(\tau_1+\tau_2-1+1/\theta-\overline{A}\tau_1)\theta}{\tau_2+\overline{B}\tau_1} 
= \frac{(1+\frac{\tau_2}{\tau_1}-\overline{A})\tau_1\theta  +1-\theta   }{(\frac{\tau_2}{\tau_1}+\overline{B})\tau_1} 
\stackrel{\text{\ding{172}}}{=} \frac{1+\frac{\tau_2}{\tau_1}-\overline{A}}{\frac{\tau_2}{\tau_1}+\overline{B}}\theta - \frac{1}{12m\tau_1(\frac{\tau_2}{\tau_1}+\overline{B})} \\
\stackrel{\text{\ding{173}}}{\geq}& 1+\frac{1-\overline{A}-\overline{B}}{\frac{\tau_2}{\tau_1}+\overline{B}}  - \frac{1}{6(\frac{\tau_2}{\tau_1}+\overline{B})}
= 1+ \frac{  1-\overline{A}-\overline{B} - \frac{1}{6}}{\frac{\tau_2}{\tau_1}+\overline{B}} 
\stackrel{\text{\ding{174}}}{\geq} 1+ \frac{  1-\overline{A}-\overline{B} - \frac{1}{6}}{1+\overline{B}}\geq 1+ \frac{  1-\frac{1}{4} - \frac{1}{6}}{1+\frac{1}{8}} > \frac{13}{12}.
\end{align*}
Here, \ding{172} holds because $\theta=1+\frac{1}{12m}$, \ding{173} follows from $\theta\geq 1$ and $2m\tau_1\geq 1$, and \ding{174} uses $\frac{\tau_2}{\tau_1}\leq 1.$
 

{
\section{Results for multi-level finite-sum COP}\label{sec:multilevel}
In this section we extend SoCK and NoCK algorithms for a $p$-level finite-sum COP ($p\ge 2$) in the form of 
	\begin{equation}\label{multiP}
	\min_{x\in \mathbb{R}^{N_1}}  H(x)
	\equiv \left(f_{p} \circ f_{p-1} \circ\cdots\circ f_{1}\right) (x) +h(x),
\end{equation}
where $f_{i}=\frac{1}{n_i}\sum_{j=1}^{n_i}f_{i,j}$ is a finite-sum of $n_i$ differentiable maps $f_{i,j}\colon \mathbb{R}^{N_{i}}\rightarrow \mathbb{R}^{N_{i+1}}$ for all $i\in[p]$ and $j\in[n_{i}]$; and $h(x)$ is a convex regularizer that admits an easy proximal mapping. Clearly, only the last level $f_{p}$ must be a function with $N_{p+1}\equiv 1$. For consistency of notation, we write gradient information in terms of Jacobian matrices. As our results rely heavily on induction, we let $\phi^\prime$ denote the Jacobian matrix of map $\phi$, and define $\phi_{k}=f_{k} \circ f_{k-1} \circ\cdots\circ f_{1}$ for all $k\in[p]$. Then, by the chain rule, 
\begin{equation}\label{eq:phi-k-Jac}
 \phi_{k}^\prime(x)=f_{k}^\prime(\phi_{k-1}(x)) \cdot f_{k-1}^\prime(\phi_{k-2}(x)) \cdots  f_{1}^\prime(x)=f_{k}^\prime(\phi_{k-1}(x)) \cdot\phi_{k-1}^\prime(x).
 \end{equation}

The critical component of the convergence of SoCK as well as the main difference it has from the convergence of Katyusha is the biased Jacobian estimation $\widetilde{\nabla}$ at a query point $x$ given a snapshot point $\widetilde{x}$, and how the bias $\EE\|\widetilde{\nabla}-\phi_{p}^\prime(x)\|^2$ is bounded. Notice that the results in both Lemmas~\ref{lem:samplevariance} and \ref{lem:samplevariance3} are like 
\begin{equation}\label{gradest-ess}
	 \EE\|\widetilde{\nabla}-\phi_{p}^\prime(x)\|^2=O\left(\|x-\widetilde{x}\|^2\cdot\sum\frac{1}{\text{mini-batch size}}\right),
\end{equation}
and the success of SoCK is then from limiting this deviation from the Katyusha acceleration, so that all mini-batch sizes are set to $\Theta(\kappa^2)$. 
 For a $p$-level COP in \eqref{multiP}, we can design a gradient estimator $\widetilde{\nabla}$ that achieves \eqref{gradest-ess} and then implement it in SoCK. By this, everything else will follow exactly as those in Theorems~\ref{thm:sock} and \ref{thm:gocknos}.

\begin{assump}\label{multi-one}
	~The function $\phi_{p}$ given in \eqref{multiP} is convex, and the function 
	$h$ in \eqref{multiP} is $\mu$-strongly convex with $\mu>0$.
\end{assump}

\begin{assump}\label{multiLip}
	For every $i\in [p]$ $j\in[n_i]$, $f_{i,j}\colon \mathbb{R}^{N_{i}}\rightarrow \mathbb{R}^{N_{i+1}}$ is $L_i$-smooth and has a $B_i$-bounded Jacobian matrix. Also, $\phi_{p}$ is $L$-smooth.
\end{assump}
With Assumption~\ref{multiLip}, it is known that $\phi_{k}$ is $\ell_{k}$-smooth and has a $\gamma_{k}$-bounded Jacobian matrix, where $\gamma_{k}=\prod_{i=1}^{k}B_i$ and $\ell_{k}=\sum_{i=1}^{k}L_{i}(\prod_{j=1}^{i-1}B_j^2)(\prod_{j=i+1}^{k}B_j)$; cf. \cite{zhang2019multi}. To be consistent with the notation of SoCK, we still use $L$ as the smoothness parameter of $\phi_{p}$, and hence $L\le\ell_{p}$. In practice $\ell_{p}$ may not be well-defined, or may dominate a good estimate of $L$. In this section we will utilize the fact that $f_{k,j}\circ\phi_{k-1}$ is also $\ell_{k}$-smooth. 

For a $p$-level composition, we replace the snapshot computation and gradient estimation of SoCK as follows and obtain Algorithm~\ref{alg:gock-multi}. At a snapshot point $\widetilde{x}$, we compute
\begin{equation}\label{eq:snap-01}
	\phi_{1}(\widetilde{x}),\ldots,\,\phi_{k}(\widetilde{x}),\ldots,\,\phi_{p-1}(\widetilde{x})
\end{equation}
and
\begin{equation}\label{eq:snap-02}
	f_1^\prime(\widetilde{x}),\,f_2^\prime(\phi_{1}(\widetilde{x})),\ldots,\,f_{k+1}^\prime(\phi_{k}(\widetilde{x})),\ldots,\,f_{p}^\prime(\phi_{p-1}(\widetilde{x})).
\end{equation}
Hence, by \eqref{eq:phi-k-Jac}, we can also compute
\begin{equation}\label{eq:snap-03}
	\phi_{1}^\prime(\widetilde{x}),\ldots,\,\phi_{k}^\prime(\widetilde{x}),\ldots,\,\phi_{p}^\prime(\widetilde{x}).
\end{equation} 
For any query-snapshot pair $(x,\widetilde{x})$, compute the inner map and Jacobian estimations inductively
\begin{equation}\label{lvl-1-uv}
	u_1=\frac{1}{a_1}\sum_{j \in \A_1}\left(f_{1,j}(x)-f_{1,j}(\widetilde{x}) \right)+ \phi_{1}(\widetilde{x} ) \text{ and }
	v_1=\frac{1}{b_1}\sum_{j \in \B_1}\left(f_{1,j}^\prime(x)-f_{1,j}^\prime(\widetilde{x}) \right)+ \phi_{1}^\prime(\widetilde{x}),
\end{equation}
where  $\A_1$ and $\B_1$ are sampled uniformly at random from $[n_1]$ with replacement such that $|\A_1|=a_1$ and $|\B_1|=b_1$. For $k=2,\ldots, p$, we compute the inner map estimation
\begin{equation}\label{lvl-k-u}
	u_k=\frac{1}{a_k}\sum_{j \in \A_k}\left(f_{k,j}(u_{k-1})-f_{k,j}(\phi_{k-1}(\widetilde{x})) \right)+ \phi_{k}(\widetilde{x})	
\end{equation}
and the Jacobian estimation
\begin{equation}\label{lvl-k-v}
v_k=\frac{1}{b_k}\sum_{j \in \B_k}\left(f_{k,j}^\prime(u_{k-1})v_{k-1}-f_{k,j}^\prime(\phi_{k-1}(\widetilde{x})) \phi_{k-1}^\prime(\widetilde{x})\right)+ \phi_{k}^\prime(\widetilde{x}),
\end{equation}
where  $\A_k$ and $\B_k$ are sampled uniformly at random from $[n_k]$ with replacement such that $|\A_k|=a_k$ and $|\B_k|=b_k$. We return $\widetilde{\nabla}=v_p^\top$ as the gradient estimation.

\begin{algorithm}[H]                      
	\caption{ Multi-level SoCK}	          
	\label{alg:gock-multi}      
	{\small                     
	\begin{algorithmic} 	
		\State \textbf{Input:} $x_0\in \mathbb{R}^{N_1},$ 
		$S,$ $m\leq \frac{L}{2\mu},$ $\theta>1$, mini-batch sizes $\{a_i\}_{i=1}^{p-1}$ and $\{b_i\}_{i=1}^{p}$;
		\State Let $y_0=z_0=\widetilde{x}^0 \leftarrow x_0;$ 
		\For{$s=0$ to $S-1$}
		\State Compute \eqref{eq:snap-01}, \eqref{eq:snap-02}, and \eqref{eq:snap-03}; 
		\algorithmiccomment{take a snapshot}
		\For{$j=0$ to $m-1$}
		\State $k\leftarrow s m+j;$
		\State $x_{k+1}\leftarrow\tau_1 z_k+ \tau_2 \widetilde{x}^s +(1-\tau_1-\tau_2)y_k;$ \algorithmiccomment{linear coupling step}
		\State Compute $v_p(x_{k+1},\widetilde{x}^s)$ by \eqref{lvl-1-uv}, \eqref{lvl-k-u}, and \eqref{lvl-k-v};
		\State Update 
		$\textstyle\widetilde{\nabla}_{k+1}\leftarrow [v_p(x_{k+1},\widetilde{x}^s)]^\top $; 	\algorithmiccomment{biased gradient estimator}
		\State  Let $z_{k+1}\leftarrow\argmin_{z}\, \langle \widetilde{\nabla}_{k+1},z\rangle +\frac{1}{2\alpha}\|z-z_{k}\|^2+h(z);$ \algorithmiccomment{mirror descent step}
		\State  Let $y_{k+1}\leftarrow\argmin_{y}\, \langle \widetilde{\nabla}_{k+1},y\rangle +\frac{3L}{2}\|y-x_{k+1}\|^2+h(y);$ \algorithmiccomment{gradient descent step}
		\EndFor
		\State   $\widetilde{x}^{s+1}\leftarrow(\sum_{j=0}^{m-1} \theta^{j})^{-1} \cdot \sum_{j=0}^{m-1} \theta^{j}y_{s m+j+1};$ \algorithmiccomment{update to the snapshot point}
		\EndFor
		\State \Return $\widetilde{x}^S$.	
	\end{algorithmic}
}	
\end{algorithm}

Below, we establish \eqref{gradest-ess}. In order to do this, we bound $\EE\|\widetilde{\nabla}-\phi_{p}^\prime(x)\|^2=\EE\|v_p-\phi_{p}^\prime(x)\|^2$ by forming recurrence relations.
	 
\begin{lem}\label{lem:samplevariance-multi}
	Let the query point $x$ and the snapshot point $\widetilde{x}$ be given, and $\widetilde{\nabla}=v_p$ be computed by \eqref{lvl-1-uv}, \eqref{lvl-k-u}, and \eqref{lvl-k-v}. Then under Assumption~\ref{multiLip},
\begin{equation}\label{SVmulti}
		 \EE [ \|\widetilde{\nabla}-\phi_{p}^\prime(x)\|^2|x,\widetilde{x} ]
		 \le \left( \sum_{j=1}^{p} \frac{2}{b_j }\frac{4^{p-j}\ell_{j}^2\gamma_{p}^2}{\gamma_{j}^2}  +   \sum_{i=1}^{p-1}\frac{1}{a_i} \sum_{j=i+1}^{p} \frac{4^{p-j+1}\gamma_{p}^2  \gamma_{j-1}^4 L_{j}^2}{ \gamma_{j}^2 }   \right) \|x-\widetilde{x}\|^2. 
\end{equation}
\end{lem}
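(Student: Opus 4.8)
The plan is to control, simultaneously and by downward coupling through the levels, the mean-squared errors of the inner-map estimators $u_k$ and of the Jacobian estimators $v_k$. For the fixed pair $(x,\widetilde{x})$ write $\EE[\cdot]$ for $\EE[\cdot\mid x,\widetilde{x}]$ and set
\[P_k := \EE\|u_k-\phi_k(x)\|^2 \ \ (1\le k\le p-1), \qquad Q_k := \EE\|v_k-\phi_k^\prime(x)\|^2 \ \ (1\le k\le p),\]
so that $Q_p$ is exactly the quantity bounded in \eqref{SVmulti}. For the base level, $u_1$ and $v_1$ in \eqref{lvl-1-uv} are unbiased for $\phi_1(x)=f_1(x)$ and $\phi_1^\prime(x)=f_1^\prime(x)$; since they are averages of i.i.d.\ terms sampled with replacement, the ``variance of an average'' bound together with the $B_1$-bounded-Jacobian and $L_1$-smoothness parts of Assumption~\ref{multiLip} give $P_1\le\frac{\gamma_1^2}{a_1}\|x-\widetilde{x}\|^2$ and $Q_1\le\frac{\ell_1^2}{b_1}\|x-\widetilde{x}\|^2$ (recall $\gamma_1=B_1$, $\ell_1=L_1$).

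For the step $k-1\mapsto k$ the key is that the sampling at level $k$ is independent of everything used to build the level-$(k-1)$ iterates, so it injects mean-zero noise orthogonal to the propagated error. Using $\phi_k(\widetilde{x})=f_k(\phi_{k-1}(\widetilde{x}))$ I decompose $u_k-\phi_k(x)=\big(u_k-f_k(u_{k-1})\big)+\big(f_k(u_{k-1})-f_k(\phi_{k-1}(x))\big)$; the first bracket has conditional mean zero given $(\A_1,\dots,\A_{k-1})$ while the second is a function of those samples, so the cross term vanishes and $P_k=\EE\|u_k-f_k(u_{k-1})\|^2+\EE\|f_k(u_{k-1})-f_k(\phi_{k-1}(x))\|^2$. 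The first piece is at most $\frac{B_k^2}{a_k}\EE\|u_{k-1}-\phi_{k-1}(\widetilde{x})\|^2$, and I bound $\|u_{k-1}-\phi_{k-1}(\widetilde{x})\|^2\le 2\|u_{k-1}-\phi_{k-1}(x)\|^2+2\gamma_{k-1}^2\|x-\widetilde{x}\|^2$ using the $\gamma_{k-1}$-bound on $\phi_{k-1}^\prime$; the second piece is at most $B_k^2P_{k-1}$, giving a recursion $P_k\le c\,B_k^2P_{k-1}+\frac{c\,B_k^2\gamma_{k-1}^2}{a_k}\|x-\widetilde{x}\|^2$ with an absolute constant $c$. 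An entirely parallel split $v_k-\phi_k^\prime(x)=\big(v_k-f_k^\prime(u_{k-1})v_{k-1}\big)+\big(f_k^\prime(u_{k-1})v_{k-1}-\phi_k^\prime(x)\big)$, which uses $\phi_k^\prime(\widetilde{x})=f_k^\prime(\phi_{k-1}(\widetilde{x}))\phi_{k-1}^\prime(\widetilde{x})$ and the chain rule \eqref{eq:phi-k-Jac}, again kills the cross term; expanding both the fresh-noise term $f_{k,j}^\prime(u_{k-1})v_{k-1}-f_{k,j}^\prime(\phi_{k-1}(\widetilde{x}))\phi_{k-1}^\prime(\widetilde{x})$ in \eqref{lvl-k-v} and the propagated term by adding and subtracting $f_k^\prime(\phi_{k-1}(\cdot))v_{k-1}$, and then invoking the $L_k$-Lipschitz continuity of $f_k^\prime$, the $B_k$-bound on $f_k^\prime$, and the identity $\ell_k=B_k\ell_{k-1}+L_k\gamma_{k-1}^2$, yields a recursion of the form $Q_k\le c^\prime B_k^2Q_{k-1}+c^\prime L_k^2\gamma_{k-1}^2P_{k-1}+\frac{c^\prime\ell_k^2}{b_k}\|x-\widetilde{x}\|^2$. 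I then unroll the $P$-recursion, substitute into the $Q$-recursion, and unroll that: each level contributes the multiplicative factor $B_l^2$ (which accumulates to $\gamma_p^2/\gamma_j^2$) together with absolute constants that compound into the $4^{p-j}$-type factors, and the coefficient of $\|x-\widetilde{x}\|^2$ organizes into the two sums of \eqref{SVmulti} — the $1/b_j$ sum from noise injected at level $j$ through \eqref{lvl-k-v}, and the $1/a_i$ sum from inner-map error injected at level $i$ through \eqref{lvl-k-u} and carried through the Jacobian recursion for $j=i+1,\dots,p$.

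The main obstacle is the bilinearity of the chain rule, which forces the fresh-noise term at level $k$ to involve the product $\|u_{k-1}-\phi_{k-1}(\widetilde{x})\|\cdot\|v_{k-1}\|$ of two quantities built from the \emph{same} earlier randomness; a naive use of $\|v_{k-1}\|\le\gamma_{k-1}+\|v_{k-1}-\phi_{k-1}^\prime(x)\|$ leaves a product of two error terms and hence an unwanted $\|x-\widetilde{x}\|^4$ contribution. I will circumvent this by first establishing, via a short separate induction on $k$ that uses only the $B_\bullet$-bounds (namely $\|v_k\|\le B_k\|v_{k-1}\|+2B_k\gamma_{k-1}$ with $\|v_1\|\le 3\gamma_1$), an almost-sure bound $\|v_k\|\le C_p\,\gamma_k$ with $C_p$ a constant depending only on $p$; then $\|v_{k-1}\|$ is simply pulled out of the expectation, leaving only $\|u_{k-1}-\phi_{k-1}(\widetilde{x})\|^2$ to be handled by $P_{k-1}$ plus a $\gamma_{k-1}^2\|x-\widetilde{x}\|^2$ term as above. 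What remains is the bookkeeping of the compounded constants through the two nested unrollings, which I will carry out to confirm that $Q_p$ is dominated by the expression displayed in \eqref{SVmulti}.
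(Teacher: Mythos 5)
Your overall architecture — level-wise recursions for the map error $P_k$ and the Jacobian error $Q_k$, then two nested unrollings — is the same as the paper's, and several ingredients (the base-level bounds, the conditional-unbiasedness of $u_k$ around $f_k(u_{k-1})$, the mean-value bound $\|f_k(u_{k-1})-f_k(\phi_{k-1}(x))\|\le B_k\|u_{k-1}-\phi_{k-1}(x)\|$) are correct. But there is a concrete gap: your recursions are too loose to ever ``confirm'' the displayed constants in \eqref{SVmulti}, so the final bookkeeping step you defer cannot succeed. Two places are responsible. First, you bound $\EE\|u_{k-1}-\phi_{k-1}(\widetilde{x})\|^2\le 2P_{k-1}+2\gamma_{k-1}^2\|x-\widetilde{x}\|^2$; the paper instead proves the \emph{almost-sure} bound $\|u_{k}-\phi_{k}(\widetilde{x})\|\le\gamma_{k}\|x-\widetilde{x}\|$ by a separate chained induction that uses only the $B_\bullet$-bounds (see \eqref{eq:rec-3}--\eqref{eq:form-1}), with no factor $2$ and no coupling to $P_{k-1}$. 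With your version, unrolling $P_k\le 2B_k^2P_{k-1}+\frac{2\gamma_k^2}{a_k}\|x-\widetilde{x}\|^2$ gives $P_{j-1}\le\gamma_{j-1}^2\sum_{i\le j-1}\frac{2^{j-i}}{a_i}\|x-\widetilde{x}\|^2$, and after substitution into the $Q$-recursion the $1/a_i$ coefficients carry extra factors $2^{j-i}$ beyond the stated $4^{p-j+1}\gamma_p^2\gamma_{j-1}^4L_j^2/\gamma_j^2$; the inequality you end up with is strictly weaker than \eqref{SVmulti}. Second, your treatment of the bilinear chain-rule term is unnecessarily lossy: the a.s.\ bound $\|v_{k}\|\le C_p\gamma_{k}$ (with $C_p$ growing linearly in $p$) injects a $C_p^2$ into the recursion, and your orthogonal split around $f_k^\prime(u_{k-1})v_{k-1}$ forces the fresh-noise ($1/b_k$) term to be expanded via $f_{k,j}^\prime(\phi_{k-1}(\widetilde{x}))v_{k-1}$, which contaminates it with $Q_{k-1}$ and with squared versions of the identity $\ell_k=B_k\ell_{k-1}+L_k\gamma_{k-1}^2$ (squares do not add, so another factor $2$ appears). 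The paper avoids all of this with the decomposition
\begin{equation*}
f_{k,j}^\prime(u_{k-1})v_{k-1}-f_{k,j}^\prime(\phi_{k-1}(x))\phi_{k-1}^\prime(x)
= f_{k,j}^\prime(u_{k-1})\bigl(v_{k-1}-\phi_{k-1}^\prime(x)\bigr)
+\bigl(f_{k,j}^\prime(u_{k-1})-f_{k,j}^\prime(\phi_{k-1}(x))\bigr)\phi_{k-1}^\prime(x),
\end{equation*}
which pairs the $v$-error with the bounded factor $f_{k,j}^\prime(u_{k-1})$ (norm at most $B_k$) and the $u$-error with the deterministic $\phi_{k-1}^\prime(x)$ (norm at most $\gamma_{k-1}$), so no bound on $\|v_{k-1}\|$ and no product of two errors is ever needed; the clean $\frac{2\ell_k^2}{b_k}\|x-\widetilde{x}\|^2$ term then comes from centering the split at the estimator built from the \emph{exact} $\phi_{k-1}(x),\phi_{k-1}^\prime(x)$, where the $\ell_k$-smoothness of each $f_{k,j}\circ\phi_{k-1}$ applies directly.

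To be fair, your program would still deliver a bound of the qualitative form \eqref{gradest-ess}, which is what the downstream argument (Lemma~\ref{lem:cp2multi} with enlarged mini-batch sizes) really needs; but the lemma as stated, with its specific constants, is not established by your route. To repair it, replace your two lossy steps by the paper's: prove the a.s.\ chained bound on $\|u_{k}-\phi_{k}(\widetilde{x})\|$ first, and use the two-term decomposition above for the Jacobian noise rather than an a.s.\ bound on $\|v_{k-1}\|$.
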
	
\begin{proof}
	The first recurrence on $\EE\|v_k-\phi_{k}^\prime(x)\|^2$ is follows the proof of Lemma~\ref{lem:samplevariance3}. 
\begin{align}
	\|v_k-\phi_{k}^\prime(x)\|^2 \le& 2 \left\|v_k- \frac{1}{b_k}\sum_{j \in \B_k}\left(f_{k,j}^\prime(\phi_{k-1}(x)) \phi_{k-1}^\prime(x)-f_{k,j}^\prime(\phi_{k-1}(\widetilde{x})) \phi_{k-1}^\prime(\widetilde{x})\right)- \phi_{k}^\prime(\widetilde{x})\right\|^2\nonumber\\
	&+2\left\| \frac{1}{b_k}\sum_{j \in \B_k}\left(f_{k,j}^\prime(\phi_{k-1}(x)) \phi_{k-1}^\prime(x)-f_{k,j}^\prime(\phi_{k-1}(\widetilde{x})) \phi_{k-1}^\prime(\widetilde{x})\right)+ \phi_{k}^\prime(\widetilde{x}) -\phi_{k}^\prime(x) \right\|^2.\label{recurrence01}
\end{align}
The first term of the r.h.s. of \eqref{recurrence01} can be simplified and bounded as follows,
\begin{align*}
	&\left\|v_k- \frac{1}{b_k}\sum_{j \in \B_k}\left(f_{k,j}^\prime(\phi_{k-1}(x)) \phi_{k-1}^\prime(x)-f_{k,j}^\prime(\phi_{k-1}(\widetilde{x})) \phi_{k-1}^\prime(\widetilde{x})\right)- \phi_{k}^\prime(\widetilde{x})\right\|^2\\
	= &  \frac{1}{b_k^2}\left\| \sum_{j \in \B_k}\left(f_{k,j}^\prime(u_{k-1})v_{k-1} -f_{k,j}^\prime(\phi_{k-1}(x)) \phi_{k-1}^\prime(x)\right) \right\|^2 \\
	\le &  \frac{1}{b_k}\sum_{j \in \B_k}\left\| f_{k,j}^\prime(u_{k-1})v_{k-1} -f_{k,j}^\prime(\phi_{k-1}(x)) \phi_{k-1}^\prime(x)  \right\|^2 \\
	\le &  \frac{1}{b_k}\sum_{j \in \B_k} 2\left\| f_{k,j}^\prime(u_{k-1})v_{k-1} -f_{k,j}^\prime(u_{k-1}) \phi_{k-1}^\prime(x)  \right\|^2 +2\left\| f_{k,j}^\prime(u_{k-1}) \phi_{k-1}^\prime(x) -f_{k,j}^\prime(\phi_{k-1}(x)) \phi_{k-1}^\prime(x)  \right\|^2\\
	\le &  2 B_k^2\left\| v_{k-1} - \phi_{k-1}^\prime(x)  \right\|^2 +2 \gamma_{k-1}^2 L_{k}^2\left\|  u_{k-1}   - \phi_{k-1}(x)     \right\|^2,
\end{align*}
where the equality follows from \eqref{lvl-k-v}, the first and second inequalities use Cauchy-Schwarz inequaity, and the last inequality follows from the boundedness of $f_{k,j}^\prime$ and $\phi_{k-1}^\prime$, and the $L_k$-smoothness of $f_{k,j}^\prime$. 
The second term of the r.h.s. of \eqref{recurrence01} can be bounded by the same process for the second term of the r.h.s. of \eqref{chain1}, by using the $\ell_{k}$-smoothness of each $f_{k,j}\circ\phi_{k-1}$, and it results in
{
	\[\expect*{\left\| \frac{1}{b_k}\sum_{j \in \B_k}\left(f_{k,j}^\prime(\phi_{k-1}(x)) \phi_{k-1}^\prime(x)-f_{k,j}^\prime(\phi_{k-1}(\widetilde{x})) \phi_{k-1}^\prime(\widetilde{x})\right)+ \phi_{k}^\prime(\widetilde{x}) -\phi_{k}^\prime(x) \right\|^2|x,\widetilde{x} }\le \frac{\ell_{k}^2}{b_k}\|x-\widetilde{x}\|^2.\]
}Utilizing the above two bounds for \eqref{recurrence01}, we have the first recurrence relation 
\begin{equation}\label{eq:rec-1}
	\EE [\|v_k-\phi_{k}^\prime(x)\|^2 |x,\widetilde{x} ]\le 4 B_k^2\EE [ \| v_{k-1} - \phi_{k-1}^\prime(x)   \|^2|x,\widetilde{x} ] +4 \gamma_{k-1}^2 L_{k}^2\EE [ \|  u_{k-1}   - \phi_{k-1}(x)      \|^2|x,\widetilde{x} ] + \frac{2\ell_{k}^2}{b_k}\|x-\widetilde{x}\|^2.
\end{equation}
We establish the recurrence for $\|  u_{k}   - \phi_{k}(x) \|^2$ as follows, 
\begin{align}
	&\EE [\|  u_{k}   - \phi_{k}(x) \|^2 |x,\widetilde{x},u_{k-1} ] \cr
	= & { \frac{1}{a_k^2} \expect*{\left\| \sum_{j \in \A_k}\left(f_{k,j}(u_{k-1})-f_{k,j}(\phi_{k-1}(\widetilde{x}))+ \phi_{k}(\widetilde{x})- \phi_{k}(x) \right) \right\|^2 |x,\widetilde{x},u_{k-1}}	} \cr
	= & { \frac{1}{a_k^2} \expect*{\left\| \sum_{j \in \A_k}\left(f_{k,j}(u_{k-1})-f_{k,j}(\phi_{k-1}(\widetilde{x})) -f_{k}(u_{k-1}) + \phi_{k}(\widetilde{x}) +f_{k}(u_{k-1}) - \phi_{k}(x) \right) \right\|^2 |x,\widetilde{x},u_{k-1} }	} \cr 
	= &  \frac{1}{a_k^2}\sum_{j \in \A_k} \EE [ \| f_{k,j}(u_{k-1})-f_{k,j}(\phi_{k-1}(\widetilde{x})) -f_{k}(u_{k-1}) + \phi_{k}(\widetilde{x}) \|^2|x,\widetilde{x},u_{k-1} ] +\|f_{k}(u_{k-1}) - \phi_{k}(x)  \|^2 	 \cr
	\le & \frac{1}{a_k^2}\sum_{j \in \A_k} \EE [ \| f_{k,j}(u_{k-1})-f_{k,j}(\phi_{k-1}(\widetilde{x})) \|^2|x,\widetilde{x},u_{k-1} ] +\|f_{k}(u_{k-1}) - \phi_{k}(x)  \|^2\cr
	\le & \frac{B_k^2}{a_k}   \left\| u_{k-1} - \phi_{k-1}(\widetilde{x})  \right\|^2 +  B_{k}^2\left\|  u_{k-1}   - \phi_{k-1}(x)     \right\|^2, \label{eq:rec-2}
\end{align}
where the first equality follows from \eqref{lvl-k-u}, the third equality 
comes from the fact that $\big\{ f_{k,j}(u_{k-1})-f_{k,j}(\phi_{k-1}(\widetilde{x})) -f_{k}(u_{k-1}) + \phi_{k}(\widetilde{x}) \big\}$ are conditionally independent with each other, and their expectations all equal 0, the first inequality holds because the variance is bounded by the second moment, and the second inequality follows from the intermediate value theorem and the boundedness  of the Jacobian of each $f_{k,j}$ and $f_k$. 
A similar argument gives us the following recurrence for $\|  u_{k}   - \phi_{k}(\widetilde{x}) \|^2$:
\begin{align}
 \|  u_{k}   - \phi_{k}(\widetilde{x}) \|^2 
	= &  \frac{1}{a_k^2}   \left\| \sum_{j \in \A_k}\left(f_{k,j}(u_{k-1})-f_{k,j}(\phi_{k-1}(\widetilde{x})) \right) \right\|^2 
		 \cr
	\le & \frac{1}{a_k}\sum_{j \in \A_k}  \| f_{k,j}(u_{k-1})-f_{k,j}(\phi_{k-1}(\widetilde{x})) \|^2
	\le     B_{k}^2\left\|  u_{k-1}   - \phi_{k-1}(x)     \right\|^2, \label{eq:rec-3}
\end{align}
where the first inequality uses Cauchy-Schwarz inequaity, and the second inequality follows from the intermediate value theorem and the $B_k$-boundedness of the Jacobian of each $f_{k,j}$. Also note that by the same argument as that of \eqref{eq:rec-3}, we have $\| u_{1} - \phi_{1}(\widetilde{x}) \|^2\le B_{1}^2 \| x - \widetilde{x} \|^2$, which together with \eqref{eq:rec-3} implies that for all $k\in[p]$,
\begin{equation}\label{eq:form-1}
 \|  u_{k}   - \phi_{k}(\widetilde{x}) \|^2\le \gamma_{k}^2 \| x - \widetilde{x} \|^2.
\end{equation}
Plugging \eqref{eq:form-1} into \eqref{eq:rec-2}, we obtain the recurrence
\[\EE [\|  u_{k}   - \phi_{k}(x) \|^2 |x,\widetilde{x}  ] \le B_k^2 \EE [\|  u_{k-1}   - \phi_{k-1}(x) \|^2 |x,\widetilde{x}  ] +  \frac{\gamma_{k}^2}{a_k} \| x - \widetilde{x} \|^2 ;\]
together with the fact that $ \EE [\|  u_{1}   - \phi_{1}(x) \|^2 |x,\widetilde{x} ] \le \frac{B_1^2}{a_1} \| x - \widetilde{x} \|^2$, which is similar to  \eqref{eq:lem-summandbound-ineq2}, we have 
\begin{equation}\label{eq:form-2}
	\EE[ \|  u_{k}   - \phi_{k}(x) \|^2 |x,\widetilde{x} ]\le \gamma_{k}^2\left(\sum_{i=1}^{k}\frac{1}{a_i}\right) \| x - \widetilde{x} \|^2.
\end{equation}
Plug \eqref{eq:form-2} into \eqref{eq:rec-1} to obtain the recurrence
\[
	\EE [\|v_k-\phi_{k}^\prime(x)\|^2 |x,\widetilde{x} ]\le 4 B_k^2\EE [ \| v_{k-1} - \phi_{k-1}^\prime(x)   \|^2|x,\widetilde{x} ] + \left(4 \gamma_{k-1}^4 L_{k}^2 \sum_{i=1}^{k-1}\frac{1}{a_i} +  \frac{2\ell_{k}^2}{b_k}\right)\|x-\widetilde{x}\|^2;
\] together with the fact that $ \EE [\|  v_{1}   - \phi_{1}^\prime(x) \|^2 |x,\widetilde{x} ] \le \frac{L_1^2}{b_1} \| x - \widetilde{x} \|^2$, which appeared right after \eqref{eq:lem-summandbound-ineq2} , we have
\begin{align*}
	\EE [\|v_k-\phi_{k}^\prime(x)\|^2 |x,\widetilde{x} ] \le&\|x-\widetilde{x}\|^2 4^k\gamma_{k}^2 \sum_{j=1}^{k} \frac{1}{4^j\gamma_{j}^2}\left(\frac{2\ell_{j}^2}{b_j}  + 4 \gamma_{j-1}^4 L_{j}^2 \sum_{i=1}^{j-1}\frac{1}{a_i}  \right) \cr
	=&  \left( \sum_{j=1}^{k} \frac{2}{b_j }\frac{4^{k-j}\ell_{j}^2\gamma_{k}^2}{\gamma_{j}^2}  +   \sum_{i=1}^{k-1}\frac{1}{a_i} \sum_{j=i+1}^{k} \frac{4^{k-j+1}\gamma_{k}^2  \gamma_{j-1}^4 L_{j}^2}{ \gamma_{j}^2 }   \right) \|x-\widetilde{x}\|^2. 
\end{align*}
Let $k=p$ in the above completes the proof. 
\end{proof}

Plugging \eqref{SVmulti} into \eqref{eq:ineq-for-all-case}, we are able to show \eqref{eq:cp2} and thus \eqref{eq:key} with an appropriate $M$. 
\begin{lem}
	\label{lem:cp2multi}
	~Suppose $\tau_1\in (0, \frac{1}{3\alpha L}]$ and $\tau_2\in[0,1-\tau_1] $ in the linear coupling step of Algorithm \ref{alg:gock-multi}. Let $x^*$ be the solution of \eqref{multiP}. If $\widetilde{\nabla}_{k+1}$ is computed by  
	\eqref{lvl-1-uv}, \eqref{lvl-k-u}, and \eqref{lvl-k-v}, then \eqref{eq:cp2} holds with  
	\begin{equation}\label{eq:M-set-mult}
		M = 3\Big(\frac{1}{4\tau_1L}+\frac{3}{\mu}\Big)\left( \sum_{j=1}^{p} \frac{2}{b_j }\frac{4^{p-j}\ell_{j}^2\gamma_{p}^2}{\gamma_{j}^2}  +   \sum_{i=1}^{p-1}\frac{1}{a_i} \sum_{j=i+1}^{p} \frac{4^{p-j+1}\gamma_{p}^2  \gamma_{j-1}^4 L_{j}^2}{ \gamma_{j}^2 }   \right).
	\end{equation} 
\end{lem}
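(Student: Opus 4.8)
The plan is to follow verbatim the argument used for Lemma~\ref{lem:cp2nos}, replacing the two-level variance bound \eqref{SV3} by the $p$-level variance bound \eqref{SVmulti}. The starting point is inequality \eqref{eq:ineq-for-all-case} of Lemma~\ref{lem:cp-step2}: its proof never refers to how the gradient estimator is formed, using only the convexity and $L$-smoothness of $\phi_p=f_p\circ\cdots\circ f_1$, the $\mu$-strong convexity of $h$, and the linear-coupling / mirror-descent / gradient-descent steps. All of these hold here under Assumptions~\ref{multi-one}--\ref{multiLip} and the hypotheses $\tau_1\in(0,\tfrac{1}{3\alpha L}]$, $\tau_2\in[0,1-\tau_1]$, so \eqref{eq:ineq-for-all-case} is in force for Algorithm~\ref{alg:gock-multi} with $f$ there read as $\phi_p$ and $\nabla f$ as $[\phi_p']^{\top}$.

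First I would set $\beta=\tfrac{6}{\mu}$ in \eqref{eq:ineq-for-all-case}, which turns the coefficient of the bias term into $\tfrac{1}{4\tau_1 L}+\tfrac{3}{\mu}$ and that of $\|z_k-x^\ast\|^2$ into $\tfrac{1}{2\alpha}+\tfrac{\mu}{12}$, and then take the conditional expectation $\EE_{k-1}$ of both sides. The only quantity that behaves differently from Lemma~\ref{lem:cp2nos} is $\EE_{k-1}\|\widetilde\nabla_{k+1}-\nabla\phi_p(x_{k+1})\|^2$. Since Algorithm~\ref{alg:gock-multi} sets $\widetilde\nabla_{k+1}=v_p(x_{k+1},\widetilde x^s)^{\top}$ and the spectral norm is invariant under transposition, this equals $\EE_{k-1}\|v_p(x_{k+1},\widetilde x^s)-\phi_p'(x_{k+1})\|^2$, which is precisely the object bounded in Lemma~\ref{lem:samplevariance-multi} (applied with query point $x=x_{k+1}$ and snapshot $\widetilde x=\widetilde x^s$). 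Writing $V$ for the bracketed coefficient appearing in \eqref{SVmulti}, this gives $\EE_{k-1}\|\widetilde\nabla_{k+1}-\nabla\phi_p(x_{k+1})\|^2\le V\|x_{k+1}-\widetilde x^s\|^2$.

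Next I would apply Lemma~\ref{lem:split} — valid because $H$ is $\mu$-strongly convex by Assumption~\ref{multi-one} — to replace $\|x_{k+1}-\widetilde x^s\|^2$ by $3\bigl(\tau_1^2\|z_k-x^\ast\|^2+\tfrac{2(1-\tau_2)^2}{\mu}(H(\widetilde x^s)-H(x^\ast))+\tfrac{2(1-\tau_1-\tau_2)^2}{\mu}(H(y_k)-H(x^\ast))\bigr)$. Taking $M=3\bigl(\tfrac{1}{4\tau_1 L}+\tfrac{3}{\mu}\bigr)V$, which is exactly \eqref{eq:M-set-mult}, the bias contribution $\bigl(\tfrac{1}{4\tau_1 L}+\tfrac{3}{\mu}\bigr)V\|x_{k+1}-\widetilde x^s\|^2$ becomes $M\tau_1^2\|z_k-x^\ast\|^2+\tfrac{2M(1-\tau_2)^2}{\mu}(H(\widetilde x^s)-H(x^\ast))+\tfrac{2M(1-\tau_1-\tau_2)^2}{\mu}(H(y_k)-H(x^\ast))$. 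Substituting this into the conditionally-expected form of \eqref{eq:ineq-for-all-case} and grouping the coefficients of $H(y_k)-H(x^\ast)$, $H(\widetilde x^s)-H(x^\ast)$, $\|z_k-x^\ast\|^2$, $\EE_{k-1}[H(y_{k+1})]-H(x^\ast)$ and $\EE_{k-1}[\|z_{k+1}-x^\ast\|^2]$ reproduces \eqref{eq:cp2} term by term.

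I do not anticipate a real obstacle: the substantive work — the layer-by-layer recursion controlling $\EE\|v_p-\phi_p'(x)\|^2$ — has already been carried out in Lemma~\ref{lem:samplevariance-multi}, and both \eqref{eq:ineq-for-all-case} and \eqref{eq:gaps} are agnostic to the estimator. The only points that require a little care are bookkeeping ones: keeping the identification $\nabla f\leftrightarrow[\phi_p']^{\top}$ consistent throughout, remembering that the constant $L$ used in the algorithm, in Lemma~\ref{lem:split}, and in $M$ is the smoothness constant of $\phi_p$ (by Assumption~\ref{multiLip} it satisfies $L\le\ell_p$, and one may keep $L$ rather than the possibly-larger $\ell_p$), and checking that the choice $\beta=\tfrac{6}{\mu}$ is exactly what produces the constants $\tfrac{3}{\mu}$ and $\tfrac{\mu}{12}$ that appear in \eqref{eq:M-set-mult} and \eqref{eq:cp2}.
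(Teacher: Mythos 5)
Your proposal is correct and follows essentially the same route as the paper: the paper's proof of this lemma is exactly to take $\EE_{k-1}$ of \eqref{eq:ineq-for-all-case} with $\beta=\frac{6}{\mu}$, plug in the bound \eqref{SVmulti}, and use Lemma~\ref{lem:split} (as explained right before \eqref{eq:cp2}) to dispense the bias term, yielding the stated $M$. Your extra bookkeeping remarks (the identification $\nabla f\leftrightarrow[\phi_p']^{\top}$ and keeping $L$ rather than $\ell_p$) are consistent with the paper and do not change the argument.
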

\begin{proof} 
Taking conditional expectation $\EE_{k-1}$ on both sides of \eqref{eq:ineq-for-all-case} with $\beta=\frac{6}{\mu}$ and plugging \eqref{SVmulti}, we immediately have \eqref{eq:cp2} by using the choice of $M$ in \eqref{eq:M-set-mult}.
\end{proof} 

The convergence of Algorithm~\ref{alg:gock-multi} follows that of Algorithm~\ref{alg:gock}, based on Lemma~\ref{lem:cp2multi} and a similar parameter choice.

\begin{thm}[convergence result for multi-level SoCK
	]\label{thm:gocknos-multi}
	~Under Assumptions~\ref{multi-one} 
	 and \ref{multiLip}, let $\{\widetilde x^s\}$ be generated from Algorithm~\ref{alg:gock-multi} with $\widetilde\nabla_{k+1}$ computed by \eqref{lvl-1-uv}, \eqref{lvl-k-u}, \eqref{lvl-k-v} and with parameters set as follows:
	\begin{subequations}\label{eq:para-multi}
		\begin{align}
			m\leftarrow\left\lceil \frac{1}{2}\sqrt{\frac{L}{\mu}}\right\rceil,\, \tau_1\leftarrow  \frac{1}{2m},\, \tau_2\leftarrow \frac{1}{2m},\, \theta\leftarrow 1+\frac{1}{12m},\, \alpha\leftarrow \frac{1}{3\tau_1 L},\\
			a_i\leftarrow \frac{180(2p-1)}{\mu^2} \sum_{j=i+1}^{p} \frac{4^{p-j+1}\gamma_{p}^2  \gamma_{j-1}^4 L_{j}^2}{ \gamma_{j}^2 } \quad \text{ for } i=1,\ldots,p-1,\label{eq:para-multi-a}\\
						b_i\leftarrow  \frac{360(2p-1)}{\mu^2} \frac{4^{p-i}\ell_{i}^2\gamma_{p}^2}{\gamma_{i}^2}  \quad \text{ for } i=1,\ldots,p.\label{eq:para-multi-b}
		\end{align}
	\end{subequations}
	Then
	\begin{equation*}
		\EX\big[H(\widetilde{x}^{S})-H(x^{\ast})\big] \leq 
		11\cdot \left(  \frac{12}{13}\right)^{S}\big(H(x_0)-H(x^{\ast})\big). 
	\end{equation*}
\end{thm}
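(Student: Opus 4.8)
The plan is to reduce the statement to the two-level result already proved in Theorem~\ref{thm:gocknos}, exploiting the fact that the parameters in \eqref{eq:para-multi} coincide with those in \eqref{eq:para-2} except for the mini-batch sizes, and that the mini-batch sizes enter the analysis only through the constant $M$ in the master recursion \eqref{eq:cp2}. Indeed, Lemma~\ref{lem:cp2multi} already establishes that \eqref{eq:cp2} holds for Algorithm~\ref{alg:gock-multi} with $M$ given by \eqref{eq:M-set-mult}; once this is in hand, every step downstream of \eqref{eq:cp2} --- Lemma~\ref{telescoping} (with $\overline A=\frac{2M}{\mu}(1-\tau_1-\tau_2)^2$, $\overline B=\frac{2M}{\mu}(1-\tau_2)^2$, $\overline C=\frac{1}{2\alpha}+\frac{\mu}{12}+M\tau_1^2$), Lemma~\ref{usualrate}, the one-outer-loop contraction \eqref{expdecay} with ratio $\frac{12}{13}$, and the $S$-fold iteration of it --- is word for word the proof of Theorem~\ref{thm:sock}, and it carries over verbatim provided the single quantitative inequality $M\le\frac{\mu}{16}$ holds. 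This is precisely the inequality that, in that proof, yields $2M\tau_1^2<\frac{\mu}{6}$, $\frac{2M}{\mu}\le\frac{1}{8}$, and the required upper bound on $\theta$.

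Thus the only thing to verify is $M\le\frac{\mu}{16}$ for the choices \eqref{eq:para-multi-a}--\eqref{eq:para-multi-b}. The mini-batch sizes are tailored exactly so that each of the $p$ summands in the first (Jacobian) sum of \eqref{eq:M-set-mult} equals $\frac{2}{b_j}\cdot\frac{4^{p-j}\ell_j^2\gamma_p^2}{\gamma_j^2}=\frac{\mu^2}{180(2p-1)}$, and each of the $p-1$ summands in the second (inner-map) sum equals $\frac{1}{a_i}\sum_{j=i+1}^p\frac{4^{p-j+1}\gamma_p^2\gamma_{j-1}^4 L_j^2}{\gamma_j^2}=\frac{\mu^2}{180(2p-1)}$. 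Summing these $2p-1$ equal contributions gives exactly $\frac{\mu^2}{180}$ for the bracket in \eqref{eq:M-set-mult}, whence $M=3\big(\frac{1}{4\tau_1 L}+\frac{3}{\mu}\big)\frac{\mu^2}{180}$. Using $\tau_1=\frac{1}{2m}$ with $m=\lceil\frac{1}{2}\sqrt{L/\mu}\rceil$ together with the same elementary bound $\frac{1}{\tau_1}=2m<\frac{3L}{\mu}$ (hence $\frac{1}{4\tau_1 L}<\frac{3}{4\mu}$) used in the proof of Theorem~\ref{thm:sock}, one obtains $M<3\big(\frac{3}{4\mu}+\frac{3}{\mu}\big)\frac{\mu^2}{180}=\frac{\mu}{16}$, as needed. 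Combining this with the reduction in the previous paragraph yields $\EE[H(\widetilde x^S)-H(x^\ast)]\le 11(\frac{12}{13})^S(H(x_0)-H(x^\ast))$, with the same leading constant and contraction ratio.

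The main obstacle --- though it is bookkeeping rather than a genuine difficulty --- is making sure that the intricate $p$-dependent constants $\gamma_j$, $\ell_j$, the factors $4^{p-j}$, and the nested sum over $j$ in \eqref{eq:M-set-mult} cancel exactly against the chosen $a_i,b_i$, so that $M$ is bounded by $\frac{\mu}{16}$ \emph{uniformly in $p$}. This uniformity is what makes the $p$-level guarantee as strong as the two-level one: the dependence on $p$ is absorbed entirely into the per-iteration cost through the batch sizes $a_i,b_i$, not into the iteration complexity. No analytic ingredient beyond Lemma~\ref{lem:samplevariance-multi} (feeding the bias bound \eqref{SVmulti} into \eqref{eq:ineq-for-all-case}) and Lemma~\ref{lem:cp2multi} is required.
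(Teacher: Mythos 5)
Your proposal is correct and follows essentially the same route as the paper: the paper's own proof simply observes that the parameters coincide with \eqref{eq:para-1} except for the mini-batch sizes, which enter only through $M$ in \eqref{eq:M-set-mult}, verifies $M\le\frac{\mu}{16}$ for the choices \eqref{eq:para-multi-a}--\eqref{eq:para-multi-b}, and then repeats the argument of Theorem~\ref{thm:sock}. Your explicit check that each of the $2p-1$ summands in the bracket of \eqref{eq:M-set-mult} equals $\frac{\mu^2}{180(2p-1)}$, so the bracket is $\frac{\mu^2}{180}$ and $M\le 3\bigl(\frac{3}{4\mu}+\frac{3}{\mu}\bigr)\frac{\mu^2}{180}=\frac{\mu}{16}$, is exactly the verification the paper leaves implicit.
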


\begin{proof} 
Notice that the parameters given in \eqref{eq:para-multi} are the same as those in \eqref{eq:para-1} except for $\{a_i\}_{i=1}^{p-1}$ and $\{b_i\}_{i=1}^{p}$, and also notice that the choices of $\{a_i\}_{i=1}^{p-1}$ and $\{b_i\}_{i=1}^{p}$ only affect the value of $M$. Plugging into \eqref{eq:M-set-mult} the values of $\{a_i\}_{i=1}^{p-1}$ and $\{b_i\}_{i=1}^{p}$ given in \eqref{eq:para-multi-a} and \eqref{eq:para-multi-b}, we can easily verify that $M\le \frac{\mu}{16}$. Now following the same arguments in the proof of Theorem~\ref{thm:sock}, we obtain the desired result.
\end{proof}

By Theorem~\ref{thm:gocknos-multi}, we can estimate the complexity of  Algorithm~\ref{alg:gock-multi} in terms of the number of evaluations on $ f_{i,j}$, $f_{i,j}^\prime$, and the proximal mapping of $h$. Its proof follows that of Corollary~\ref{cor:sock}, and we omit it.

\begin{cor}\label{cor:gocknos-multi}
	~Given $\varepsilon>0$, under the same assumptions as those in Theorem~\ref{thm:gocknos-multi}, the number of component map/Jacobian evaluations 
	 of Algorithm~\ref{alg:gock-multi} to produce a stochastic $\varepsilon$-solution is \[O\left( \Big(\sum_{i=1}^{p}n_i   +\sqrt{\frac{L}{\mu}}\frac{ \ell_{p}^2 }{\mu^2}\Big) \log\frac{H(x_0)-H(x^{\ast})}{\varepsilon}\right).\] 
\end{cor}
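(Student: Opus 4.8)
The plan is to follow the proof of Corollary~\ref{cor:sock} in structure: convert the linear rate of Theorem~\ref{thm:gocknos-multi} into a bound on the number of outer loops, tally the oracle cost of one outer loop under the parameter choice \eqref{eq:para-multi}, and multiply the two. By Theorem~\ref{thm:gocknos-multi} we have $\EE\big[H(\widetilde{x}^{S})-H(x^{\ast})\big]\le 11(12/13)^{S}\big(H(x_0)-H(x^{\ast})\big)$, so $S = O\big(\log\frac{H(x_0)-H(x^{\ast})}{\varepsilon}\big)$ outer loops suffice to produce a stochastic $\varepsilon$-solution. One outer loop consists of: (i) forming a snapshot, i.e.\ computing \eqref{eq:snap-01} and \eqref{eq:snap-02} and then \eqref{eq:snap-03} through the chain rule \eqref{eq:phi-k-Jac} (only matrix products, hence no component evaluations), at a cost of $O\big(\sum_{i=1}^{p}n_i\big)$ evaluations of the maps $f_{i,j}$ and their Jacobians; and (ii) $m$ inner iterations, each of which evaluates $v_p$ via \eqref{lvl-1-uv}, \eqref{lvl-k-u}, \eqref{lvl-k-v}, at a cost of $O\big(\sum_{i=1}^{p-1}a_i+\sum_{i=1}^{p}b_i\big)$ evaluations (again all the matrix multiplications are free of component evaluations). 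Hence the total is $O\Big(S\big(\sum_{i=1}^{p}n_i + m\,(\sum_{i=1}^{p-1}a_i+\sum_{i=1}^{p}b_i)\big)\Big)$.

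The only step requiring work is to show that, with $p$ held constant and the mini-batch sizes chosen by \eqref{eq:para-multi-a}--\eqref{eq:para-multi-b}, each $a_i$ and each $b_i$ is $O(\ell_p^2/\mu^2)$. The key algebraic identity is that the definition $\ell_{p}=\sum_{k=1}^{p}L_{k}\big(\prod_{j=1}^{k-1}B_j^2\big)\big(\prod_{j=k+1}^{p}B_j\big)$ rewrites, via $\gamma_k=\prod_{j=1}^{k}B_j$, as $\ell_p=\sum_{k=1}^{p}L_k\gamma_{k-1}^2\,\gamma_p/\gamma_k$, a sum of nonnegative terms; therefore each term $L_k\gamma_{k-1}^2\gamma_p/\gamma_k$ is at most $\ell_p$. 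Consequently the generic summand $4^{p-j+1}\gamma_p^2\gamma_{j-1}^4L_j^2/\gamma_j^2=4^{p-j+1}\big(L_j\gamma_{j-1}^2\gamma_p/\gamma_j\big)^2$ in $a_i$ is $\le 4^{p-j+1}\ell_p^2$, so $a_i=O(\ell_p^2/\mu^2)$; and writing $\ell_i=\sum_{k=1}^{i}L_k\gamma_{k-1}^2\gamma_i/\gamma_k$ gives $\ell_i\gamma_p/\gamma_i=\sum_{k=1}^{i}L_k\gamma_{k-1}^2\gamma_p/\gamma_k\le\ell_p$, whence the summand $4^{p-i}\ell_i^2\gamma_p^2/\gamma_i^2\le 4^{p-i}\ell_p^2$ and $b_i=O(\ell_p^2/\mu^2)$. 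Summing the $O(p)$ such terms keeps the bound, so $\sum_{i=1}^{p-1}a_i+\sum_{i=1}^{p}b_i=O(\ell_p^2/\mu^2)$.

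Finally I would combine: since $m=\lceil\tfrac12\sqrt{L/\mu}\,\rceil=O(\sqrt{L/\mu})$, the cost per outer loop is $O\big(\sum_{i=1}^{p}n_i+\sqrt{L/\mu}\cdot\ell_p^2/\mu^2\big)$, and multiplying by $S=O\big(\log\frac{H(x_0)-H(x^{\ast})}{\varepsilon}\big)$ gives the claimed complexity. The main obstacle is the middle paragraph, namely recognizing that the somewhat opaque mini-batch formulas \eqref{eq:para-multi-a}--\eqref{eq:para-multi-b} all collapse to $O(\ell_p^2/\mu^2)$; this dissolves once the partial products $L_k\gamma_{k-1}^2\gamma_p/\gamma_k$ are identified as exactly the individual terms of the closed form of $\ell_p$, after which the remainder is the same bookkeeping as in Corollary~\ref{cor:sock}.
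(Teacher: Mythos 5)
Your proposal is correct and follows exactly the route the paper intends: the paper omits this proof, stating only that it follows that of Corollary~\ref{cor:sock}, and your argument is precisely that bookkeeping ($S=O(\log\frac{H(x_0)-H(x^\ast)}{\varepsilon})$ outer loops, $O(\sum_i n_i)$ snapshot cost, $m=O(\sqrt{L/\mu})$ inner iterations of cost $O(\sum_i a_i+\sum_i b_i)$) together with the verification, left implicit in the paper, that the batch sizes \eqref{eq:para-multi-a}--\eqref{eq:para-multi-b} are $O(\ell_p^2/\mu^2)$ since each summand $L_j\gamma_{j-1}^2\gamma_p/\gamma_j$ (resp.\ $\ell_i\gamma_p/\gamma_i$) is bounded by $\ell_p$. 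The only caveat, which applies equally to the paper's statement, is that your constants absorb $p$-dependent factors such as $4^p(2p-1)$, i.e.\ $p$ is treated as fixed.
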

The extension to a $p$-level finite-sum COP \eqref{multiP} that is  non-strongly convex can be done easily, following the argument in Section~\ref{sec:nonsc}, hence we omit the details.

\section{More numerical tests}\label{app:test}
In this section, we provide more numerical results, which can show the stable performance of the proposed methods.  
In Figures~\ref{fig:sock_comp_k_large} and \ref{fig:sock_comp_k_small}, we compare Algorithm~\ref{alg:gock} with both options to VRSC-PG in \cite{huo2018accelerated}, C-SAGA and GC-SAGA in \cite{zhang2019composite} on four independent trials with different $n$ and $\kappa$. 
In Figures~\ref{fig:nock_comp_k_large}, \ref{fig:nock_comp_k_small}, \ref{fig:nock_comp_k_large2} and \ref{fig:nock_comp_k_small2}, we compare  Algorithm~\ref{alg:gock-cvx} to SCVRG in \cite{lin2018improved} on four independent trials with different $n$ but similar smoothness constant $L$. 

\begin{figure}[h]
	\caption{Comparison of the proposed SOCK (i.e., Algorithm~\ref{alg:gock} with Option I) and GOCK (i.e., Algorithm~\ref{alg:gock} with Option II) to VRSC-PG in \cite{huo2018accelerated}, C-SAGA and GC-SAGA in \cite{zhang2019composite} on solving instances of \eqref{eq:exp1} having $\kappa>(2n)^{9/4}$, with the parameter tuple $(n,\kappa,v)=(5e3, 96, 30)$ in the top row and $(n,\kappa,v)=(5e4, 211, 10)$ in the bottom row. All methods take the explicitly-computed strong convexity constant $\mu$. The value of $v$ is varied to change the condition number $\kappa$.}
	\label{fig:sock_comp_k_large}
	\begin{center}
		\begin{tabular}{cccc}
			{\small trial $1$} & {\small trial $2$} & {\small trial $3$} & {\small trial $4$}\\	
			\includegraphics[width=.23\linewidth]{pics/Case_N=500_by_n=5000_with_v=30_T1.pdf}		&
			\includegraphics[width=.23\linewidth]{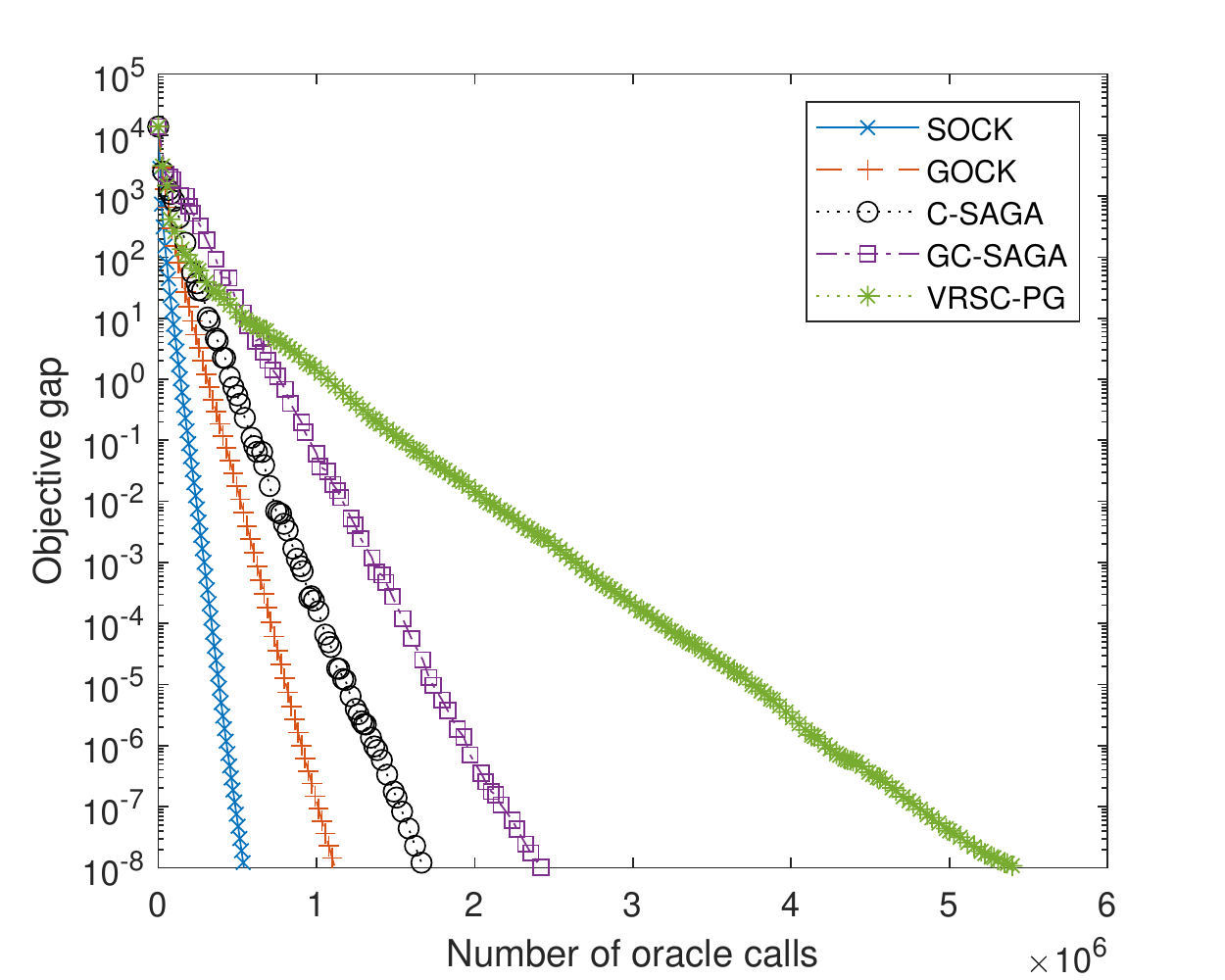}	&
			\includegraphics[width=.23\linewidth]{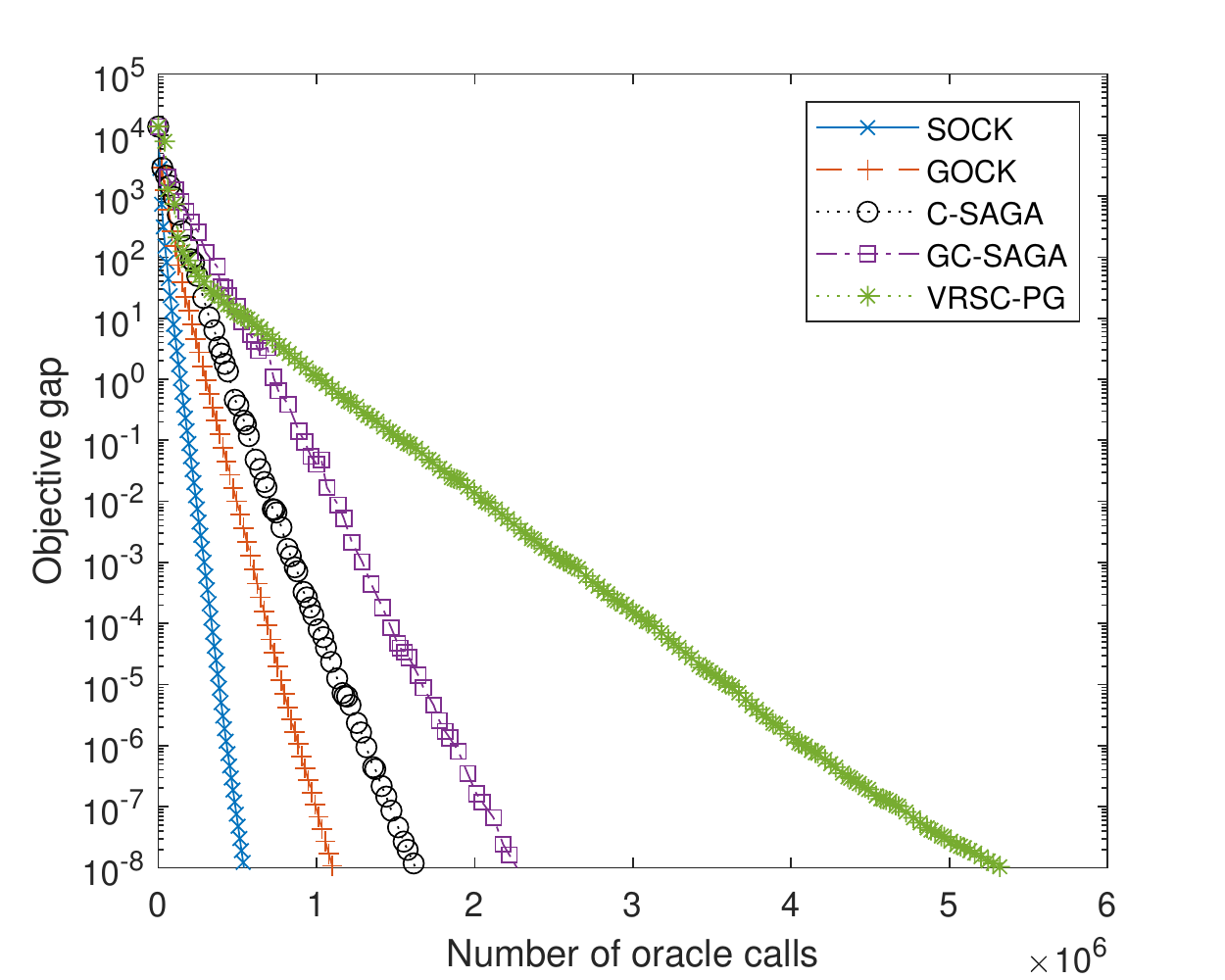}	&
			\includegraphics[width=.23\linewidth]{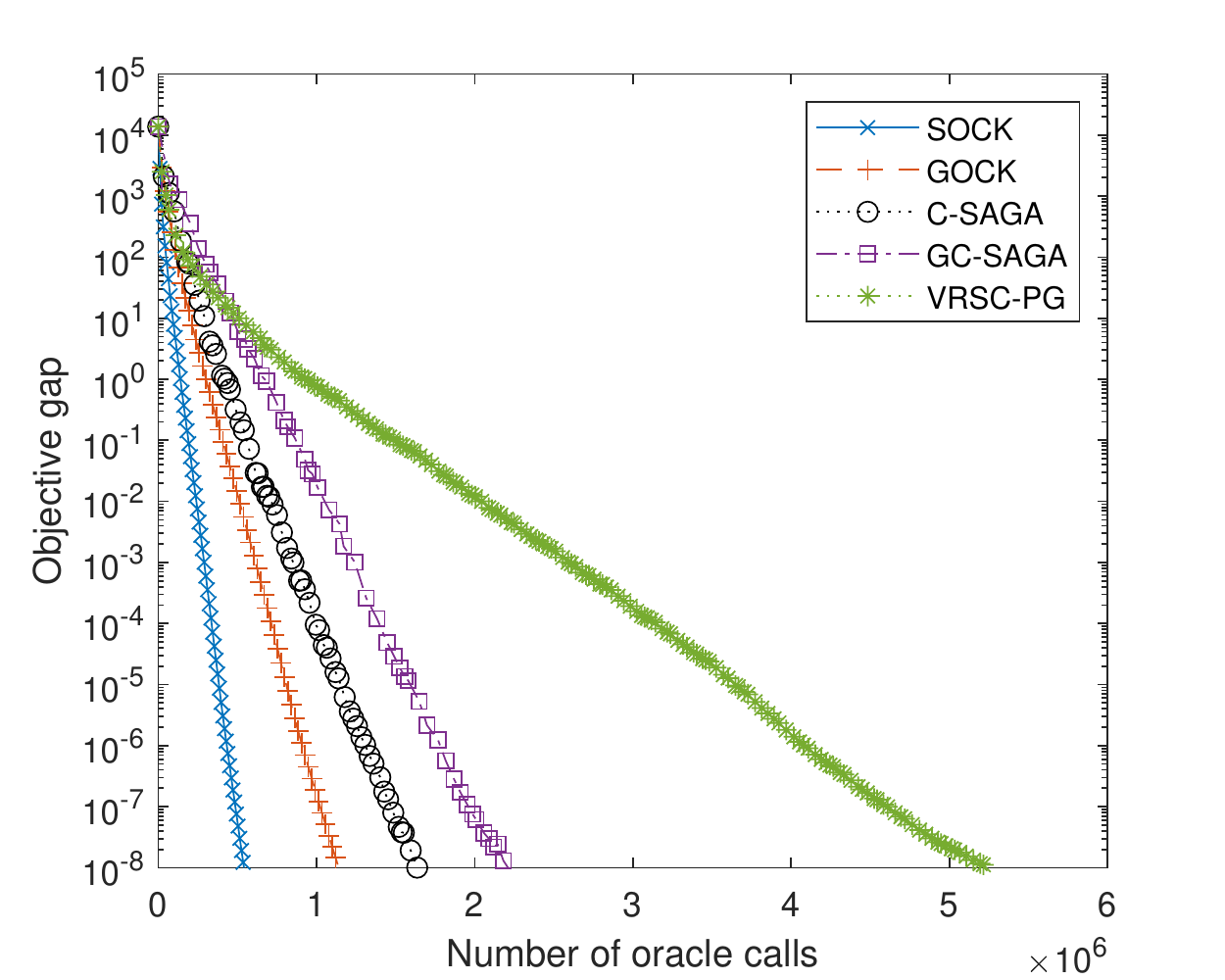}	\\
			\includegraphics[width=.23\linewidth]{pics/Case_N=500_by_n=50000_with_v=10_T1.pdf} &
			\includegraphics[width=.23\linewidth]{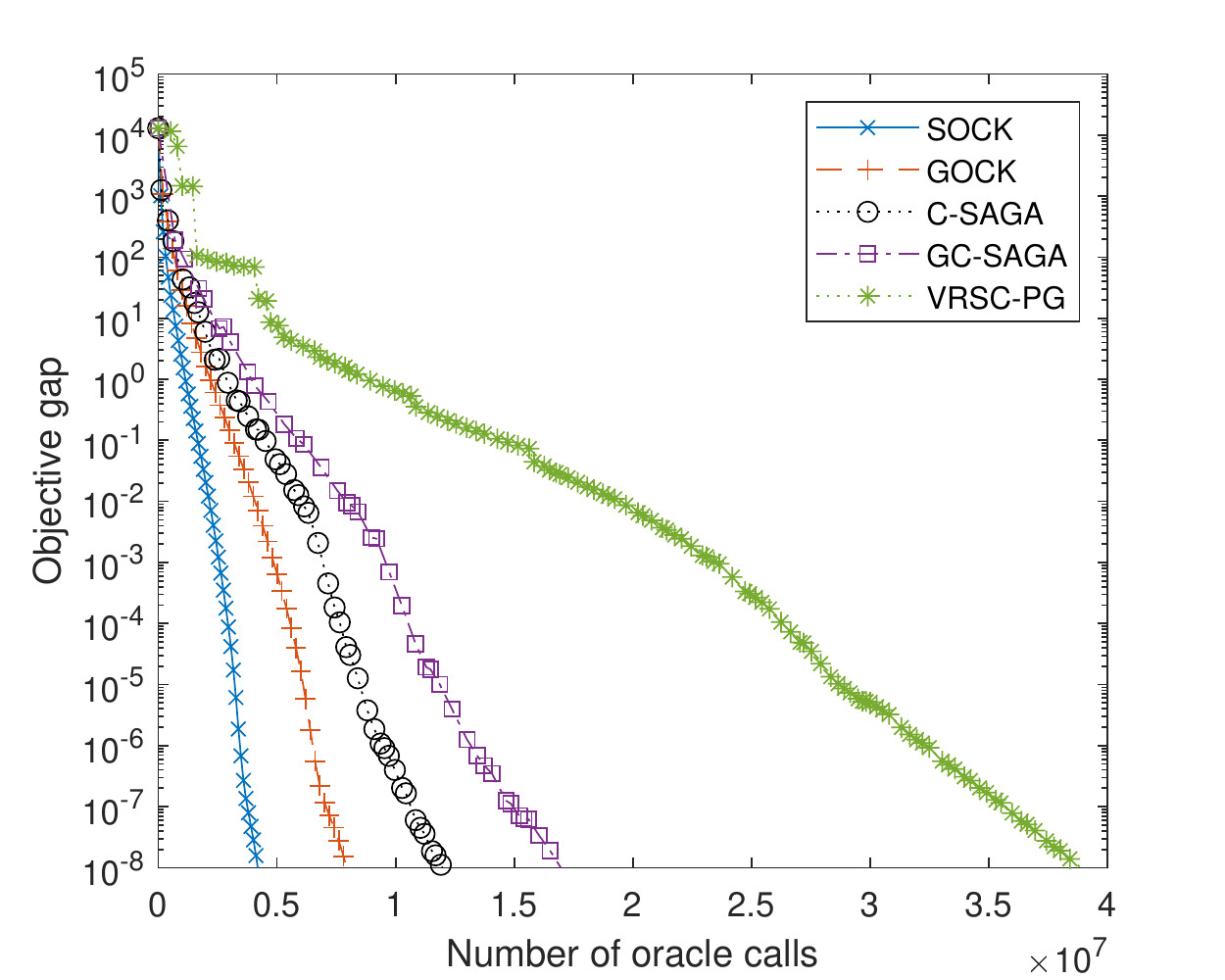}&
			\includegraphics[width=.23\linewidth]{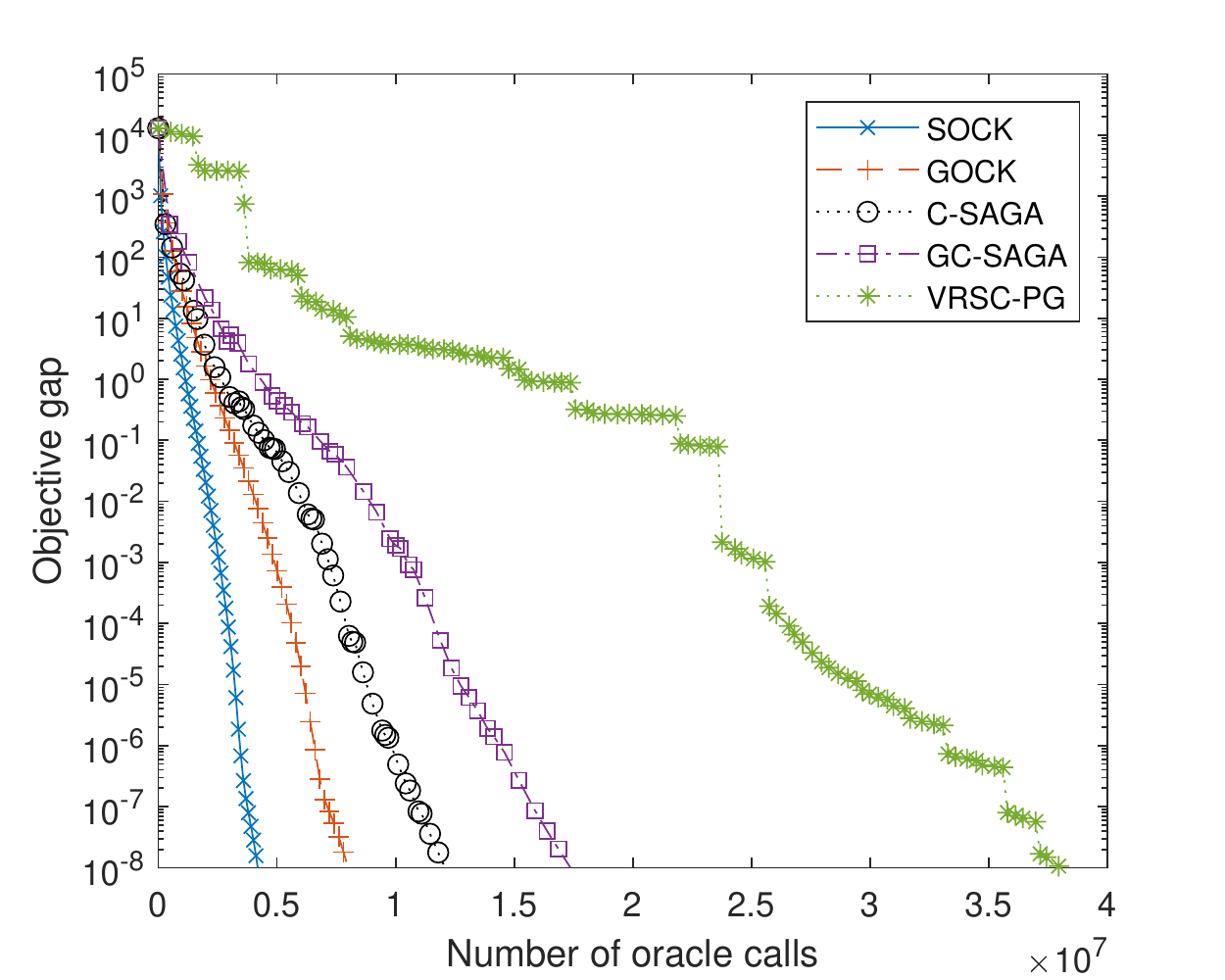} &
			\includegraphics[width=.23\linewidth]{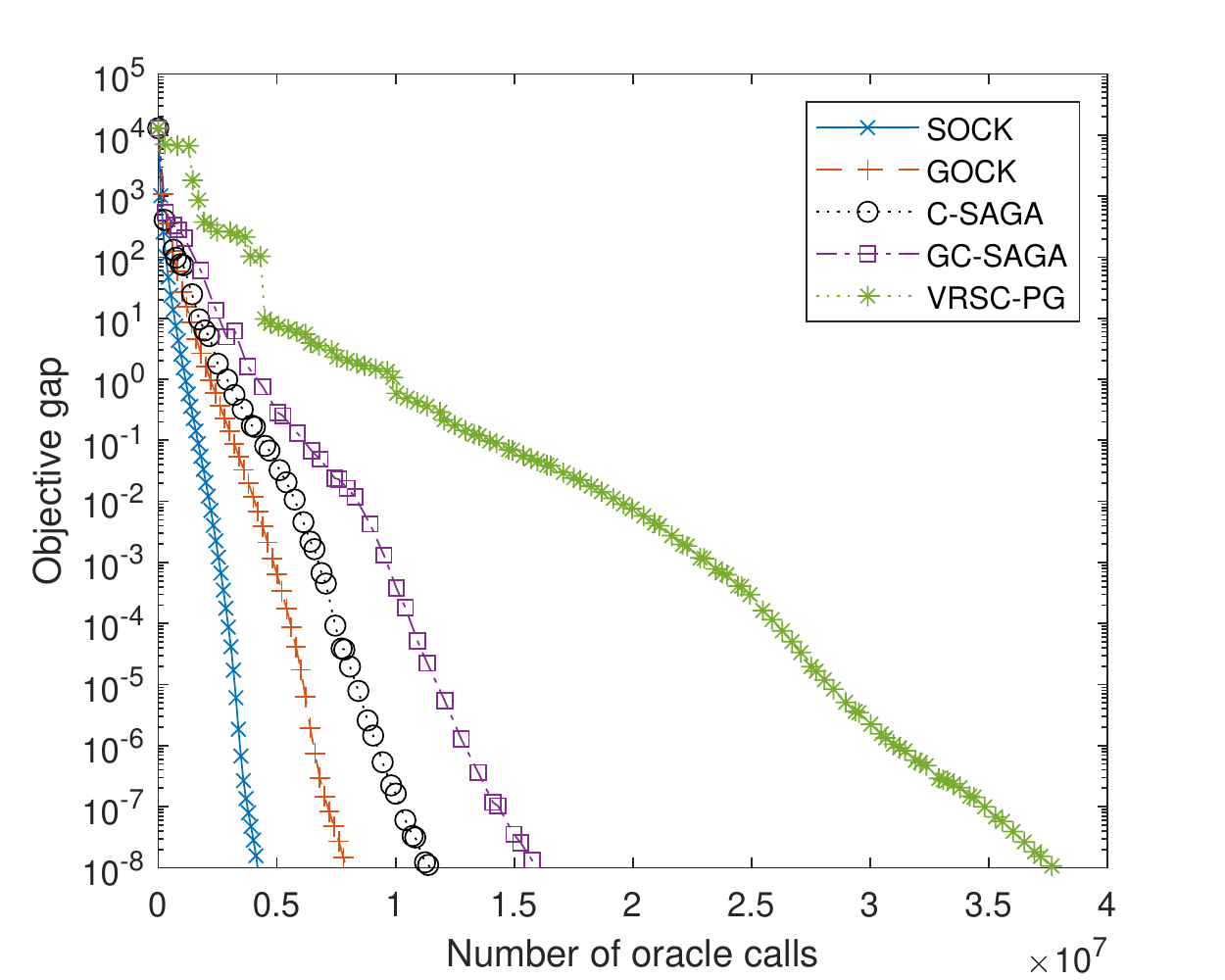}		
		\end{tabular}
	\end{center}	
\end{figure}

\begin{figure}[h]
	\caption{Comparison of the proposed SOCK (i.e., Algorithm~\ref{alg:gock} with Option I) and GOCK (i.e., Algorithm~\ref{alg:gock} with Option II) to VRSC-PG in \cite{huo2018accelerated}, C-SAGA and GC-SAGA in \cite{zhang2019composite} on solving instances of \eqref{eq:exp1} having $(2n)^{1/3}<\kappa<(2n)^{4/9}$, with the parameter tuple $(n,\kappa,v)=(5e3, 52, 60)$ in the top row and $(n,\kappa,v)=(5e4, 93, 30)$ in the bottom row. All methods take the explicitly-computed strong convexity constant $\mu$. The value of $v$ is varied to change the condition number $\kappa$.}
	\label{fig:sock_comp_k_small}
	\begin{center}
		\begin{tabular}{cccc}
						{\small trial $1$} & {\small trial $2$} & {\small trial $3$} & {\small trial $4$}\\	
			\includegraphics[width=.23\linewidth]{pics/Case_N=500_by_n=5000_with_v=60_T1.pdf}		&
			\includegraphics[width=.23\linewidth]{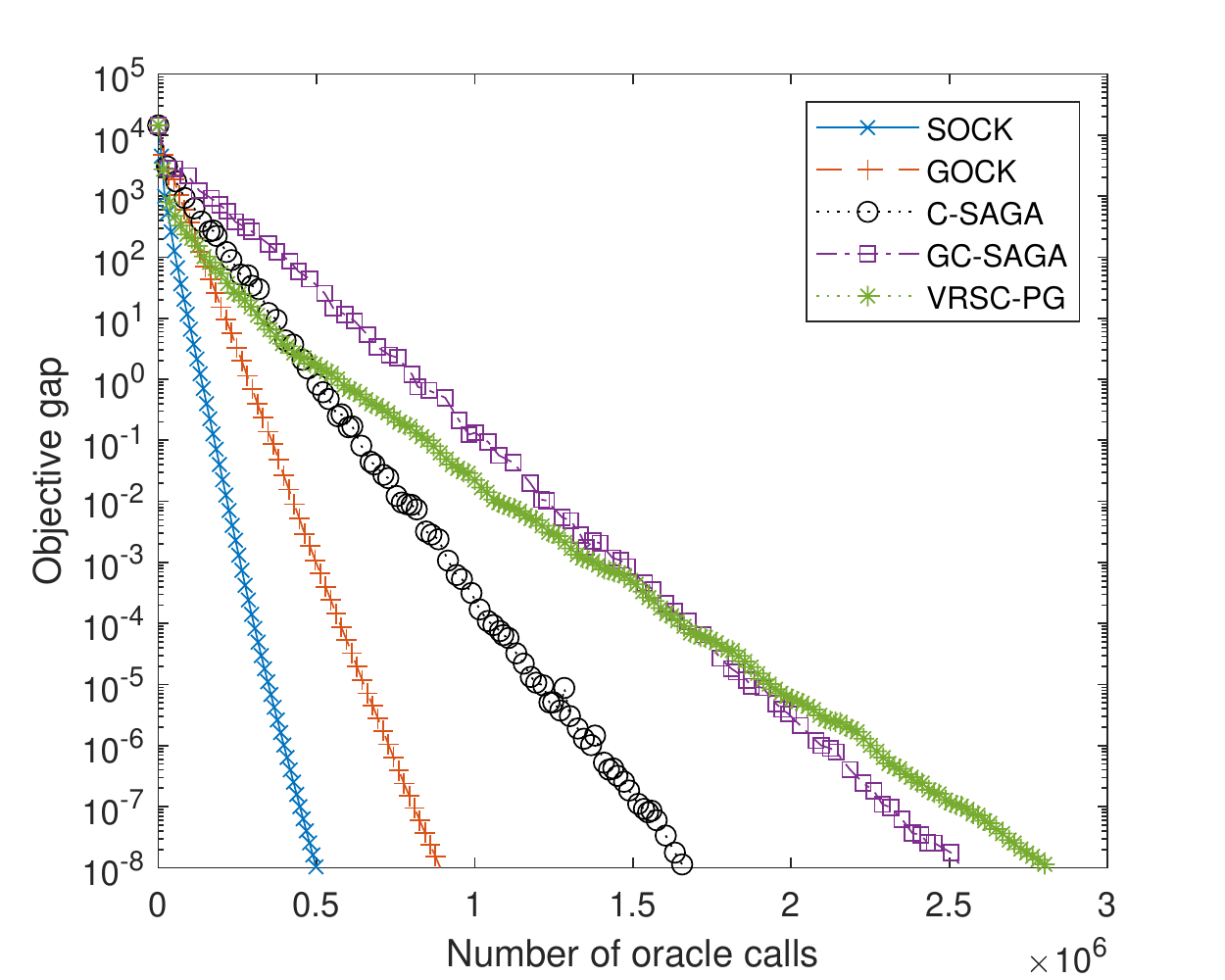}	&
			\includegraphics[width=.23\linewidth]{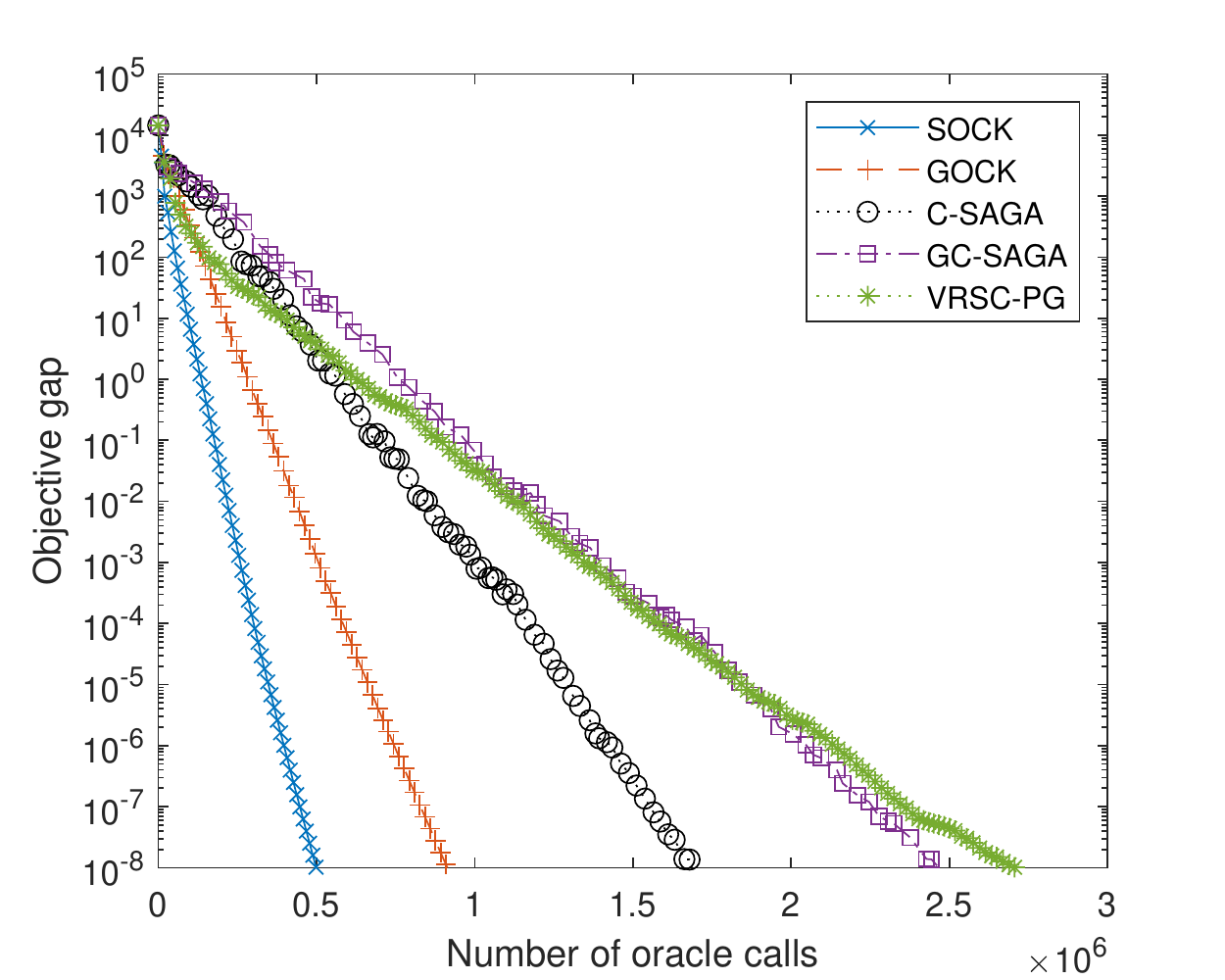}	&
			\includegraphics[width=.23\linewidth]{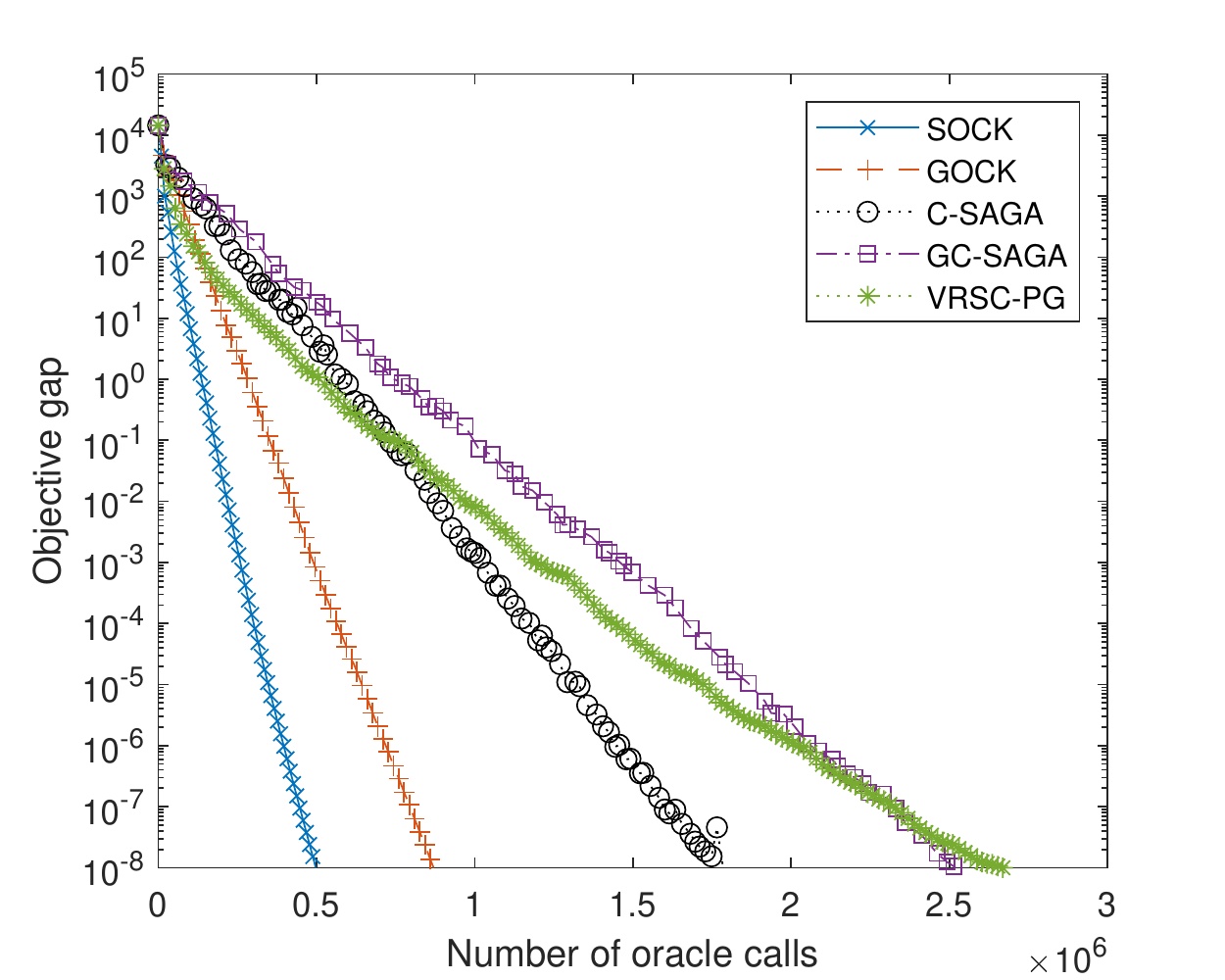}	\\
			\includegraphics[width=.23\linewidth]{pics/Case_N=500_by_n=50000_with_v=30_T1.pdf} &
			\includegraphics[width=.23\linewidth]{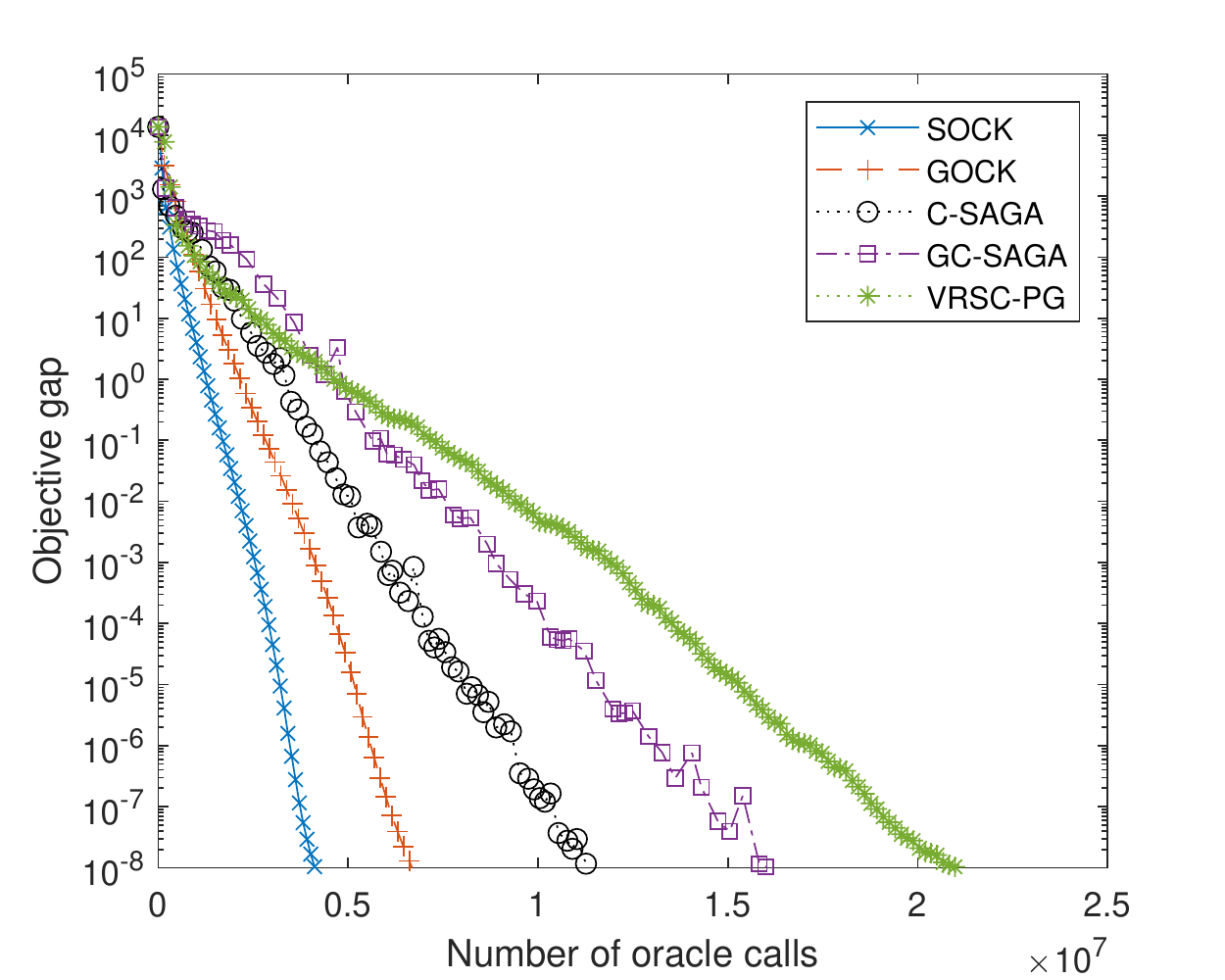}&
			\includegraphics[width=.23\linewidth]{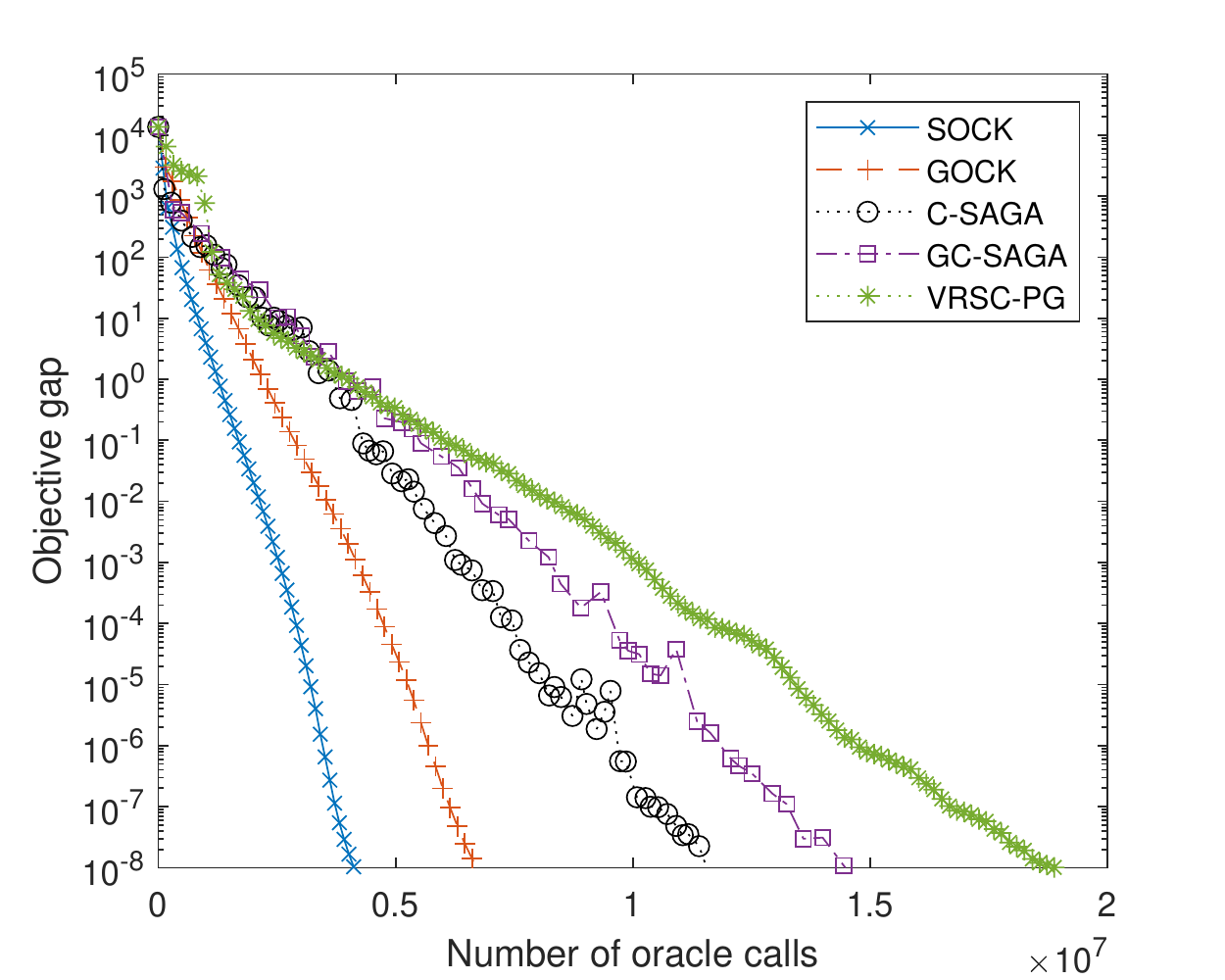} &
			\includegraphics[width=.23\linewidth]{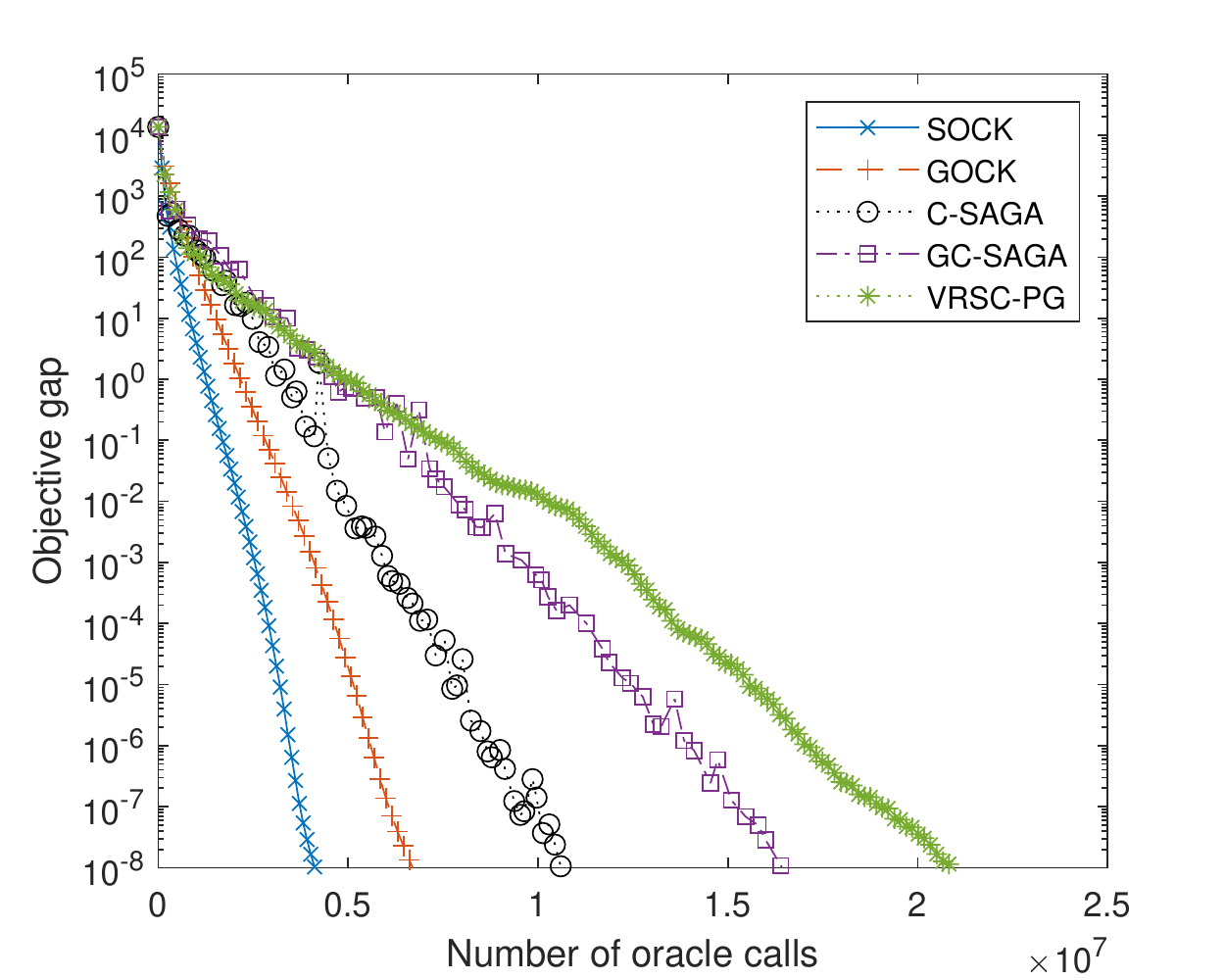}		
		\end{tabular}
	\end{center}	
\end{figure}

\begin{figure}[h]
	\caption{Comparison of the proposed NoCK method (i.e., Algorithm~\ref{alg:gock-cvx}) to SCVRG in \cite{lin2018improved} on solving instances of \eqref{eq:exp1} with the parameter tuple $(n, L, v) = (5e4, 791, 0)$. The instances are treated as convex, even though they can be strongly convex. Parameters of the algorithms are set by the theoretical parameter choice in the top row and the heuristic choice in the bottom row.}
	\label{fig:nock_comp_k_large}
	\begin{center}
		\begin{tabular}{cccc}
			{\small trial $1$} & {\small trial $2$} & {\small trial $3$} & {\small trial $4$}\\	
			\includegraphics[width=.23\linewidth]{pics/Case_N=500_by_n=50000_with_v=0_T1_O2.pdf}		&
			\includegraphics[width=.23\linewidth]{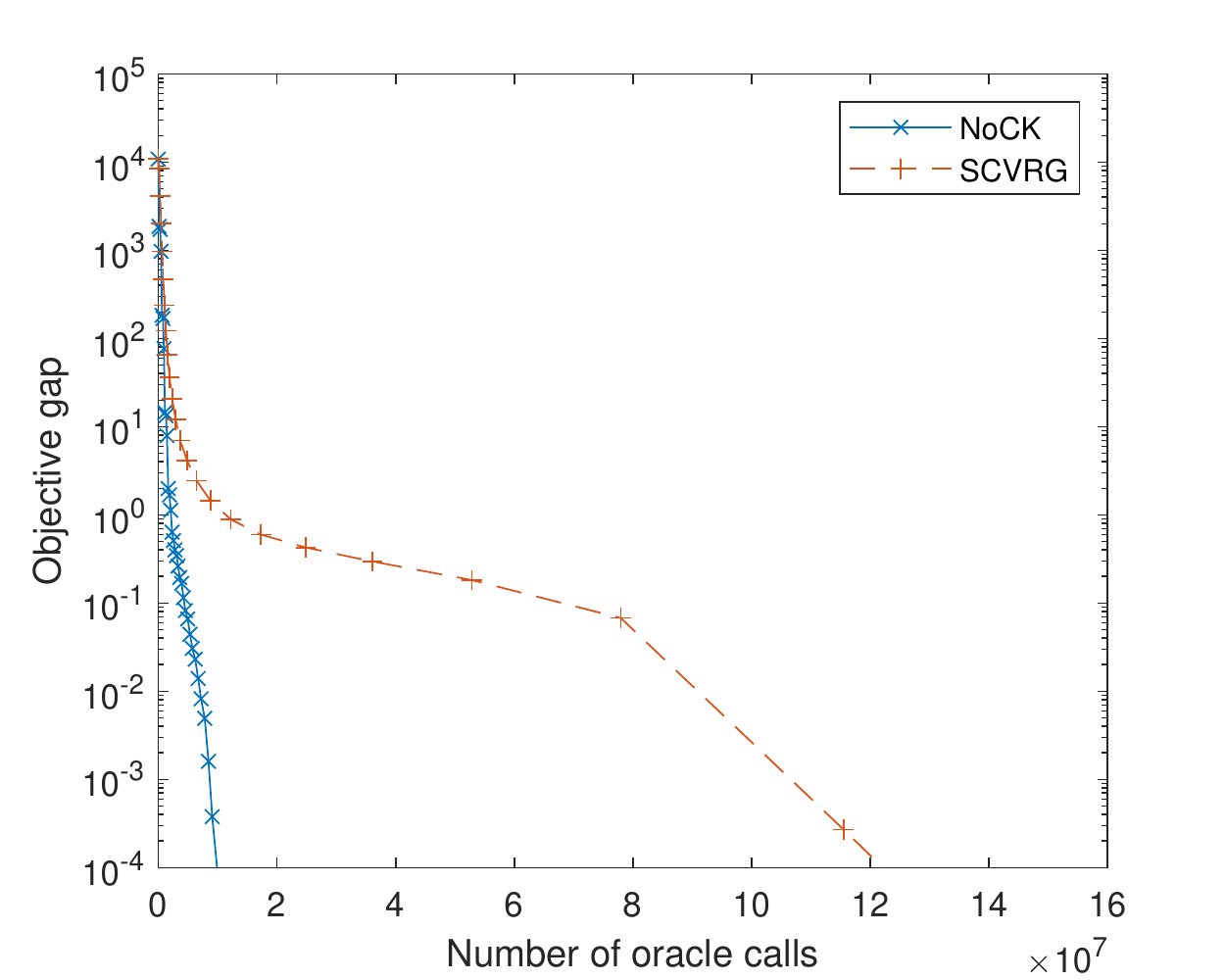}	&
			\includegraphics[width=.23\linewidth]{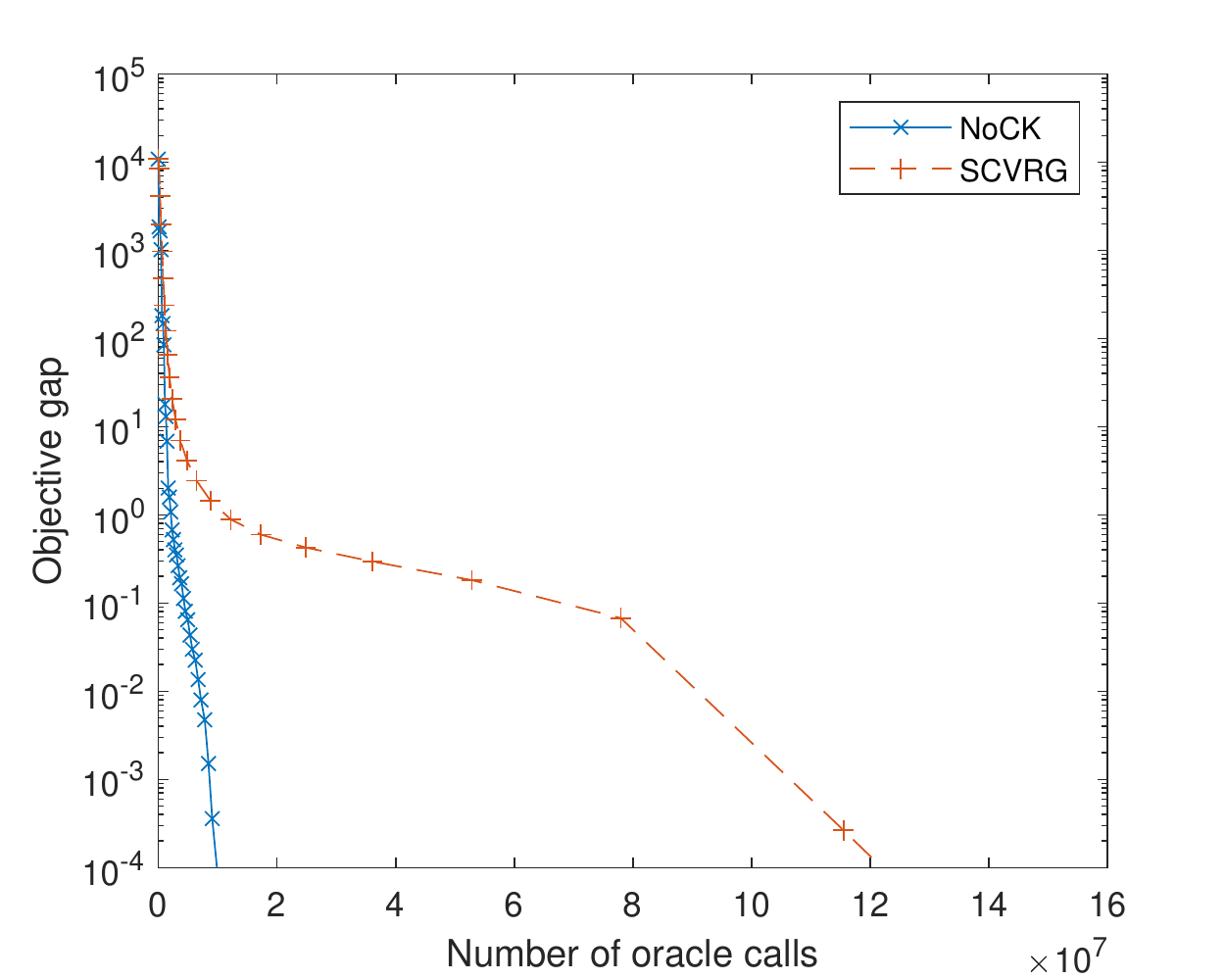}	&
			\includegraphics[width=.23\linewidth]{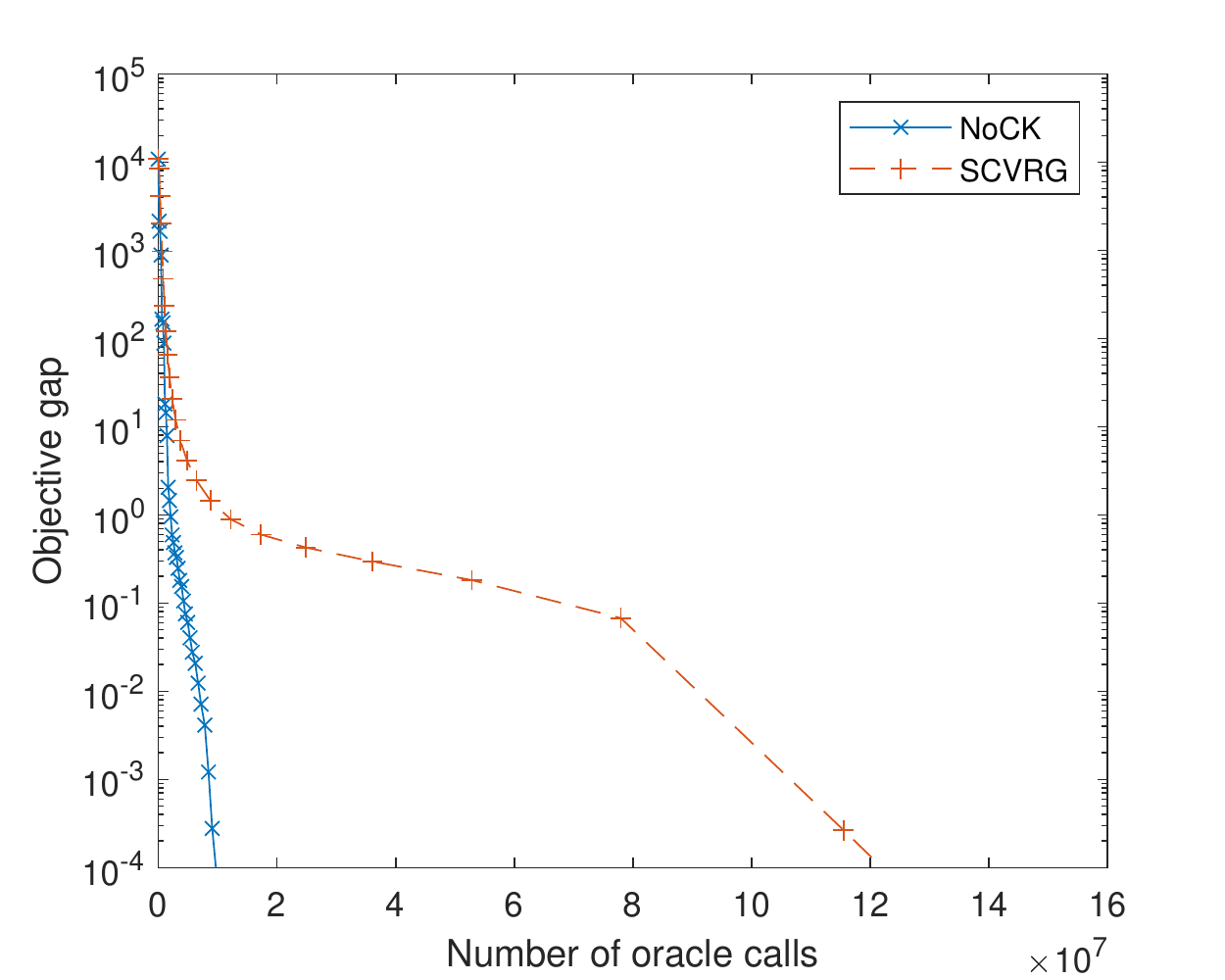}	\\
			\includegraphics[width=.23\linewidth]{pics/Case_N=500_by_n=50000_with_v=0_T1_O1.pdf}		&
			\includegraphics[width=.23\linewidth]{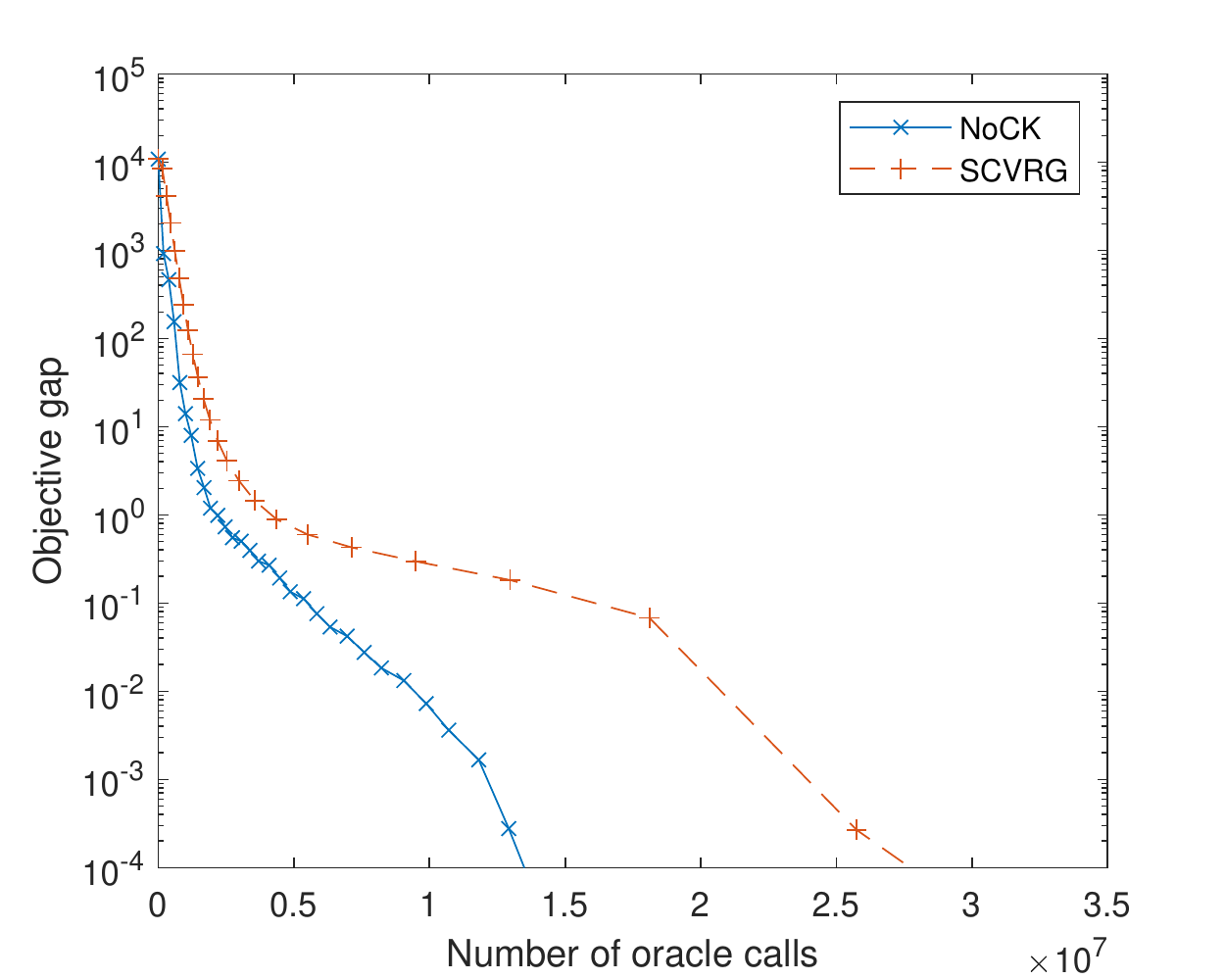}	&
			\includegraphics[width=.23\linewidth]{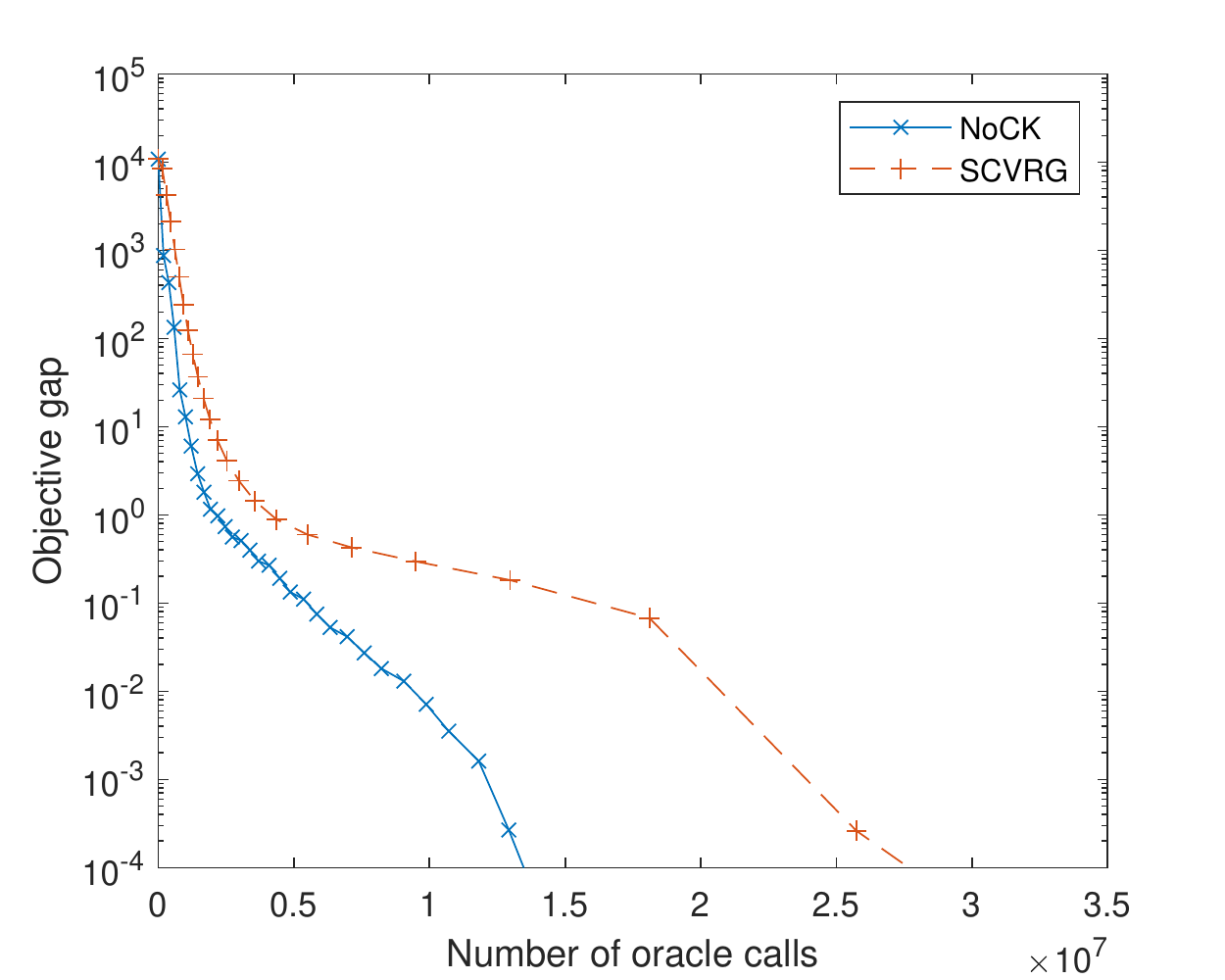}	&
			\includegraphics[width=.23\linewidth]{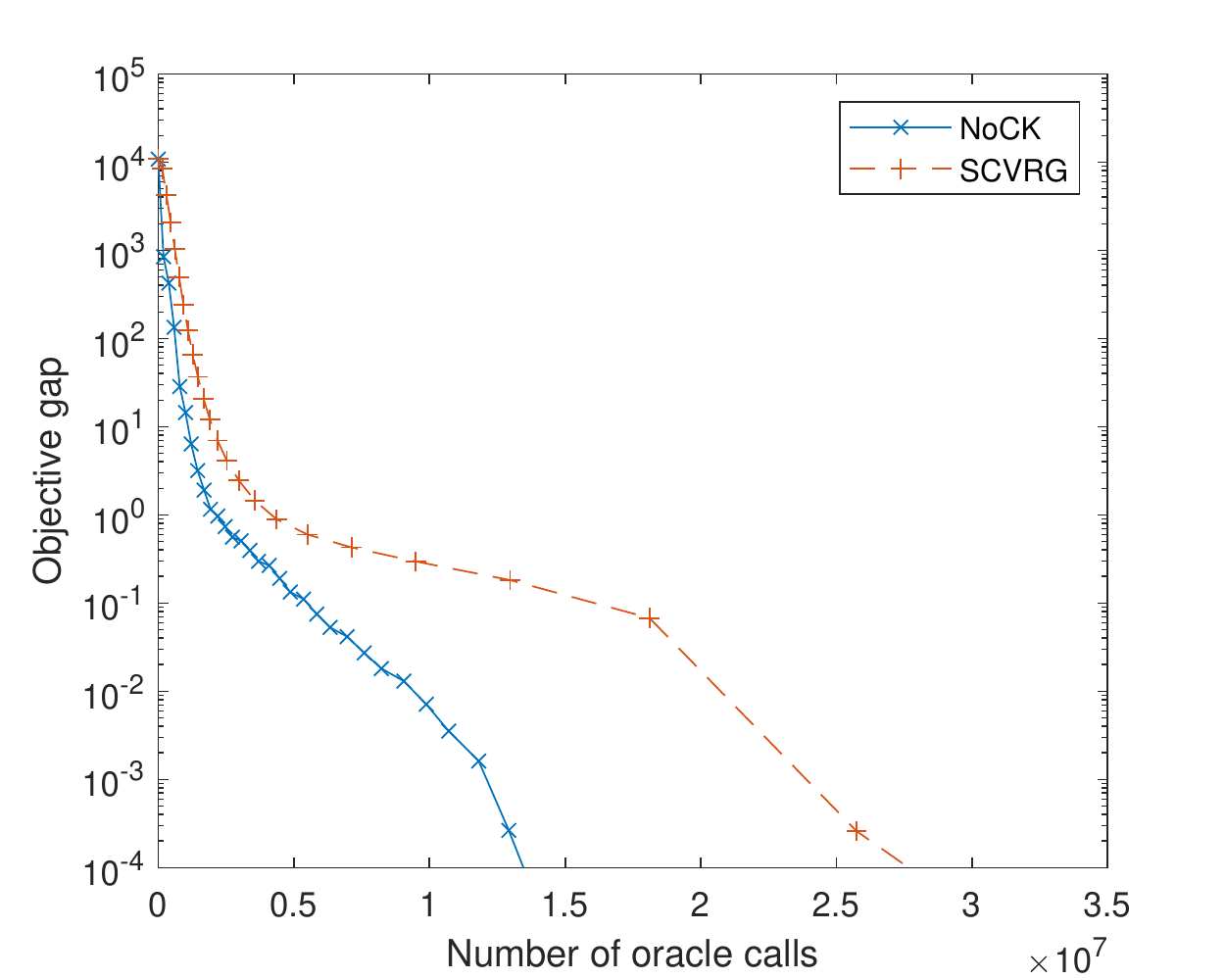}		
		\end{tabular}
	\end{center}	
\end{figure}

\begin{figure}[h]
	\caption{Comparison of the proposed NoCK method (i.e., Algorithm~\ref{alg:gock-cvx}) to SCVRG in \cite{lin2018improved} on solving instances of \eqref{eq:exp1} with the parameter tuple $(n, L, v) = (2.5e5, 806, 0)$. The instances are treated as convex, even though they can be strongly convex. Parameters of the algorithms are set by the theoretical parameter choice in the top row and the heuristic choice in the bottom row.}
	\label{fig:nock_comp_k_small}
	\begin{center}
		\begin{tabular}{cccc}
			{\small trial $1$} & {\small trial $2$} & {\small trial $3$} & {\small trial $4$}\\	
			\includegraphics[width=.23\linewidth]{pics/Case_N=500_by_n=250000_with_v=0_T1_O2.pdf}		&
			\includegraphics[width=.23\linewidth]{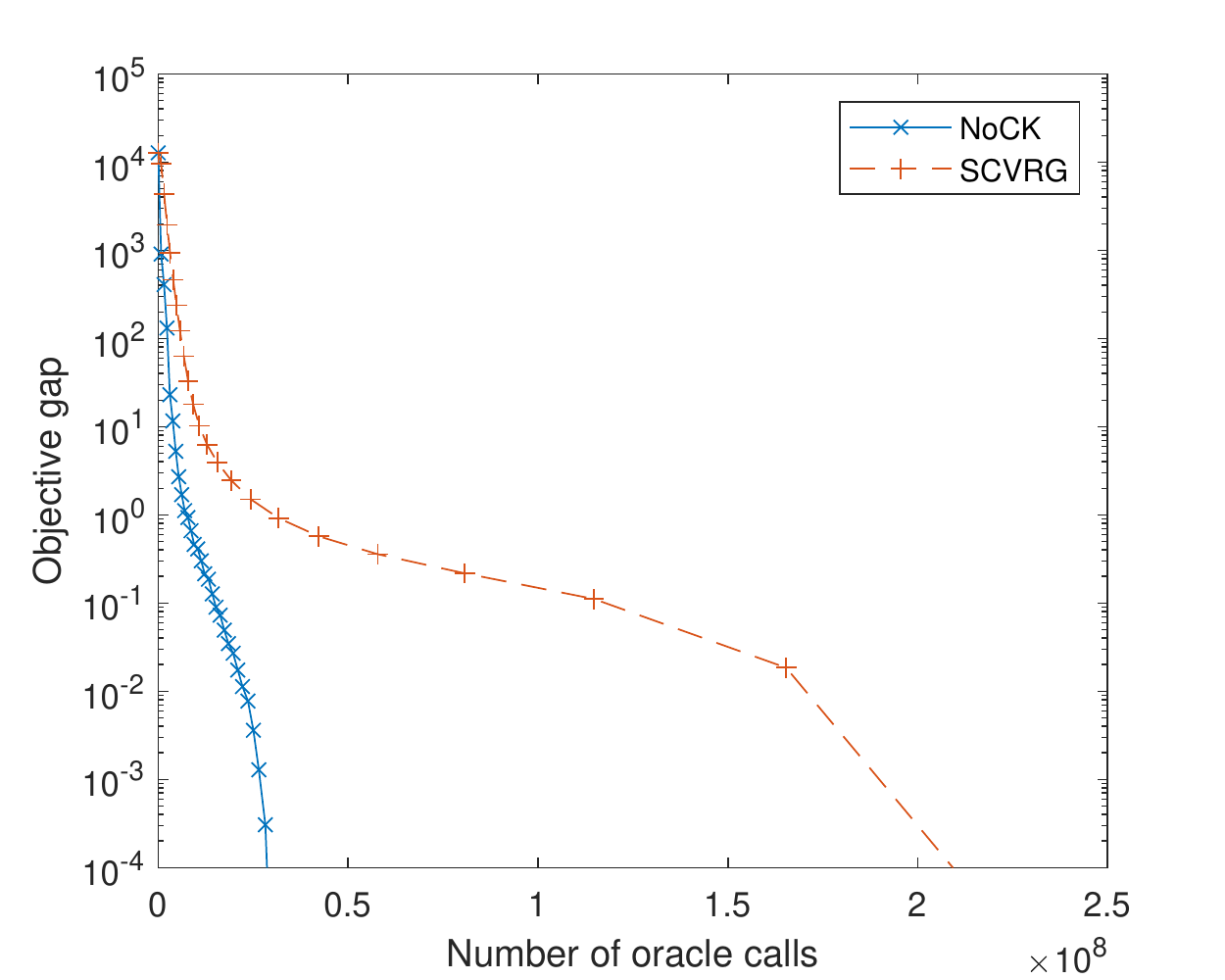}	&
			\includegraphics[width=.23\linewidth]{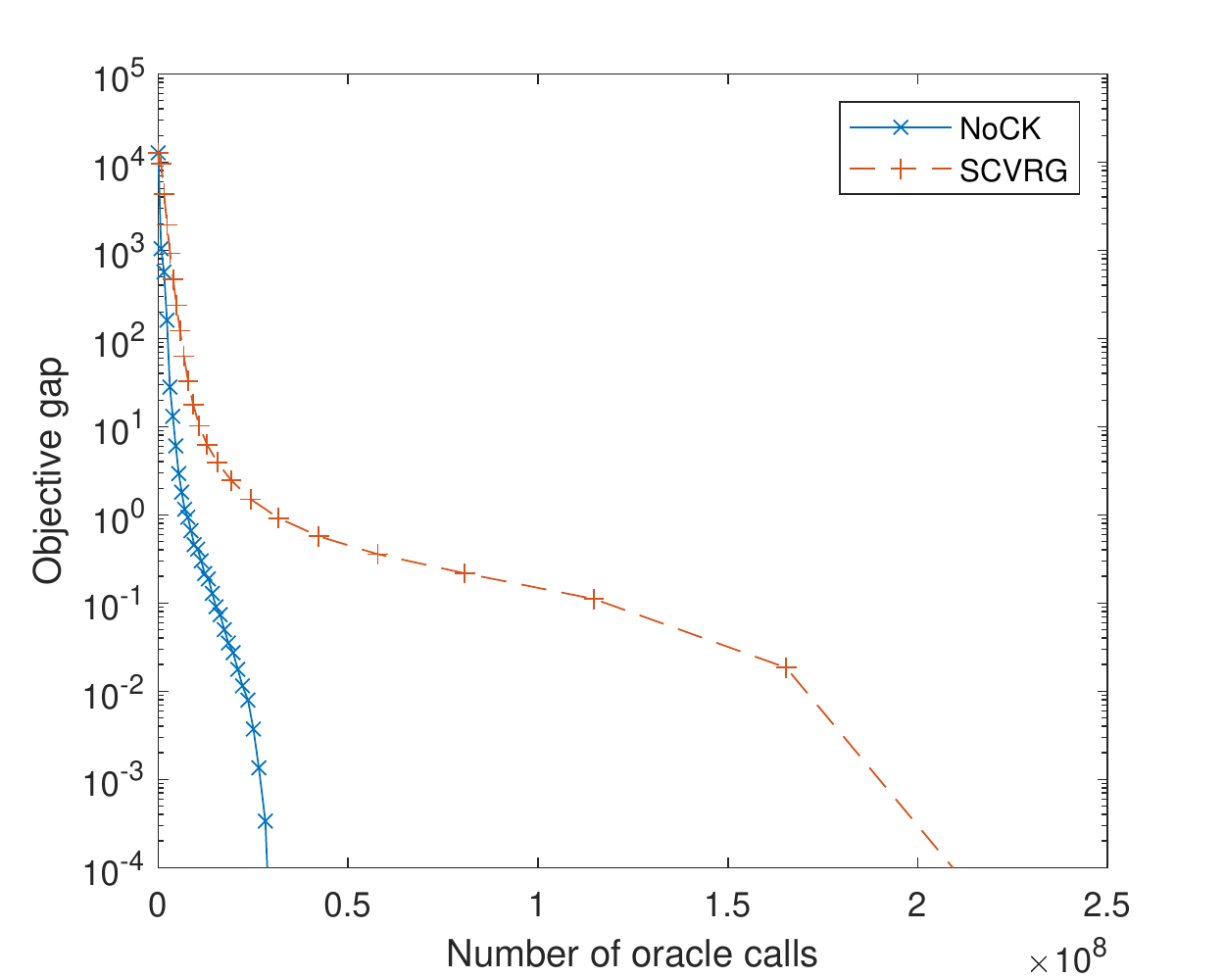}	&
			\includegraphics[width=.23\linewidth]{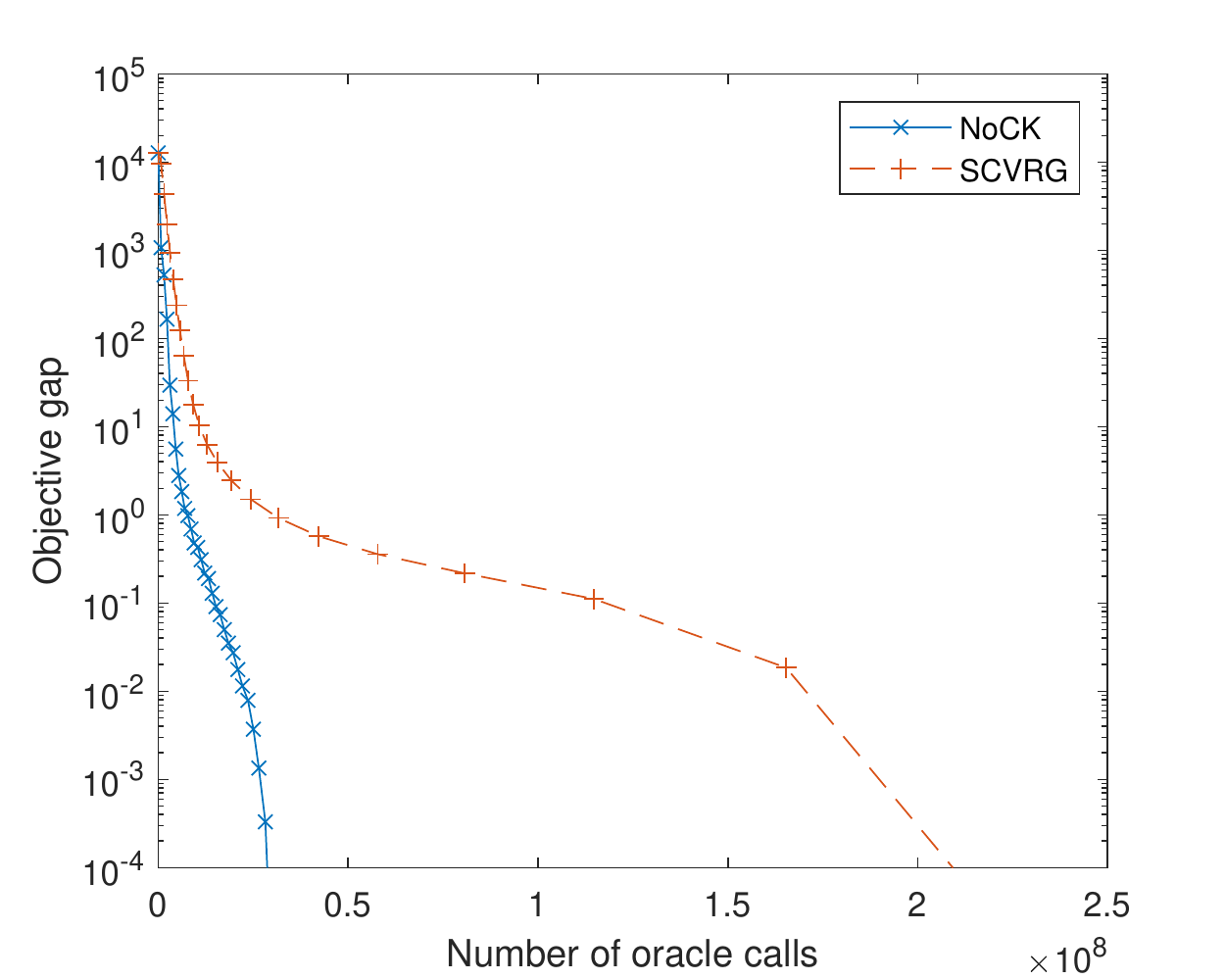}	\\
			\includegraphics[width=.23\linewidth]{pics/Case_N=500_by_n=250000_with_v=0_T1_O1.pdf}		&
			\includegraphics[width=.23\linewidth]{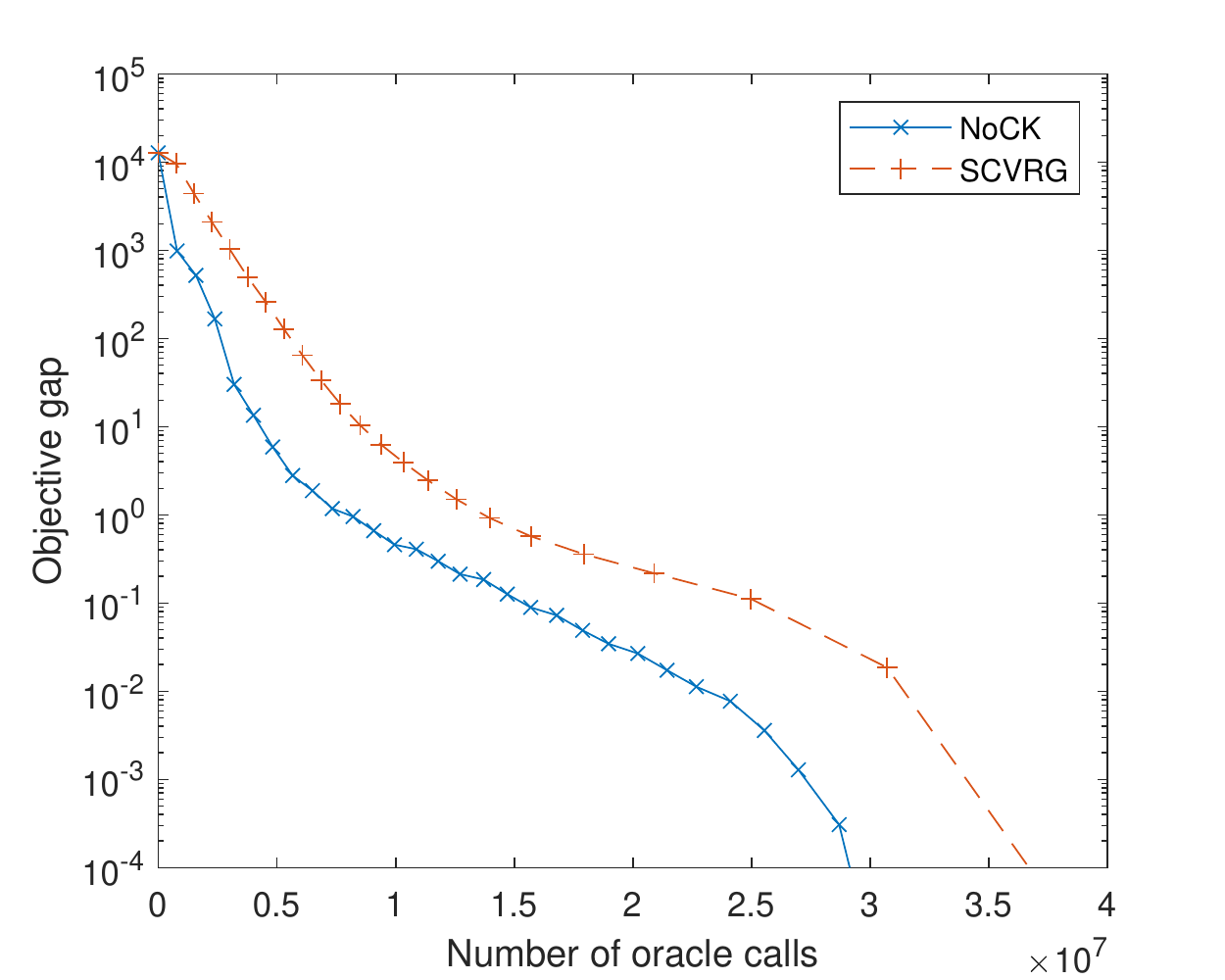}	&
			\includegraphics[width=.23\linewidth]{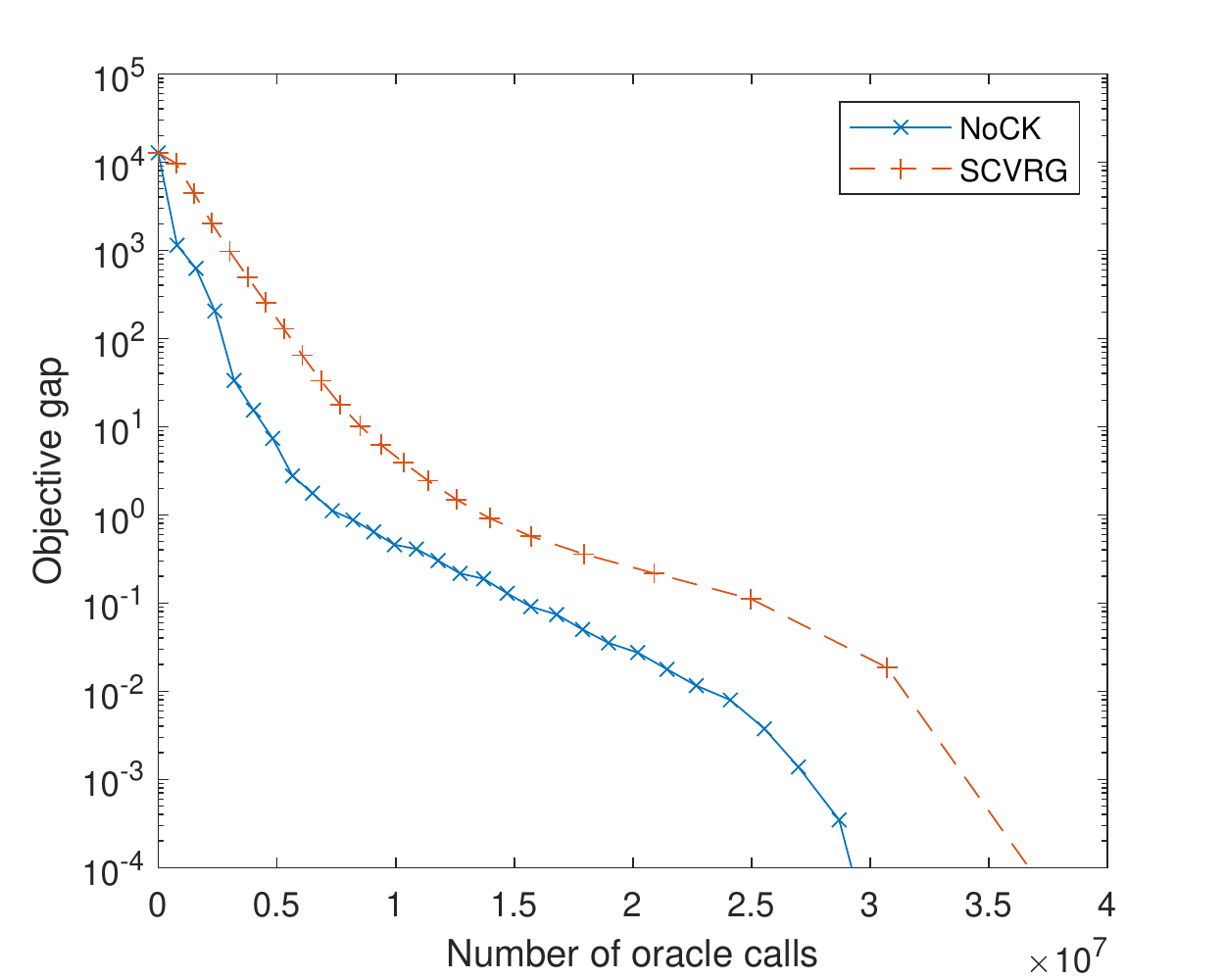}	&
			\includegraphics[width=.23\linewidth]{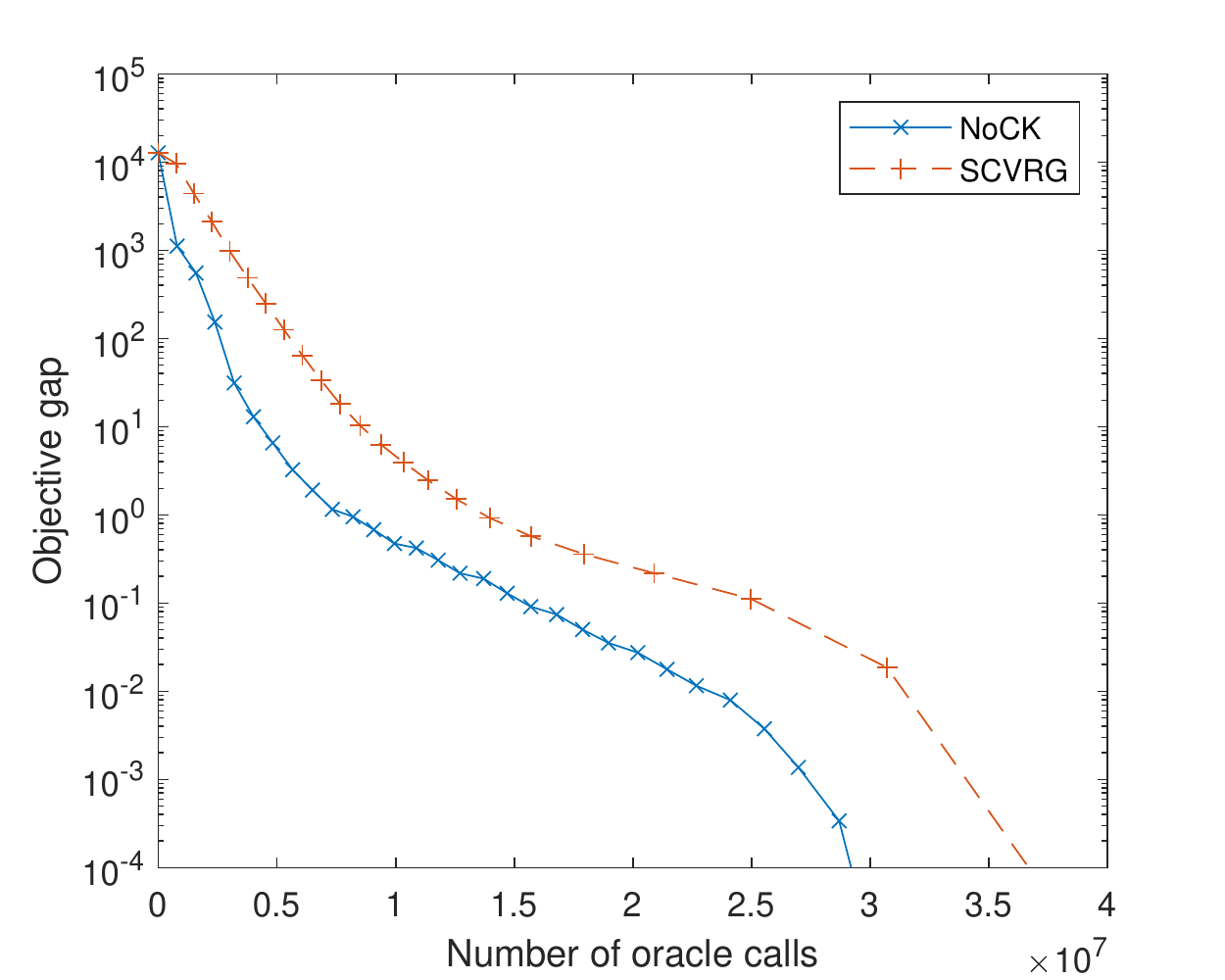}		
		\end{tabular}
	\end{center}	
\end{figure}

\begin{figure}[h]
	\caption{Comparison of the proposed NoCK method (i.e., Algorithm~\ref{alg:gock-cvx}) to SCVRG in \cite{lin2018improved} on solving instances of \eqref{eq:exp1} with the parameter tuple $(n, L, v) = (5e4, 798, 10)$. The instances are treated as convex, even though they can be strongly convex. Parameters of the algorithms are set by the theoretical parameter choice in the top row and the heuristic choice in the bottom row.}
	\label{fig:nock_comp_k_large2}
	\begin{center}
		\begin{tabular}{cccc}
			{\small trial $1$} & {\small trial $2$} & {\small trial $3$} & {\small trial $4$}\\	
			\includegraphics[width=.23\linewidth]{pics/Case_N=500_by_n=50000_with_v=10_T1_O2.pdf}		&
			\includegraphics[width=.23\linewidth]{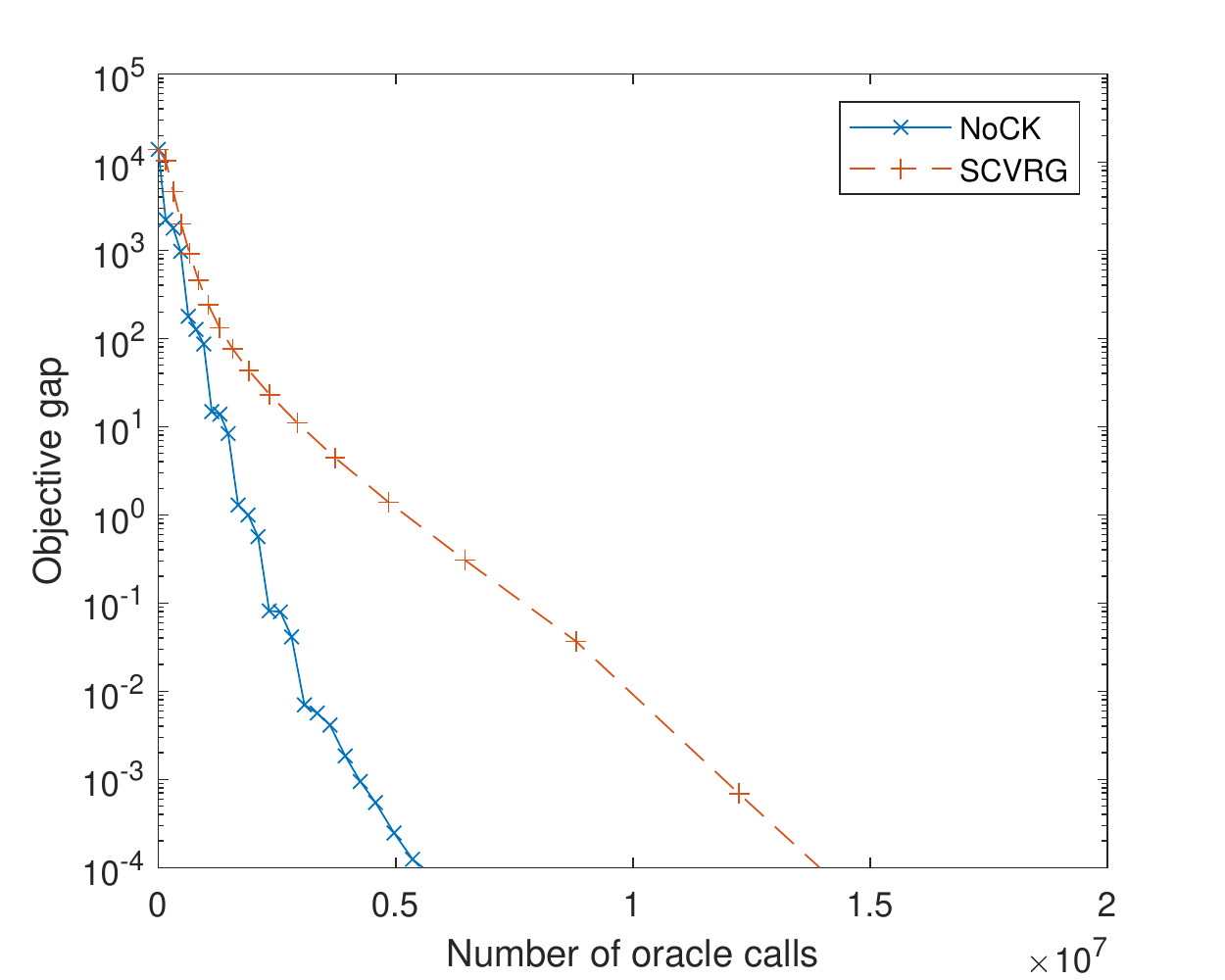}	&
			\includegraphics[width=.23\linewidth]{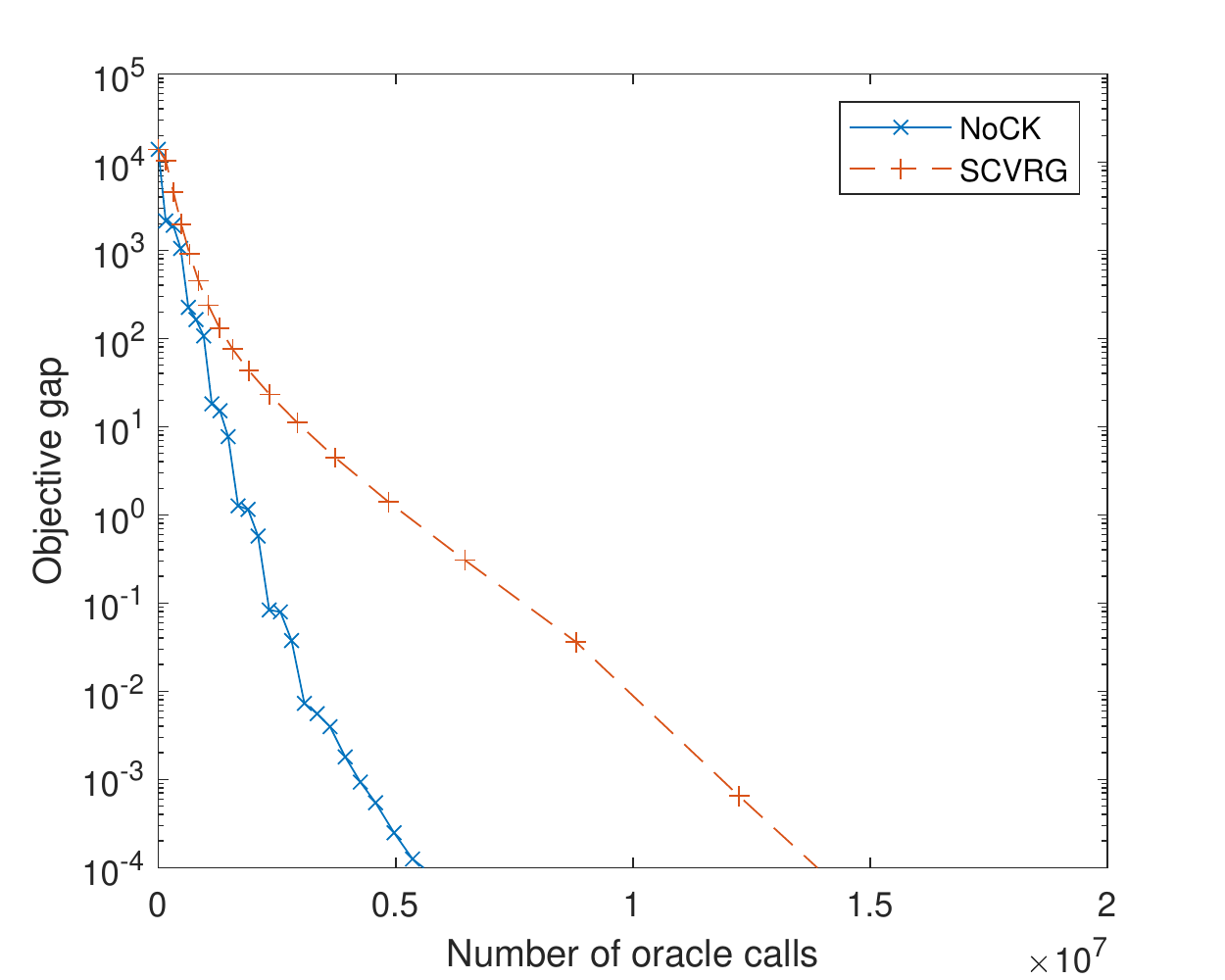}	&
			\includegraphics[width=.23\linewidth]{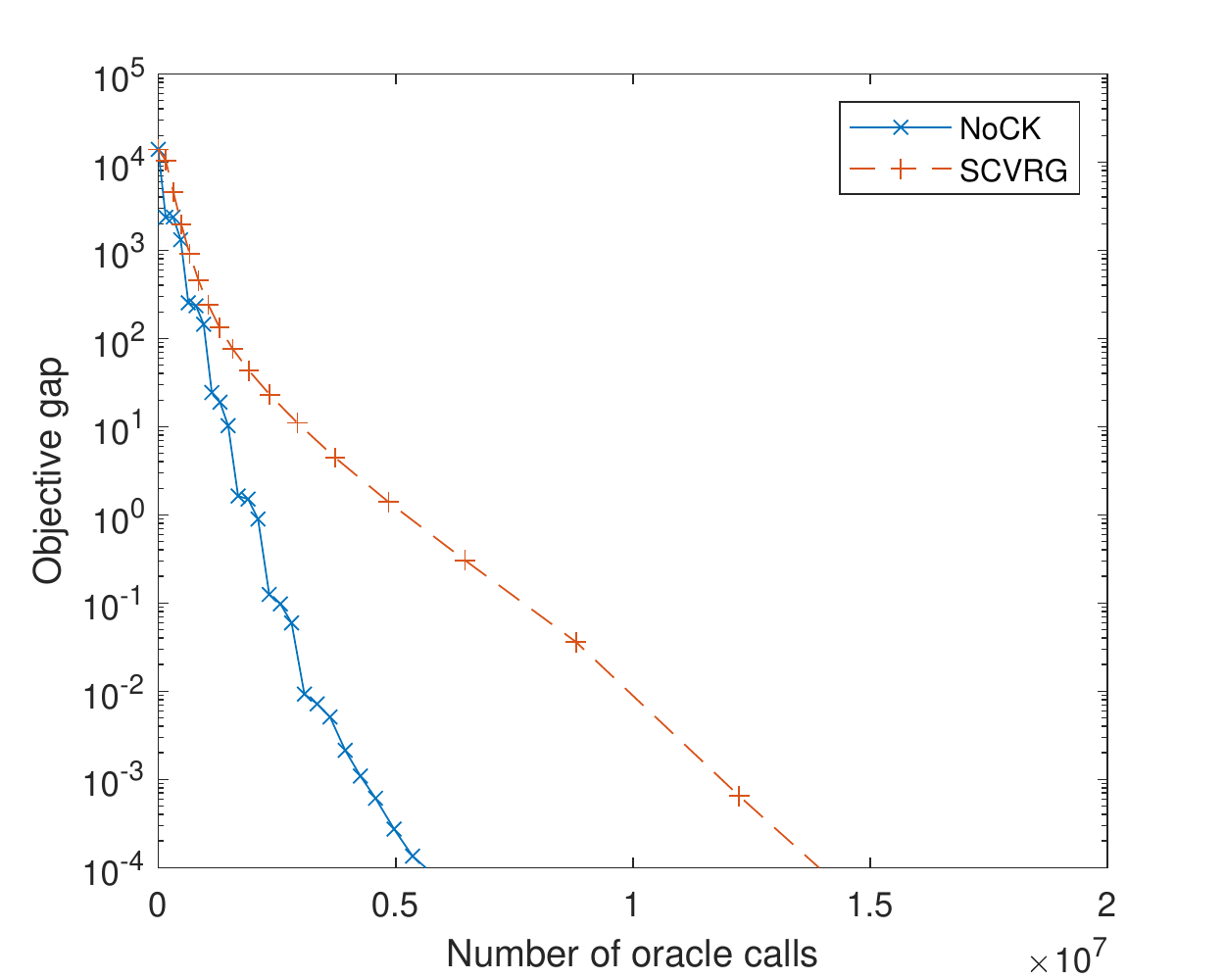}	\\
			\includegraphics[width=.23\linewidth]{pics/Case_N=500_by_n=50000_with_v=10_T1_O1.pdf}		&
			\includegraphics[width=.23\linewidth]{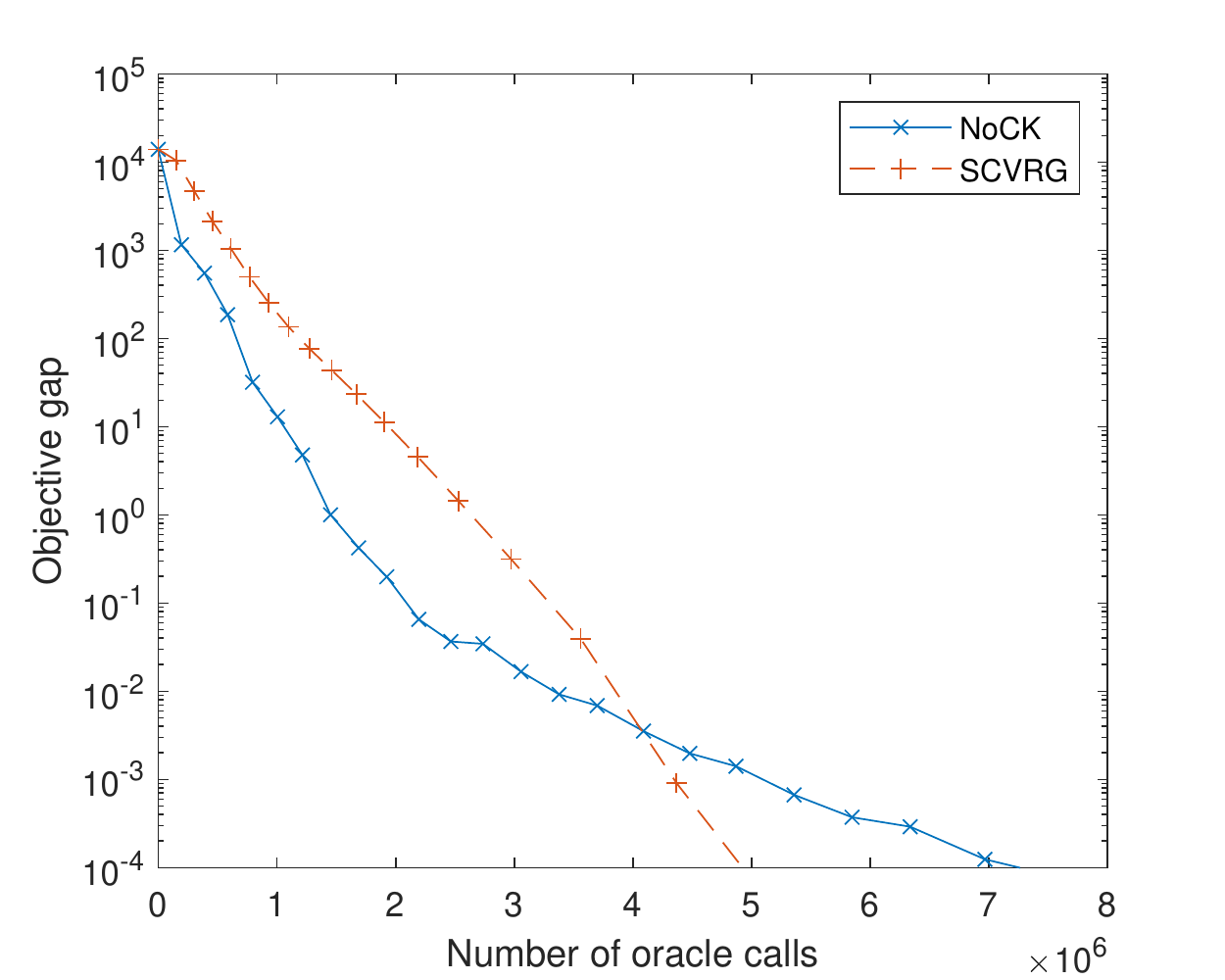}	&
			\includegraphics[width=.23\linewidth]{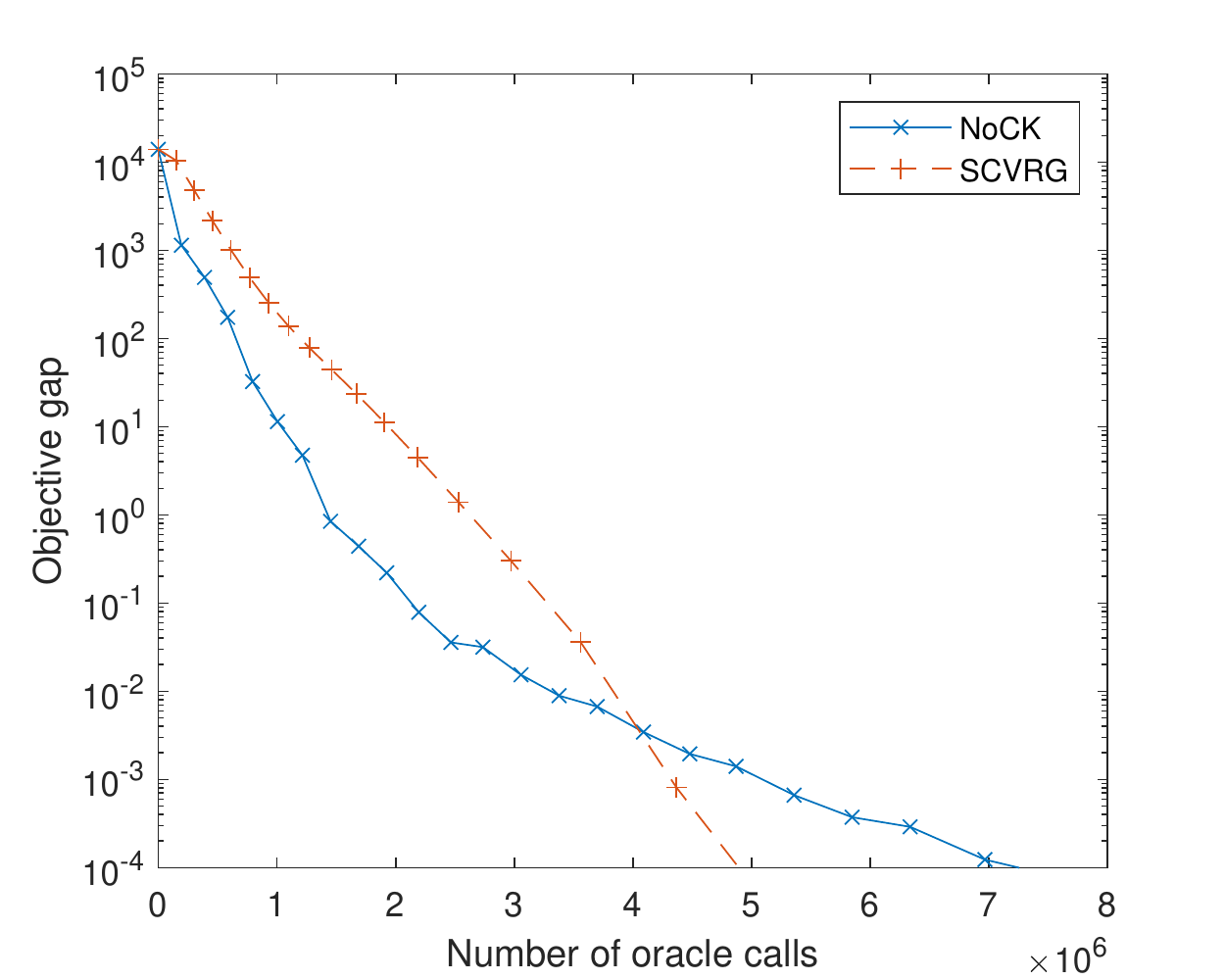}	&
			\includegraphics[width=.23\linewidth]{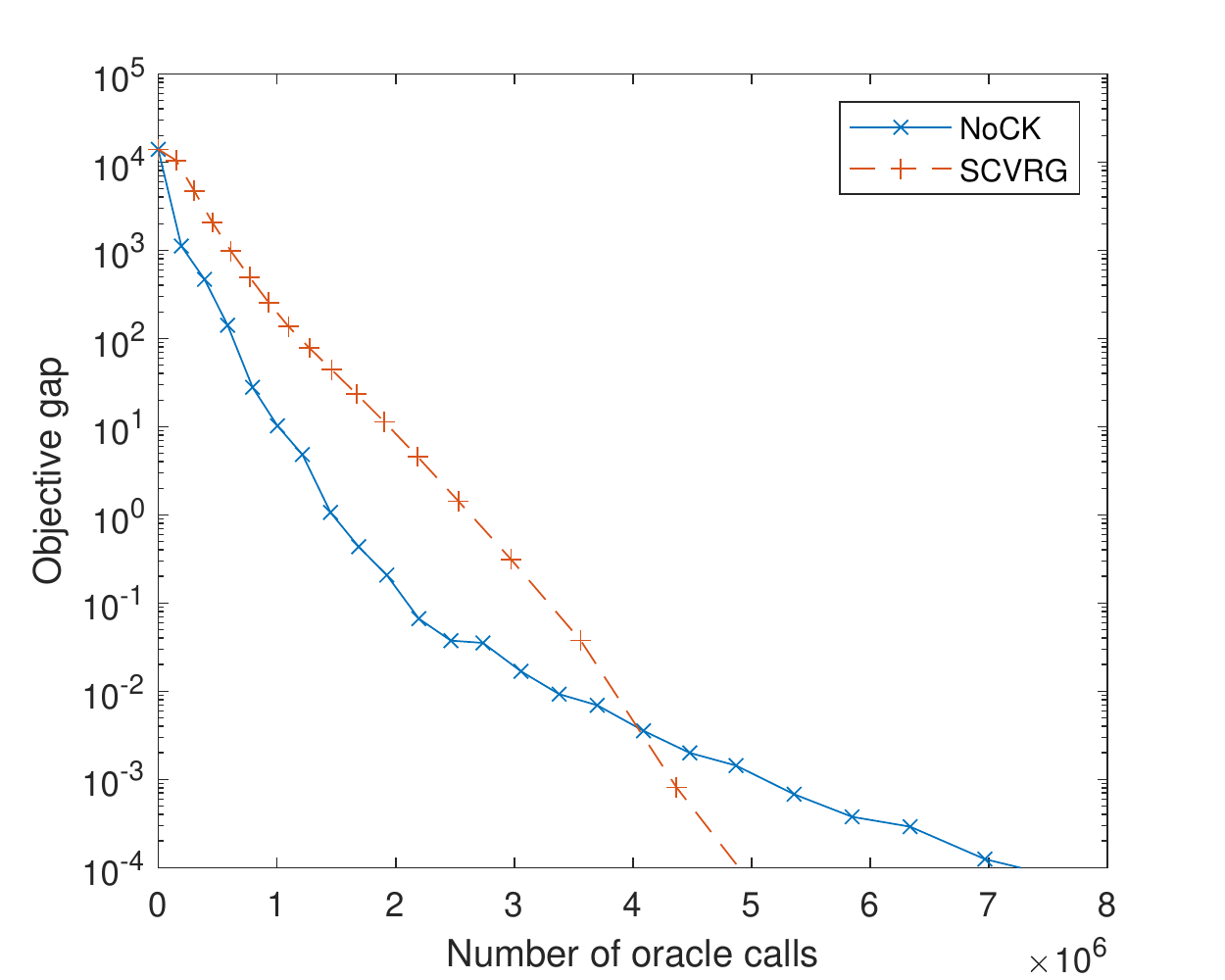}		
		\end{tabular}
	\end{center}	
\end{figure}

\begin{figure}[h]
	\caption{Comparison of the proposed NoCK method (i.e., Algorithm~\ref{alg:gock-cvx}) to SCVRG in \cite{lin2018improved} on solving instances of \eqref{eq:exp1} with the parameter tuple $(n, L, v) = (2.5e5, 799, 10)$. The instances are treated as convex, even though they can be strongly convex. Parameters of the algorithms are set by the theoretical parameter choice in the top row and the heuristic choice in the bottom row.}
	\label{fig:nock_comp_k_small2}
	\begin{center}
		\begin{tabular}{cccc}
			{\small trial $1$} & {\small trial $2$} & {\small trial $3$} & {\small trial $4$}\\	
			\includegraphics[width=.23\linewidth]{pics/Case_N=500_by_n=250000_with_v=10_T1_O2.pdf}		&
			\includegraphics[width=.23\linewidth]{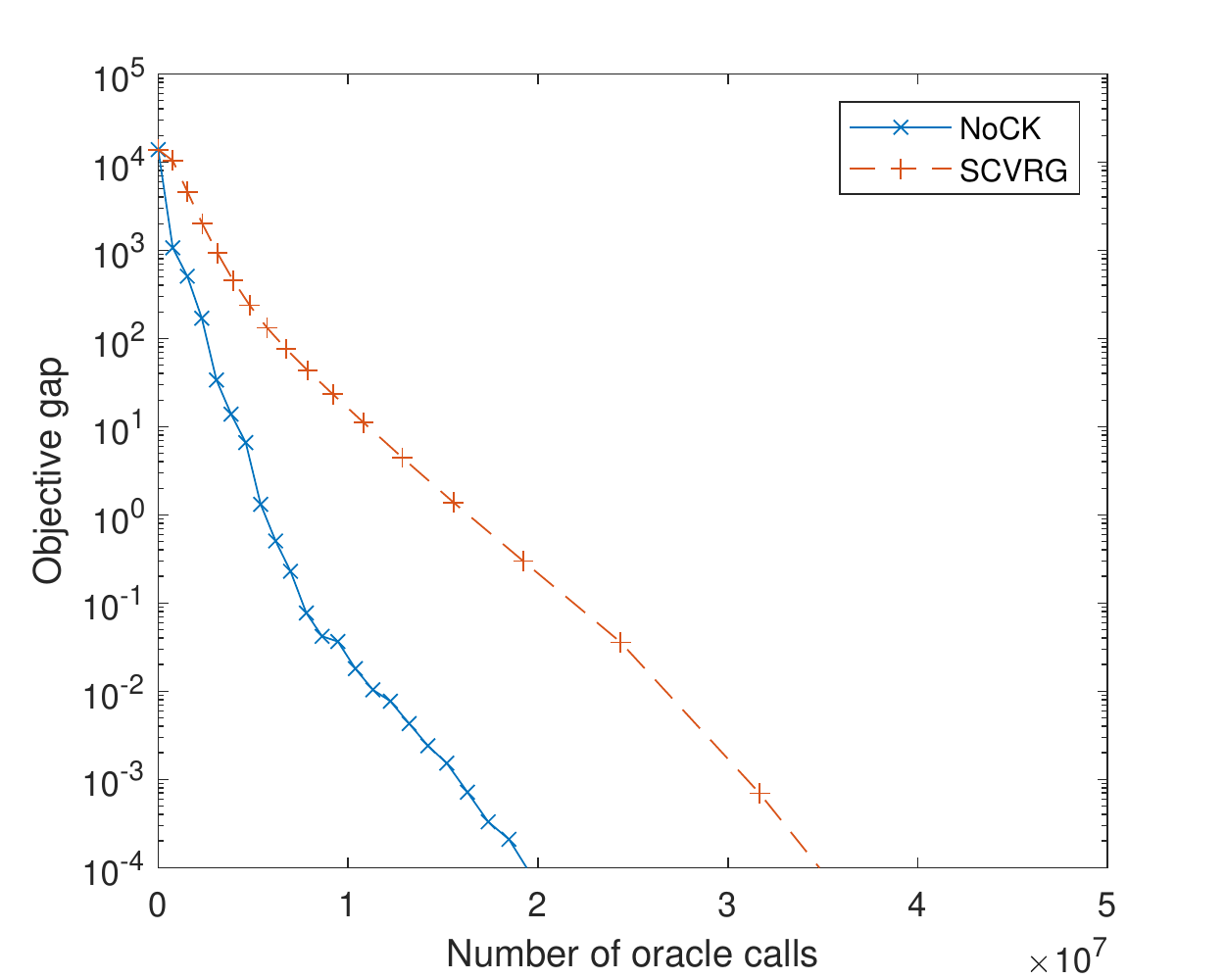}	&
			\includegraphics[width=.23\linewidth]{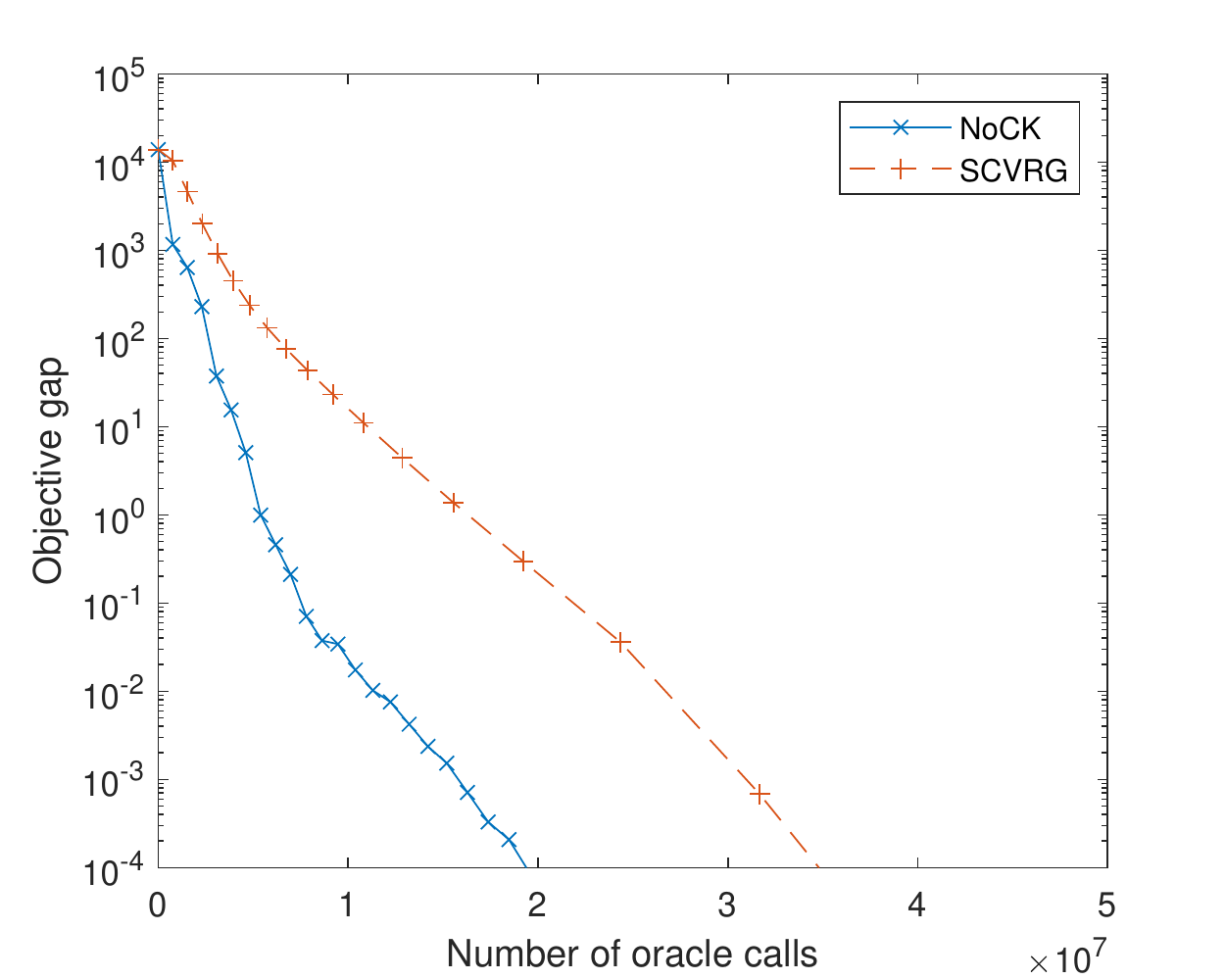}	&
			\includegraphics[width=.23\linewidth]{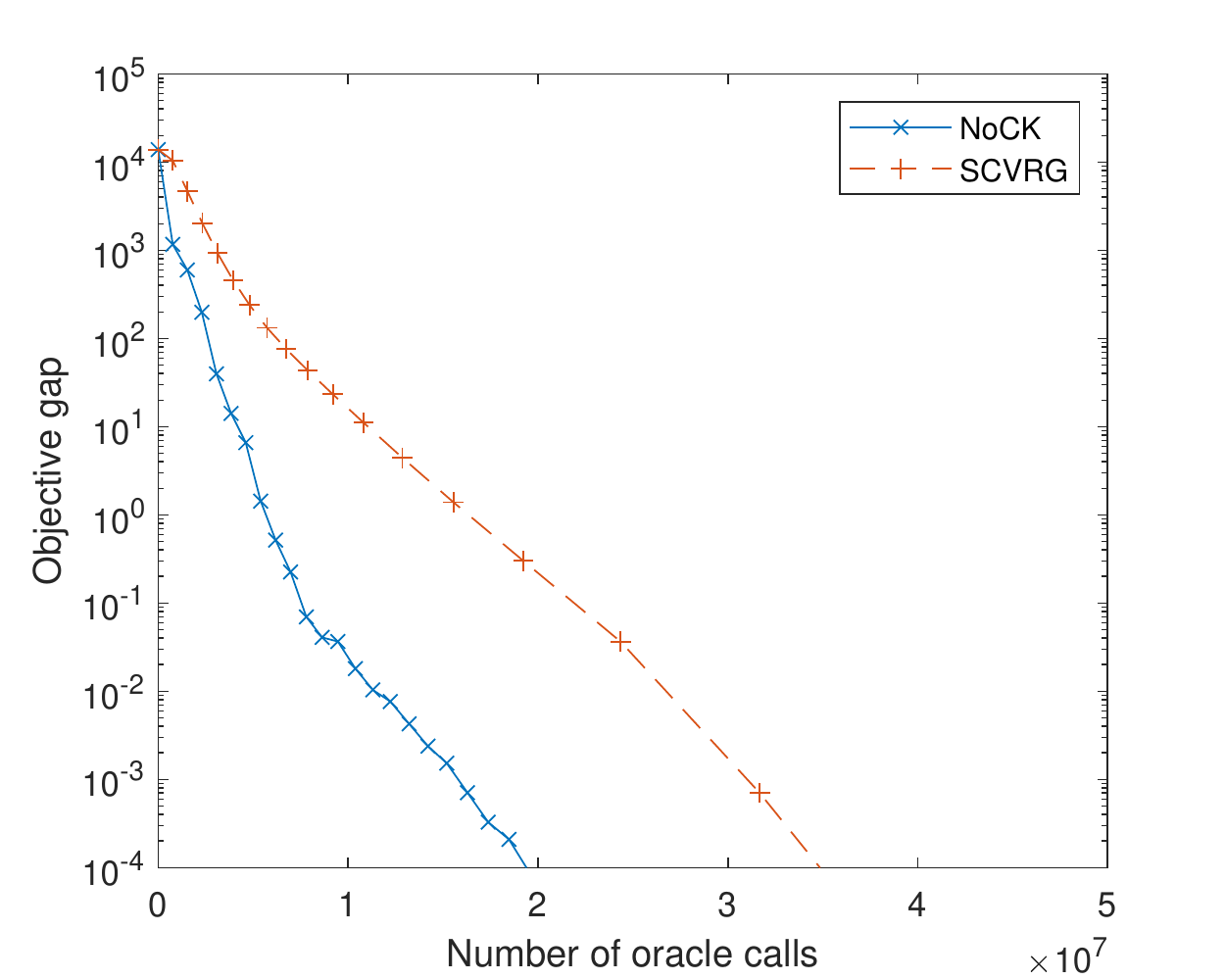}	\\
			\includegraphics[width=.23\linewidth]{pics/Case_N=500_by_n=250000_with_v=10_T1_O1.pdf}		&
			\includegraphics[width=.23\linewidth]{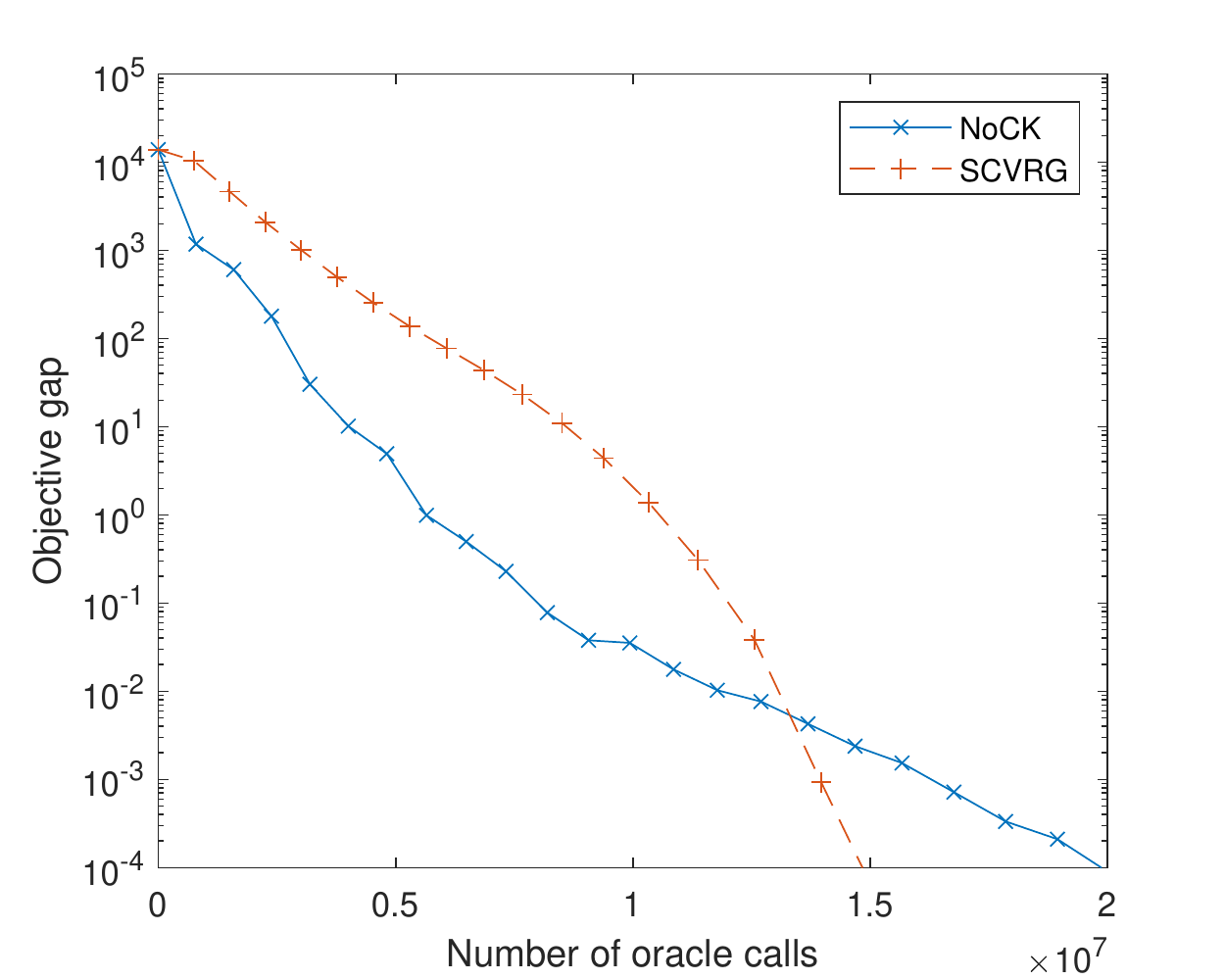}	&
			\includegraphics[width=.23\linewidth]{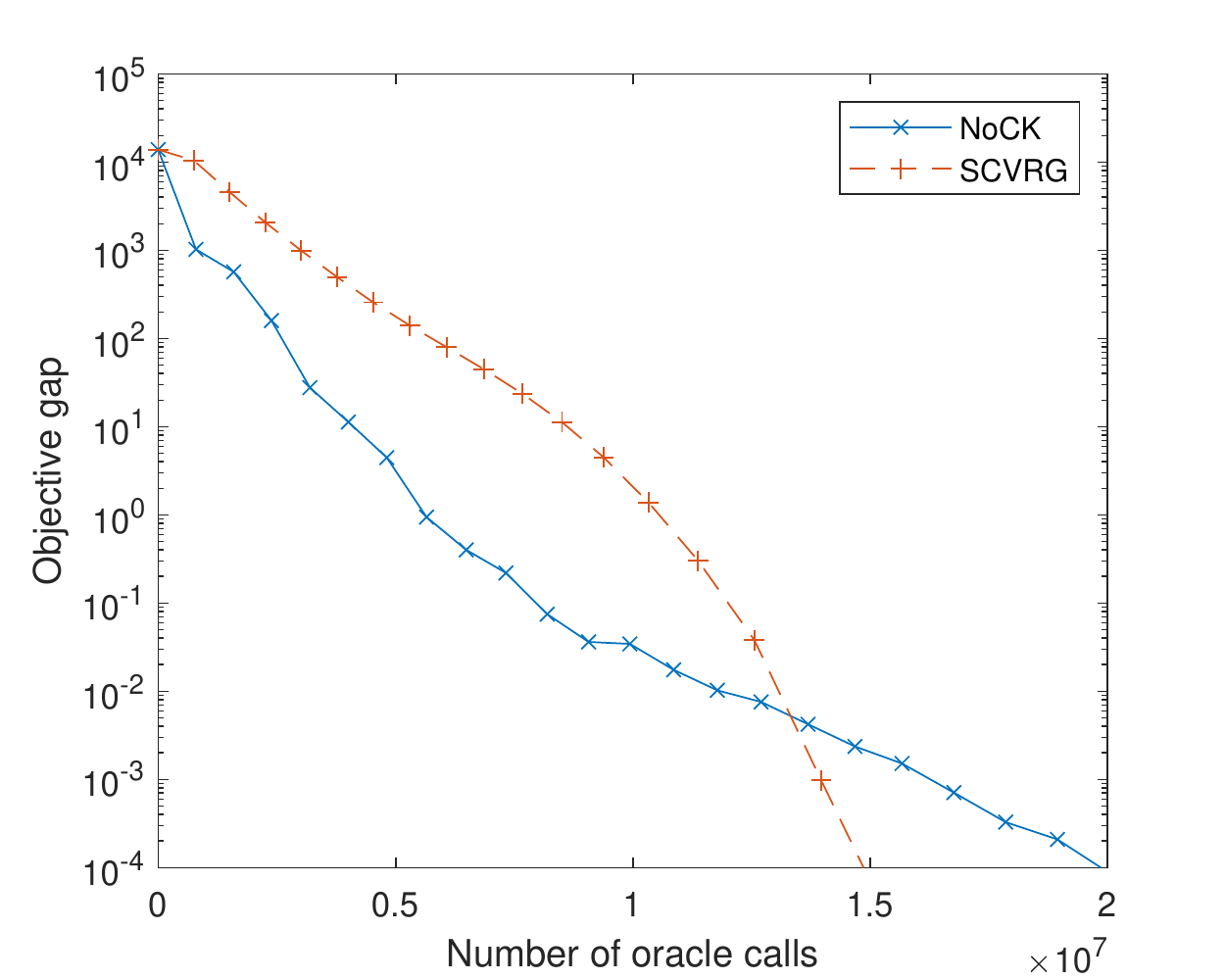}	&
			\includegraphics[width=.23\linewidth]{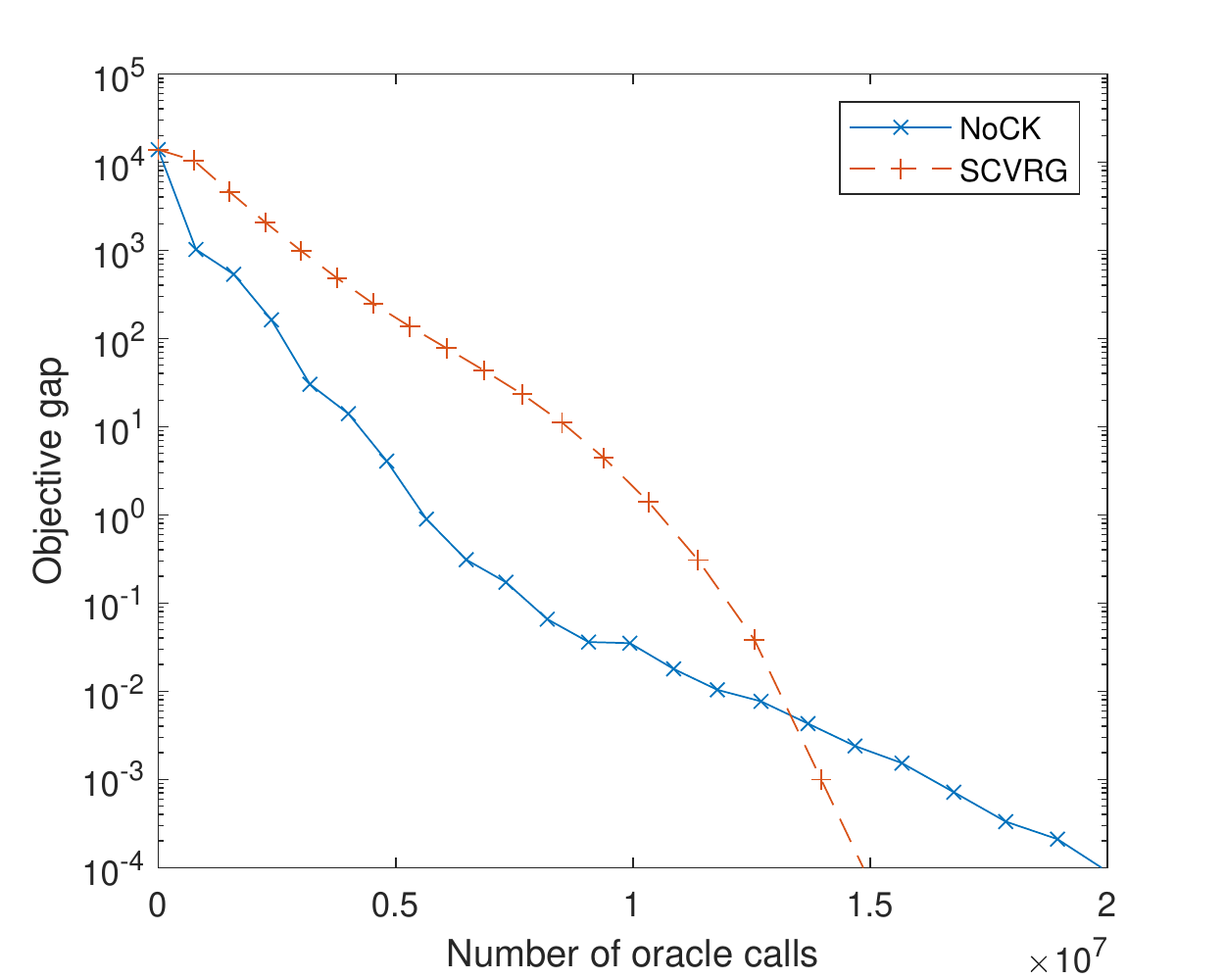}		
		\end{tabular}
	\end{center}	
\end{figure}	

} 

\bibliographystyle{abbrv} 
\bibliography{compositional}

\end{document}